\definecolor{ColorOne}{HTML}{5EA4FF}
\newcommand{\colorOne}[1]{\textcolor{ColorOne}{#1}}
\definecolor{ColorTwo}{HTML}{ffa217}
\newcommand{\colorTwo}[1]{\textcolor{ColorTwo}{#1}}
\tikzstyle{black dot}=[fill=black, draw=none, shape=circle, scale=0.4]
\tikzstyle{c0}=[-, line width=0.1em]
\tikzstyle{c0 arrow}=[->, line width=0.1em]
\tikzstyle{c0 bold}=[-, line width=0.15em]
\tikzstyle{c1}=[-, draw={rgb,255: red,94; green,164; blue,255}, line width=0.1em]
\tikzstyle{c1 arrow}=[->, draw={rgb,255: red,94; green,164; blue,255}, line width=0.1em]
\tikzstyle{c2}=[-, draw={rgb,255: red,255; green,162; blue,23}, line width=0.1em]
\tikzstyle{c2 arrow}=[->, draw={rgb,255: red,255; green,162; blue,23}, line width=0.1em]
\tikzstyle{d1}=[-, dashed, line width=0.1em]
\tikzstyle{d1 arrow}=[->, line width=0.1em, dashed]
\setlist[enumerate]{label=\textnormal{(\roman*)}}
\renewcommand{\leq}{\leqslant}
\renewcommand{\geq}{\geqslant}
\renewcommand{\le}{\leqslant}
\renewcommand{\ge}{\geqslant}
\newcommand{\pmo}{^{\pm1}}
\DeclareMathOperator{\nequiv}{\not\equiv}
\newcommand{\mcm}[3]{\newcommand{#1}[#2]{{\ensuremath{#3}}}} 
\mcm{\tuple}{1}{\langle #1 \rangle}
\mcm{\name}{1}{\ulcorner #1 \urcorner}
\mcm{\Nbb}{0}{\mathbb{N}}
\mcm{\N}{0}{\mathbb{N}}
\mcm{\Zbb}{0}{\mathbb{Z}}
\mcm{\Rbb}{0}{\mathbb{R}}
\mcm{\Cbb}{0}{\mathbb{C}}
\mcm{\Qbb}{0}{\mathbb{Q}}
\mcm{\Acal}{0}{\cal A}
\mcm{\Bcal}{0}{\mathcal B}
\mcm{\Ccal}{0}{\cal C}
\mcm{\Dcal}{0}{\cal D}
\mcm{\Ecal}{0}{\cal E}
\mcm{\Fcal}{0}{\cal F}
\mcm{\Gcal}{0}{\cal G}
\mcm{\Hcal}{0}{\cal H}
\mcm{\Ical}{0}{\cal I}
\mcm{\Jcal}{0}{\cal J}
\mcm{\Kcal}{0}{\cal K}
\mcm{\Lcal}{0}{\cal L}
\mcm{\Mcal}{0}{\cal M}
\mcm{\Ncal}{0}{\cal N}
\mcm{\Ocal}{0}{{\cal O}}
\mcm{\Pcal}{0}{{\cal P}}
\mcm{\Qcal}{0}{{\cal Q}}
\mcm{\Rcal}{0}{{\cal R}}
\mcm{\Scal}{0}{{\cal S}}
\mcm{\Tcal}{0}{{\cal T}}
\mcm{\Ucal}{0}{{\cal U}}
\mcm{\Vcal}{0}{{\cal V}}
\mcm{\Wcal}{0}{{\cal W}}
\mcm{\Xcal}{0}{{\cal X}}
\mcm{\Ycal}{0}{{\cal Y}}
\mcm{\Zcal}{0}{{\cal Z}}
\mcm{\Mfrak}{0}{\mathfrak M}
\mcm{\restric}{0}{\upharpoonright}
\mcm{\upset}{0}{\uparrow}
\mcm{\onto}{0}{\twoheadrightarrow}
\mcm{\smallNbb}{0}{{\small \mathbb{N}}}
\DeclareMathOperator{\preop}{op}
\mcm{\op}{0}{^{\preop}}
\newcommand{\se}{\subseteq}
\newcommand{\sm}{\setminus}
\newcommand{\unit}{\mathbb{I}}
\newcommand{\theoremize}[2]{\newaliascnt{#1}{thm} \newtheorem{#1}[#1]{#2} \aliascntresetthe{#1}}
\theoremstyle{plain}
\newtheorem{thm}{Theorem}[section]
\newtheorem{sublem}{Sublemma}[thm]
\theoremstyle{definition}
\theoremstyle{plain}
\newtheorem{main}{Theorem}
\newenvironment{cproof}{\noindent\textit{Proof of Sublemma.}}{\phantom{!}\!\hfill$\diamondsuit$}
\renewcommand{\i}{^{-1}}
\renewcommand{\r}[1]{\textcolor{red}{\underline{#1}}}
\renewcommand{\b}[1]{\textcolor{blue}{\underline{#1}}}
\newcommand{\g}[1]{\textcolor{teal}{\underline{#1}}}
\newcommand{\grey}[1]{\textcolor{gray}{#1}}
\newcommand{\red}[1]{{\color{red}{#1}}}
\newcommand{\cay}[2]{\textnormal{Cay}(#1,#2)}
\newcommand{\X}{\{\unit,h\}}
\newcommand{\vvv}{v\hspace{-0.2em}v\hspace{-0.2em}v}
\title{Towards a Stallings-type theorem for finite groups}
\author[Johannes Carmesin]{Johannes Carmesin${}^\clubsuit$}
\author[George Kontogeorgiou]{George Kontogeorgiou${}^\heartsuit$}
\author[Jan Kurkofka]{Jan Kurkofka${}^\clubsuit$}
\author[Will J.\ Turner]{Will J.\ Turner${}^\diamondsuit$}
\thanks{${}^\clubsuit$ University of Birmingham, Birmingham, UK, funded by EPSRC, grant number EP/T016221/1}
\thanks{${}^\heartsuit$ University of Warwick, Coventry, UK, funded by EPSRC Doctoral Training Partnership}
\thanks{${}^\diamondsuit$ University of Birmingham, Birmingham, UK, funded by EPSRC, CDT in Topological Design EP/S02297X/1}
\keywords{Stallings theorem, finite group, Cayley graph, local separator, nilpotent}
\subjclass[2020]{05C40, 05C83, 20F65, 05E18, 20E34, 20F18}
\begin{document}
\thispagestyle{empty}

\topskip0pt
\vspace*{\fill}
\maketitle

\noindent\textsc{Abstract.}
A recent development in graph-minor theory is to study \emph{local separators}, vertex-sets that separate graphs locally but not necessarily globally.
The local separators of a graph roughly correspond to the genuine separators of its \emph{local covering}: a usually infinite graph obtained by keeping all local structure of the original graph while unfolding all other structure as much as possible.

We use local separators and local coverings to discover and prove a low-order Stallings-type result for finite nilpotent groups~$\Gamma$:
the $r$-local covering of some Cayley graph $G$ of $\Gamma$ has $\ge 2$ ends that are separated by $\le 2$ vertices iff $G$ has an $r$-local separator of size $\le 2$ and $\Gamma$ has order~$>r$, iff $\Gamma$ is isomorphic to $C_i\times C_j$ for some $i>r$ and $j\in\{1,2\}$.

\vspace*{\fill}

\thispagestyle{empty}

\newpage
\setcounter{page}{1}


\section{Introduction}

Stallings theorem, usually regarded as a founding result of geometric group theory and of Bass-Serre theory more specifically, states that a finitely generated group has at least two ends if and only if it decomposes as a non-trivial amalgamated free product or HNN-extension over a finite subgroup~\cite{stallings1968torsion,stallings1972group}.
It is not known how to extend Stallings theorem to finite groups.
The purpose of this paper is to suggest a new approach via local separations and local coverings, two recently developed concepts from the theory of graph-minors, and to demonstrate that this approach is feasible by discovering and proving a low-order Stallings-type theorem for finite nilpotent groups.

There are two major challenges when searching for a Stallings-type theorem for finite groups.
The first challenge is that the notion of ends is meaningful only for infinite groups and graphs.
The second challenge arises from a key step in finding the decomposition of a given multi-ended group~$\Gamma$.
Here one considers a Cayley graph $G$ of $\Gamma$ and discovers a non-empty set $N(G)$ of pairwise non-crossing separations\footnote{A \emph{separation} of $G$ is a pair $\{A,B\}$ such that $A\cup B=V(G)$, there is no edge in $G$ between $A\sm B$ and $B\sm A$, and both $A\sm B$ and $B\sm A$ are nonempty. Its \emph{separator} is $A\cap B$, and its \emph{order} is $|A\cap B|$. In the group-theoretic literature, edge-cuts are often used instead of separations, but both perspectives work here.} of~$G$ such that the action of $\Gamma$ on $G$ induces an action of $\Gamma$ on~$N(G)$ that is transitive.
But as soon as $\Gamma$ is finite, it no longer induces a well-defined action on any set of pairwise non-crossing separations of~$G$.

A recent development in graph-minor theory is the study of \emph{$r$-local separations}: separations that separate a graph locally in a ball of a previously chosen radius $r/2>0$, but which need not separate the entire graph, roughly speaking~\cite{carmesin}.
Sets $N$ of pairwise non-crossing $r$-local separations can be found in
Cayley graphs $G$ of finite groups $\Gamma$ such that the action of $\Gamma$ on $G$ induces an action of $\Gamma$ on~$N$ that is even transitive; see \autoref{fig:loc-sepr}.
The $r$-local separations of $G$ correspond\footnote{The correspondence works for order at most two, but requires mild assumptions for higher order.} to genuine separations of the \emph{$r$-local covering} $G_r$ of $G$, which is another graph obtained from $G$ by preserving all balls of radius $r/2$ while unfolding all other structure as much as possible~\cite{GraphDec}.
The $r$-local covering $G_r$ is usally infinite.

\begin{figure}[ht]
    \centering
    \includegraphics[height=12\baselineskip]{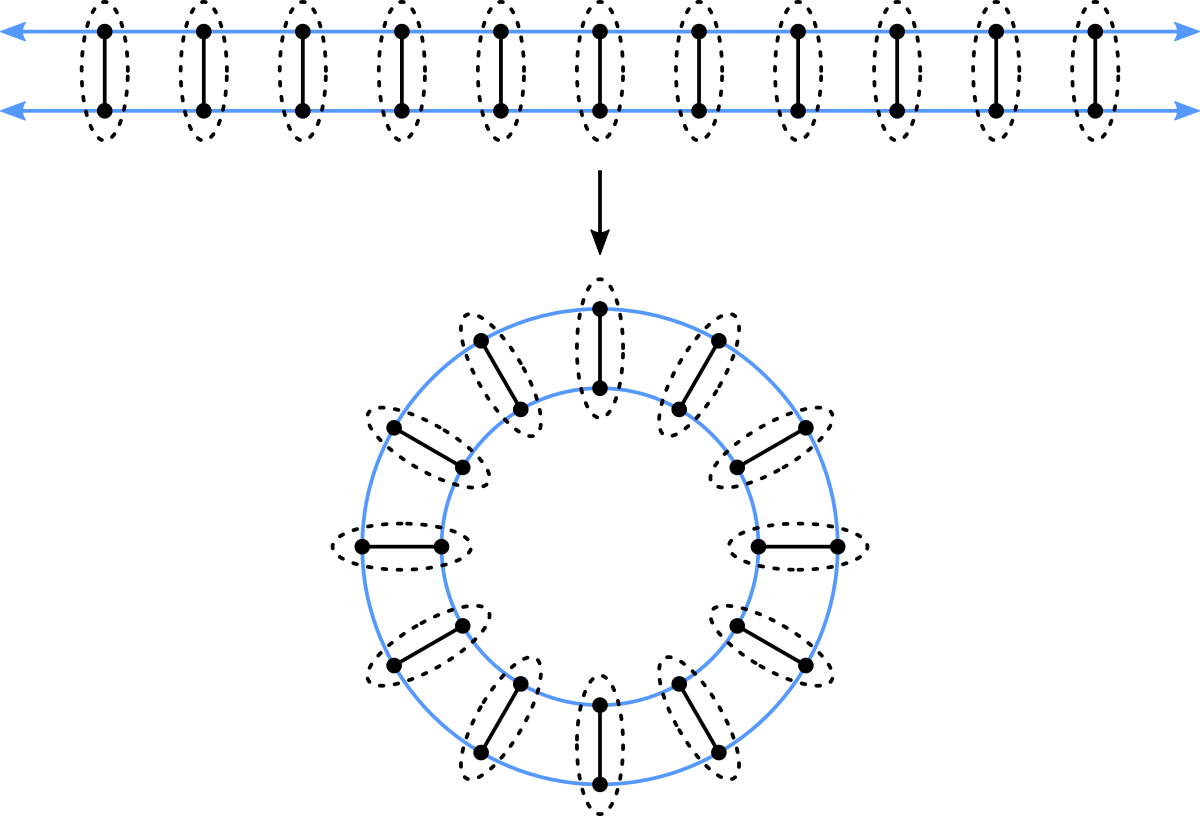}
    \caption{Bottom: The Cayley graph $G$ of $C_{12}\times C_2=:\Gamma$. 
    The twelve circled vertex-sets are the separators of a set $N$ of pairwise nested 11-local 2-separations of~$G$.
    The group $\Gamma$ acts transitively on~$N$.
    Every element of $N$ lifts to a 2-separation of the 11-local covering of $G$ (top) where it separates two ends.}
    \label{fig:loc-sepr}
\end{figure}

To overcome the two challenges outlined above, we suggest considering the ends of the $r$-local coverings of Cayley graphs of finite groups~$\Gamma$ on the one hand, and using $r$-local separations instead of genuine separations on the other hand.
As our main result, we show that this approach works for $r$-local separations of low order in conjunction with the assumption of nilpotency, yielding the following Stallings-type theorem:

\begin{main}\label{3-con}
    Let $\Gamma$ be a finite group that is nilpotent of class~$\le n$.
    Let $r\ge \max\{4^{n+1},20\}$.
    Then the following assertions are equivalent:
    \begin{enumerate}
        \item\label{StallingsCovering} The $r$-local covering of some Cayley graph of $\Gamma$ has $\ge 2$ ends that are separated by $\le 2$ vertices.
        \item\label{StallingsLocalSeparators} Some Cayley graph of $\Gamma$ has an $r$-local separator of size $\le 2$ and $\Gamma$ has order~$>r$.
        \item\label{StallingsProduct} $\Gamma$ is isomorphic to $C_i\times C_j$ for some $i>r$ and $j\in\{1,2\}$.
    \end{enumerate}
\end{main}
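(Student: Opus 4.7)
I would arrange the three equivalences as a cycle $(\ref{StallingsProduct}) \Rightarrow (\ref{StallingsCovering}) \Rightarrow (\ref{StallingsLocalSeparators}) \Rightarrow (\ref{StallingsProduct})$, with the last implication carrying essentially all of the content.

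For $(\ref{StallingsProduct}) \Rightarrow (\ref{StallingsCovering})$ I would construct the witnesses explicitly. If $\Gamma \cong C_i$ with $i > r$, take $G$ to be the standard one-generator cycle, whose $r$-local covering $G_r$ is the bi-infinite path, with two ends separated by a single vertex. If $\Gamma \cong C_i \times C_2$, take $G$ to be the natural prism on generators $(a, b)$ of orders $i$ and $2$, whose $r$-local covering is the bi-infinite ladder, with two ends separated by a single rung, recovering precisely the situation of Figure~\ref{fig:loc-sepr}. The equivalence $(\ref{StallingsCovering}) \Leftrightarrow (\ref{StallingsLocalSeparators})$ is then a direct appeal to the correspondence between $r$-local 2-separations in $G$ and genuine 2-separations in $G_r$, which the excerpt asserts is unconditional in order at most two; the side hypothesis $|\Gamma| > r$ serves to rule out finite covers with no ends.

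The bulk of the argument is $(\ref{StallingsLocalSeparators}) \Rightarrow (\ref{StallingsProduct})$. I would proceed by induction on the nilpotency class $n$, the hypothesis $r \geq 4^{n+1}$ being calibrated to tolerate a factor-of-$4$ loss in radius per induction step. In the base case $n = 1$, $\Gamma$ is abelian, and writing $\Gamma \cong C_{m_1} \times \cdots \times C_{m_k}$ with $m_1 \mid \cdots \mid m_k$, I would show that for any generating set some ball of radius $r/2$ contains a rigid two-dimensional grid-like subgraph whenever $k \geq 3$, or $k = 2$ with $m_1 \geq 3$; such a ball admits no $r$-local 2-separator splitting it nontrivially once $r \geq 20$, forcing $\Gamma$ into the claimed shape. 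For the induction step I would exploit the non-trivial centre $Z(\Gamma)$: using the transitive action of $\Gamma$ on the nested family $N$ of local 2-separations guaranteed by the paper's framework (as in Figure~\ref{fig:loc-sepr}), I would identify a central element $z$ preserving each member of $N$ setwise and descend to a Cayley graph of $\Gamma / \langle z\rangle$ carrying an $(r/4)$-local 2-separator. Applying the induction hypothesis classifies $\Gamma/\langle z \rangle$ as $C_i$ or $C_i \times C_2$, and a short central-extension argument, invoking nilpotency to preclude exotic extensions, upgrades the classification to $\Gamma$ itself.

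The principal obstacle I anticipate is the \emph{descent step}: given an $r$-local 2-separator of $G$, producing a Cayley graph of $\Gamma/\langle z\rangle$ that still carries a local 2-separator of adequate radius. Because Cayley graphs depend on the choice of generating set, and because projection to the quotient may collapse generators or introduce new short cycles, one must carefully re-present the quotient on generators that both descend cleanly from $G$ and preserve enough local structure to detect the separator. Controlling this re-presentation is precisely what forces the factor-of-$4$ slack in the inductive radius and ultimately the bound $r \geq 4^{n+1}$.
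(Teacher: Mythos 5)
Your plan for $(\ref{StallingsLocalSeparators})\Rightarrow(\ref{StallingsProduct})$ contains a circularity at its centre. You propose to ``identify a central element $z$ preserving each member of $N$ setwise'' and then descend to $\Gamma/\langle z\rangle$. But a central $z$ fixes a separator $X=\{g,gh\}$ setwise only when $z\in\{\unit,h\}$: centrality and $zg=gh$ force $z=h$, while $zg=g$ forces $z=\unit$. So the central element your descent requires is the very element $h$ spanning the local $2$-separator, and the assertion that this $h$ is central is, when $h$ is an involution, precisely the content of the hardest piece of the paper's argument (\autoref{lem:involution_case}, which shows $\langle h\rangle$ is normal; for an involution, normality of $\langle h\rangle$ is equivalent to $h$ being central). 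The paper must earn this via the iterated-commutator and magic-property machinery before it is permitted to quotient; your plan assumes it. Even granting the central element, the descent step you flag as the principal obstacle --- that $\cay{\Gamma/\langle z\rangle}{\,\cdot\,}$ inherits an $(r/4)$-local $2$-separator --- is not established, and it is false for an arbitrary central $z$: the paper proves a version of it (\autoref{Loc2sepToLocCutvx}) only after first invoking the Edge Insertion Lemma (\autoref{long_edge}) to arrange that the separator is spanned by an edge labelled $h$, with $\langle h\rangle$ already known to be normal.

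Two structural mismatches are also worth recording. The paper does not induct on nilpotency class at all: $n$ enters only through the length $2^{n+1}$ of the iterated commutator word $[g,h]_n$, and the case split is cutvertex vs.\ $2$-separator, then involution vs.\ non-involution, with the effective base case being ``one generator'' (\autoref{submainCutvertex}) rather than ``abelian''; your proposed grid analysis for the abelian base would be substantial work that the commutator machinery handles uniformly. And your reading of $r\ge 4^{n+1}$ as a per-step factor-of-$4$ loss does not match how the bound actually arises: the paper incurs a single \emph{quadratic} loss, setting $r^-=\lfloor\sqrt{r}\rfloor$ in the Edge Insertion Lemma and requiring $r^-\ge 2^{n+1}$ so that the Magic Lemma applies, whence $r\ge 4^{n+1}$ in one step rather than $n$ multiplicative ones.
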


\autoref{3-con} does not extend to all finite groups, as witnessed by simple finite groups; see \autoref{fig:cutv_ex}.
However, we believe that extending \autoref{3-con} to solvable groups is in reach:

\begin{opro}
    Extend \autoref{3-con} to solvable groups.
\end{opro}

\begin{figure}[ht]
    \centering
    \includegraphics[height=8\baselineskip,angle=180,origin=c]{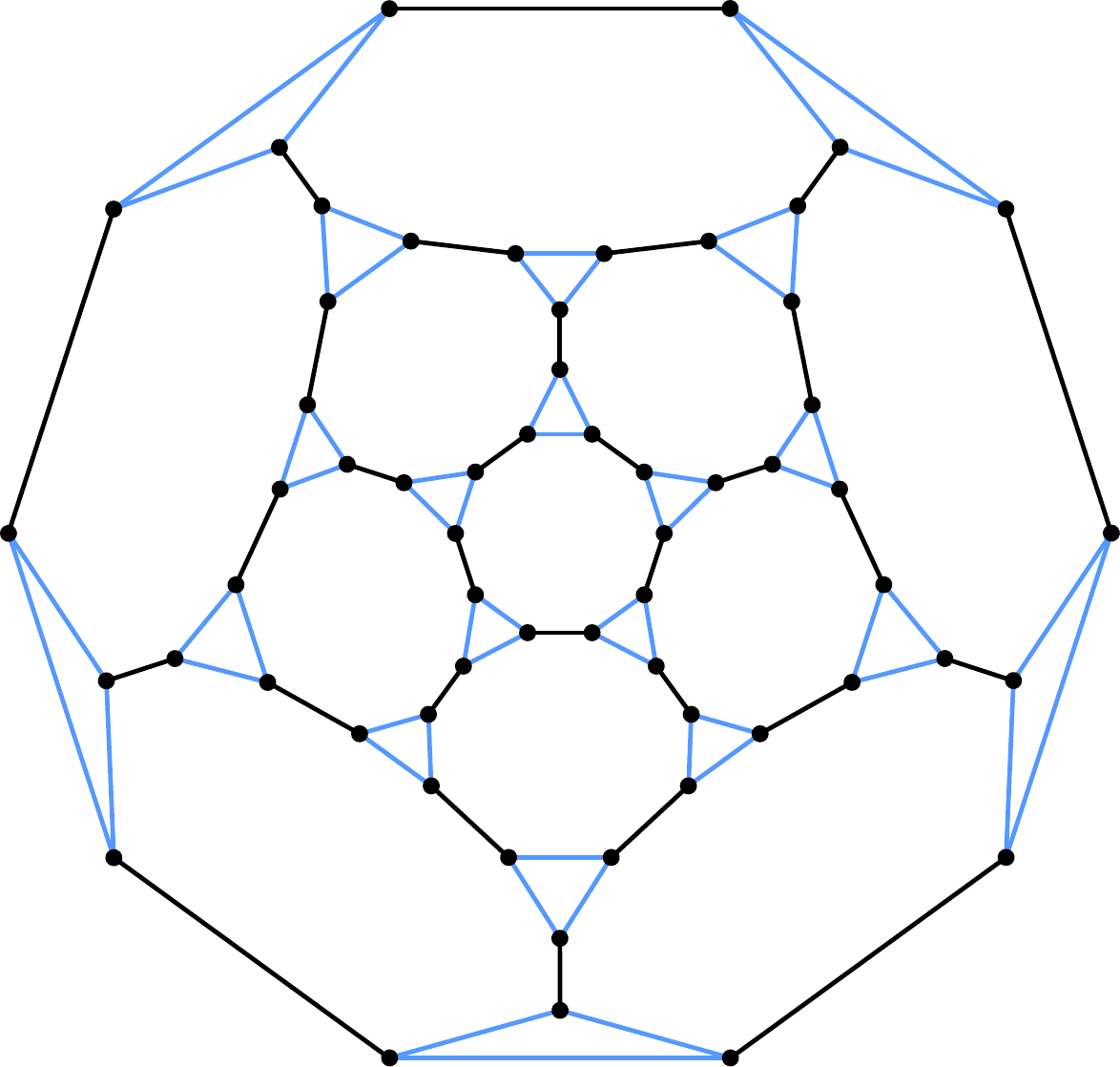}
\caption{Every vertex of the Cayley graph of $A_5\cong\langle a,b:a^3,b^2,(ab)^5\rangle$ is a 9-local cutvertex. However, these local cutvertices cannot give rise to interesting product structure, as alternating groups $A_n$ are simple for $n\ge 5$.
More generally, for every $r>0$ there is $n$ such that every vertex of some Cayley graph of $A_n$ is an $r$-local cutvertex, see \autoref{UnboundedLocalCutvertex}.}
\label{fig:cutv_ex}
\end{figure}

The main challenge when fully extending Stallings theorem to finite groups via local separations and local coverings is to deal with local separations of higher order.
Indeed, a key ingredient in the proof of \autoref{3-con} is a version for local separations of Tutte's theorem for 2-connectivity~\cite{carmesin}.
A Tutte-type theorem for 3-connectivity has been found and proved only recently~\cite{Tridecomp}, but it is an open problem to extend it to the setting of local separations.
It is not known how to find Tutte-type theorems for higher connectivity, let alone extending them to the local setting.

\begin{opro}
    Extend \autoref{3-con} to $r$-local separators of size $\ge 3$.
\end{opro}

\subsection*{Related work}

Stallings theorem builds on the work of Freudenthal~\cite{Freudenthal}, Hopf~\cite{Hopf} and Wall~\cite{Wall}.
It has since been used in various works on highly symmetric graphs, for example by Georgakopoulos, Hamann, Lehner, Miraftab, M\"oller~\cite{OneEndedTDCs,Geo_plane_cubic,Accessibility2018Hamann,hamann2018stallings} and by Woess~\cite{WoessAmenable,WoessStallings}.
Recent work on local separations and local coverings includes \cite{albrechtsen2023structural,albrechtsen2023induced,bumpus2023structured,carmesin2024structure,carmesin2023apply,GraphDec}.

\subsection*{Overview of the proof}

We focus on the hard implication of \autoref{3-con}, which is \ref{StallingsLocalSeparators}$\to$\ref{StallingsProduct}.
For this, let $G$ be the Cayley graph of a finite group~$\Gamma$ that is nilpotent of class~$\le n$ with generating set~$S$.
Let $X$ be an $r$-local $(\le 2)$-separator of~$G$ with $r\gg n$, and assume that $\Gamma$ has order~$>r$.
The aim is to show that $\Gamma$ is isomorphic to $C_i\times C_j$ for some $i>r$ and $j\in\{1,2\}$.

To harness the separating property of~$X$, we will introduce tools with which we can find cycles of length $\le r$ that traverse a fixed vertex through a pair of prescribed edges.
More specifically, the $n$th iterated commutator $[g,h]_n$ labels the edges of a closed walk in~$G$, and it is straightforward to find a subwalk $O$ that runs once around a cycle of length~$\le r$.
If $O$ has length at least four, then $O$ and its inverse together contain nearly all 2-letter-combinations of $g$ or $g^{-1}$ with $h$ or $h^{-1}$.
We will call this property \emph{magic} and prove the above fact in the \nameref{magic_cor}~(\ref{magic_cor}).
Since $G$ is vertex-transitive, we can move $O$ around in~$G$ so that a prescribed vertex lies in the middle of a desired 2-letter-combination.

In the case where $X$ is a local cutvertex, we will use such tools to show that short cycles link up the neighbours of~$X$ so that we can deduce that $S$ is trivial.
Hence the group $\Gamma$ will be cyclic; see \autoref{submainCutvertex}.

In the other case, $X$ is a local $2$-separator.
The main difficulty is that no iterated commutator seems to be useful for constructing short cycles that could help us harness the separating property of~$X$.
Our plan is to circumvent this problem entirely by extending $S$ so that $X$ is spanned by an edge, labelled $h$ (say).
Since $h$ will also span every local $2$-separator in the $\Gamma$-orbit of~$X$, it is not clear that $X$ will be a local $2$-separator after adding~$h$ to~$S$.
To resolve this, we will use the local Tutte-decomposition~\cite{carmesin} to replace $X$ with a local $2$-separator that is nested with its orbit under the action of~$\Gamma$ on~$G$.
Adding $h$ will also incur a toll on~$r$, as $X$ might become an $r'$-local $2$-separator for a smaller $r'\le r$, but the toll will be manageable.
This will be the \nameref{long_edge}~(\ref{long_edge}).
We then distinguish the following two cases.

Case 1: $h$ is not an involution.
In this case, we will investigate short cycles from iterated commutators of the form $[g,h]_n$ to show that $S$ must be a subset of $\{h^{\pm 1},h^{\pm 2}\}$.
Then $\Gamma$ will be cyclic.
We will deal with this case in \autoref{sec:noninvo}, where the key result will be \autoref{lem:involution_free}.

Case 2: $h$ is an involution.
Here we will show that the subgroup $\langle h\rangle$ of~$\Gamma$ generated by $h$ is normal.
Taking the quotient $\Gamma/\langle h\rangle$ roughly corresponds to contracting all the edges in $G$ labelled by~$h$.
Since $X$ is a local $2$-separator of~$G$, it defines a local cutvertex of the resulting contraction minor, so the quotient $\Gamma/\langle h\rangle$ will be cyclic by the local-cutvertex case (\autoref{submainCutvertex}).
Then it will follow that $\Gamma$ is the direct product of a cyclic group with~$C_2$.
We will deal with this case in \autoref{sec:invo}, where the key result will be \autoref{lem:involution_case}.

\subsection{Organisation of the paper}
We introduce the terminology that we use in \autoref{sec:graphs}.
Next we show in \autoref{sec:commutators} that iterated commutators have the magic property.
We use the magic property in \autoref{sec:cutvertices} to deal with local cutvertices.
Local $2$-separators are introduced in detail in \autoref{sec:2sep} where we also prove the \nameref{long_edge}~(\ref{long_edge}).
In \autoref{sec:proof} we explain in great detail how we plan to deal with local $2$-separators.
The case where the given local $2$-separator is spanned by a non-involution will be addressed in \autoref{sec:noninvo}, whereas the involution case will be solved in \autoref{sec:invo}.

\subsection{Acknowledgements}

We are grateful to David Hume for a valuable discussion.

\section{Terminology}
\label{sec:graphs}

For graph-theoretic terminology we follow Diestel \cite{diestelBook}.
    A \emph{walk} (of \emph{length}~$k$) in a graph~$G$ is a non-empty sequence 
    \[W=v_0 e_0 v_1\ldots v_{k-1} e_{k-1} v_k\] 
    alternating between vertices and edges in~$G$ such that $e_i$ has endvertices $v_i$ and $v_{i+1}$ for all~$i<k$.
    A walk is \emph{closed} if $v_0=v_k$. We refer to $k$ as the \emph{length} of $W$. 
    A walk is \emph{trivial} if it has length~0, i.e. if it consists of a single vertex. 
    A walk $W'$ is a \emph{subwalk} of~$W$ if $W'=v_i e_i v_{i+1}\ldots v_{j-1} e_{j-1} v_j$ for some indices $i,j$ with $0\le i\le j\le k$.
Given a graph $G$ with a vertex $v$ and $r\in\N$, the \emph{ball around~$v$ of diameter~$r$} is the subgraph of~$G$ whose vertices and edges are the ones that lie on closed walks in $G$ of length at most~$r$ that pass through~$v$. We denote it by $B_r(v,G)$, and simply by $B_r(v)$ if $G$ is implicitly given by the context. 
We say that $v$ is an \emph{$r$-local cutvertex} of $G$ if it is a cutvertex of the ball $B_r(v)$; that is, if the punctured ball $B_r(v)-v$ is disconnected.
    An $r$-local cutvertex is also an $r'$-local cutvertex for every $r'$ satisfying $0<r'<r$.

We use the standard group-theoretic terminology as found in Hall \cite{Hall}. 
In what follows, let $\Gamma$ be a finite group with generating set~$S$. We follow the convention that a \emph{generating set} $S$ does not contain the neutral element and is closed under taking inverses.
We denote the neutral element of~$\Gamma$ by~$\unit$.
In this paper, we will carefully distinguish between \textit{words in~$S$} (i.e.\ finite sequences of elements of $S$), \textit{reduced words in $S$} (i.e.\ elements of the free group $F(S)$), and elements of the group $\Gamma$ (which can be represented by words, reduced or not, in a not necessarily unique way). 
    We say that two group elements $a$ and $b$ are \emph{equivalent}, and we write~$a\equiv b$, if $a=b$ or $a=b\i$.
    The \emph{Cayley graph} $\cay{\Gamma}{S}$ is the simple graph with vertex set $\Gamma$ and edge set $\{\,\{g,gs\}\mid g\in \Gamma, s\in S\,\}$, where each edge between vertices $x$ and $y$ has two \emph{orientations}, one corresponding to $s$ such that $y=xs$ and the other to $s^{-1}$, which satisfies $x=y s^{-1}$. 
Right-multiplication of an element $g$ by a generator $s$ corresponds to traversing the edge $\{g,gs\}$ in the direction from $g$ to $gs$ in the Cayley graph~$G$.
The group~$\Gamma$ acts on its Cayley graph $G$ by left-multiplication, so every $h\in\Gamma$ maps edges $\{g,gs\}$ to edges $\{(h g),(h g)s\}$.
Each word $w$ in $S$ determines a walk in $G$ that starts at the neutral element $\unit$ and traverses edges of $G$ labelled by the generators appearing in $w$ in order from left to right. 

We will use the same symbols to denote group elements, edge-labels and vertices. For example, given a group $\Gamma$, a generating set $S$ and the corresponding Cayley graph $G=\cay{\Gamma}{S}$, a group element $g\in\Gamma$ will label a vertex $g\in V(G)$. 
If $g\in S$, then every vertex $x\in V(G)$ will be incident with two edges labelled by~$g$, one from $xg\i$ to $x$ and one from $x$ to~$xg$, except when $g$ is an involution, in which case the two edges coincide. In the text we will compensate this compressed notation by writing out the role of $g$, such as `group element', `vertex' and `edge labelled by'.

Given a group $\Gamma$ and a subset $X\subseteq\Gamma$,
an \emph{identity word} of~$\Gamma$ in~$X$ is a nonempty word $x_1\ldots x_n$ with $x_i\in X$ for all $i\in [n]$ such that $x_1\ldots x_n=\unit$ in~$\Gamma$.
An identity word $x_1\ldots x_n$ is \emph{trivial} if $n=1$.

\begin{dfn}[Morpheme]
    Let $\Gamma$ be a group.
    A \emph{morpheme} of $\Gamma$ in~$S$ is an identity word of~$\Gamma$ in~$S$ (in particular, distinct from the word~$\unit$) such that no nonempty proper subword of it is an identity word.
\end{dfn}

\begin{eg}\label{cyclesGiveMorphemes}
Let $G:=\cay{\Gamma}{S}$ be a Cayley graph.
From every cycle in~$G$ we can obtain morphemes by choosing a startvertex and then reading the edge-labels around one of its two directions.
In fact, all morphemes of~$\Gamma$ in~$S$ can be obtained in this way, except for the ones of the form $h^2$ where $h$ is an involution.
\end{eg}

\begin{lem}[Folklore]
\label{lem:cycle_finder}
    Every nontrivial non-backtracking \footnote{A closed walk $v_0e_0\ldots v_k$ is \emph{backtracking} if there is $i\in \mathbb{Z}_k$ such that $v_i=v_{i+2}$.} closed walk contains a cycle as a subwalk.
\end{lem}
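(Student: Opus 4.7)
The plan is a one-line extremality argument. Write the given walk as $W=v_0 e_0 v_1\ldots v_{k-1}e_{k-1}v_k$ with $v_0=v_k$ and $k\ge 1$. Among all pairs of indices $0\le i<j\le k$ with $v_i=v_j$, choose one for which $j-i$ is minimum; such a pair exists because $(0,k)$ qualifies. Let $C:=v_i e_i v_{i+1}\ldots v_j$ denote the corresponding closed subwalk.

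First I would argue that $j-i\ge 3$. The case $j-i=1$ is ruled out because the ambient graph is simple and so contains no loops (and in particular this is the setting of the paper). The case $j-i=2$ would mean $v_i=v_{i+2}$, which is forbidden by the non-backtracking hypothesis. Hence $j-i\ge 3$.

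Next I would verify that $C$ is in fact a cycle as a subwalk. If some pair $v_a=v_b$ with $i\le a<b\le j$ and $(a,b)\ne (i,j)$ existed, then $b-a<j-i$, contradicting the minimality of $(i,j)$. Therefore the intermediate vertices $v_i,v_{i+1},\ldots,v_{j-1}$ are pairwise distinct, the edges $e_i,\ldots,e_{j-1}$ must also be pairwise distinct (two equal edges would force a repetition among the intermediate vertices), and $v_i=v_j$ closes up the walk. So $C$ is a cycle contained in $W$ as a subwalk.

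There is essentially no obstacle to overcome: the whole argument is a pure extremality pigeonhole on the multiset of vertices of~$W$. The non-backtracking hypothesis enters at exactly one place, namely to exclude the degenerate length-two closed subwalk $v_i e_i v_{i+1} e_{i} v_i$ that would otherwise satisfy the minimality condition without being a cycle.
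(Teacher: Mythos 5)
Your proof is correct and takes essentially the same route as the paper's one-line argument, namely picking a shortest nontrivial closed subwalk and observing that minimality forces it to be a cycle. You simply make explicit the details the paper leaves implicit: ruling out lengths~$1$ and~$2$ (via simplicity and the non-backtracking hypothesis, respectively) and verifying that the intermediate vertices and edges are pairwise distinct.
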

\begin{proof}
Pick a nontrivial closed subwalk that is of minimum length.  
\end{proof}

\begin{lem}[Morpheme Finding Lemma]\label{commutatorContainsMorpheme}
Let $\Gamma$ be a group, let $g_1,\ldots,g_k\in \Gamma$ with $g_i\nequiv g_j$ for all distinct $i,j\in [k]$, and let $u$ be a nontrivial reduced word in $g_1\pmo,\ldots,g_k\pmo$.

If $u=\unit$ in~$\Gamma$, then $u$ contains a morpheme~$m$ of $\Gamma$ in $g_1\pmo,\ldots,g_k\pmo$ as a subword, and $m$ has length at least three or is of the form $g_i^{\pm 2}$ for some~$i\in[k]$.
\end{lem}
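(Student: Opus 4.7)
The plan is to take $m$ to be a nonempty subword of $u$ of minimum length subject to $m=\unit$ in $\Gamma$. Such an $m$ exists because $u$ itself is a candidate. By the choice of $m$, no nonempty proper subword of $m$ can itself be an identity word, so $m$ is a morpheme of $\Gamma$ in $g_1\pmo,\ldots,g_k\pmo$. It then remains to verify the length constraint: that $m$ either has length at least three, or has length two and equals $g_i^{\pm 2}$ for some $i\in[k]$. (The case of length one does not occur, since we may assume each $g_i\neq\unit$.)

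The main work is therefore the analysis of the length-two case. So suppose $m=ab$ is a two-letter subword of $u$ with $ab=\unit$, and write $a=g_i^{\varepsilon}$ and $b=g_j^{\delta}$ as formal letters, with $i,j\in[k]$ and $\varepsilon,\delta\in\{+1,-1\}$. From $ab=\unit$ in $\Gamma$ we get $g_j^{\delta}=g_i^{-\varepsilon}$, which I will exploit together with two structural constraints: first, the hypothesis $g_i\nequiv g_j$ for $i\neq j$; second, the fact that $m$, as a subword of the reduced word $u$, is reduced.

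If $i\neq j$, the identity $g_j^{\delta}=g_i^{-\varepsilon}$ forces $g_j=g_i^{\pm 1}$, hence $g_j\equiv g_i$, contradicting the hypothesis. So $i=j$. If moreover $\delta=-\varepsilon$, then $b$ is the formal inverse of $a$, contradicting the reducedness of $m$. Hence $\delta=\varepsilon$, which gives $m=g_i^{\varepsilon}g_i^{\varepsilon}=g_i^{\pm 2}$ as claimed (and as a byproduct $g_i$ is an involution). For length at least three there is nothing to check. The only step that takes any real thought is this length-two case analysis; once one has the dichotomy between "different generator letters" and "same generator letter with the same sign", the hypotheses of the lemma each rule out exactly one of the two formal ways $ab=\unit$ can be realised.
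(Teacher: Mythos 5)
Your proof is correct and takes a genuinely different, more elementary route than the paper's. The paper reads $u$ as a closed walk from~$\unit$ in the Cayley graph $\cay{\Gamma}{\{g_1,\ldots,g_k\}}$, argues that the walk is non-backtracking once no $g_i^{\pm2}$ occurs as a subword, and extracts a cycle via \autoref{lem:cycle_finder} to read off a morpheme of length at least three; the residual case, where $u$ does contain some $g_i^{\pm2}$, is dispatched by taking $m:=g_i^{\pm2}$ directly. You instead choose a nonempty subword~$m$ of~$u$ of minimum length among those evaluating to~$\unit$ in~$\Gamma$: minimality makes $m$ a morpheme with no further work, and your two-letter case analysis (using reducedness of~$u$ and pairwise inequivalence of the~$g_i$) is exactly what pins $m$ to the form $g_i^{\pm 2}$ when $|m|=2$. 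Your route avoids the graph-theoretic detour and is also more robust: the paper's first case, as written, has a gap, since if $u$ contains $g_i^{\pm 2}$ for a $g_i$ that is not an involution, then $g_i^{\pm 2}$ does not evaluate to~$\unit$ and so is not a morpheme at all and cannot serve as the sought~$m$; your minimal $m$ is a morpheme by construction regardless, and your case analysis correctly shows that $|m|=2$ can only happen when $g_i$ is an involution. The one small caveat, which you rightly flag yourself, is that excluding $|m|=1$ presumes each $g_i\neq\unit$. This is tacit rather than literally among the stated hypotheses, but it is forced in every application of the lemma in the paper and the paper's own proof relies on it equally when it treats $\{g_1,\ldots,g_k\}$ as a generating set.
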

\begin{proof}
    If $u$ contains $g_i^{\pm 2}$ as a subword for some $i\in [k]$, then $m:=g_i^2$ or $m:=g_i^{-2}$ is the desired morpheme.
    Otherwise we let $S:=\{g_1,\ldots ,g_k\}$ and consider the walk in $\cay{\Gamma}{S}$ that starts at $\unit$ and is determined by~$u$.
    This walk is not backtracking\footnote{A walk $v_0 e_0\ldots v_k$ is \emph{backtracking} if there is $i\in \{0,\ldots,k-2\}$ such that $v_i=v_{i+2}$.} since $u$ does not contain $g_i^{\pm 2}$ as a subword for any~$i\in [k]$.
    By applying \autoref{lem:cycle_finder} to the walk, we find a cycle in it as a subwalk, from which we read the desired morpheme~$m$.
\end{proof}

\section{Iterated commutators} \label{sec:commutators}

\begin{dfn}[Iterated commutator word]
    Let $g,h\in\Gamma$. The $n$-th \emph{iterated commutator word} is the word defined recursively by
    \begin{align*}
        [g,h]_1 &:=gh\i g\i h, \text{ and}\\
        [g,h]_n &:=[g,[g,h]_{n-1}]_1 \text{ after reduction}.
    \end{align*}
\end{dfn}

We remark that the notion of iterated commutators that we use slightly differs from notions commonly used in the literature.
We work with the above notion as it will be more convenient for our proofs.
However, all notions characterise the same nilpotency classes, and in this sense are equivalent.
We recall that a group $\Gamma$ is \emph{nilpotent of class $\le n$} for an integer $n\ge 1$ if $[g,h]_n = \unit$ for all inequivalent $g,h\in\Gamma$.
In the context of two given group-elements named $g$ and~$h$ with $g\nequiv h$ we abbreviate $[g,h]_n$ by~$u_n$. 

\begin{eg}\label{eg1}
The first three iterated commutator words in $g$ and $h$ are:
\begin{align*}
    u_1&=gh\i g\i h,\\
    u_2&=gh\i ghg\i h\i g\i h\text{, and}\\
    u_3&=gh\i ghgh\i g\i hg\i h\i g hg\i h\i g\i h.
\end{align*}
\end{eg}

\begin{dfn}[Truncation]
    Let $u=a_1\ldots a_n$ be a word with~$n\ge 2$ letters. 
    The \emph{left truncation} and \emph{right truncation} of~$u$ are the subwords ${}^-u := a_2\ldots a_n$ and $u^- := a_1\ldots a_{n-1}$, respectively.
\end{dfn}

\begin{lem}
\label{lem:what_words_look_like}
    Let $g\nequiv h$ be letters.
    Then the word $u_n$ can be written as $u_n=g(u_{n-1}\i )({}^-u_{n-1})$ for all $n\ge 2$.
    For $n\ge 4$ we obtain the following more detailed description of this form:
    \begin{equation}\label{eq:unRecursiveForm}
        u_n =g\,\underset{u_{n-1}\i}{\underbrace{(h\i ghgh\i g\i h\ldots ghg\i h\i g\i h g\i)}}\;\underset{{}^{-}u_{n-1}}{\underbrace{(h\i g h g h\i g\i \ldots h\i gh g\i h\i g\i h)}}
    \end{equation}
\end{lem}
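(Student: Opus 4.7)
The plan is induction on $n$, with a strengthened hypothesis that $u_{n-1}$ is reduced, starts with the letter $g$, has $h^{-1}$ in second position (which is immediate from $u_1=gh^{-1}g^{-1}h$), and ends in $h$. Since $g\nequiv h$, different letters never cancel with one another, so I only need to control the cancellations of a letter with its own inverse.

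For the base case $n=2$, I would expand $u_2 = g\cdot u_1^{-1}\cdot g^{-1}\cdot u_1$ with $u_1=gh^{-1}g^{-1}h$: the only reduction happens in the centre, where $g^{-1}\cdot g^{-1}\cdot g$ shrinks to $g^{-1}$, yielding $u_2=gh^{-1}ghg^{-1}h^{-1}g^{-1}h$. This agrees with $g\cdot u_1^{-1}\cdot{}^-u_1$ and preserves the strengthened invariant. For the inductive step, I would use the hypothesis to write $u_{n-1}=g\cdot{}^-u_{n-1}$, so that the central $g^{-1}\cdot g$ in $u_n=g\cdot u_{n-1}^{-1}\cdot g^{-1}\cdot u_{n-1}$ cancels, leaving $g\cdot u_{n-1}^{-1}\cdot{}^-u_{n-1}$. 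To confirm this is in fact reduced, I would inspect the two remaining internal boundaries: the leading $g$ meets the first letter $h^{-1}$ of $u_{n-1}^{-1}$ (the inverse of the last letter $h$ of $u_{n-1}$), and the last letter $g^{-1}$ of $u_{n-1}^{-1}$ meets the first letter of ${}^-u_{n-1}$, which is the second letter $h^{-1}$ of $u_{n-1}$; neither pair cancels. This proves the recurrence and re-establishes the invariant for $u_n$.

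For the detailed form \eqref{eq:unRecursiveForm}, which is claimed only for $n\ge 4$, the driving observation is stabilisation of the endpoints of $u_n$. From $u_n=g\cdot u_{n-1}^{-1}\cdot{}^-u_{n-1}$ together with $|u_n|=2|u_{n-1}|$, the last $k$ letters of $u_n$ coincide with the last $k$ letters of $u_{n-1}$ whenever $k<|u_{n-1}|$, so a direct read-off from $u_2$ pins down the trailing segment $\ldots h^{-1}ghg^{-1}h^{-1}g^{-1}h$ of $u_n$ for every $n\ge 2$. Symmetrically, the letters of $u_n$ immediately after the leading $g$ are the inverse-reverse of the trailing letters of $u_{n-1}$ and thus stabilise to $h^{-1}ghgh^{-1}g^{-1}h\ldots$ for $n\ge 3$. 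The displayed trailing letters of the factor $u_{n-1}^{-1}$ in \eqref{eq:unRecursiveForm}, however, are the inverse-reverse of the \emph{initial} letters of $u_{n-1}$, so they only stabilise after the initial letters of $u_{n-1}$ themselves have stabilised, which requires $n-1\ge 3$; this is precisely why the statement demands $n\ge 4$. Using $u_3$ from \autoref{eg1} as an explicit anchor, the blocks displayed in \eqref{eq:unRecursiveForm} can then be matched term by term against $u_{n-1}^{-1}$ and ${}^-u_{n-1}$.

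I expect the main obstacle to be bookkeeping rather than conceptual: keeping straight the iterated inverse-reversals and aligning the seven- or eight-letter substrings in \eqref{eq:unRecursiveForm} with the correct pieces of $u_{n-1}^{-1}$ and ${}^-u_{n-1}$. Anchoring all stable prefix/suffix computations at the explicit $u_2$ and $u_3$ from \autoref{eg1} should keep the inductive step routine.
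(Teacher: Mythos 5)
Your proof is correct and follows essentially the same route as the paper: induction on $n$, observing that in $g\,u_{n-1}^{-1}\,g^{-1}\,u_{n-1}$ the only cancellation is the central $g^{-1}g$ because $u_{n-1}$ starts with $g$ and ends with $h$ (with $g\nequiv h$), so $u_n=g\,u_{n-1}^{-1}\,({}^-u_{n-1})$ stays reduced. Your stabilisation discussion and the strengthened invariant are a slightly more explicit packaging of what the paper extracts directly from the inductive form \eqref{eq:unRecursiveForm} and the base cases $u_2,u_3$ in \autoref{eg1}, but the core idea is identical.
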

\begin{proof}
    We proceed by induction on~$n$. 
    The base cases $n=2$ and $n=3$ are true by \autoref{eg1}, so let~$n>3$.
    By definition, the word $u_n$ is obtained from $u':=gu_{n-1}\i g\i u_{n-1}$ by reduction. By the induction hypothesis (or using the explicit form of $u_3$ provided by \autoref{eg1} if $n-1=3$) we have
    \begin{equation*}
        u'=g(h\i ghgh\i g\i h\ldots ghg\i h\i g\i h g\i)\red{g\i} (\red{g}h\i g h g h\i g\i\ldots h\i gh g\i h\i g\i h)
    \end{equation*}
    where the terms in brackets are already in reduced form.
    Therefore, only the subword formed by the two red letters is reduced to obtain $u_n$ from~$u'$, which yields $u_n=g(u_{n-1}\i)( {}^-u_{n-1})$ and (\ref{eq:unRecursiveForm}) as desired.
\end{proof}

\begin{cor}\label{commutatorLength}
    $|u_n|=2^{n+1}$ for all~$n\ge 1$.\qed
\end{cor}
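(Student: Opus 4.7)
The plan is a one-line induction on $n$ based on the recursive form established in the preceding lemma. The base case $n=1$ is immediate from the definition: $u_1 = gh^{-1}g^{-1}h$, whose length is $4 = 2^{1+1}$.

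For the inductive step, assume $|u_{n-1}| = 2^n$ and recall from \autoref{lem:what_words_look_like} that
\[
u_n = g\,(u_{n-1}^{-1})\,({}^{-}u_{n-1}),
\]
where the concatenation on the right-hand side is already reduced. Since $|u_{n-1}^{-1}| = |u_{n-1}|$ and left truncation removes exactly one letter, so $|{}^{-}u_{n-1}| = |u_{n-1}| - 1$, counting letters gives
\[
|u_n| \;=\; 1 + |u_{n-1}| + (|u_{n-1}| - 1) \;=\; 2\,|u_{n-1}| \;=\; 2 \cdot 2^n \;=\; 2^{n+1},
\]
as required. There is no real obstacle here; the only thing to be careful about is that the right-hand side of the recursion is genuinely in reduced form (so we are actually counting the letters of $u_n$ rather than an unreduced expression for it), but this is precisely what \autoref{lem:what_words_look_like} guarantees through the explicit description in (\ref{eq:unRecursiveForm}).
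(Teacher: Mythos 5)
Your proof is correct and is exactly the argument the paper leaves implicit by marking the corollary with \qed: it follows directly from the recursive decomposition $u_n = g\,(u_{n-1}^{-1})\,({}^{-}u_{n-1})$ of \autoref{lem:what_words_look_like} by the same letter count. One minor note: you invoke (\ref{eq:unRecursiveForm}) for reducedness, but that display is stated only for $n\ge 4$; the cleaner justification is that the lemma asserts $u_n=g\,(u_{n-1}^{-1})\,({}^{-}u_{n-1})$ as an equation of words for all $n\ge 2$, and $u_n$ is reduced by definition, so the right-hand side is automatically reduced.
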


\begin{dfn}[Cyclic permutation]
A \emph{cyclic permutation} of a word $w=a_1\ldots a_n$ is a word of the form $w'=a_k\ldots a_n a_1\ldots a_{k-1}$ for some $k\in [n]$; here we follow the convention that $a_i\ldots a_i=a_i$ and $a_1\ldots a_0$ is the empty word.
\end{dfn}

\begin{dfn}[Cyclic subword]
Let $u$ and $w$ be two words.
We say that $u$ is a \emph{cyclic subword} of~$w$ if $u$ or its inverse~$u\i$ is a subword of a cyclic permutation of~$w$. 
\end{dfn}

If $u$ is a subword of~$w$, we also say that $u$ is a \emph{linear subword} of~$w$.

\begin{eg}
Given letters $a,b,c$, the word $ca$ is a cyclic subword of $abc$.
The word $a\i c\i$ is a cyclic subword of $abc$ because its inverse $ca$ is a subword of the cyclic permutation $cab$ of $abc$.
The words $ac$ and $c\i a\i$ are not cyclic subwords of $abc$.
\end{eg}

\begin{dfn}[Square words]
     Given letters $g\nequiv h$, the \emph{square words in $g$ and $h$} are the words that can be obtained by cyclic permutations or inversions of $ghgh$ and $gh\i gh\i$.
We omit `in $g$ and $h$' when $g$ and $h$ are implicitly given by the context.
\end{dfn}

\begin{eg}\label{square_words}
    The square words in $g$ and~$h$ are:
    \[
    \begin{array}{c|c|c||c|c}
        & & \text{cyclic permutation} & & \text{cyclic permutation}\\ \hline
        &ghgh &hghg &gh\i gh\i &h\i gh\i g \\ \hline
        \text{inversion} &h\i g\i h\i g\i &g\i h\i g\i h\i &hg\i hg\i &g\i hg\i h.
    \end{array}
    \]
    
\end{eg}

\begin{lem}[No-Squares Lemma]\label{lem:nosquares}
    For all $n\ge1$, the iterated commutator word $u_n:=[g,h]_n$ contains no square words as cyclic subwords.
\end{lem}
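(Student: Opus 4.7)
My plan is to prove the lemma by induction on $n$. The base cases $n=1$ and $n=2$ I would handle by direct verification: for each, enumerate all $2^{n+1}$ length-4 cyclic subwords (one starting at each cyclic position) and check that none appears among the eight square words listed in \autoref{square_words}.

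For the inductive step at $n\ge 3$, I would use the recursion $u_n = g\cdot u_{n-1}^{-1}\cdot {}^{-}u_{n-1}$ from \autoref{lem:what_words_look_like} and classify a hypothetical length-4 square cyclic subword $s$ of $u_n$ by its location: (i) entirely within the $u_{n-1}^{-1}$-block, (ii) entirely within the ${}^{-}u_{n-1}$-block, or (iii) straddling one of the three junctions, namely the initial $g$ with $u_{n-1}^{-1}$, the $u_{n-1}^{-1}$ with ${}^{-}u_{n-1}$, or the cyclic wrap from the end of ${}^{-}u_{n-1}$ back to the initial $g$.

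Case (ii) would be immediate: since ${}^{-}u_{n-1}$ is a suffix of $u_{n-1}$, a linear subword of it is a linear (hence cyclic) subword of $u_{n-1}$, contradicting the induction hypothesis. For case (i) I would note, by direct inspection of \autoref{square_words}, that the set of square words is closed under taking inverses; so $s^{-1}$ is itself a square, and it is a linear subword of $u_{n-1}$ (read off by reversing the positions in $u_{n-1}^{-1}$), giving again a cyclic square subword of $u_{n-1}$ and contradicting the IH.

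Case (iii) is the main obstacle. I would first prove, as a short parallel induction on the same recursion, an auxiliary claim that $u_m$ begins with $gh^{-1}gh$ and ends with $g^{-1}h^{-1}g^{-1}h$ for every $m\ge 2$. With the first and last four letters of $u_{n-1}$ thus pinned down, the letters of $u_n$ in every length-4 window straddling one of the three boundaries become completely determined — a careful count shows there are exactly seven such windows — and I would write each one out and verify case by case that none matches a square word from \autoref{square_words}. This boundary bookkeeping is finite and elementary, but enumerating all seven windows without error and matching against the eight forbidden patterns is the most delicate part of the proof.
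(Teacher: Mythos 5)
Your proposal is correct and takes essentially the same approach as the paper's proof: induction on $n$, base cases by inspection, the recursion $u_n = g\cdot u_{n-1}^{-1}\cdot {}^{-}u_{n-1}$ from \autoref{lem:what_words_look_like}, discharging the two interior cases via the induction hypothesis (using that square words are closed under inversion for the $u_{n-1}^{-1}$ block), and then checking exactly seven boundary windows. Your auxiliary claim that $u_m$ begins with $gh^{-1}gh$ and ends with $g^{-1}h^{-1}g^{-1}h$ for $m\geq 2$ is correct (it follows from the explicit form in~(\ref{eq:unRecursiveForm})), it does pin down all seven straddling windows, and the resulting list coincides with the one written out in the paper; so the only remaining work is the same finite verification the paper performs.
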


\begin{proof}
    We are required to show that no cyclic permutation of $u_n$ contains any of the words from \autoref{square_words}.
    We proceed by induction on~$n$.
    For $n\le 2$, we can see this by inspecting the words in \autoref{eg1}.
    So let $n\ge 3$.
    By \autoref{lem:what_words_look_like}~(\ref{eq:unRecursiveForm}) and using the explicit form of $u_3$ provided by \autoref{eg1}, we have
    \begin{equation}\label{eq:NoSquares}
        u_n=g\,\underset{u_{n-1}\i}{\underbrace{(h\i gh\ldots g\i h g\i)}}\;\underset{{}^{-}u_{n-1}}{\underbrace{(h\i g h \ldots h\i g\i h)}}.
    \end{equation}
    Words that are entirely contained in either of $u_{n-1}\i$ or ${}^{-}u_{n-1}$ are not square by the induction hypothesis.
    Beyond those, there are seven subwords of length four of cyclic permutations of $u_n$ that we need to check; we find these either as subwords of $u_n$ that are contained in neither $u_{n-1}\i$ nor ${}^{-}u_{n-1}$, and we also find some of them in the same way but after cyclically permuting~$u_n$.
    Explicitly, by inspecting (\ref{eq:NoSquares}) from left to right we find that these are the seven words $gh\i gh$, $g\i hg\i h\i$, $hg\i h\i g$, $g\i h\i gh$, $h\i g\i hg$, $g\i hgh\i$, and $hgh\i g$. None of the listed words are square words, which completes the proof.
\end{proof}

\begin{dfn}[Magic]
    Given letters $g\nequiv h$, a word $u$ is \emph{magic} (\emph{in $g$ and $h$}) if $u$ and $u\i$ together contain at least three of the following as linear subwords:
    \[
        gh,\quad hg,\quad g\i h,\quad hg\i
    \]
\end{dfn}

\begin{lem}\label{lem:magic}
    Given letters $g\nequiv h$, a word alternating between $g\pmo$ and $h\pmo$ of length at least four either is magic or contains a square word as a linear subword.
\end{lem}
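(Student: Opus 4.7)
The plan is to reduce to length four and then to carry out a short case analysis on the sign-exponents. First I would note that it suffices to treat the case $|u|=4$. Given $u$ of length $\ge 4$, let $w$ be its length-four prefix: every linear subword of $w$ is a linear subword of $u$ and $w^{-1}$ is a linear subword of $u^{-1}$, so if $w$ is magic then so is $u$, and if $w$ contains a square word as a linear subword then so does $u$.

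I would then organise the eight possible length-two alternating subwords into four classes, each pairing a word with its inverse:
\[
C_1=\{gh,h^{-1}g^{-1}\},\ C_2=\{hg,g^{-1}h^{-1}\},\ C_3=\{g^{-1}h,h^{-1}g\},\ C_4=\{hg^{-1},gh^{-1}\}.
\]
Since the magic set $\{gh,hg,g^{-1}h,hg^{-1}\}$ contains one representative from each $C_i$, the word $u$ is magic if and only if at least three of the four classes appear among its length-two linear subwords. Direct inspection yields the key observation: a pair $a_ia_{i+1}$ lies in $C_1\cup C_2$ exactly when the exponents of $a_i$ and $a_{i+1}$ are equal, and in $C_3\cup C_4$ exactly when they are opposite.

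For the case analysis on $u=a_1a_2a_3a_4$ I would record the three sign-relations $s_1,s_2,s_3\in\{+,-\}$, where $s_i=+$ iff the exponents of $a_i$ and $a_{i+1}$ agree. If $s_1=s_2=s_3$, then all three pairs lie in $C_1\cup C_2$ (common value $+$) or in $C_3\cup C_4$ (common value $-$); an immediate enumeration over the starting letter and its sign shows $u$ is one of the eight words listed in \autoref{square_words}, hence a square word. Otherwise, both values occur among the $s_i$, so at least one class from $\{C_1,C_2\}$ and at least one from $\{C_3,C_4\}$ is represented.

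The main technical step—which I expect to be the only real obstacle—is to rule out the remaining possibility that exactly two classes are represented in this mixed case. For each of the four candidate pairings $C_i\cup C_j$ with $i\in\{1,2\}$ and $j\in\{3,4\}$, I would observe that the two classes impose rigid and incompatible constraints on the middle letters of $u$: for instance, in $C_1\cup C_3$ every pair starting with a $g$-letter ends in $h$ while every pair starting with an $h$-letter begins with $h^{-1}$, which forces the middle letter $a_2$ (when $a_1$ is a $g$-letter) to equal both $h$ and $h^{-1}$. Hence three or more classes must be hit, and $u$ is magic.
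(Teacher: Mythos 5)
Your proof is correct but takes a genuinely different route from the paper's. The paper observes that the magic and square-word properties are invariant under the interchanges $g\pmo\leftrightarrow g^{\mp 1}$, $h\pmo\leftrightarrow h^{\mp 1}$, $g\pmo\leftrightarrow h\pmo$ applied to the whole word, so it may assume $u$ starts with $g$ and ends with $h$; this leaves just the four length-four candidates $ghgh$, $ghg\i h$, $gh\i gh$, $gh\i g\i h$ to inspect directly. You instead encode $u$ by its string of sign-relations $(s_1,s_2,s_3)$, reformulate being magic as hitting at least three of the inversion-closed classes $C_1,\dots,C_4$, show that the constant sign strings give exactly the eight square words of \autoref{square_words}, and then rule out hitting exactly two classes in the mixed case. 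Both arguments reduce to length four in the same way and both are sound. The paper's version is shorter and entirely mechanical: one symmetry observation plus four explicit checks. Yours is longer but more structural: it isolates the invariant (sign agreement versus disagreement, equivalently membership in $C_1\cup C_2$ versus $C_3\cup C_4$) that makes the claim tick and explains why four is precisely the threshold length. One presentational caveat: you spell out the final incompatibility only for the pairing $C_1\cup C_3$ and only for the subcase where $a_1$ is a $g$-letter, leaving the other three pairings $C_i\cup C_j$ and the $h$-letter subcase implicit. These all do close by the same scheme (one of the two inner letters is forced to take two distinct values — which inner letter and which pair of values depends on the pairing), so the omission is cosmetic rather than a logical gap, but a finished write-up should enumerate them.
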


\begin{proof}
    It suffices to prove the above for words of length exactly four.
    So let $u$ be a word alternating between $g\pmo$ and $h\pmo$ of length exactly four.
    Being magic and being a square word are invariant under applying any number of the following interchanges to the entire word:  
    $g\pmo\leftrightarrow g^{\mp 1}, h\pmo\leftrightarrow h^{\mp 1},g\pmo\leftrightarrow h\pmo$.
    Hence we may assume without loss of generality that $u$ starts with $g$ and ends with~$h$. So $u$ is one of the four words:
    \[
    ghgh,\quad g hg\i h,\quad gh\i gh,\quad gh\i g\i h
    \]
    The first word is square and the other three are magic.
\end{proof}

\begin{lem}[Magic Lemma]\label{magic_cor}
    All cyclic subwords of $u_n=[g,h]_n$ of length at least four are magic, for all $n\ge 1$.
\end{lem}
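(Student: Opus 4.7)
The plan is to deduce the Magic Lemma as a direct combination of the No-Squares Lemma (\autoref{lem:nosquares}) and \autoref{lem:magic}. First I would observe that the iterated commutator word $u_n$ alternates between letters in $\{g^{\pm 1}\}$ and $\{h^{\pm 1}\}$: this is immediate from the recursive description in \autoref{lem:what_words_look_like}~(\ref{eq:unRecursiveForm}) together with the base cases in \autoref{eg1}. Hence every cyclic subword of $u_n$ is also alternating in $g^{\pm 1}$ and $h^{\pm 1}$.

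Now let $w$ be a cyclic subword of $u_n$ of length at least four. Applying \autoref{lem:magic} to the alternating word $w$, either $w$ is already magic (and we are done) or $w$ contains a square word $s$ as a linear subword. In the latter case, since $w$ is a cyclic subword of $u_n$ and $s$ is a linear subword of $w$, a quick unfolding of the definitions (using the symmetry of the notion of cyclic subword under inversion) shows that $s$ is itself a cyclic subword of $u_n$, contradicting the No-Squares Lemma. Hence only the first case occurs, so $w$ is magic.

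The argument is essentially mechanical once the alternation of $u_n$ is checked and the compatibility between the linear and cyclic notions of subword is noted, so I do not foresee any serious obstacle; the only step worth spelling out explicitly is the harmless bookkeeping that a linear subword of a cyclic subword of $u_n$ is again a cyclic subword of $u_n$, which follows from inspecting the two cases in the definition of cyclic subword (whether it is $w$ or $w^{-1}$ that appears as a linear subword of a cyclic permutation of $u_n$).
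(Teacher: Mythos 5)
Your proposal is correct and is exactly the paper's argument, just spelled out in more detail: the paper's proof is the one-line ``This follows from \autoref{lem:nosquares} and \autoref{lem:magic}.'' Your bookkeeping observations (that $u_n$ and hence all its cyclic subwords are alternating, and that a linear subword of a cyclic subword of $u_n$ is again a cyclic subword of $u_n$) are the right ones to make the deduction airtight.
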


\begin{proof}
    This follows from \autoref{lem:nosquares} and \autoref{lem:magic}. 
\end{proof}

\begin{cor}\label{magic2}
        Let $\Gamma$ be a group that is nilpotent of class~$\le n$ with a generating set $S\subseteq\Gamma$ containing two inequivalent elements $g$ and $h$. 
    There exists a morpheme of $\Gamma$ in $\{g\pmo,h\pmo\}$ of length at most $r$ that is magic in $g$ and $h$, where $r=2^{n+1}$. 
\end{cor}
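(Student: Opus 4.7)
The plan is to extract the required magic morpheme from the $n$-th iterated commutator word $u_n:=[g,h]_n$. Since $\Gamma$ is nilpotent of class $\le n$ and $g\nequiv h$, we have $u_n=\unit$ in $\Gamma$. By \autoref{commutatorLength} the word $u_n$ has length $2^{n+1}=r$, and by \autoref{lem:what_words_look_like} it is reduced and alternates strictly between $g^{\pm 1}$ and $h^{\pm 1}$, so no $g^{\pm 2}$ or $h^{\pm 2}$ appears as a subword. Applying the Morpheme Finding Lemma (\autoref{commutatorContainsMorpheme}) to $u_n$ yields a morpheme $m$ of $\Gamma$ in $\{g\pmo,h\pmo\}$ that is a linear subword of $u_n$, and the absence of squared-letter subwords forces $|m|\ge 3$.

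If $|m|\ge 4$, then $m$ is in particular a cyclic subword of $u_n$ of length at least four, so by the Magic Lemma (\autoref{magic_cor}) $m$ is magic; as $|m|\le|u_n|=r$, this is the desired morpheme.

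The main obstacle is the residual case $|m|=3$. By the strict alternation of $u_n$, the word $m$ must have one of the forms $g^{\alpha}h^{\beta}g^{\gamma}$ or $h^{\alpha}g^{\beta}h^{\gamma}$ with $\alpha,\beta,\gamma\in\{\pm1\}$. A direct group calculation from $m=\unit$ (left-multiply by $g^{-\alpha}$, then right-multiply by $g^{-\gamma}$) yields $h^{\beta}=g^{-(\alpha+\gamma)}$; since $h\ne\unit$, necessarily $\alpha+\gamma\ne 0$ and hence $h\equiv g^{\pm 2}$. The symmetric form yields $g\equiv h^{\pm 2}$. In either case $\langle g,h\rangle\le\Gamma$ is cyclic, hence abelian, so $g$ and $h$ commute in $\Gamma$ and $u_1=gh\i g\i h=\unit$. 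A short inspection of the proper subwords of $u_1$, using $g\nequiv h$ and $g,h\ne\unit$, shows that $u_1$ itself is a morpheme; and the length-two subwords of $u_1$ and $u_1\i$ include $gh$, $g\i h$ and $hg\i$ from the target set, so $u_1$ is magic. Since $|u_1|=4\le r$ (as $n\ge 1$), this furnishes the required magic morpheme.
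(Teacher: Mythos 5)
Your proof is correct and follows essentially the same route as the paper's: extract a morpheme from $u_n$ via the Morpheme Finding Lemma, handle $|m|\ge 4$ by the Magic Lemma, and in the residual length-three case observe that $g,h$ commute and exhibit a length-four commutator word as the magic morpheme. The only cosmetic difference is that you use $u_1=gh^{-1}g^{-1}h$ whereas the paper names the equivalent commutator $ghg^{-1}h^{-1}$; otherwise your argument merely fills in the verification details (why $|m|\ge 3$ and why the length-four word is a morpheme) that the paper leaves implicit.
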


\begin{proof}
    Consider the iterated commutator $u_n$ in $g$ and~$h$, which has length at most~$r$ by \autoref{commutatorLength}. 
By the \nameref{commutatorContainsMorpheme} (\ref{commutatorContainsMorpheme}), we find a morpheme $u$ in~$u_n$ as a subword.
Since $u$ alternates in $g\pmo$ and $h\pmo$, the morpheme $u$ has length at least three.
If $u$ has length at least four, we are done by \autoref{magic_cor}. 
So assume that $u$ has length exactly three. 
Then one of $g$ or $h$ is a power of the other. 
Hence $g$ and $h$ commute. 
Thus the commutator $ghg^{-1}h^{-1}$ is a morpheme that is magic in $g$ and $h$, and the commutator has length at most $r=2^{n+1}$ since $n\ge 1$.
\end{proof}

In the context of two given elements of a generating set~$S$, named $g$ and~$h$, we abbreviate $w_n:=~[g,h^2]_n$.

\begin{eg}
The words $w_1$ and $w_2$ are
    $w_1=gh^{-2}g\i h^2$ and $w_2=gh^{-2}gh^2g\i h^{-2}g\i h^2$.
\end{eg}

\begin{obs}\label{obs:commlen2}
We have $|w_n|=\frac{3}{2}\cdot2^{n+1}$ for all $n\ge 1$.
\end{obs}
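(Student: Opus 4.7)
The plan is to proceed by induction on $n$, mirroring the approach of \autoref{lem:what_words_look_like}. The base case $n=1$ is immediate from the example above: $w_1 = gh^{-2}g^{-1}h^2$ has $6 = \tfrac{3}{2}\cdot 2^2$ letters.

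For the inductive step I would show that for $n \geq 2$,
\[
w_n \;=\; g\cdot w_{n-1}^{-1}\cdot {}^-w_{n-1},
\]
and that this expression is already freely reduced, i.e.\ that no cancellation beyond the single central $g^{-1}\cdot g$ takes place when reducing $g\,w_{n-1}^{-1}\,g^{-1}\,w_{n-1}$. To control the boundary letters cleanly, I would carry along the auxiliary invariant that $w_{n-1}$ begins with the two letters $g,h^{-1}$ and ends with the letter $h$; both parts are visible in $w_1$ and $w_2$ from the preceding example. Given this invariant, the leading $g$ meets $w_{n-1}^{-1}$, which begins with $h^{-1}$ (the inverse of the last letter of $w_{n-1}$), so no cancellation happens there; the tail $g^{-1}$ meets $w_{n-1}$, which begins with $g$, leaving exactly ${}^-w_{n-1}$; and the freshly exposed junction between $w_{n-1}^{-1}$ and ${}^-w_{n-1}$ pits $g^{-1}$ against $h^{-1}$, so no cascading cancellation follows. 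The invariant is then preserved for $w_n$: its first two letters are $g$ and the first letter of $w_{n-1}^{-1}$, namely $h^{-1}$, and its last letter is the last letter of ${}^-w_{n-1}$, which is the last letter $h$ of $w_{n-1}$.

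Counting letters in the reduced expression gives $|w_n| = 1 + |w_{n-1}| + (|w_{n-1}|-1) = 2|w_{n-1}|$, and combining this with $|w_1|=6$ unwinds to $|w_n| = 6\cdot 2^{n-1} = \tfrac{3}{2}\cdot 2^{n+1}$, as required. The only genuinely delicate step is the no-further-cancellation claim, and it is handled by the first/last-letter invariant described above; the rest is routine bookkeeping that does not require any hypothesis beyond $g\nequiv h$.
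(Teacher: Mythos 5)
Your proof is correct. However, it takes a different route from the one the paper intends. The paper's proof of \autoref{obs:commlen2} is the single line ``implied by \autoref{commutatorLength}'', which should be read as a substitution-and-counting argument: the reduced word $u_n = [g,h]_n$ alternates between $g^{\pm1}$ and $h^{\pm1}$ (visible from \autoref{lem:what_words_look_like}), so exactly $2^n$ of its $2^{n+1}$ letters are $h^{\pm1}$; replacing each $h^{\pm 1}$ by $h^{\pm 2}$ preserves reducedness (no new cancellations can arise between a $g^{\pm1}$ and an $h^{\pm 2}$, and the reductions in the recursion of \autoref{lem:what_words_look_like} occur only at $g$-boundaries) and produces exactly $w_n = [g,h^2]_n$, giving $|w_n| = 2^n + 2\cdot 2^n = \tfrac{3}{2}\cdot 2^{n+1}$. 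Your argument instead re-derives the analogue of \autoref{lem:what_words_look_like} directly for $w_n$, namely $w_n = g\, w_{n-1}^{-1}\, ({}^-w_{n-1})$, via a first-two-letters/last-letter invariant, and then does the same length bookkeeping as in \autoref{commutatorLength}. Your route is fully self-contained and the invariant is maintained correctly; the trade-off is that you repeat work already done for $u_n$, whereas the paper's substitution argument reuses \autoref{lem:what_words_look_like} and \autoref{commutatorLength} essentially for free (and is the same device the paper invokes again in the proof of the \nameref{Ramsey}, so recognising it there is worthwhile).
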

\begin{proof}[Proof:] implied by \autoref{commutatorLength}.
\end{proof}

\begin{dfn}
    Given a word $u$ and a letter $a$, an \emph{$a$-segment} of $u$ is a maximal linear subword of $u$ in the letter $a$. A \emph{segment} of $u$ is an $a$-segment of $u$ in some letter $a$.
\end{dfn}

\begin{lem}[Ramsey Lemma]\label{Ramsey}
    Let $g\nequiv h$ be letters. If a cyclic subword $w$ of the iterated commutator $w_n=[g,h^2]$ has at least four segments, then $w$ either contains at least one of $gh^2$ or $h^2g$ as a cyclic subword or $w=g\i h^2g\i h\i$ up to cyclic permutation or inversion.
\end{lem}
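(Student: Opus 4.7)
My plan is to lift \autoref{lem:nosquares} from $u_n$ to $w_n = [g, h^2]_n$ by treating $H := h^2$ as a formal letter. Since (by the direct analogue of \autoref{lem:what_words_look_like}, verified by the same induction as there) every $h$-segment of $w_n$ has length exactly $2$, the word $w_n$ viewed in the $(g, H)$-alphabet is literally $[g, H]_n$, i.e., $u_n$ with $h$ relabelled to $H$. Applying \autoref{lem:nosquares} verbatim to this relabelling then yields the workhorse: no cyclic permutation or inverse of $gh^2gh^2$ or $gh^{-2}gh^{-2}$ appears as a cyclic subword of $w_n$.

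Now let $w$ be a cyclic subword of $w_n$ with at least four segments; its segments alternate between $g^{\pm 1}$ (always length $1$) and $h^{\pm 2}$ (length $2$, except possibly truncated to $h^{\pm 1}$ at a boundary of $w$). I may cyclically rotate $w$ so that it begins with a $g$-segment, writing $w = g^{s_1} h^{2s_2} g^{s_3} h^{ks_4}\cdots$ with $s_i \in \{\pm 1\}$ and $k \in \{1, 2\}$; both alternatives of the conclusion are invariant under this rotation. Assume toward the first alternative that $w$ contains neither $gh^2$ nor $h^2g$ as a cyclic subword, i.e., that none of $gh^2, h^2g, g^{-1}h^{-2}, h^{-2}g^{-1}$ occurs in any cyclic permutation of $w$. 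The linear length-$3$ subwords $g^{s_1}h^{2s_2}$ and $h^{2s_2}g^{s_3}$ then immediately force $s_1 \ne s_2$ and $s_2 \ne s_3$, whence $s_1 = s_3$.

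The analysis now splits by the structure of $w$. If $w$ has five or more segments, iterating the above local constraint across consecutive full-segment triples of $w$ forces strict sign-alternation on the interior, and after extending $w$ by at most one letter at each partial boundary one obtains a four-full-segment cyclic subword of $w_n$ in square pattern, contradicting the workhorse. If $w$ has exactly four segments and $k = 2$, the further constraints $s_3 \ne s_4$ (from $g^{s_3}h^{2s_4}$) and $s_4 \ne s_1$ (from the wraparound $h^{2s_4}g^{s_1}$) make $w$ itself a $(g,H)$-square---again forbidden. This leaves only four segments with $k = 1$: extending $w$ by one $h$-letter at the partial boundary gives a four-full-segment cyclic subword $w' = g^{s_1}h^{2s_2}g^{s_3}h^{2s_4}$ of $w_n$, to which the workhorse yields $s_2 \ne s_4$; together with $s_1 = s_3 \ne s_2$ this pins down the sign pattern to $(+,-,+,+)$ or $(-,+,-,-)$, i.e., $w \in \{gh^{-2}gh,\,g^{-1}h^2g^{-1}h^{-1}\}$. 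A direct computation shows that $gh^{-2}gh$ is a cyclic permutation of the inverse of $g^{-1}h^2g^{-1}h^{-1}$, establishing the exceptional form stated.

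The main obstacle I expect is precisely the four-segment $k = 1$ case: the wraparound that trapped the full-boundary $k = 2$ case is unavailable here, so one must introduce the auxiliary extended word $w'$ in order to invoke the workhorse at all, and then carefully verify that the two surviving sign configurations collapse under cyclic permutation and inversion to the single exceptional word declared in the statement.
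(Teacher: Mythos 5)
Your approach diverges from the paper's in a genuine way: the paper's proof handles the ``four consecutive full segments'' case by applying the \nameref{magic_cor} (\ref{magic_cor}) after substituting $h^{\pm 2}$ for $h\pmo$, and then does a direct four-segment case analysis, whereas you bypass the Magic Lemma entirely and work directly from the \nameref{lem:nosquares} (\ref{lem:nosquares}) via the relabelling $H := h^2$. Both routes rest on the same underlying input, but yours is more self-contained (no intermediate magicness) at the cost of more careful sign bookkeeping. The sign-constraint analysis is sound and the exceptional case $g\i h^2 g\i h\i$ is pinned down correctly, including the verification that $gh^{-2}gh$ and $g\i h^2g\i h\i$ agree up to cyclic permutation and inversion.

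However, two steps need tightening. First, the workhorse as you state it forbids \emph{all} cyclic permutations of $gh^2gh^2$ and $gh^{-2}gh^{-2}$ as cyclic subwords of $w_n$, including the boundary-splitting ones such as $hgh^2gh$. Those are not direct relabellings of $(g,H)$-squares, so they do not follow ``verbatim'' from \autoref{lem:nosquares}; one must observe that any such occurrence extends inside $w_n$ to a genuine relabelled square (since the split $h$'s sit in length-two $h$-segments of $w_n$). Second, and more importantly, your rotation of $w$ to begin with a $g$-segment is a cyclic permutation of $w$, but cyclic permutations of a cyclic subword of $w_n$ need not themselves be cyclic subwords of $w_n$, and rotating can merge the boundary $h$-segments (possibly even with opposite signs, in which case the rotated string is not reduced). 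In particular, in your four-segment $k=1$ case, if the truncated $h$-boundary was originally at the \emph{start} of $w$, the extended word $w' = g^{s_1}h^{2s_2}g^{s_3}h^{2s_4}$ built from the rotated word is \emph{not} a cyclic subword of $w_n$, so the workhorse cannot be applied to it as written. The fix is to extend $w$'s own truncated boundary in place (yielding $h^{2a}g^b h^{2c}g^d$), which \emph{is} a cyclic subword of $w_n$ and gives the same constraint $s_2\neq s_4$ via the workhorse; similarly, in your ``five or more segments'' case you should argue via a four-full-segment linear subword of the unrotated $w$ (after extension of $w$'s genuine boundaries, and with the wraparound cyclic subwords of $w$ when $m=5$ with both boundaries truncated), rather than via the rotated word. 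With those repairs the argument goes through.
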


\begin{proof}
    Suppose that $w$ has a linear subword $w'$ that consists of four consecutive `full'  segments; that is, the $h$-segments of $w'$ have length two.
    By substituting $h\pmo$ for $h^{\pm2}$ in \autoref{magic_cor}, at least one of $gh^2$ or $h^2g$ is a linear subword of $w'$ or of the inverse of~$w'$ and so of $w$ or the inverse of~$w$, and so we are done here. Thus assume that $w$ has exactly four segments and they are not all full.
    
    By inverting or cyclically permuting $w$ if necessary, assume that $w$ has the form $w=g\pmo h^{2}g\pmo h\pmo$ where the `$\pm$' are independent of each other. Since otherwise we find $gh^2$ or $h^2g$ as a linear subword of $w$, the letters before and after the $h^2$ must both be $g\i$. If $w=g\i h^2g\i h$, then the iterated commutator contains the square $g\i h^2g\i h^2$, which is impossible by the
    \nameref{lem:nosquares} (\ref{lem:nosquares}) (applied with the substitution). Thus $w=g\i h^2g\i h\i$.
    This completes our proof.
\end{proof}

\section{Local cutvertices}
\label{sec:cutvertices}

In this section, we prepare to prove \autoref{3-con} in the local cutvertex case, by proving the following theorem:

\begin{thm}\label{submainCutvertex}
   Let $G$ be a Cayley graph of a group~$\Gamma$ that is nilpotent of class~$\le n$. Suppose that $r\geq 2^{n+1}$.
   If $G$ has an $r$-local cutvertex, then $G$ is a cycle longer than~$r$ and $\Gamma$ is cyclic.
\end{thm}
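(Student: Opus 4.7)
The plan is to show that having an $r$-local cutvertex forces $G=\cay{\Gamma}{S}$ to be a cycle of length~$>r$ with $\Gamma$ cyclic. By vertex-transitivity of the Cayley graph under left multiplication we may assume the cutvertex is $v=\unit$, and the main task is to prove that $S$ is a single $\equiv$-class $\{s,s^{-1}\}$ with $s$ a non-involution.

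Suppose for contradiction that $g,h\in S$ are inequivalent. Applying \autoref{magic2} with $r\ge 2^{n+1}$ yields a magic morpheme $u$ of~$\Gamma$ in $\{g^{\pm 1},h^{\pm 1}\}$ of length at most~$r$, which by \autoref{cyclesGiveMorphemes} corresponds to a cycle $C$ of length $|u|\le r$ through~$\unit$. For any length-two linear subword $ab$ of $u$ or $u^{-1}$, vertex-transitivity lets us translate $C$ so that $v$ lies at the position where $a$ is the incoming and $b$ the outgoing step-label; the translated $C$ still has length $\le r$, lies entirely in $B_r(v)$, and $C-v$ is a path in $B_r(v)-v$ from $va^{-1}$ to $vb$, placing these two neighbors in a common component of $B_r(v)-v$. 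Being magic yields three of the four subwords $gh,hg,g^{-1}h,hg^{-1}$ in $u$ or $u^{-1}$, and via the translation these correspond to three of the four edges of the complete bipartite graph $K_{2,2}$ on parts $\{vg,vg^{-1}\}$ and $\{vh,vh^{-1}\}$; any three edges of $K_{2,2}$ span a connected subgraph, so all distinct neighbors among $\{vg^{\pm 1},vh^{\pm 1}\}$ lie in a single component of $B_r(v)-v$. (If $g$ or $h$ is an involution then several of these four vertices coincide and some of the four pair-types collapse, but a short case-check shows that the three realised pairs still connect the remaining distinct neighbors.) Iterating this argument over every pair of inequivalent generators places all neighbors of $v$ in $B_r(v)$ into a single component of $B_r(v)-v$; since every other vertex of $B_r(v)$ is connected in $B_r(v)$ to $v$ via some neighbor of $v$, the punctured ball $B_r(v)-v$ is connected, contradicting the assumption that $v$ is an $r$-local cutvertex.

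Thus $S$ is a single $\equiv$-class. The sub-case $S=\{s\}$ with $s$ an involution is excluded immediately, as then $B_r(v)$ contains at most the edge $\{v,vs\}$, so $B_r(v)-v$ has at most one vertex and is connected. Hence $S=\{s,s^{-1}\}$ for a non-involution~$s$, forcing $\Gamma=\langle s\rangle$ to be cyclic and $G$ to be a single cycle; and if $|\Gamma|\le r$ then $B_r(v)=G$ so $B_r(v)-v$ is a connected path, again contradicting cutvertex, so $|\Gamma|>r$. I expect the main bookkeeping obstacle to be the involution sub-cases of the ``three of four pairs'' argument, but once the non-involution enumeration is in place the involution cases reduce to a routine finite check.
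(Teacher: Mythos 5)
Your proof is correct and follows essentially the same strategy as the paper's: vertex-transitivity to center the cutvertex at $\unit$, \autoref{magic2} to produce a magic morpheme of length $\le r$ for any inequivalent pair $g,h\in S$, translation of the corresponding cycle to link the neighbours $g^{\pm1},h^{\pm1}$ inside $B_r(\unit)-\unit$ (your ``three edges of $K_{2,2}$'' is exactly the paper's concatenation of three paths $P(ab)$ in \autoref{lem:short_cycle_forall}), and then the remaining computation for $S$ a single $\equiv$-class. The only organizational difference is that you inline the content of \autoref{lem:short_cycle_forall} into the argument rather than isolating it as a separate lemma.
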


We will also show that the assumption of nilpotency is necessary, see \autoref{UnboundedLocalCutvertex}.
But first, we will prove \autoref{submainCutvertex}.

\begin{lem}
\label{lem:short_cycle_forall}
    Let $\Gamma$ be a group that is nilpotent of class~$\le n$, let $G:=\cay{\Gamma}{S}$ be a Cayley graph for a generating set $S$ containing two inequivalent elements, and let~$r\ge 2^{n+1}$. 
    Suppose that, for all $g,h\in S$ with $g\nequiv h$, there exists a morpheme of $\Gamma$ in $S$ of length at most $r$ that is magic in $g$ and $h$. Then $G$ has no $r$-local cutvertex.
\end{lem}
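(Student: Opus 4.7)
The plan is to argue by contraposition: assuming every pair of inequivalent generators in~$S$ admits a magic morpheme of length at most~$r$, I will show that $G$ has no $r$-local cutvertex. Since $\Gamma$ acts vertex-transitively on $G$, it suffices to prove that $B_r(\unit) - \unit$ is connected. The strategy splits into two steps: (i) for every pair of inequivalent generators $g, h \in S$, the four vertices $g, g^{-1}, h, h^{-1}$ lie in one common component of $B_r(\unit) - \unit$; (ii) every vertex of $B_r(\unit) - \unit$ is linked inside $B_r(\unit) - \unit$ to a neighbour of $\unit$.

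For (i), fix inequivalent $g, h \in S$ and take a magic morpheme $m$ of length $\ell \le r$. Magic forces $|m| \ge 3$, so by \autoref{cyclesGiveMorphemes} the morpheme $m$ traces a cycle $C$ of length $\ell$ through $\unit$ in $G$, and $C$ lies entirely in $B_r(\unit)$. Whenever one of the four 2-letter words $gh, hg, g^{-1}h, hg^{-1}$ occurs as a linear subword of $m$ or $m^{-1}$, say as $ab$ at some vertex $x$ of $C$, left-multiplying $C$ by $x^{-1}$ yields a length-$\ell$ cycle through $\unit$, still inside $B_r(\unit)$, and with $a^{-1}$ and $b$ as the two neighbours of $\unit$ on it. Deleting $\unit$ from this translated cycle leaves a path in $B_r(\unit) - \unit$ from $a^{-1}$ to $b$. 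The four eligible subwords therefore correspond to the four edges $\{g^{-1}, h\}$, $\{h^{-1}, g\}$, $\{g, h\}$, $\{h^{-1}, g^{-1}\}$ on $\{g, g^{-1}, h, h^{-1}\}$, which together form a 4-cycle; since magic guarantees that at least three of them appear and any three edges of a 4-cycle form a connected subgraph (a path), (i) follows. Applying (i) to a fixed inequivalent pair $g_0, h_0 \in S$ and then, for each remaining $s \in S$, to whichever of $(g_0, s)$ or $(h_0, s)$ is inequivalent, places every neighbour of $\unit$ in one common component of $B_r(\unit) - \unit$.

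Step (ii) is routine: every $w \in B_r(\unit) - \unit$ lies on some closed walk $W$ through $\unit$ of length at most $r$, and traversing $W$ from $w$ in either direction until just before its first occurrence of $\unit$ gives a walk inside $B_r(\unit) - \unit$ from $w$ to a neighbour of $\unit$. Combining (i) and (ii) yields the connectedness of $B_r(\unit) - \unit$, completing the contrapositive. The main obstacle is recognising the combinatorial content of (i): one must observe that the four eligible 2-letter subwords encode precisely the edges of a 4-cycle on $\{g, g^{-1}, h, h^{-1}\}$, so that the magic threshold of three is tuned exactly to force connectivity of that subgraph (with the degenerate cases where $g$ or $h$ is an involution being easier, since the edge labels collapse onto fewer vertices); the translation trick then mechanically converts this combinatorial data into genuine paths in the punctured ball.
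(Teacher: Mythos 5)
Your proof is correct and follows essentially the same approach as the paper's: translate the cycle traced by the magic morpheme so that a prescribed vertex sits in the middle of a 2-letter subword, observe that the four eligible subwords correspond to the edges of a 4-cycle on $\{g,g^{-1},h,h^{-1}\}$, and use that any three edges connect all four vertices. (One small quibble: you announce "contraposition" but then give the direct argument; and your explicit step (ii) is a nice addition that the paper leaves implicit.)
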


\begin{figure}[ht]
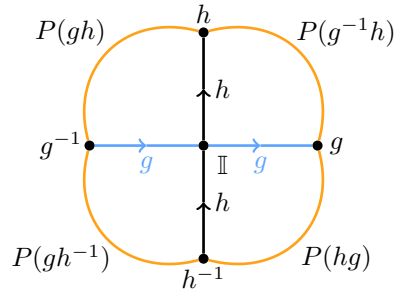

    \centering
\tikzfig{clover}
\caption{In the proof of \autoref{lem:short_cycle_forall} we construct a walk by concatenating three orange paths joining four neighbours of $\unit$.}
\label{fig:cutvertexCycles}
\end{figure}

\begin{proof}
    We have to show that $B_r(v)-v$ is connected for every vertex $v\in G$.
    Since Cayley graphs are vertex-transitive, it is enough to show that $B_{r}(\unit)-\unit$ is connected.
Let $g$ be a neighbour of $\unit$. 
We shall show that for every element $h$ from the generating set $S$ with $g\not\equiv h$, there is a walk included in $B_r(\unit)-\unit$ containing the vertices $g$, $h$ and $g^{-1}$. 
By assumption, there is a morpheme $m$ of $\Gamma$ in $S$ of length at most~$r$ that is magic in $g$ and~$h$.
The morpheme $m$ determines a closed walk in $B_r(\unit)$ that starts and ends at~$\unit$, and that runs once around a cycle~$O$ because magic morphemes cannot have length~$\le 2$.
For each $ab\in \{gh,hg, g^{-1}h,hg^{-1}\}$ for which $m$ or $m\i$ contains $ab$ as a linear subword, we use that $G$ is vertex-transitive to obtain from~$O$ a cycle $O(ab)$ in $G$ such that the path $a\i\,\unit\; b$ is a subpath of~$O(ab)$, and we let $P(ab):=O(ab)-\unit$.
Since $O(ab)$ has length~$\le r$, the path $P(ab)$ links $a\i$ to~$b$ in $B_r(\unit)-\unit$.
By the magic property of~$m$, we find the paths $P(ab)$ for at least three of the pairs $ab\in\{gh,hg, g^{-1}h,hg^{-1}\}$, see \autoref{fig:cutvertexCycles}.
The concatenation of these three paths is the desired walk in $B_r(\unit)-\unit$ containing the vertices $g$, $h$ and~$g^{-1}$.
\end{proof}

\begin{proof}[Proof of \autoref{submainCutvertex}]
    Let $\Gamma$ be a group that is nilpotent of class~$\le n$, and let $G=\cay{\Gamma}{S}$ be a Cayley graph of~$\Gamma$.
    Suppose that $G$ has an $r$-local cutvertex for some $r\ge 2^{n+1}$.
    Our aim is to show that $G$ is a cycle longer than $r$ and $\Gamma$ is cyclic.

By \autoref{magic2}, for any two inequivalent elements $g$ and $h$ from $S$, there is a morpheme of $\Gamma$ in~$\{g\pmo,h\pmo\}$ that is magic in $g$ and $h$ and that has length at most~$r$. Since $G$ has an $r$-local cutvertex, the contrapositive of \autoref{lem:short_cycle_forall} implies that the generating set $S$ only contains equivalent elements. Thus $\Gamma$ is cyclic, and the Cayley graph $G=\cay{\Gamma}{S}$ is a cycle which must have length more than~$r$.
\end{proof}

Next, we will show that an assumption like nilpotency in \autoref{submainCutvertex} is necessary, see \autoref{UnboundedLocalCutvertex}. 
This essentially follows from the following result in the literature.

\begin{thm}[Dixon, Pyber, Seress and Shalev~{\cite[Theorem~3]{connectedgirth}}]\label{HighGirthRandom}\,\\
Let $\Gamma$ be a finite simple group and let $w$ be a nontrivial element of the free group on two elements $a,b$.
Let $\gamma_w(x,y)$ be the element of $\Gamma$ that is obtained from the word $w$ by first replacing $a$ and $b$ with $x$ and~$y$, respectively, and then evaluating the resulting word in the group~$\Gamma$.
Let $x,y\in\Gamma$ be chosen independently and uniformly at random.
Then a.a.s.\ $\langle x\pmo,y\pmo\rangle=\Gamma$ and $\gamma_w(x,y)\neq\unit$ as $|\Gamma|\to\infty$.
\end{thm}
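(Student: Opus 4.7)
The statement splits into two independent assertions: that the random pair $(x,y)$ generates $\Gamma$, and that the word evaluation $\gamma_w(x,y)$ avoids~$\unit$. Each will be shown to fail with probability $o(1)$ as $|\Gamma|\to\infty$ through the finite simple groups, so a union bound will yield the conjunction. I will treat the two assertions separately.

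For the generation assertion, the plan is to follow Dixon's M\"obius-inversion strategy. Writing $P(\Gamma)$ for the probability that $\langle x\pmo,y\pmo\rangle=\Gamma$, one has
$$1-P(\Gamma)\;\le\;\sum_{M}\Bigl(\frac{|M|}{|\Gamma|}\Bigr)^{2},$$
where the sum ranges over all maximal subgroups $M$ of~$\Gamma$. The task reduces to showing that this upper bound tends to~$0$ across the finite simple groups. For alternating groups this is Dixon's original theorem on random permutations. For groups of Lie type and sporadic groups one invokes the classification together with the Liebeck--Shalev bounds on the indices and on the number of conjugacy classes of maximal subgroups.

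For the word-map assertion, fix the non-trivial reduced word $w\in F(a,b)$, of some length~$\ell$. The plan is to use a Frobenius-type formula to rewrite $|\{(x,y)\in\Gamma^{2}:\gamma_w(x,y)=\unit\}|$ as a sum over the irreducible characters of~$\Gamma$; for finite simple groups, this sum is controlled by the Witten zeta function $\zeta_\Gamma(s):=\sum_{\chi\neq 1}\chi(1)^{-s}$, which tends to~$0$ as $|\Gamma|\to\infty$ for any fixed $s>0$ because the minimal non-trivial character degree diverges along the finite simple groups. The resulting bound is of the form $\mathbb{P}[\gamma_w(x,y)=\unit]=o(1)$, with the implicit constant depending only on~$\ell$ and~$s$.

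The main obstacle is making the second step work uniformly across all families of finite simple groups: the shape of the character-degree bounds differs between alternating, classical, exceptional, and sporadic groups, and one must also confirm that the dependence on~$\ell$ does not spoil the $o(1)$ conclusion. A delicate point is that for some words (for instance proper powers) the direct character bound is not sharp enough, and one needs to fall back on Shalev-type word-map results to ensure that the image of $\gamma_w$ has density bounded away from concentration on~$\unit$. Once both assertions are obtained in quantitative form, the union bound concludes the proof.
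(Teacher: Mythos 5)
This theorem is not proved in the paper you are reading: it is a black-box citation of Dixon, Pyber, Seress and Shalev (their Theorem~3), used in the proof of \autoref{UnboundedLocalCutvertex} to manufacture Cayley graphs of $A_n$ of arbitrarily high girth. There is therefore no internal proof to compare against, and you should not try to reprove it; simply citing it, as the paper does, is the right move.

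That said, a few remarks on your sketch. The generation half is in the right direction: the M\"obius/union-bound estimate over maximal subgroups, together with Dixon's theorem for $A_n$ and the Kantor--Lubotzky and Liebeck--Shalev results for Lie type and sporadic groups, is indeed the standard route. The word-map half, however, does not match what Dixon--Pyber--Seress--Shalev actually do. A Frobenius-type character formula controls the \emph{fibre over} $\unit$ only for special words (e.g.\ commutators or power words); for an arbitrary non-trivial $w\in F_2$ there is no such exact formula, and the Witten zeta function $\zeta_\Gamma(s)$ on its own does not give the needed fibre bound. Their actual argument is: for groups of Lie type, combine Borel's theorem that word maps are dominant on simple algebraic groups with the Lang--Weil estimates to bound the number of solutions of $w(x,y)=1$ by $O(|\Gamma|^{2-\varepsilon})$; for alternating groups, a separate combinatorial/probabilistic argument. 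Your closing caveat (``for some words the direct character bound is not sharp enough'') is in fact the central obstruction, not a side issue, so as written the proposal has a real gap in the word-map part. Since the theorem is external to the paper, the correct resolution is to cite it, not to reconstruct it.
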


\begin{lem}\label{UnboundedLocalCutvertex}
For every $r>0$ there exists $n>0$ such that every vertex of some Cayley graph of the alternating group $A_n$ is an $r$-local cutvertex.
\end{lem}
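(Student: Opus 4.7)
The plan is to use \autoref{HighGirthRandom} to construct, for each given $r$, a Cayley graph of some $A_n$ with girth strictly greater than $r$, and then to deduce via a local-covering argument that every vertex of this graph is an $r$-local cutvertex. Since Cayley graphs are vertex-transitive, it will suffice to show that $\unit$ alone is an $r$-local cutvertex.

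First, I would enumerate the finitely many nontrivial reduced words $w\in F(a,b)$ of length at most $r$, keeping among them $ab$, $ab^{-1}$, $a^2$, $b^2$. For each such $w$, \autoref{HighGirthRandom} asserts that when $x,y\in A_n$ are chosen independently and uniformly at random, a.a.s.\ $\gamma_w(x,y)\neq\unit$ and a.a.s.\ $\langle x^{\pm 1},y^{\pm 1}\rangle=A_n$. A union bound over this finite family produces, for all sufficiently large $n$, elements $x,y\in A_n$ that generate $A_n$, neither of which is an involution, satisfying $x\nequiv y$, and such that no nontrivial reduced word of length at most $r$ in $\{x^{\pm 1},y^{\pm 1}\}$ represents $\unit$ in $A_n$.

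Setting $G:=\cay{A_n}{\{x^{\pm 1},y^{\pm 1}\}}$, every cycle of $G$ yields a nontrivial reduced identity word of the same length by reading off its edge-labels, so $G$ has girth greater than $r$. The closing step is a local-covering argument: let $T$ denote the infinite $4$-regular tree, realised as the Cayley graph of $F(a,b)$, with natural covering map $\pi\colon T\to G$ induced by $a\mapsto x$, $b\mapsto y$, and fix a lift $\tilde{\unit}\in T$ of $\unit$. Any closed walk of length at most $r$ through $\unit$ in $G$ must reduce, by iterated free cancellation, to the trivial walk: otherwise its fully reduced form would be a nontrivial non-backtracking closed walk of length at most $r$, hence would contain a cycle of length at most $r$ by \autoref{lem:cycle_finder}, contradicting the girth bound. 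It follows that each such walk lifts to a closed walk in $T$ based at $\tilde{\unit}$, and a parallel argument shows that $\pi$ is injective on $B_r(\tilde{\unit},T)$; so $\pi$ restricts to an isomorphism $B_r(\tilde{\unit},T)\to B_r(\unit,G)$.

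Since $B_r(\tilde{\unit},T)$ is a subtree of $T$ in which $\tilde{\unit}$ has degree four, $\unit$ will be a cutvertex of $B_r(\unit,G)$ and hence an $r$-local cutvertex; by vertex-transitivity of $G$ this will hold for every vertex. I expect the main obstacle to be packaging the local-covering step cleanly, i.e.\ converting the pure girth bound into the required tree structure of $B_r(\unit,G)$; the probabilistic production of generators via \autoref{HighGirthRandom} together with the union bound is routine.
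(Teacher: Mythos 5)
Your proof matches the paper's approach exactly: both invoke \autoref{HighGirthRandom} to produce, for suitable $n$, a generating pair $\{x^{\pm1},y^{\pm1}\}$ of $A_n$ whose Cayley graph has girth exceeding~$r$, and then conclude that every vertex is an $r$-local cutvertex. Your tree-covering elaboration of the last step (lifting balls to the 4-regular tree and observing that a tree with an internal root vertex becomes disconnected on deleting it) is correct and is precisely the implicit reasoning compressed into the paper's closing ``Hence every vertex of $G$ is an $r$-local cutvertex.''
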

\begin{proof}
    Let $r>0$.
    Let $w_1,\ldots,w_k$ be all the nonempty words of length at most~$r$ in the two letters $a\pmo,b\pmo$.
    By \autoref{HighGirthRandom}, there is $n>0$ and a generating set $S=\{x\pmo,y\pmo\}$ of $\Gamma:=A_n$, so that $\gamma_{w_i}(x,y)\neq\unit$ for all $i\in [k]$, where $\gamma_{w_i}(x,y)$ is defined as in the statement of the theorem.
    Then the Cayley graph $G:=\cay{\Gamma}{S}$ has no cycles of length at most~$r$. 
    Hence every vertex of $G$ is an $r$-local cutvertex.
\end{proof}

\section{Local 2-separations}
\label{sec:2sep}

\subsection{Local components and separations}
We begin this section by defining $r$-local $2$-separators. Here we work with the definition of \cite{carmesin2023apply}; in that paper it is proved that it is equivalent to the original definition of \cite{carmesin}. We denote the neighbourhood of a set $X$ of vertices of a graph by $N(X)$.

\begin{dfn}
 Given a positive integer $r\ge 2$ and two vertices $v_0$ and $v_1$ in a graph~$G$, the \emph{connectivity graph} at $\{v_0,v_1\}$ of \emph{locality~$r$} has vertex set $N(\{v_0,v_1\})$ and two of its vertices $a$ and $b$ are adjacent if there is an $i\in \mathbb{F}_2$ such that $a$ and $b$ are in the same component of $B_r(v_i)-v_i-v_{i+1}$. 
 We denote the connectivity graph by $C_r(v_0,v_1,G)$ or just $C_r(v_0,v_1)$ if $G$ is clear from context.
 
 We say that $\{v_0,v_1\}$ is an \emph{$r$-local $2$-separator} of~$G$ if $C_r(v_0,v_1)$ is disconnected and the distance between $v_0$ and $v_1$ in $G$ is at most~$r/2$. 
 An \emph{$r$-local component} at $\{v_0,v_1\}$ is a connected component of $C_r(v_0,v_1)$. 
 An \emph{$r$-local $2$-separation} of~$G$ is a pair $\{A,B\}$ such that $A\cap B$ is an $r$-local $2$-separator $X$ and the sets $A\sm X$ and $B\sm X$ are nonempty and partition the vertex set of $C_r(v_0,v_1)$ so that each component of $C_r(v_0,v_1)$ has its vertex set contained in exactly one of $A\sm X$ and~$B\sm X$.
\end{dfn}

In this context, we refer to $A$ and~$B$ as the \emph{sides} of the $r$-local $2$-separation.
We say that the sides $A,B$ are \emph{opposite} of each other.
Two vertices $u,v$ are said to \emph{lie on opposite sides} of~$\{A,B\}$ if we have $u\in A\sm B$ and $v\in B\sm A$ or vice versa.
Two vertices $u,v$ are said to \emph{lie on the same side} of~$\{A,B\}$ if they both lie in $A\sm B$ or if they both lie in $B\sm A$.
We refer to $A\cap B$ as the \emph{separator} of~$\{A,B\}$.

\begin{lem}\label{locSepSeps}
Let $G$ be a graph, $r\ge 4$, and $X\se V(G)$ of size two.
Then $G$ has no edge between any two $r$-local components at~$X$.
\end{lem}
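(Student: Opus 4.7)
The plan is to argue by contradiction: suppose $a, b$ lie in two distinct $r$-local components at $X = \{v_0, v_1\}$ and are joined by an edge $ab \in E(G)$; I will derive that $a$ and $b$ are in fact adjacent in the connectivity graph $C_r(v_0, v_1)$, contradicting their placement in different components.

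Since $a, b$ are vertices of $C_r(v_0, v_1)$, they lie in the open neighbourhood $N(\{v_0, v_1\})$, so in particular $a, b \notin \{v_0, v_1\}$, and at least one of $v_0, v_1$ is adjacent to $a$; call this vertex $v_i$ and write $v_{i+1}$ for the other. The key step is to exhibit a short closed walk through $v_i$ containing the edge $ab$, which will certify that $ab$ lies in the ball $B_r(v_i)$. The natural candidate is the length-$4$ closed walk
\[
W = v_i,\, a,\, b,\, a,\, v_i,
\]
which bounces across the edge $ab$ before returning to $v_i$. Because the definition of $B_r(v_i)$ uses closed walks rather than cycles or paths, traversing the edge $ab$ twice is permitted; since $r \ge 4$, the walk $W$ has length at most $r$, so the edge $ab$ lies in $B_r(v_i)$.

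To finish, I would observe that the edge $ab$ survives the deletion of $v_i$ and $v_{i+1}$, since $a, b \notin X$. Hence $a$ and $b$ lie in a common component of $B_r(v_i) - v_i - v_{i+1}$, which by definition of the connectivity graph makes them adjacent in $C_r(v_0, v_1)$, contradicting the assumption that they belong to distinct $r$-local components.

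I do not expect a genuine obstacle here; the only subtlety is that walks rather than paths are used in the definition of the ball, which is exactly what lets the length-$4$ ``bouncing'' walk do the work and explains the hypothesis $r \ge 4$.
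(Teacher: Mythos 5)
Your proof is correct and follows essentially the same idea as the paper's: both exhibit the length-$4$ closed walk $v_i\, a\, b\, a\, v_i$ (the paper writes it implicitly via the chain $x,u,w$) to place the edge $ab$ inside $B_4(v_i)\subseteq B_r(v_i)$, from which adjacency of $a$ and $b$ in the connectivity graph $C_r(v_0,v_1)$ follows. Your write-up just spells out the bouncing walk explicitly and phrases the argument as a contradiction rather than directly.
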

\begin{proof}
Let $x\in X$ and $u$ be a neighbour of $x$ in $N(X)$. Let $w$ be a neighbour of $u$ in $N(X)$. Then $u$ and $w$ are in the same component of $B_4(x)\sm X$ and thus in the same local component of $X$.
\end{proof}

\subsection{Traversals}

For a depiction of the following definition, see \autoref{fig:walk_traversal}.

\begin{dfn}[Cyclic subwalk]
Let $W=v_0 e_0 v_1 \ldots v_{n-1} e_{n-1} v_n$ be a closed walk, so~$v_0=v_n$.
A \emph{cyclic permutation} of~$W$ is either the walk~$W$ itself or a walk of the form $v_k e_k \ldots e_{n-1} v_n e_0 v_1\ldots v_k$ for some~$k$ with $0<k<n$.
A walk~$W'$ is a \emph{cyclic subwalk} of a closed walk~$W$ if $W'$ or the reverse of~$W'$ is a subwalk of a cyclic permutation of~$W$.
\end{dfn}

\begin{dfn}[Traversal]
    Let $G$ be a graph, $r\ge 2$ an integer, and $X$ an $r$-local $2$-separator of~$G$.
    A~walk in~$G$ is a \emph{traversal} of~$X$ if its ends lie in distinct $r$-local components at~$X$ and all its internal vertices lie in~$X$.
    Suppose now that $\{A,B\}$ is an $r$-local $2$-separation of~$G$ with separator~$X$.
    A~walk in~$G$ is a \emph{traversal} of~$\{A,B\}$ if it is a traversal of~$X$ and, additionally, its ends lie on opposite sides of~$\{A,B\}$.

    A traversal of either kind is \emph{weak} if it has precisely one internal vertex (so it has a total length of two), and \emph{strong} if it has precisely two internal vertices.

    Let $W=v_0 e_0 v_1\ldots v_{n-1} e_{n-1} v_n$ be a (closed) walk in~$G$, and let $T$ denote the set of those indices $i\in \mathbb{Z}_n$ such that the cyclic subwalk $v_ie_i \ldots v_{i+k}$ is a traversal of~$X$.
    We say that
    \begin{enumerate}
        \item $W$ \emph{traverses} $X$ if $T$ is nonempty;
        \item $W$ traverses $X$ \emph{evenly} or \emph{oddly} if $|T|$ is even or odd, respectively;\\
        (if $|T|=0$ we still say that $W$ traverses~$X$ evenly, by a slight abuse of notation)
        \item $W$ traverses $X$ \emph{weakly} or \emph{strongly} if at least one of the traversals that contribute to~$T$ contains a weak or strong traversal, respectively.
    \end{enumerate}
    If $W$ traverses~$X$ weakly, and the internal vertex of the traversal witnessing this is~$x$, then we say that $W$ \emph{weakly traverses $X$ at~$x$}.
    Similarly, we define when $W$ traverses $\{A,B\}$ evenly/oddly/weakly/strongly/weakly at~$x\in X$.
\end{dfn}

\begin{eg}
A~walk may traverse both weakly and strongly, see \autoref{fig:walk_traversal}.
However, paths and cycles cannot traverse local $2$-separators or local $2$-separations both weakly and strongly.
\end{eg}

\begin{figure}[ht]
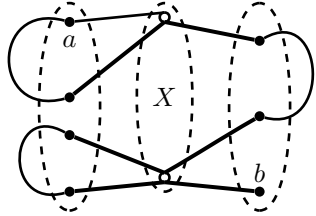

    \tikzfig{traverse_odd}
    \caption{An $a$--$b$ walk that traverses a local $2$-separator~$X$ oddly. The traversals are bold.}
    \label{fig:walk_traversal}
\end{figure}

\begin{lem}\label{traversal}
Let $G$ be a graph, $r\ge 2$ an integer, and $\{A,B\}$ an $r$-local $2$-separation of~$G$ with separator~$X$.
Let $a,b\in N(X)$ and let $W$ be an $a$--$b$ walk in~$G$ that is included in $B_r(x)$ for some~$x\in X$.
Then $a$ and $b$ lie on opposite sides of~$\{A,B\}$ if and only if $W$ traverses $\{A,B\}$ oddly.
\end{lem}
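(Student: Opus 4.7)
The plan is to traverse $W$ from $a$ to $b$ while keeping track of which side of $\{A,B\}$ the current vertex lies on, and to argue that the number of side-changes equals the number of traversals of~$\{A,B\}$ witnessed by~$W$. Write $W=w_0e_0w_1\ldots e_{n-1}w_n$ with $w_0=a$ and $w_n=b$, and let $[i_1,j_1],\ldots,[i_p,j_p]$ be the maximal runs of consecutive indices~$j$ with $w_j\in X$. If $p=0$ then $W$ lies entirely in $B_r(x)-X$, so (by the claim below) $a$ and $b$ end up on the same side and $W$ has no traversal; both sides of the equivalence are then false, so I will assume $p\ge 1$.

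The crux will be the following claim: whenever $0\le\alpha\le\beta\le n$ satisfy $w_\alpha,w_\beta\in N(X)$ and $w_j\notin X$ for all $\alpha\le j\le\beta$, the vertices $w_\alpha$ and $w_\beta$ lie in the same $r$-local component at~$X$, and hence on the same side of~$\{A,B\}$. This is where the hypothesis $W\subseteq B_r(x)$ enters: the subwalk from $w_\alpha$ to $w_\beta$ lies in $B_r(x)-X$, so its endpoints are in the same component of $B_r(x)-X$ and therefore adjacent in $C_r(v_0,v_1)$ (writing $X=\{v_0,v_1\}$).

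Applying the claim to the $p+1$ ``non-$X$ segments'' of $W$ yields a well-defined sequence of sides $s_0,s_1,\ldots,s_p\in\{A,B\}$: I set $s_0$ to be the side of $a$, $s_p$ to be the side of $b$, and for $1\le k\le p-1$ I set $s_k$ to be the common side of the flanking vertices $w_{j_k+1}$ and $w_{i_{k+1}-1}$. Next I observe that a subwalk of $W$ can be a traversal of $X$ only if its internal vertices comprise an entire $X$-run and its endpoints are the flanking vertices of that run. Hence the traversal subwalks of $W$ are precisely the subwalks $w_{i_k-1}e_{i_k-1}w_{i_k}\ldots e_{j_k}w_{j_k+1}$ (with $1\le k\le p$) whose flanking vertices lie in distinct $r$-local components, and such a subwalk is moreover a traversal of~$\{A,B\}$ iff $s_{k-1}\ne s_k$. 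A standard parity argument then closes the proof: $a$ and $b$ lie on opposite sides of $\{A,B\}$ iff $s_0\ne s_p$ iff the number of indices $k\in\{1,\ldots,p\}$ with $s_{k-1}\ne s_k$ is odd iff $W$ traverses $\{A,B\}$ oddly.

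I expect the hard step to be the claim. A non-$X$ segment of $W$ may well have interior vertices lying outside $N(X)$ entirely, and thus invisible to $C_r(v_0,v_1)$; it is precisely the assumption that all of $W$ is contained in one ball $B_r(x)$ with $x\in X$ that rescues the argument, since it places each segment inside $B_r(x)-X$ and thereby makes its endpoints adjacent in the connectivity graph.
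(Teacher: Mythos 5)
Your proof is correct and takes essentially the same route as the paper: the paper decomposes $W$ into alternating subwalks $W_0,\ldots,W_k$ that alternately avoid $X$ and meet $X$ with all interior vertices, observes that the avoiding pieces keep the side fixed (using $W\subseteq B_r(x)$) while the $X$-meeting pieces change the side precisely when they are traversals of $\{A,B\}$, and closes with the parity count — exactly your run-tracking argument in slightly different notation. Your identification of the ``hard step'' (that the inclusion $W\subseteq B_r(x)$ is what makes consecutive flanking vertices adjacent in $C_r(v_0,v_1)$) matches the paper's use of the $W_i$ with even index as witnesses.
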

\begin{proof}
Let $v_0e_0v_1\ldots v_{n-1}e_{n-1}v_n:=W$, so $v_0=a$ and~$v_n=b$.
We write the walk $W$ as a concatenation of walks $W_0,\ldots,W_k$ with even~$k$ such that
\begin{itemize}
    \item the walks $W_i$ with odd index~$i$ have their ends in $N(X)$ and meet $X$ with all their internal vertices,
    \item while the walks $W_i$ with even index~$i$ avoid $X$ and link two vertices in~$N(X)$; these walks may have length~0.
\end{itemize}
For every odd~$i$, the ends of $W_i$ lie on opposite sides of~$\{A,B\}$ if and only if $W_i$ is a traversal of~$\{A,B\}$.
For every even~$i$, the walks $W_i$ witness that the ends of~$W_i$ lie on the same side of~$\{A,B\}$.
So the ends $a$ and $b$ of~$W$ lie on opposite sides of~$W$ if and only if the number of walks $W_i$ with odd~$i$ that are traversals of~$\{A,B\}$ is odd.
Since every traversal of~$\{A,B\}$ contained in~$W$ occurs as a~$W_i$, we are done.
\end{proof}

\begin{cor}\label{CycleWeakTraversal}
    Let $G$ be a graph, $r\ge 2$ an integer, and $\{A,B\}$ an $r$-local $2$-separation of~$G$ with separator~$X$.
    If a cycle $O$ in~$G$ of length at most~$r$ weakly traverses $\{A,B\}$ at some vertex of~$X$, then $O$ weakly traverses $\{A,B\}$ at both vertices of~$X$.\qed
\end{cor}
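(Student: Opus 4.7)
The plan is to apply \autoref{traversal} to the arc of~$O$ complementary to the given weak traversal. Write $X=\{v_0,v_1\}$ and let the witnessing weak traversal of $\{A,B\}$ at~$v_0$ be the subwalk $a\,e\,v_0\,e'\,b$ of~$O$, so that $a,b\in N(X)$ lie on opposite sides of $\{A,B\}$. Let $W$ denote the $b$-$a$ walk obtained from~$O$ by deleting the vertex~$v_0$ together with its two incident edges $e$ and~$e'$. Since $|O|\le r$ and $v_0\in V(O)$, the definition of the ball $B_r(v_0)$ places all of $O$, and in particular~$W$, inside $B_r(v_0)$. Hence \autoref{traversal} applies to~$W$ with $x=v_0$, and because the endpoints of~$W$ lie on opposite sides of $\{A,B\}$, the walk~$W$ must traverse $\{A,B\}$ an odd number of times. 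In particular, $W$ contains at least one traversal~$T$ of $\{A,B\}$.

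The key step is to identify $T$ as a weak traversal of $\{A,B\}$ at~$v_1$. Since $O$ is a cycle, all vertices along~$W$ are pairwise distinct, so the internal vertices of~$T$ form a set of pairwise distinct elements of $X=\{v_0,v_1\}$. However $v_0\notin V(W)$ by construction, so the only possible internal vertex of~$T$ is~$v_1$. Consequently $T$ has exactly one internal vertex, namely~$v_1$, and is a weak traversal of $\{A,B\}$ at~$v_1$. Because $T$ is a subwalk of~$O$, this shows that $O$ weakly traverses $\{A,B\}$ at~$v_1$, which is the desired conclusion.

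The main subtlety I anticipate is ruling out the degenerate possibility that $W$ is a single edge $b\,e''\,a$ (which would force $|O|=3$ and leave $T$ with no internal vertex, so the last paragraph's argument would fail). This corner case is eliminated by \autoref{locSepSeps}, which forbids an edge of~$G$ between two distinct local components at~$X$; since $a$ and $b$ lie on opposite sides, no such edge~$e''$ can exist, and so $W$ has length at least two.
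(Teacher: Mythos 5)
Your approach is the right one and is what the paper (which gives no explicit proof, marking the statement \qed\ as an immediate consequence of \autoref{traversal}) intends: apply \autoref{traversal} to the $b$--$a$ arc $W=O-v_0$ inside $B_r(v_0)$ and conclude that $W$ contains a traversal of $\{A,B\}$, whose unique possible internal vertex is $v_1$.

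There is, however, a genuine gap in the step ``Consequently $T$ has exactly one internal vertex, namely $v_1$.'' From $v_0\notin V(W)$ you can only conclude that the set of internal vertices of $T$ is a subset of $\{v_1\}$, so it could also be empty; a traversal with no internal vertex is a single edge $v_i e_i v_{i+1}$ with $v_i,v_{i+1}\in N(X)$ on opposite sides of $\{A,B\}$. Your closing paragraph tries to deal with this, but mischaracterises the danger: you argue as if a zero-internal-vertex traversal can occur only when $W$ itself is a single edge (forcing $|O|=3$), whereas such an edge could sit anywhere in a longer $W$. You therefore need to rule out the degenerate case for \emph{every} edge of $W$, not just the case $T=W$. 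The cleanest fix does not even need \autoref{locSepSeps} (which requires $r\ge4$ while the corollary allows $r\ge 2$): any such edge $e_i$ lies on $O\subseteq B_r(v_0)$, so $v_i$ and $v_{i+1}$ are joined by an edge of $B_r(v_0)-X$ and hence lie in the same component of $B_r(v_0)-X$, hence in the same $r$-local component at~$X$, hence on the same side of $\{A,B\}$ --- contradicting that $e_i$ is a traversal of $\{A,B\}$. Equivalently, one can observe that in the decomposition $W_0,\ldots,W_k$ used in the proof of \autoref{traversal}, the traversal is one of the odd-index pieces $W_i$, which by construction has at least one internal vertex in~$X$. With this correction the proof is complete.
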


\begin{dfn}[Traversal of words and generators]
    Let $G:=\cay{\Gamma}{S}$ be a Cayley graph and~$r\ge 2$.
    Let $X$ be an $r$-local $2$-separator of~$G$, and let $\{A,B\}$ be an $r$-local $2$-separation of~$G$.

    We say that a word $w$ in $S$ \emph{traverses} $X$ if there exists a walk in~$G$ labelled by~$w$ that traverses~$X$.
    We say that $w$ traverses $X$ \emph{weakly} or \emph{strongly} if some walk labelled by $w$ traverses $X$ weakly or strongly, respectively.
    We say that an element $g\in S$ \emph{traverses} $X$ if the word $g^2$ weakly traverses~$X$.
    We say that $g$ traverses $X$ \emph{at} some $x\in X$ if some traversal of~$X$ labelled by $g^2$ uses the vertices $xg\i,x,xg$ in this order.
    Similarly, we define all of the above with~$\{A,B\}$ in place of~$X$.
\end{dfn}
\begin{rem}
   If $X=\X$, then no traversing element of~$S$ is equal to $h$ or~$h\i$.
   If $g$ traverses $X$, then it does so at some $x\in X$.
\end{rem}

\begin{lem}[Strong Traversal Lemma]\label{traverser}
Let $G:=\cay{\Gamma}{S}$ be a Cayley graph, $r\ge 2$, and $X$ an $r$-local $2$-separator of~$G$.
Then no cycle in~$G$ of length at most~$r$ strongly traverses~$X$.
In particular, if a word $w$ strongly traverses $X$, then $w$ cannot be a cyclic subword of a morpheme of~$\Gamma$ in~$S$ of length at most~$r$.
\end{lem}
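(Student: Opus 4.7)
The plan is to argue the first sentence by contradiction, then deduce the ``in particular'' clause from it. Suppose that a cycle $O$ in $G$ of length at most $r$ strongly traverses $X=\{v_0,v_1\}$. Then $O$ contains as a subwalk a strong traversal $a\,e_1\,x_1\,e_2\,x_2\,e_3\,b$ where $\{x_1,x_2\}=X$ and $a,b\in N(X)$ lie in distinct $r$-local components at~$X$. The key idea is to exploit the complementary arc of~$O$. Since $O$ is a cycle, deleting the three edges $e_1,e_2,e_3$ from~$O$ leaves a walk $W'$ from $b$ to~$a$ of length at most $r-3$; and because $O$ visits each vertex at most once, $W'$ avoids~$v_0$ and~$v_1$. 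Moreover, $O$ itself is a closed walk of length $\le r$ through~$v_0$ (since $v_0\in\{x_1,x_2\}$), so every vertex and edge of~$O$, and hence of~$W'$, lies in $B_r(v_0)$. Therefore $W'$ is a walk in $B_r(v_0)-v_0-v_1$ from~$b$ to~$a$, placing $a$ and~$b$ in a common component there, adjacent in the connectivity graph $C_r(v_0,v_1)$, and so in the same $r$-local component at~$X$ --- contradicting the choice of the strong traversal.

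For the ``in particular'' clause, I would reduce it to the first sentence. Assume $w$ strongly traverses~$X$, so some walk $W$ in~$G$ labelled by~$w$ contains a strong traversal; in particular $|w|\ge 3$. If $w$ is a cyclic subword of a morpheme $m$ of $\Gamma$ in $S$ of length at most~$r$, then $|m|\ge|w|\ge 3$, so $m$ falls outside the involution exception of \autoref{cyclesGiveMorphemes}: any realisation of~$m$ as a closed walk in~$G$ is in fact a cycle, because two coincident vertices on that walk would produce a proper nonempty identity subword of~$m$, contradicting minimality. Either $w$ or~$w^{-1}$ occurs as a contiguous subword of some cyclic permutation of~$m$; replacing $w$ by $w^{-1}$ and reversing $W$ if needed, we may assume $w$ is a prefix of a cyclic permutation $m'$ of~$m$. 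By vertex-transitivity of~$G$, we realise $m'$ as a closed walk starting at the first vertex of~$W$; this realisation is a cycle~$O$ of length $|m|\le r$ in~$G$ that has $W$ as an initial subwalk, so $O$ strongly traverses~$X$, contradicting the first sentence.

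The main technical point to verify is that the complementary arc~$W'$ in the first argument both avoids $\{v_0,v_1\}$ and sits inside~$B_r(v_0)$; both facts hinge on~$O$ being a simple closed walk of length $\le r$ through~$v_0$. Once this is in hand, the ``in particular'' part is mostly bookkeeping about left-multiplication and cyclic permutations of morphemes. Philosophically, the reason the strong case works (and the weak case need not) is precisely that a strong traversal uses up both vertices of~$X$ as internal vertices, so the remainder of a short cycle provides a detour through $B_r(v_0)-v_0-v_1$ reconnecting the two sides --- a detour that is simply unavailable for weak traversals, whose two endpoints flank only a single vertex of~$X$.
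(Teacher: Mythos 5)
Your argument for the first sentence is essentially the paper's: the complementary arc $W'$ (which the paper denotes $O\sm X$) lies in $B_r(x)\sm X$ and joins the two ends of the strong traversal, contradicting that those ends lie in distinct $r$-local components at~$X$. The paper treats the ``in particular'' clause as immediate, whereas you spell out the standard bookkeeping (using that a morpheme of length $\ge 3$ realises as a cycle, per \autoref{cyclesGiveMorphemes}, and translating by vertex-transitivity); both proofs are correct and take the same route.
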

\begin{proof}
Suppose for a contradiction that there is a cycle $O$ in $G$ of length at most~$r$ that strongly traverses~$X$.
Then $O$ is contained in the ball $B_r(x)$, where $x\in X$ is arbitrary, and the ends of the path $O\sm X$ lie in distinct $r$-local components at~$X$ because $O$ strongly traverses~$X$.
But the path $O\sm X$ is contained in $B_r(x)\sm X$, so its ends lie in the same $r$-local component at~$X$, a contradiction.
 \end{proof}

\begin{figure}[ht]
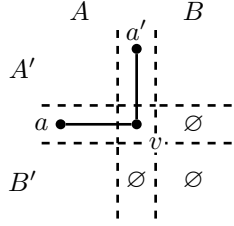

    \tikzfig{crossing_diag}
    \caption{Two crossing $r$-local separations as in \autoref{empty_corner}, where $v$ is not an $r$-local cutvertex}
    \label{fig:empty_corner}
\end{figure}

The following lemma is supported by \autoref{fig:empty_corner}.

\begin{lem}\label{empty_corner}
Let $G$ be a graph and $r\ge 4$.
Let $\{A,B\}$ be an $r$-local $2$-separation of~$G$ with separator~$\{a',v\}$ such that $a'v$ is an edge in~$G$.
Let $\{A',B'\}$ be an $r$-local $2$-separation of~$G$ with separator~$\{a,v\}$ such that $av$ is an edge in~$G$ and~$a\neq a'$.
Suppose that $a\in A$ and $a'\in A'$, and that $v$ is not an $r$-local cutvertex of~$G$.
Then $B\cap B'=\{v\}$.
\end{lem}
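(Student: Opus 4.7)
The plan is to suppose, toward contradiction, that there is some $u\in (B\cap B')\sm\{v\}$, and to derive a contradiction from the connectedness of $B_r(v)-v$.

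First I would dispose of the cases $u\in\{a,a'\}$: since $a\in A\sm X$ and the sets $A\sm X$ and $B\sm X$ are disjoint, we have $a\notin B$; symmetrically $a'\notin B'$. So $u\notin\{a,a',v\}$, and hence $u\in B\sm X$ and $u\in B'\sm X'$, which places $u$ in $N(\{a',v\})\cap N(\{a,v\})$. A short check using $r\ge 4$ then shows $u\in B_r(v)$: a neighbour of $v$ is at distance one, and otherwise $u$ is a neighbour of $a$ or $a'$, which yields a closed walk of length four through $v$ containing $u$, such as $v,a,u,a,v$ or $v,a,u,a',v$.

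The key step is the following path-finding lemma, which I would prove by a one-line shortest-path argument: in any connected graph $H$ with three distinct vertices $u,a,a'$, either there is a $u$-$a$ path in $H$ avoiding $a'$, or there is a $u$-$a'$ path in $H$ avoiding $a$. Indeed, take a shortest $u$-$a$ path $P$ in~$H$; if $P$ avoids $a'$, take that path, and otherwise the initial subpath of $P$ from $u$ to $a'$ is a $u$-$a'$ path that avoids $a$, since $a$ occurs on $P$ only at its endpoint.

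Applying this to the connected graph $H:=B_r(v)-v$, I obtain either a $u$-$a$ path in $B_r(v)-v-a'$ or a $u$-$a'$ path in $B_r(v)-v-a$. In the former case, $u$ and $a$ lie in a common component of $B_r(v)-v-a'$, so they are adjacent in $C_r(a',v)$, and hence in the same $r$-local component at~$\{a',v\}$; since $a\in A\sm X$, this forces $u\in A\sm X$, contradicting $u\in B\sm X$. The latter case is symmetric and produces $u\in A'\sm X'$ in contradiction with $u\in B'\sm X'$. Since $v\in X\cap X'\subseteq B\cap B'$, this completes the proof that $B\cap B'=\{v\}$. I don't expect any serious obstacle here; the only subtlety is ensuring $u\in B_r(v)$ before invoking the connectedness of $B_r(v)-v$, which is precisely where the hypothesis $r\ge 4$ is used.
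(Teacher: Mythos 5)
Your proof is correct and follows essentially the same route as the paper's: place $u$ in the ball $B_r(v)$, use that $B_r(v)-v$ is connected to extract a path from $u$ to one of $a,a'$ inside $B_r(v)-v$ that avoids the other, and read off a contradiction via the connectivity graph. The one small variation is how you get $u\in B_r(v)$: the paper first invokes \autoref{locSepSeps} to conclude that $u$ must in fact be a neighbour of~$v$, while you bypass that lemma and instead exhibit a closed walk of length at most four through $v$ and $u$ (using $r\ge 4$), which is equally valid.
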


\begin{proof}
Suppose for a contradiction that $B\cap B'$ contains another vertex~$x$ besides~$v$.
Then $x$ has neighbours in both separators $\{a',v\}$ and $\{a,v\}$.
Since $x$ lies in $B\sm A$ and $a$ lies $A\sm B$, there is no edge between $x$ and $a$ by \autoref{locSepSeps}.
Similarly, there is no edge between $x$ and~$a'$.
Hence $v$ must be a neighbour of~$x$.
Since $v$ is not an $r$-local cutvertex of~$G$, 
there is an $x$--$\{a,a'\}$ path $P$ in the ball~$B_r(v)$; say its endvertex is~$a$. 
Then $a$ and $x$ are in the same component of $B_r(v)-v-a'$, contradicting that $a$ and~$x$ lie on opposite sides of~$\{A,B\}$.
\end{proof}

\subsection{The Edge Insertion Lemma}

In this section, we will prove the \nameref{long_edge} (\ref{long_edge}), a key lemma, which will allow us to assume in the proof of \autoref{3-con} that the given local $2$-separator spans an edge.

\begin{lem}[Edge Insertion Lemma]\label{long_edge}
    Let $4\le r^-\le r_0\le r^+$ be integers such that $\tfrac{1}{2}r_0\cdot r^-\le r^+$.
    Let $G:=\cay{\Gamma}{S}$ be a Cayley graph such that $G$ has no $r^-$-local cutvertex and $G$ is not a cycle of length~$\le r^+$, but such that $G$ contains a cycle of length at most~$r^+$.
    We further assume that $G$ has an $r^+$-local $2$-separator.
    Then there is some $h\in\Gamma$ such that the Cayley graph $G':=\cay{\Gamma}{S\cup\{h^{\pm 1}\}}$ has no $r_0$-local cutvertex and such that $\X$ is an $r_0$-local $2$-separator of~$G'$.
    \end{lem}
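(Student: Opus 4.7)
My plan follows the strategy sketched in the introduction. First, I apply the local Tutte-decomposition of~\cite{carmesin} to select an $r^+$-local $2$-separator~$X$ of~$G$ that is nested with its $\Gamma$-orbit; since $G$ admits an $r^+$-local $2$-separator, contains cycles of length at most~$r^+$, is not itself such a cycle, and has no $r^-$-local cutvertex, such an~$X$ exists. Here ``nested with its $\Gamma$-orbit'' means that for every $\gamma\in\Gamma$ either $\gamma\cdot X=X$, or the translate $\gamma\cdot X$ lies entirely on a single side of the $r^+$-local $2$-separation $\{A,B\}$ of~$G$ induced by~$X$. Using vertex-transitivity I then arrange $\unit\in X$; writing $X=\{\unit,h\}$, I have $d_G(\unit,h)\le r^+/2$ for free.

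Second, I set $G':=\cay{\Gamma}{S\cup\{h^{\pm 1}\}}$ and check that $\{\unit,h\}$ is an $r_0$-local $2$-separator of~$G'$. The adjacency of~$\unit$ and~$h$ in~$G'$ immediately gives the distance condition $d_{G'}(\unit,h)=1\le r_0/2$. For the disconnectedness of $C_{r_0}(\unit,h,G')$, I use that every new edge of~$G'$ (compared to~$G$) is a translate $\{\gamma,\gamma h\}=\gamma\cdot X$; by the orbit-nestedness, this translate either is the edge $\{\unit,h\}$ itself or has both endpoints on a common side of~$\{A,B\}$. So the added edges do not bridge the two sides, and the $r^+$-local disconnectedness of~$C_{r^+}(\unit,h,G)$ survives the passage to the smaller $r_0$-balls in~$G'$.

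Third, I show $G'$ has no $r_0$-local cutvertex. By vertex-transitivity it suffices to prove $B_{r_0}(\unit,G')-\unit$ connected. Since $B_{r^-}(\unit,G)-\unit$ is connected by hypothesis and lies inside $B_{r_0}(\unit,G')-\unit$ (because $r^-\le r_0$ and $G\subseteq G'$), it is enough to connect every $u\in B_{r_0}(\unit,G')-\unit$ to a vertex of $B_{r^-}(\unit,G)-\unit$ through $B_{r_0}(\unit,G')-\unit$. Given~$u$, I pick a closed walk~$W$ in~$G'$ of length at most~$r_0$ through~$\unit$ containing~$u$, and decompose~$W$ at its $h$-edges into $G$-segments linked by translates of $\{\unit,h\}$. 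The orbit-nestedness of~$X$ controls how each $h$-edge of~$W$ interacts with the separation, and the arithmetic constraint $\tfrac12 r_0 r^-\le r^+$ provides the length budget to bypass $h$-shortcuts along $G$-detours that stay inside $B_{r_0}(\unit,G')-\unit$, reducing to the known connectivity of $B_{r^-}(\unit,G)-\unit$.

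The hard part will be the third step: adding $h^{\pm 1}$ to~$S$ enlarges balls via $h$-shortcuts, and verifying that the enlarged ball $B_{r_0}(\unit,G')$ does not acquire~$\unit$ as a cutvertex is not automatic. The combination of orbit-nestedness (ruling out shortcuts that cross the separation) with the arithmetic constraint $\tfrac12 r_0 r^-\le r^+$ (bounding the length of replacement $G$-walks) is precisely what prevents this, but distilling these ingredients into a clean argument requires careful length bookkeeping around~$\unit$ and around the translates of~$X$ close to~$\unit$.
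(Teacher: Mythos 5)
Your outline identifies the right ingredients (the Tutte decomposition to get a nested separator, vertex-transitivity to place $\unit$ in it, and the extension $S\cup\{h^{\pm1}\}$), but there are two concrete gaps, one of which is decisive.

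First, you never establish the distance bound $d := d_G(\unit,h) \le r^-/2$; you only note the trivial bound $d\le r^+/2$ coming from the definition of an $r^+$-local $2$-separator. This sharper bound is where the hypothesis ``$G$ has no $r^-$-local cutvertex'' actually enters: by \autoref{localCutvertexFromLocal2sep}, if $d>r^-/2$ then each vertex of $X$ would be an $r^-$-local cutvertex, contradicting the hypothesis. Without $d\le r^-/2$, the arithmetic constraint $\tfrac12 r_0 r^- \le r^+$ is useless. You invoke it as ``length budget'' for replacing $h$-edges by $G$-detours, but each detour has length $d$, so a walk of length $\le r_0$ after replacement has length $\le r_0\cdot d$; this is $\le r^+$ only when $d\le r^-/2$. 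With only $d\le r^+/2$ you would need $r_0\le 2$, which conflicts with $r_0\ge r^-\ge 4$.

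Second, your third step is substantially more complicated than necessary, and the complication hides the fact that nestedness is not needed there at all. Once $d\le r^-/2$ is in hand, the new neighbours $xh$ and $xh^{-1}$ of a vertex $x$ in $G'$ already lie in $B_{r^-}(x,G)$, which is assumed to have a connected punctured ball. So $N_{G'}(x)$ is contained in the connected set $B_{r^-}(x,G)-x\subseteq B_{r^-}(x,G')-x$, and since every component of a punctured ball must contain a neighbour of the centre, $B_{r^-}(x,G')-x$ is connected; the same then holds for every $r\ge r^-$. No decomposition of walks at $h$-edges, no interaction with the separation, no bookkeeping. Your proposed argument to connect $u$ back to $B_{r^-}(\unit,G)-\unit$ by rerouting $h$-shortcuts also has a real obstacle you do not address: the replacement $G$-segments must avoid $\unit$, and nothing in orbit-nestedness guarantees this.

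Finally, for step two (showing $X$ is still an $r_0$-local $2$-separator of $G'$), ``added edges do not bridge the two sides'' is not sufficient on its own; you must also control how the smaller ball $B_{r_0}(x,G')$ compares to the larger ball $B_{r^+}(x,G)$ and verify that the induced $2$-separations are genuine and compatible. This is exactly what the paper's \autoref{CompatibleSeparationsAddingEdges} (applied with $r:=r^+$, $d$, $F$ the $h$-labelled edges, $r':=r_0$) and \autoref{LocalSepnCompatibleII} do, and the hypothesis that $F$ respects the separation is where total nestedness is used. I recommend reworking your proof to first derive $d\le r^-/2$, then handle the cutvertex step with the short argument above, and then reduce the separator step to the two cited lemmas.
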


\begin{dfn}
    Let $G$ be a graph and $r\ge 2$ an integer.
    We say that a pair $\{u,v\}$ of two distinct vertices of~$G$ \emph{crosses} an $r$-local $2$-separation $\{A,B\}$ of~$G$ with separator~$X$ if the two vertex sets $\{u,v\}$ and $X$ are disjoint and if there is a $u$--$v$ walk $W$ in~$G$ of length at most~$r/2$ such that $W$ traverses $\{A,B\}$ oddly, see \autoref{fig:crossing}.
    We say that $\{u,v\}$ \emph{crosses} an $r$-local $2$-separator $X$ of~$G$ if $\{u,v\}$ crosses some $r$-local $2$-separation of~$G$ with separator equal to~$X$.
\end{dfn}

\begin{figure}[ht]
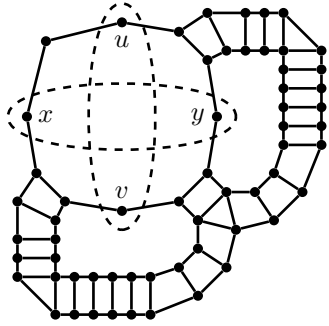

    \centering
\tikzfig{local_cross}
\caption{A pair $\{u,v\}$ that crosses a local separation with separator $\{x,y\}$.}
\label{fig:crossing}
\end{figure}

\begin{lem}
{\cite[Corollary 6.6]{carmesin}}
\label{crossingIsSymmetric}
The relation defined on the $r$-local $2$-separators of~$G$ by `crossing' is symmetric.
\end{lem}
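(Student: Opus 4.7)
The plan is to reduce crossing symmetry to a symmetry of $r$-local components that is more transparently self-dual.

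First, I would establish the following reformulation: for disjoint $r$-local $2$-separators $X=\{x_1,x_2\}$ and $Y=\{y_1,y_2\}$, $X$ crosses $Y$ if and only if there exists an $r$-local $2$-separation $\{A,B\}$ of $G$ with separator $Y$ such that $x_1$ and $x_2$ are placed on opposite sides, witnessed by some walk of length $\le r/2$. The forward direction uses \autoref{traversal}: any witnessing walk $W$ of length $\le r/2$ that traverses $\{A,B\}$ oddly lies entirely in $B_r(y)$ for any $y\in Y\cap V(W)$, and the odd-traversal parity forces $x_1$ and $x_2$ onto opposite sides of $\{A,B\}$. For the reverse direction, since $d(x_1,x_2)\le r/2$ because $X$ is an $r$-local $2$-separator, any shortest $x_1$-$x_2$ walk realizes the required length, and its parity of traversals is forced to be odd by \autoref{traversal} applied inside the ball around a vertex of $Y$ on the walk.

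Second, with this reformulation in hand, symmetry reduces to the manifestly self-dual claim: an $r$-local $2$-separation of $G$ with separator $Y$ that splits $x_1$ from $x_2$ exists if and only if an $r$-local $2$-separation of $G$ with separator $X$ that splits $y_1$ from $y_2$ exists. I would prove this by contrapositive. Suppose no such $\{A',B'\}$ with separator $X$ splits $y_1$ from $y_2$; then $y_1$ and $y_2$ lie in the same $r$-local component at $X$, so there is a path $P$ from $y_1$ to $y_2$ inside $B_r(x)-X$ for some $x\in X$. Combine $P$ with a shortest $x_1$-$x_2$ walk and a short $y_1$-$y_2$ walk (which exist because $d(x_1,x_2),d(y_1,y_2)\le r/2$) to form a closed walk whose traversal structure at $Y$ can be analyzed using \autoref{traversal}. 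The goal is to conclude that $x_1$ and $x_2$ lie in the same $r$-local component at $Y$, contradicting the crossing hypothesis.

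The main obstacle will be the path-surgery in the contrapositive: one must combine the auxiliary paths inside the appropriate balls $B_r(y)$ and $B_r(x)$ while carefully tracking which side of which local separation each vertex sits on. Since all involved paths have length at most $r/2$ and the diameters of $X$ and $Y$ are each at most $r/2$, every piece of the construction fits inside a ball of radius $r$ around a chosen pivot vertex, so \autoref{traversal} can be applied freely in either ball. The delicate bookkeeping step is to verify that a path from $y_1$ to $y_2$ avoiding $X$ inside $B_r(x)$ can be spliced with a short $x_1$-$x_2$ walk to yield an $x_1$-$x_2$ walk inside $B_r(y)$ that avoids $Y$ — this is where the interplay between the two balls, and the small diameter of each local $2$-separator, is essential.
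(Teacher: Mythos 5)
The paper does not prove this lemma; it cites it directly as \cite[Corollary~6.6]{carmesin}. You are therefore attempting a from-scratch proof of a nontrivial result from a different paper, and your plan as written has genuine gaps.

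The central problem is that your reformulation conflates the parity condition in the definition of crossing with a ``sides'' condition that need not be well-defined. For an $r$-local $2$-separation $\{A,B\}$ with separator $Y=\{y_1,y_2\}$, the sets $A$ and $B$ are contained in $N(Y)\cup Y$ by definition, and $A\sm Y$ and $B\sm Y$ partition the vertex set of the connectivity graph $C_r(y_1,y_2)$, which is exactly $N(Y)$. Hence ``$x_1$ and $x_2$ lie on opposite sides of $\{A,B\}$'' only makes sense when $x_1,x_2\in N(Y)$. But the definition of $X$ crossing $Y$ does not require $X\subseteq N(Y)$: the witnessing $x_1$--$x_2$ walk merely passes through $Y$ somewhere in its interior. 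So your reformulation is not a restatement of the definition; it is a strictly different (and in general undefined) condition. Correspondingly, \autoref{traversal} cannot be applied to $x_1,x_2$ in the way you propose, since that lemma requires its walk endpoints $a,b$ to lie in $N(X)$.

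Your reverse direction of the reformulation also breaks: you assert that any shortest $x_1$--$x_2$ walk is ``forced'' to traverse $\{A,B\}$ oddly, but a shortest $x_1$--$x_2$ walk need not meet $Y$ at all (for instance if $x_1$ and $x_2$ are adjacent), in which case it traverses $\{A,B\}$ zero times. Even when it does meet $Y$, nothing forces odd parity. The contrapositive step has the same adjacency problem in the other direction (you place $y_1,y_2$ in $r$-local components at $X$, which again requires $y_1,y_2\in N(X)$), plus an additional issue: vertices in the same component of the connectivity graph $C_r(x_1,x_2)$ are linked by a chain of paths spread across the two balls $B_r(x_1)-X$ and $B_r(x_2)-X$, not necessarily by a single path inside one ball $B_r(x)-X$. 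Finally, you yourself flag the ``delicate bookkeeping'' of splicing paths between the two balls as the essential step but do not carry it out; as it stands the proposal is a plan with unresolved holes, and the holes are precisely where the substance of the result lies.
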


By \autoref{crossingIsSymmetric}, we can and will use `crossing' as a symmetric relation on $r$-local $2$-separators.

\begin{thm}
\cite[Theorem~1.3]{carmesin}
\label{AngryTheorem2}
Let $G$ be a connected graph and $r\ge 2$ such that $G$ has no $r$-local cutvertex and contains a cycle of length at most~$r$.
If $G$ has an $r$-local $2$-separator, and if every $r$-local $2$-separator of~$G$ is crossed, then $G$ is a cycle of length at most~$r$.
\end{thm}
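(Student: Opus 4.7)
The plan is to extract an \emph{uncrossed} $r^+$-local $2$-separator of $G$ using \autoref{AngryTheorem2}, translate it to $\{\unit, h\}$ by vertex-transitivity, and then verify both desired properties of the enlarged Cayley graph $G'$. First I would observe that $G$ has no $r^+$-local cutvertex: since $r^- \le r^+$, the monotonicity remark in \autoref{sec:graphs} (an $r$-local cutvertex is also an $r'$-local cutvertex for $0 < r' < r$) implies, contrapositively, that the absence of $r^-$-local cutvertices yields the absence of $r^+$-local ones. Combined with the hypotheses that $G$ has an $r^+$-local $2$-separator, contains a cycle of length $\le r^+$, and is not itself a cycle of length $\le r^+$, the contrapositive of \autoref{AngryTheorem2} provides an $r^+$-local $2$-separator of $G$ that is not crossed by any other. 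By vertex-transitivity of $G$ we may translate so that this uncrossed separator is $\{\unit, h\}$ for some $h\in\Gamma$, and define $G' := \cay{\Gamma}{S \cup \{h^{\pm 1}\}}$.

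Next I would verify that $\{\unit, h\}$ is an $r_0$-local $2$-separator of~$G'$. The distance between $\unit$ and $h$ in~$G'$ is~$1 \le r_0/2$ via the new edge labelled~$h$. For disconnectedness of $C_{r_0}(\unit, h, G')$, I would start from an $r^+$-local $2$-separation $\{A,B\}$ of~$G$ with separator $\{\unit, h\}$ and show that the partition it induces on the relevant neighbourhoods in $G'$ still gives a separation. Any putative $G'$-walk inside $(B_{r_0}(\unit, G') \cup B_{r_0}(h, G')) \setminus \{\unit, h\}$ that links an $A$-side vertex to a $B$-side vertex can be expanded into a $G$-walk by replacing each new $h$-edge $\{g, gh\}$ by the canonical $G$-path from $g$ to~$gh$ of length $\ell := d_G(\unit, h) \le r^+/2$. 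The length inequality $\tfrac12 r_0\cdot r^- \le r^+$ is what guarantees this expansion fits inside $B_{r^+}(\unit, G) \cup B_{r^+}(h, G)$; and the uncrossed property of $\{\unit, h\}$ ensures that each canonical $h$-path preserves its side with respect to $\{A,B\}$ (a side-flip would be a crossing of $\{\unit, h\}$ by some $\Gamma$-translate of itself, ruled out by \autoref{crossingIsSymmetric} and uncrossedness). This yields a $G$-walk connecting the two sides inside the $r^+$-balls, contradicting the $r^+$-local separation.

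Then I would show that $G'$ has no $r_0$-local cutvertex. Suppose for contradiction that some $v \in \Gamma$ were one. Since $G \subseteq G'$ and $r^- \le r_0$, we have $B_{r^-}(v, G) \subseteq B_{r_0}(v, G')$, and since $G$ has no $r^-$-local cutvertex, $B_{r^-}(v, G) - v$ is connected. Hence all pairs of $G$-neighbours of $v$ are joined inside $B_{r_0}(v, G') - v$. The only potentially disconnected $G'$-neighbours of $v$ are thus the new ones $vh^{\pm 1}$. Using the canonical $G$-path from $v$ to $vh$ of length $\ell \le r^+/2$ together with the new $h$-edge, one obtains a closed $G'$-walk through $v$; the bound $\tfrac12 r_0 \cdot r^- \le r^+$ ensures this closed walk (after routing it via a cycle of length $\le r^-$ that contains the first $G$-edge of the canonical path, which exists because $v$ is not an $r^-$-local cutvertex) has length $\le r_0$, placing the canonical path inside $B_{r_0}(v, G')$ and connecting $vh$ to a $G$-neighbour of $v$ there, contradicting the cutvertex hypothesis. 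The same argument handles $vh^{-1}$.

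The main obstacle will be the separator-property step, which requires translating a $G$-level uncrossedness statement at locality~$r^+$ into a separation statement in~$G'$ at the smaller locality~$r_0$ while simultaneously accounting for every new $h$-edge. The product inequality $\tfrac12 r_0\cdot r^- \le r^+$ is the precise calibration that lets one expand each $h$-edge into a $G$-path of length up to $r^+/2$ and still conclude that the result lies in an $r^+$-ball in $G$; verifying that the expansion plus uncrossedness preserve sides uniformly across all $h$-edges encountered in the walk is the most delicate part of the argument.
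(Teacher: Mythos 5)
Your proposal does not prove the statement it is meant to prove. Theorem~\ref{AngryTheorem2} is a statement about an arbitrary connected graph $G$ and a single locality $r$: if $G$ has no $r$-local cutvertex, contains a cycle of length at most $r$, has an $r$-local $2$-separator, and every $r$-local $2$-separator of $G$ is crossed, then $G$ is a cycle of length at most $r$. What you have written is instead an argument for the \nameref{long_edge}~(\ref{long_edge}): your hypotheses and conclusion involve a Cayley graph $\cay{\Gamma}{S}$, three radii $r^-\le r_0\le r^+$ with $\tfrac12 r_0\cdot r^-\le r^+$, a group element $h$, and the enlarged graph $G'=\cay{\Gamma}{S\cup\{h^{\pm1}\}}$ --- none of which appear in Theorem~\ref{AngryTheorem2}. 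Worse, your very first step invokes ``the contrapositive of \autoref{AngryTheorem2}'' to extract a totally-nested (uncrossed) $r^+$-local $2$-separator; if the goal were to prove Theorem~\ref{AngryTheorem2}, this would be circular, since you would be assuming the theorem to prove it.

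Note also that the paper does not prove Theorem~\ref{AngryTheorem2} at all: it is imported verbatim from \cite[Theorem~1.3]{carmesin}, where its proof rests on the machinery of local $2$-separations and a local analogue of Tutte's decomposition of $2$-connected graphs (nested families of local $2$-separators, the angry-theorem-style analysis of mutually crossing separators, etc.). A genuine proof would have to engage with that material --- in particular with what mutual crossing of \emph{all} $r$-local $2$-separators forces structurally, and why the only surviving possibility under ``no $r$-local cutvertex plus a short cycle'' is that $G$ itself is a cycle of length at most $r$. Your text contains no argument of this kind; it should either cite the result, as the paper does, or be redirected and resubmitted as an attempted proof of the \nameref{long_edge}~(\ref{long_edge}), where its ideas (expanding $h$-edges into short $G$-paths, the calibration $\tfrac12 r_0\cdot r^-\le r^+$, compatibility of induced ball-separations) are actually relevant.
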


\begin{lem}\label{LocalCompsAtXseeX}
Let $X$ be an $r$-local $2$-separator of a connected graph~$G$ for some $r\ge 2$.
If an $r$-local component at~$X$ avoids $N(y)$ for some $y\in X$, then the other vertex $x\in X-y$ is an $r$-local cutvertex of~$G$.
\end{lem}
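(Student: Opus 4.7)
Write $X=\{x,y\}$ and fix an $r$-local component $K$ at $X$ with $K\cap N(y)=\emptyset$. The plan is to exhibit two vertices of $B_r(x)-x$ that lie in distinct components of $B_r(x)-x$: namely any chosen $a\in K$ and the vertex $y$ itself. That will immediately yield that $x$ is an $r$-local cutvertex of $G$.

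\textbf{Step 1: locating $a$ and $y$ inside $B_r(x)-x$.} Since the vertex set of $C_r(x,y)$ is contained in $N(x)\cup N(y)$ and $K$ avoids $N(y)$, we have $K\subseteq N(x)$, so any $a\in K$ lies in $V(B_r(x))-x$ via the closed walk $x\,a\,x$ of length two through $x$. For $y$, I invoke the local-$2$-separator assumption $d_G(x,y)\le r/2$: concatenating a shortest $x$--$y$ path with its reverse produces a closed walk through $x$ of length at most $r$ that visits $y$, so $y\in V(B_r(x))-x$ as well.

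\textbf{Step 2: separating $a$ from $y$ in $B_r(x)-x$.} Suppose for contradiction that there is a walk $W$ from $a$ to $y$ in $B_r(x)-x$. Let $b$ be the vertex of $W$ immediately preceding the \emph{first} occurrence of $y$ in $W$; then $b\in N(y)$, $b\ne x$ because $W$ avoids $x$, and $b\ne y$ by the choice of $b$. The initial segment of $W$ from $a$ to $b$ therefore lies inside $B_r(x)-x-y$, so by the definition of the connectivity graph, $a$ and $b$ are adjacent in $C_r(x,y)$. Hence $b$ belongs to the same $r$-local component as $a$, namely $K$, contradicting $K\cap N(y)=\emptyset$. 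So $a$ and $y$ lie in different components of $B_r(x)-x$, and $x$ is an $r$-local cutvertex.

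\textbf{Main obstacle.} The argument is short; the only subtlety is making sure that the chosen $b$ is really a vertex of $C_r(x,y)$ and that the truncated walk from $a$ to $b$ stays in $B_r(x)-x-y$ rather than merely $B_r(x)-x$. Both concerns are handled by taking $b$ to precede the \emph{first} visit of $W$ to $y$, so care in that single definition is the only step requiring attention.
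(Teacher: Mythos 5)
Your proof is correct, and it takes a genuinely different route from the paper: the paper's proof is a one-line citation of the Local $2$-Connectivity Lemma from \cite{carmesin}, whereas you give a self-contained direct argument from the definitions. Your Step~1 correctly places both $a\in K\subseteq N(x)$ and $y$ (via the assumed bound $d_G(x,y)\le r/2$ on the separator's diameter) into $B_r(x)-x$, and Step~2 correctly reads off that any hypothetical $a$--$y$ walk in $B_r(x)-x$, truncated just before its first visit to $y$, yields a walk in $B_r(x)-x-y$ witnessing an edge of $C_r(x,y)$ from $a$ to a vertex $b\in N(y)\setminus\{x,y\}$, placing $b$ in $K$ and contradicting $K\cap N(y)=\emptyset$. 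The one point you flag as delicate -- that $b$ is a legitimate vertex of $C_r(x,y)$ and that the truncated walk avoids $y$ -- is handled correctly by choosing the \emph{first} occurrence of $y$. Note that you never actually use the connectedness of $G$; that hypothesis is harmless but redundant for your argument, since the distance bound $d_G(x,y)\le r/2$ built into the definition of an $r$-local $2$-separator already gives you the finiteness you need. Compared with the paper's citation, your argument trades brevity for self-containment, which some readers would prefer; it also avoids the footnoted caveat in the paper about whether the cited lemma truly applies without the hidden cycle hypothesis, since you prove exactly what is needed directly.
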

\begin{proof}
This follows as a consequence of the Local 2-Connectivity Lemma \cite[Lemma~3.10]{carmesin}.\footnote{The statement of \cite[Lemma~3.10]{carmesin} requires the presence of a cycle in $G$ of length~$\le r$ (hidden in the term `$r$-locally 2-connected'), but the proof in \cite{carmesin} never uses this, so we can indeed apply the lemma here.}
\end{proof}

\begin{lem}\label{localCutvertexFromLocal2sep}
Let $\{A,B\}$ be an $r$-local $2$-separation of a connected graph~$G$ for some $r\ge 2$.
Let $X=\{x,y\}$ be the separator of $\{A,B\}$ and denote the distance between $x$ and $y$ in~$G$ by~$d$.
We assume that $G$ has no $r$-local cutvertex.
Then $x$ and $y$ are $r'$-local cutvertices of~$G$ for all integers $r'$ satisfying $1\le r'/2<d$.
\end{lem}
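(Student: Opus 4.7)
The plan is to prove that $x$ is an $r'$-local cutvertex; by swapping the roles of $x$ and $y$ throughout, the same will then follow for~$y$. The two auxiliary facts I would establish first are: (a) $r'<r$, and hence $B_{r'}(x)\se B_r(x)$, which follows because $d\le r/2$ for any $r$-local $2$-separator combined with the hypothesis $r'/2<d$; and (b) $y\notin B_{r'}(x)$, since otherwise $y$ would lie on a closed walk of length $\le r'$ through~$x$, whose two $x$--$y$ subwalks would witness $d(x,y)\le r'/2<d$, a contradiction.

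Next I would locate neighbours of $x$ on opposite sides of $\{A,B\}$. By \autoref{LocalCompsAtXseeX} and the assumption that neither $x$ nor $y$ is an $r$-local cutvertex, every $r$-local component at $X$ meets $N(x)$. Since $A\sm X$ and $B\sm X$ are nonempty unions of $r$-local components partitioning the vertex set of $C_r(x,y)$, I can pick $a\in (A\sm B)\cap N(x)$ and $b\in (B\sm A)\cap N(x)$.

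Finally I would conclude by contradiction: suppose $a$ and $b$ lie in the same component of $B_{r'}(x)-x$, witnessed by an $a$--$b$ path~$P$. By (a) and (b), the path $P$ is contained in $B_r(x)-x-y$, which makes $a$ and $b$ adjacent in the connectivity graph $C_r(x,y)$ and therefore places them in the same $r$-local component at~$X$; this contradicts $a$ and $b$ lying on opposite sides of $\{A,B\}$. Hence $x$ is a cutvertex of $B_{r'}(x)$, i.e.\ an $r'$-local cutvertex of~$G$. The whole argument is a direct unwinding of the definitions, so no real obstacle is expected; the only delicate point is the interplay of (a) and (b), but both follow immediately from the condition $1\le r'/2<d$ together with the built-in bound $d\le r/2$.
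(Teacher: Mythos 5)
Your proposal is correct and follows essentially the same route as the paper: derive $r'<r$ and $y\notin B_{r'}(x)$ from $1\le r'/2<d\le r/2$, use \autoref{LocalCompsAtXseeX} (via the no-local-cutvertex hypothesis) to find neighbours of~$x$ in distinct $r$-local components, and conclude they lie in different components of $B_{r'}(x)-x\se B_r(x)\sm X$. The only cosmetic difference is that you phrase the choice of $a,b$ in terms of opposite sides of $\{A,B\}$ rather than distinct local components, but the two are equivalent here and the argument is the same.
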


\begin{proof}
By symmetry it suffices to show that $B_{r'}(x)-x$ is disconnected for $r'$ satisfying $1\le r'/2<d$.
By \autoref{LocalCompsAtXseeX} the vertex $x$ has two neighbours, call them $u$ and $v$, in $N(X)$ in distinct $r$-local components at~$X$. 
So $u$ and $v$ are not connected by a path in the connectivity graph $C_r(x,y)$; that is, $u$ and $v$ lie in distinct components of~$B_r(x)\sm X$. 
As $X$ is an $r$-local $2$-separator, we have $d\le r/2$, so $r'<2d\le r$ gives $B_{r'}(x)\subseteq B_r(x)$.
Since $r'<2d$, the vertex~$y$ is not contained in $B_{r'}(x)$.
Hence $B_{r'}(x)-x$ is included in $B_r(x)\sm X$.
Thus $u$ and $v$ lie in distinct components of $B_{r'}(x)-x$.
\end{proof}

A \emph{separation} of $G$ is a pair $\{A,B\}$ such that $A\cup B=V(G)$, both $A\sm B$ and $B\sm A$ are nonempty, and~$G$ contains no edge between $A\sm B$ and~$B\sm A$.
We refer to $A\cap B$ as the \emph{separator} of~$\{A,B\}$ and say that $\{A,B\}$ is a \emph{$k$-separation} for $k:=|A\cap B|$.

Let $\{A,B\}$ be an $r$-local $2$-separation of~$G$ with separator~$X$. Assume that no vertex in $X$ is an $r$-local cutvertex.
For each $x\in X$ let $A_x$ be obtained from $X$ by adding all the vertices in components of $B_r(x)\sm X$ intersecting~$A\sm B$.
Analogously, let $B_x$ be obtained from $X$ by adding all the vertices in components of $B_r(x)\sm X$ intersecting~$B\sm A$.
Then $A_x\sm B_x$ and $B_x\sm A_x$ are nonempty by \autoref{LocalCompsAtXseeX}.
Hence $\{A_x,B_x\}$ is a $2$-separation of~$B_r(x)$ with separator~$X$, which we call the $2$-separation of $B_r(x)$ \emph{induced} by~$\{A,B\}$.

Let $X=\{u,v\}$ be a set of two vertices of~$G$, let $r\ge 2$, and for each $x\in X$ let $\{A_x,B_x\}$ be a $2$-separation of $B_r(x)$ with separator~$X$.
We say that $\{A_u,B_u\}$ and $\{A_v,B_v\}$ are \emph{compatible} if $(A_u\cup A_v)\cap N(X)$ is disjoint from $(B_u\cup B_v)\cap N(X)$.

\begin{eg}\label{LocalSepnCompatibleI}
If $\{A,B\}$ is an $r$-local $2$-separation of~$G$ for $r\ge 2$ with separator~$X$ such that $X$ contains no $r$-local cutvertices, then its induced $2$-separations of the balls $B_r(x)$ for $x\in X$ are compatible.\qed
\end{eg}

\begin{lem}\label{LocalSepnCompatibleII}
Let $G$ be a graph, $r\ge 2$, and $X$ a set of two vertices $u,v$ of~$G$.
If for each $x\in X$ there is a $2$-separation $\{A_x,B_x\}$ of $B_r(x)$ with separator~$X$ so that these $2$-separations are compatible, then $\{A,B\}$ is an $r$-local $2$-separation of $G$ with separator~$X$ for
    \[
        A:=(A_u\cup A_v)\cap (N(X)\cup X)\quad \text{and}\quad B:=(B_u\cup B_v)\cap (N(X)\cup X).
    \]
\end{lem}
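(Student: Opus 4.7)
The plan is to unpack the definition of an $r$-local $2$-separation and verify each of its defining properties for $\{A,B\}$ and $X$ directly from the compatibility hypothesis together with the fact that each $\{A_x,B_x\}$ is a $2$-separation of $B_r(x)$ with separator $X$. Concretely, there are five items to check: $A\cap B=X$; both $A\sm X$ and $B\sm X$ are nonempty; together they partition $V(C_r(u,v))=N(X)$; no edge of $C_r(u,v)$ runs between them; and the distance between $u$ and $v$ in $G$ is at most $r/2$. The only delicate point is that a vertex of $N(X)$ may be adjacent to only one of $u,v$ and hence lie in only one of the two balls $B_r(u),B_r(v)$; the compatibility hypothesis is crafted precisely to place such a vertex consistently on one of the two sides.

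First I would establish $A\cap B=X$. The inclusion $X\se A\cap B$ is immediate because $X\se A_x\cap B_x$ for each $x\in X$. Conversely, any $w\in(A\cap B)\sm X$ would lie in $N(X)\cap(A_u\cup A_v)\cap(B_u\cup B_v)$, which is empty by compatibility. From this it follows that $A\sm X=(A_u\cup A_v)\cap N(X)$ and $B\sm X=(B_u\cup B_v)\cap N(X)$ are disjoint (again by compatibility) and that their union is all of $N(X)$: any vertex in $N(X)$ is adjacent to $u$ or to $v$ and therefore lies in $B_r(u)\cup B_r(v)\se(A_u\cup B_u)\cup(A_v\cup B_v)$, using $r\ge 2$.

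Next I would show $A\sm X\ne\emptyset$, with $B\sm X\ne\emptyset$ following symmetrically. Pick $a\in A_u\sm X$, which exists because $\{A_u,B_u\}$ is a $2$-separation of $B_r(u)$ with separator $X$. Since $a\in B_r(u)$, there is a walk from $u$ to $a$ contained in $B_r(u)$; let $v_i$ be the last vertex lying in $X$ along this walk and let $v_{i+1}$ be its successor. Then $v_{i+1}\in N(X)\sm X$, and since the tail $v_{i+1}\ldots a$ is entirely contained in $B_r(u)\sm X$, the separation property of $\{A_u,B_u\}$ forces $v_{i+1}$ and $a$ to lie on the same side, so $v_{i+1}\in A_u\sm X\cap N(X)\se A\sm X$.

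Finally I would verify that no edge of $C_r(u,v)$ crosses between $A\sm X$ and $B\sm X$. Suppose for a contradiction that $a\in A\sm X$ and $b\in B\sm X$ lie in a common component of $B_r(u)\sm X$ (the case at $v$ is symmetric). Then $a,b\in A_u\cup B_u$, and the compatibility hypothesis rules out $a\in B_u$ and $b\in A_u$, hence $a\in A_u\sm X$ and $b\in B_u\sm X$. A connecting path in $B_r(u)\sm X$ then contradicts that $\{A_u,B_u\}$ is a $2$-separation of $B_r(u)$ with separator $X$. Together with nonemptiness this shows $C_r(u,v)$ is disconnected. Finally, $d_G(u,v)\le r/2$ because $v\in X\se B_r(u)$ lies on a closed walk through $u$ of length at most $r$, which splits at $v$ into a $u$--$v$ walk of length at most $r/2$. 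So $X$ is an $r$-local $2$-separator and $\{A,B\}$ is an $r$-local $2$-separation of $G$ with separator $X$.
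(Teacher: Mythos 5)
Your proof is correct and follows essentially the same path as the paper: use compatibility to get $A\cap B=X$ and the disjointness of the sides, use $r\ge 2$ to see that the sides cover $N(X)$, establish nonemptiness of each side, verify that no edge of the connectivity graph crosses by appealing to the fact that each component of $B_r(u)\sm X$ lies wholly in $A_u\sm B_u$ or in $B_u\sm A_u$, and read off the distance bound from $X\se B_r(u)$. The one place where you go beyond the paper is in spelling out why $(A_u\sm B_u)\cap N(X)$ is nonempty via the last-exit-from-$X$ walk argument; the paper asserts this without detail, so your elaboration is a welcome clarification rather than a deviation.
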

\begin{proof}
We have to show that $\{A,B\}$ is an $r$-local $2$-separation.
Since $X$ is included in $B_r(x)$ for $x\in X$ as the separator of $\{A_x,B_x\}$, the distance between $u$ and $v$ is at most~$r/2$.
Since $\{A_u,B_u\}$ and $\{A_v,B_v\}$ are compatible, we have $A\cap B=X$.
And since $r\ge 2$, we have $(A\sm B)\cup (B\sm A)=N(X)$.
Furthermore, $A\sm B$ and $B\sm A$ are nonempty since they include the nonempty sets $(A_u\sm B_u)\cap N(X)$ and $(B_u\sm A_u)\cap N(X)$, respectively.

Hence it remains to show that no edge of the connectivity graph $C_r(X)$ joins a vertex in $A\sm B$ to a vertex in $B\sm A$.
So let $e$ be an edge of $C_r(X)$.
Then the ends of $e$ lie in the same component $C$ of $B_r(u)\sm X$, say.
By assumption, $\{A_u,B_u\}$ is a $2$-separation of $B_r(u)$, so the vertex set of $C$ is included in $A_u\sm B_u$ or in $B_u\sm A_u$, say in $A_u\sm B_u$.
Then the ends of $e$ lie in $A_u\cap N(X)\se A\sm B$.
\end{proof}

To gain insights on the global structure of a group from a local 2-separation in one of its Cayley graphs, we sometimes found it helpful to slightly increase the generating set of the Cayley graph.
\autoref{CompatibleSeparationsAddingEdges} below in combination with \autoref{LocalSepnCompatibleII} above shows that slightly increasing the generating set essentially preserves the local 2-separation, provided that the added generators respect the local 2-separation.

Let $\{A,B\}$ be an $r$-local 2-separation of~$G$.
A set $F\se [V(G)]^2$ of edges \emph{respects} $\{A,B\}$ if for each edge $e\in F$ there exists a walk $W_e$ in $G$ of length~$\le d$ that links the ends of~$e$ and traverses~$\{A,B\}$ evenly.

\begin{lem}\label{CompatibleSeparationsAddingEdges}
Let $r,r'\ge 2$ and $d\ge 1$ with $r'\cdot d\le r$.
Let $\{A,B\}$ be an $r$-local $2$-separation of~$G$ whose separator~$X$ contains no $r$-local cutvertices.
For $x\in X$ let $\{A_x,B_x\}$ be the $2$-separation of $B_r(x)$ with separator~$X$ induced by~$\{A,B\}$.
Let $G'$ be obtained from $G$ by adding a set $F$ of new edges that respects~$\{A,B\}$.
Assume that the two vertices in $X$ have distance $\le r'/2$ in~$G'$.
For $x\in X$ let $A'_x$ and $B'_x$ be obtained from $A_x$ and $B_x$, respectively, by taking the intersection with $V(B_{r'}(x,G'))$.
Then $\{A'_x,B'_x\}$ is a $2$-separation of $B_{r'}(x,G')$ with separator~$X$ for both $x\in X$, and these $2$-separations are compatible.
\end{lem}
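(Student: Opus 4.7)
The plan is to verify the four axioms of a $2$-separation for $\{A'_x,B'_x\}$ in $B_{r'}(x,G')$ for each $x\in X$, and then check compatibility. The central technique is \emph{lifting}: given any closed walk $W$ in $G'$ of length $\le r'$ through some $x$, replace every edge $e\in F\cap E(W)$ by its respecting walk $W_e$ of length $\le d$ in $G$; the resulting closed $G$-walk $W^*$ through $x$ has length $\le r'\cdot d\le r$ and contains every vertex of $W$ (as well as every subwalk produced by some $W_e$).

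Applying lifting to an arbitrary vertex of $B_{r'}(x,G')$ gives $V(B_{r'}(x,G'))\se V(B_r(x,G))$, so $A'_x\cup B'_x=V(B_{r'}(x,G'))$. Since the two vertices of $X$ have $G'$-distance $\le r'/2$ by hypothesis, a there-and-back walk shows $X\se V(B_{r'}(x,G'))$, hence $A'_x\cap B'_x=X$. Nonemptiness of $A'_x\sm X$ and $B'_x\sm X$ follows from \autoref{LocalCompsAtXseeX} applied to both vertices of $X$ (giving $G$-neighbours of $x$ in both an $A$-side and a $B$-side component of $B_r(x)\sm X$) together with $r'\ge 2$.

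The crux is the no-crossing-edge axiom. Suppose $e=uv$ were an edge of $B_{r'}(x,G')$ with $u\in A'_x\sm X$ and $v\in B'_x\sm X$, and take a closed $G'$-walk $W$ through $x$ and $e$ of length $\le r'$ with lift $W^*\se B_r(x,G)$. If $e\in E(G)$, then $e\in E(B_r(x,G))$, contradicting that $\{A_x,B_x\}$ is a $2$-separation of $B_r(x,G)$. If $e\in F$, then the $u$--$v$ subwalk $W_e$ of $W^*$ lies in $B_r(x,G)$. A mild generalisation of \autoref{traversal} then applies: for any walk in $B_r(y,G)$ between vertices $p,q\in V(B_r(y))\sm X$, the components of $p$ and $q$ in $B_r(y)\sm X$ lie on the same side of $\{A_y,B_y\}$ iff the walk traverses $\{A,B\}$ evenly. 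This is proved by the same decomposition as in \autoref{traversal} into alternating $X$-free and $X$-intervals, tracking side-flips at each traversal. Applied to $W_e$, it forces an odd traversal count, contradicting that $F$ respects $\{A,B\}$.

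For compatibility, suppose some $w\in N_{G'}(X)$ lies in both $(A'_u\cup A'_v)\cap N_{G'}(X)$ and $(B'_u\cup B'_v)\cap N_{G'}(X)$; WLOG $w\in A'_u\cap B'_v$. If $w\in N_G(X)$, the original compatibility of $\{A_u,B_u\}$ and $\{A_v,B_v\}$ (\autoref{LocalSepnCompatibleI}) is violated. Otherwise $w$ is joined to some $x'\in X$ only by an $F$-edge, and the respecting walk $W_e$ from $w$ to $x'$ of length $\le d\le r/2$ lies in $B_r(x',G)$ (every vertex of $W_e$ is within distance $d$ of $x'$). By \autoref{LocalCompsAtXseeX} choose neighbours $a\in N_G(x')\cap A$ and $b\in N_G(x')\cap B$ of $x'$. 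Extending $W_e$ by the edge $x'a$ converts the final $X$-segment of $W_e$ into a full traversal, yielding an odd-traversal $w$--$a$ walk in $B_r(x',G)$; the generalised traversal lemma then places $w$ on the $B$-side of $\{A_{x'},B_{x'}\}$. The symmetric extension by $x'b$ places $w$ on the $A$-side, a contradiction. The main obstacle is the generalisation of \autoref{traversal}; once in hand, both the edge-crossing and the compatibility arguments are straightforward combinations of lifting and traversal counting.
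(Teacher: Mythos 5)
Your verification of the first three axioms and your lifting strategy match the paper's proof. For the no-crossing-edge axiom you take a slightly different route: instead of passing to a minimal subwalk $W^*$ with ends in $N_G(X)$ and then invoking \autoref{traversal} verbatim, you generalise \autoref{traversal} to walks whose ends are arbitrary vertices of $B_r(y,G)\sm X$ and apply it directly to $W_e$. That generalisation is sound (the decomposition into $X$-free and $X$-traversing segments works unchanged; each $X$-free segment links vertices in a single component of the punctured ball), and it delivers the same contradiction as the paper's argument. This part is correct.

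The compatibility argument, however, has a genuine gap. You extend $W_e$ (which ends at $x'\in X$ and traverses $\{A,B\}$ evenly) once by $x'a$ with $a\in A$ and once by $x'b$ with $b\in B$, and you claim that \emph{both} extensions turn the final $X$-segment into a traversal, so both extended walks have odd traversal count. That claim is false. Let $v_j$ be the last vertex of $W_e$ outside $X$; then $v_j\in N_G(X)$, and the final $X$-segment extended by a new endpoint $c$ becomes a traversal precisely when $v_j$ and $c$ lie in distinct $r$-local components. Since $a$ and $b$ lie in distinct components, exactly one of $a,b$ has this property; so exactly one of $W_e\cdot x'a$ and $W_e\cdot x'b$ has odd traversal count. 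Running the generalised traversal lemma with the correct parity bookkeeping places $w$ on the same side of $\{A_{x'},B_{x'}\}$ as $v_j$ in both cases — a consistent conclusion, not a contradiction. And even with this repair, you have only determined $w$'s side in $B_r(x',G)$ for the one $x'\in X$ that is the $F$-neighbour of $w$; compatibility requires comparing $w$'s side in \emph{both} balls $B_r(u,G)$ and $B_r(v,G)$, and nothing in the argument links the two. (For reference: the paper dispatches compatibility in one line, by observing that $A'_x\se A_x$ and $B'_x\se B_x$ and invoking \autoref{LocalSepnCompatibleI}; you correctly identify that this leaves the case $w\in N_{G'}(X)\sm N_G(X)$ to be argued, but the argument you supply for that case does not close.)
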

\begin{proof}
Let $x\in X$ be arbitrary.
Since the two vertices in $X$ have distance at most $r'/2$ in~$G'$, both vertices lie in~$B_{r'}(x,G')$.
Hence $A'_x\cap B'_x=X$.
The vertex set of $B_{r'}(x,G')$ is included in $B_r(x,G)$ since $r'\cdot d\le r$.
Thus $A'_x\cup B'_x$ is equal to the vertex set of $B_{r'}(x,G')$.
As $X$ contains no $r$-local cutvertices by assumption, \autoref{LocalCompsAtXseeX} tells us that both $A_x\sm B_x$ and $B_x\sm A_x$ contain neighbours of~$x$, and since $r'\ge 2$ these neighbours witness that $A'_x\sm B'_x$ and $B'_x\sm A'_x$ are nonempty.

To show that $\{A'_x,B'_x\}$ is a $2$-separation of $B_{r'}(x,G')$ with separator~$X$, it remains to show that there is no edge in $B_{r'}(x,G')$ between $A'_x\sm B'_x$ and $B'_x\sm A'_x$.
Suppose for a contradiction that there are $a\in A_x'\sm B_x'$ and $b\in B_x'\sm A_x'$ such that $ab$ is an edge in~$B_{r'}(x,G')$.
If $ab$ is present in~$G$, then $ab$ is an edge in $B_{r}(x,G)$ (since $r'\cdot d\le r$) with $a\in A_x\sm B_x$ and $b\in B_x\sm A_x$, contradicting that $\{A_x,B_x\}$ is a $2$-separation of $B_{r}(x,G)$.
So we may assume that $ab$ is in~$F$. 
Since $ab$ is present in the ball $B_{r'}(x,G')$, there is a closed walk $W'$ in~$G'$ of length~$\le r'$ that uses the edge~$ab$ and the vertex~$x$.
Let $W$ be the closed walk in $G$ that is obtained from~$W'$ by replacing each edge $e\in F$ used by $W'$ with the walk~$W_e$.
Then $W$ contains $W_{ab}$ as a subwalk, uses the vertex~$x$ and has length~$\le r$ (since $r'\cdot d\le r$).
Consider a minimal subwalk $W^*$ of~$W$ that includes $W_{ab}$ and has both ends $a^*,b^*$ in~$N_G(X)$.
Then $W^*$ traverses~$\{A,B\}$ as often as $W_{ab}$ does, which is an even number by assumption.
So $a^*,b^*\in A\sm B$ say, by \autoref{traversal}.
Hence $a^*,b^*\in A_x\sm B_x$.
Since the subwalks $a^* W^* a$ and $b W^* b^*$ of~$W^*$ are walks in $B_{r}(x,G)$ that avoid the separator~$X$ of~$\{A_x,B_x\}$, also the two subwalks are included in $A_x\sm B_x$.
In particular, $b\in A_x\sm B_x$.
This contradicts the fact that $b\in B_x'\sm A_x'\se B_x\sm A_x$.

Since the induced $2$-separations $\{A_x,B_x\}$ are compatible by \autoref{LocalSepnCompatibleI}, the $2$-separations $\{A'_x,B'_x\}$ are compatible as well.
\end{proof}

An $r$-local $2$-separator of~$G$ is \emph{totally nested} if it is not crossed by another $r$-local $2$-separator of~$G$.

\begin{proof}[Proof of the \nameref{long_edge} (\ref{long_edge})]
Let $2\le r^-\le r_0\le r^+$ be integers such that $\tfrac{1}{2}r_0\cdot r^-\le r^+$.
Let $G:=\cay{\Gamma}{S}$ be a Cayley graph such that $G$ has no $r^-$-local cutvertex and $G$ is not a cycle of length~$\le r^+$, but such that $G$ contains a cycle of length~$\le r^+$.
Suppose that $G$ has an $r^+$-local $2$-separator.
We have to find an $h\in\Gamma$ such that the Cayley graph $\cay{\Gamma}{S\cup\{h^{\pm 1}\}}$ has no $r_0$-local cutvertex and such that $\X$ is an $r_0$-local $2$-separator of this Cayley graph.

Since $r^+\ge r^-\ge 2$, the assumption that $G$ has no $r^-$-local cutvertex implies that $G$ has no $r^+$-local cutvertex either.
Hence, by \autoref{AngryTheorem2}, $G$~has a totally-nested $r^+$-local $2$-separator.
Using vertex-transitivity, we choose this $r^+$-local $2$-separator so that it contains the vertex~$\unit$.
Then the $r^+$-local $2$-separator has the form $\X$ for some $h\in\Gamma$, and we abbreviate $X:=\X$.

Let us denote the distance between the vertices $\unit$ and~$h$ in~$G$ by~$d$.
Since $X$ is an $r^+$-local $2$-separator of~$G$, we have $d\le r^+/2$ by definition, but we can get a better bound as follows:
Since $X$ is an $r^+$-local $2$-separator of~$G$ but $G$ has no $r^-$-local cutvertex, \autoref{localCutvertexFromLocal2sep} implies~$d\le r^-/2$.
Consider the Cayley graph $G':=\cay{\Gamma}{S\cup\{h^{\pm 1}\}}$.

\begin{sublem}\label{local connectivity}
    $G'$ has no $r$-local cutvertex for $r\ge r^-$.
\end{sublem}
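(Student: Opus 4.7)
The plan is to reduce to showing the single statement that $\unit$ is not an $r^-$-local cutvertex of~$G'$. Two quick reductions accomplish this: the monotonicity that an $r$-local cutvertex is automatically an $r^-$-local cutvertex whenever $r \ge r^-$ (recorded in \autoref{sec:graphs}) lets me fix $r = r^-$, and vertex-transitivity of the Cayley graph~$G'$ lets me fix the vertex to be~$\unit$. So concretely I would argue that $B_{r^-}(\unit, G') - \unit$ is connected.

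I would reduce once more: it suffices to verify that all neighbours of $\unit$ in~$G'$ lie in a common component of $B_{r^-}(\unit, G') - \unit$. This reduction is routine — any vertex $u \in B_{r^-}(\unit, G') - \unit$ lies on some closed walk $W$ through $\unit$ of length at most $r^-$ in~$G'$, and cutting $W$ at consecutive appearances of $\unit$ yields a subwalk of~$W$ inside $B_{r^-}(\unit, G')$ that passes through~$u$ and connects two neighbours of~$\unit$ without revisiting~$\unit$.

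The neighbours of $\unit$ in~$G'$ are $S \cup \{h, h^{-1}\}$. For any two generators $s, t \in S$, the hypothesis that $G$ has no $r^-$-local cutvertex places them in a common component of $B_{r^-}(\unit, G) - \unit$, which embeds as a subgraph of $B_{r^-}(\unit, G') - \unit$ since $G \subseteq G'$ and adding edges only enlarges balls. The remaining and main step is to link $h$ (and symmetrically $h^{-1}$) to some element of~$S$. Here I would exploit the bound $d \le r^-/2$ established just above the sublemma: a shortest $\unit$-$h$ path $\unit = v_0, v_1, \ldots, v_d = h$ in~$G$, closed off by the new edge $\{\unit, h\}$, forms a closed walk in~$G'$ through $\unit$ of length $d + 1 \le r^-/2 + 1 \le r^-$ (using $r^- \ge 4$). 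This walk certifies that $v_1, \ldots, v_d$ and the edges between them all sit inside $B_{r^-}(\unit, G')$, and the subwalk $v_1, \ldots, v_d$ then links $v_1 \in S$ to~$h$ inside $B_{r^-}(\unit, G') - \unit$, as required.

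The only subtlety, which is really just bookkeeping, is making sure every walk I use inhabits the ball $B_{r^-}(\unit, G')$ rather than merely~$G'$ — but in each case I can exhibit a short closed walk through $\unit$ certifying ball-membership, so this is straightforward.
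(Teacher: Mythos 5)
Your proof is correct and follows essentially the same strategy as the paper: reduce to the vertex $\unit$ and radius $r = r^-$ via monotonicity and vertex-transitivity, then combine the bound $d \le r^-/2$ with the fact that $G$ has no $r^-$-local cutvertex to see that all $G'$-neighbours of $\unit$ lie in a common component of the punctured ball. The only minor presentational difference is that the paper certifies $N_{G'}(\unit) \subseteq B_{r^-}(\unit,G)$ via round-trip walks of length $2d \le r^-$ inside $G$ and then invokes the connectedness of $B_{r^-}(\unit,G)-\unit$, whereas you exhibit an explicit path from $h$ to a generator in $S$ by following the shortest $\unit$--$h$ path in $G$ and closing it off with the new $h$-edge; both are valid.
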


\begin{cproof}
Let $x$ be any vertex of~$G'$.
Since $r^-\ge 2$, the ball $B_{r^-}(x,G')$ contains all neighbours of~$x$ in~$G'$.
Hence to show that $x$ is not an $r$-local cutvertex of~$G'$ for~$r\ge r^-$, it suffices to show that $x$ is not an $r^-$-local cutvertex of~$G'$.

Since $G\se G'$, we have the inclusion $B_{r^-}(x,G)\se B_{r^-}(x,G')$.
As $B_{r^-}(x,G)-x$ is connected by assumption,
and since $B_{r^-}(x,G')$ is connected by the definition of a ball,
to show that $B_{r^-}(x,G')-x$ is connected it suffices to show that $B_{r^-}(x,G)$ contains the entire neighbourhood~$N_{G'}(x)$ of~$x$ in~$G'$.
And indeed, $N_{G'}(x)=N_G(x)\cup\{xh,xh\i\}$, and both vertices $xh,xh\i$ have distance~$d\le r^-/2$ from~$x$ in~$G$, while $N_G(x)$ is included in $B_{r^-}(x,G)$ since~$r^-\ge 2$.
\end{cproof}\medskip

By \autoref{local connectivity}, $G'$ has no $r_0$-local cutvertex.
It remains to show that $X$ is an $r_0$-local $2$-separator of~$G'$.
Since $G$ has no $r^-$-local cutvertex by assumption, it also has no $r^+$-local cutvertex as $r^+\ge r^-$.
For each $x\in X$, let $\{A_x,B_x\}$ denote the $2$-separation of $B_{r^+}(x,G)$ induced by~$\{A,B\}$, and let $A'_x,B'_x$ be obtained from $A_x,B_x$ by taking the intersection with the vertex set of $B_{r_0}(x,G')$.
Then $\{A'_x,B'_x\}$ is a $2$-separation of $B_{r_0}(x,G')$, and these $2$-separations are compatible, by \autoref{CompatibleSeparationsAddingEdges} (applied with $r:=r^+$, $d:=d$, $F$ the set of $h$-labelled edges in~$G'$, and $r':=r_0$, using $r'\cdot d\le r_0\cdot (r^-/2)\le r^+=r$).
Hence the $2$-separations $\{A'_x,B'_x\}$ make $X$ into an $r_0$-local $2$-separator of~$G'$ by \autoref{LocalSepnCompatibleII}.
\end{proof}

\section{Plan for the proof of \autoref{3-con}} \label{sec:proof}

In this section, we state \autoref{lem:involution_free} and \autoref{lem:involution_case}, and we show how they -- combined with the \nameref{long_edge}~(\ref{long_edge}) -- imply \autoref{3-con}. 
To avoid stating every lemma and theorem with the same long list of assumptions, we summarise them in the following setting and refer to it instead:

\begin{setting}\label{set}
Let $\Gamma$ be a group that is nilpotent of class~$\le n$, and $S\subseteq \Gamma\sm\{\unit\}$ a generating set, and $G:=\cay{\Gamma}{S}$.
Let $r\ge 3$ be an integer.
Let $h\in S$ be such that $\X$ is an $r$-local $2$-separator of~$G$ and suppose that $G$ has no $r$-local cutvertex.
\end{setting}

\begin{thm}
\label{lem:involution_free}
    Assume \autoref{set} with $r\ge 2^{n+2}$.
    If $h$ is not an involution, then $S\subseteq\{h^{\pm1},h^{\pm2}\}$.
\end{thm}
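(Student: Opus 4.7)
Suppose for a contradiction that some $g\in S$ satisfies $g\nequiv h$ and $g\nequiv h^2$. My plan is to combine the \myref{traverser} with a short magic morpheme extracted from the iterated commutator $[g,h]_n$, in order to merge all outside-neighbours of $X:=\X$ into a single $r$-local component at~$X$, contradicting the $2$-separator assumption on~$X$.

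Since $\Gamma$ is nilpotent of class at most $n$ and $g\nequiv h$, the word $u_n:=[g,h]_n$ is an identity of $\Gamma$, and by \autoref{commutatorLength} has length $2^{n+1}\le r/2$. Applying the \myref{commutatorContainsMorpheme} to $u_n$ extracts a morpheme $m$ of $\Gamma$ in $\{g\pmo,h\pmo\}$ of length at least three. I handle the length-three case as in the proof of \autoref{magic2}: since then $h$ is forced to be a power of $g$, the elements $g$ and $h$ commute and $ghg^{-1}h^{-1}$ itself is a length-four morpheme. In either case I may assume $|m|\ge 4$, whereupon \autoref{magic_cor} ensures $m$ is magic in $g$ and~$h$. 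Hence $m$ traces out a cycle $O$ in $G$ of length $|m|\le 2^{n+1}\le r/2$.

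Since $h\in S$, the $X$-edge $\{\unit,h\}$ is $h$-labelled, and vertex-transitivity lets me left-translate $O$ so that any chosen $h\pmo$-labelled edge of $O$ becomes the $X$-edge; each such placement produces a cycle of length $\le r$ using the $X$-edge. By the \myref{traverser}, the two vertices of the translated cycle immediately preceding $\unit$ and immediately following $h$ (or vice versa) must lie in the same $r$-local component at~$X$. The magic property puts at least three of $gh$, $hg$, $g^{-1}h$, $hg^{-1}$ as linear subwords of $m\cup m^{-1}$: each pair $\star h$ supplies a placement in which the $X$-edge is oriented $\unit\to h$, while each pair $h\star$ supplies a placement oriented $h\to\unit$. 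Reading off the outside-ends of all these placements yields same-component identifications that merge $\{g,g^{-1},hg,hg^{-1}\}$ into a single $r$-local component at~$X$. A brief additional step---using translates of $O$ that place $h^{-1}$ or $h^2$ at the outside-end of an $X$-traversal---folds the remaining outside-neighbours $h^{-1}$ and $h^2$ of~$X$ into this same component, so that every outside-neighbour of~$X$ coming from the generators $g\pmo,h\pmo$ ends up in one common $r$-local component, contradicting the $2$-separator property of~$X$.

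The main obstacle is the merging step. The magic property guarantees only three of the four target pairs, so the case analysis has to treat each possible missing pair separately, and it must take the precise positions of $h\pmo$-letters in $u_n$ (which determine the outside-neighbours that get identified at each placement) into account. The edge case $h\equiv g^2$, which enters through the length-three branch of the morpheme analysis, has to be threaded carefully through this case analysis without disrupting the merger.
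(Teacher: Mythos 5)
Your plan is a direct transplant of the local-cutvertex argument in \autoref{lem:short_cycle_forall}, and the reason that argument succeeds does not survive the move to local $2$-separators. In the cutvertex case each magic pair $ab$ yields a path from $a\i$ to~$b$ in $B_r(\unit)-\unit$, so the paths form a bipartite graph between $\{g,g\i\}$ and $\{h,h\i\}$ with $h$ and $h\i$ serving as hubs; any three edges there connect all four vertices. In your setting the translated cycle uses the $X$-edge $\{\unit,h\}$, and because $m$ alternates between $g\pmo$ and $h\pmo$, the two outside-ends are always $a_i\i\in\{g,g\i\}$ (before $\unit$) and $h a_{i+2}\in\{hg,hg\i\}$ (after $h$); each identification is thus controlled by a \emph{three}-letter subword $a_i\,h\pmo\,a_{i+2}$ of~$m$, not by the two-letter magic subword, and only ever links $\{g,g\i\}$ to $\{hg,hg\i\}$. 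These identifications need not be connected. Concretely, in the length-three branch you fall back on the commutator $m=ghg\i h\i$: it has exactly two $h\pmo$-letters, whose flanking three-letter subwords are $ghg\i$ and (cyclically) $g\i h\i g$, yielding only $g\i\sim hg\i$ and $g\sim hg$ --- two disjoint pairs, no merger, no contradiction. The No-Squares Lemma does not help you forbid this configuration.

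The ``brief additional step'' for $h\i$ and $h^2$ is also without foundation. Since $m$ alternates, every translate of $O$ placing an $h\pmo$-edge on the $X$-edge has the other neighbour of $\unit$ equal to some $g^{\pm1}$ and the other neighbour of $h$ equal to $hg^{\pm1}$; the vertices $h\i$ and $h^2$ never appear as those outside-ends. You would need cycles with $h$-segments of length $\ge 2$, which is precisely why the paper works with $[g,h^2]_n$ (and $[s,h^3]_n$) rather than $[g,h]_n$, and why it is driven by the Anchor, Crisscross and Seagull Lemmas and by explicit morpheme/traversal incompatibilities (e.g.\ ruling out that both $gh^2$ and $h^2g$ traverse strongly, or deriving $h^3=\unit$) rather than by merging all outside-neighbours into one component. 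Your proposal, taken at face value, proves nothing in the abelian case $g=h^k$ with $|k|\ge 3$, which is exactly a case the theorem must exclude.
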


\begin{thm}
\label{lem:involution_case}
    Assume \autoref{set} with $r\ge \max\{2^{n+2},10\}$.
    If $h$ is an involution, then the subgroup of~$\Gamma$ generated by~$h$ is normal.
\end{thm}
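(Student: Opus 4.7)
The plan is to show $gh=hg$ for every $g\in S$; since $h^2=\unit$ and $S$ generates $\Gamma$, this yields $h\in Z(\Gamma)$, so $\langle h\rangle$ is normal in~$\Gamma$. Fix $g\in S$ with $g\not\equiv h$ and suppose for contradiction that $[g,h]\ne\unit$. The strategy is to produce a cycle in~$G$ of length at most~$r$ that strongly traverses~$X$, violating the \nameref{traverser}~(\ref{traverser}).

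By nilpotency, $u_n:=[g,h]_n$ is an identity word alternating in $g\pmo$ and $h\pmo$, of length $|u_n|=2^{n+1}\le r/2$ (\autoref{commutatorLength}), and it is reduced since $g\not\equiv h$. The \nameref{commutatorContainsMorpheme}~(\ref{commutatorContainsMorpheme}) extracts a morpheme $m\subseteq u_n$; $m$ inherits the alternating structure, has $|m|\ge 3$, and labels a cycle $O\subseteq G$ of length $|m|\le r/2<r$. If $|m|=3$ then, using $h^2=\unit$, a direct case inspection shows that $m=\unit$ forces either $h=\unit$, $g=\unit$, or $h\in\{g^{\pm 2}\}$; the first two are excluded by hypothesis, and the third would imply $h\in\langle g\rangle$ and hence that $g$ commutes with~$h$, contradicting $[g,h]\ne\unit$. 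Hence $|m|\ge 4$.

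Pick any $h\pmo$-letter of~$m$ and let $s,s'\in\{g\pmo\}$ be its two cyclically adjacent letters in~$m$, so $sh\pmo s'$ is a 3-letter cyclic subword of~$m$. Shifting~$O$ by left-translation in~$G$, we may assume that this $h\pmo$-edge is the separator edge $\{\unit,h\}$, oriented from $\unit$ to~$h$; then the subword labels the subwalk $s^{-1}\to\unit\to h\to hs'$. Its internal vertices $\{\unit,h\}$ lie in~$X$, and since $s,s'\ne h$ its endpoints $s^{-1},hs'$ lie in $N(X)\sm X$. If the endpoints lay in distinct $r$-local components at~$X$, this would be a strong traversal of~$X$ by~$O$, contradicting the \nameref{traverser}~(\ref{traverser}) since $|O|\le r$. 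Therefore, for every such 3-letter cyclic subword of~$m$, the endpoints $s^{-1}$ and $hs'$ lie in the same $r$-local component at~$X$.

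The main obstacle is to convert these same-component constraints into a contradiction with $X$ being an $r$-local 2-separator. By the \nameref{magic_cor}~(\ref{magic_cor}), $m\cup m^{-1}$ contains at least three of $\{gh,hg,g^{-1}h,hg^{-1}\}$ as linear subwords, and extending each such 2-letter pattern to the adjacent 3-letter $sh\pmo s'$ subword of~$m$ yields a same-component constraint between an element of $\{g,g^{-1}\}$ and one of $\{hg,hg^{-1}\}$. Combining these constraints with the automorphism of~$G$ given by left-multiplication by~$h$ (which swaps $\unit\leftrightarrow h$ and hence permutes the $r$-local components at~$X$), one should be able to conclude that all four vertices $g,g^{-1},hg,hg^{-1}$ lie in a single $r$-local component at~$X$; applying this reasoning to every generator in $S\sm\{h\}$ (with commuting generators handled by simpler 4-cycle arguments) then forces every vertex of $N(X)\sm X$ into one local component, contradicting \autoref{LocalCompsAtXseeX} since $\unit$ is not an $r$-local cutvertex of~$G$. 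The technical heart of the proof lies in this combinatorial accounting: ensuring that the morpheme supplies enough 3-letter patterns and using the $h$-automorphism to fill in the missing ones.
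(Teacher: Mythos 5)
Your overall framing — reduce normality of $\langle h\rangle$ to $h$ being central, i.e.\ $gh=hg$ for every $g\in S$, and try to derive a contradiction from a short cycle that strongly traverses $\X$ — matches the paper's plan. But the final paragraph, which you honestly flag as ``the technical heart,'' contains a genuine gap, and it is exactly the gap that the paper's \autoref{twosym}, \autoref{lem:inv_all_traverses} and \autoref{lem:no_antisymmetry_for_anyone} are built to close.

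The specific failure: the 3-letter constraints from the morpheme, even combined with the $h$-automorphism, do \emph{not} force $g,g\i,hg,hg\i$ into a single local component. Consider the case where the morpheme found in $u_n$ is (formally) $gh\i gh$, i.e.\ $(gh)^2$ after identifying $h$ with $h\i$; this happens precisely when $gh$ is an involution (e.g.\ in dihedral groups, which are nilpotent when the rotation order is a power of two). Every 3-letter cyclic subword of $gh\i gh$ centred on an $h$-letter is $g\,h^{\pm 1}\,g$, so your same-component constraints reduce to the single relation $g\i\sim hg$, and the $h$-automorphism only adds its image $g\sim hg\i$. That is perfectly consistent with \emph{two} local components, one containing $\{g\i,hg\}$ and the other containing $\{g,hg\i\}$ — the configuration the paper calls ``antisymmetric traversal'' (\autoref{symAndAntisym}, \autoref{obs:what_traverse}(ii)). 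No amount of combinatorial bookkeeping on the morpheme itself closes this, because the only subword patterns $g\,h^{\pm1}\,s'$ actually available have $s'=g$. The paper handles exactly this case by a separate, substantial argument: \autoref{twosym}(ii) extracts that $gh$ is an involution, and \autoref{lem:no_antisymmetry_for_anyone} then adds $z:=gh$ as a new generator, shows $\X$ remains an $\tfrac{r}{2}$-local $2$-separator in the enlarged Cayley graph (\autoref{lem:add_z_invo}), and derives a contradiction from a morpheme in $[z,h]_n$ that contains $zhz$.

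There is a second gap downstream. Even if you could show that all four of $g,g\i,hg,hg\i$ lie in one local component, that is not yet a contradiction — it is precisely the statement that ``$g$ does not traverse $\X$'' (\autoref{lem:c_shape}). Ruling this out requires a second generator: since $\X$ is a local $2$-separator, there is some $k\in S$ whose vertices lie in a different local component, and the paper then applies the morpheme machinery to $[g,hkh]_n$ (see \nameref{lem:ghkh} and the proof of \autoref{lem:inv_all_traverses}). Your proposal never introduces such a second generator, so there is no mechanism to turn ``all four in one component'' into a strong traversal. In short, the structure of the paper's proof — the symmetric/antisymmetric trichotomy, the edge-insertion trick of adjoining $z=gh$, and the two-generator argument via $[g,hkh]_n$ — is not a technicality you can route around with the Magic Lemma plus left-multiplication by $h$; it is the content of the proof.
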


We prepare to prove \autoref{3-con}.
If $G=\cay{\Gamma}{S}$ is a Cayley graph and $h\in S$ is an involution that generates a normal subgroup $H:=\langle h\rangle=\{\unit,h\}$ in~$\Gamma$, then we denote by $G/h$ the graph obtained from $G$ by contracting the perfect matching formed by all edges labelled by~$h=h\i$ and identifying all other edges $\{g_1,g_2\}$ with $\{g_1',g_2'\}$ whenever $g_i H=g_i' H$ for both $i=1,2$, see \autoref{fig:collapse_subgroup}.
The vertices of $G/h$ are precisely the equivalence classes $[g]=gH=\{g,gh\}$ of $\Gamma/H$.

\begin{figure}[ht]
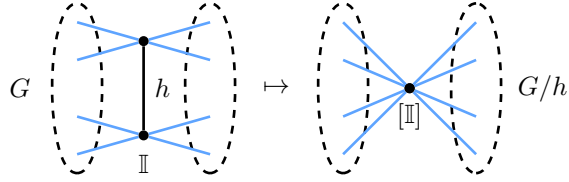

    \centering
\tikzfig{quotient}
\caption{The graph on the right is obtained from the graph on the left by factoring out a normal subgroup generated by an involution $h$.}
\label{fig:collapse_subgroup}
\end{figure}

\begin{lem}\label{lem:quotient_is_contraction}
    Let $G=\cay{\Gamma}{S}$ be a Cayley graph and $h\in S$ an involution. 
    If the subgroup $H:=\langle h\rangle$ of~$\Gamma$ generated by~$h$ is normal in~$\Gamma$, then $\Gamma/H$ is a group with $\cay{\Gamma/H}{S/H}=G/h$.
\end{lem}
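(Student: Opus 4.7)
The plan is to verify the two assertions of the lemma in turn. First, since $H$ is normal in $\Gamma$ by assumption, the quotient $\Gamma/H$ is a group under the standard coset multiplication. So part one of the statement is immediate from group theory, and the substance of the lemma lies in the graph equality.

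For the Cayley graph equality, I first need to pin down what $S/H$ means. The natural definition is $S/H:=\{\,[s]\mid s\in S,\ [s]\neq [\unit]\,\}$, where $[g]$ denotes the coset $gH=\{g,gh\}$. Since the only element of $S$ lying in $H\setminus\{\unit\}=\{h\}$ is $h$ itself, this equals $\{[s]\mid s\in S\setminus\{h\}\}$. I then check the three conditions that $S/H$ must satisfy to be a valid generating set in the sense of the paper: it avoids the neutral element by construction; it is closed under inverses because $S$ is and because $(sH)^{-1}=s^{-1}H$ under normality of~$H$; and it generates $\Gamma/H$ because $S$ generates $\Gamma$ and the quotient map is surjective.

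Next I show the graph equality. Both graphs have vertex set $\Gamma/H$, so it suffices to compare edges. On one hand, an edge of $\cay{\Gamma/H}{S/H}$ has the form $\{[g],[g][s]\}=\{[g],[gs]\}$ for some $g\in\Gamma$ and some $s\in S\setminus\{h\}$. On the other hand, edges of $G/h$ arise from edges $\{g_1,g_2\}$ of $G$ not labelled by $h$, i.e.\ of the form $\{g,gs\}$ with $s\in S\setminus\{h\}$, which after contracting the $h$-matching and identifying parallel edges become $\{[g],[gs]\}$. So the two edge sets agree set-theoretically.

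The only subtlety to address is that $G/h$, as constructed, is a well-defined simple graph matching the Cayley graph (no stray loops or multi-edges). For absence of loops: if $s\in S\setminus\{h\}$ then $gs\in\{g,gh\}$ would force $s\in\{\unit,h\}$, contradiction; so no $s$-edge of $G$ is contracted to a loop. For well-definedness of the edge identification: two generators $s,s'\in S\setminus\{h\}$ yield the same edge class $\{[g],[gs]\}=\{[g],[gs']\}$ in $G/h$ exactly when $[s]=[s']$, i.e.\ when they represent the same element of $S/H$, which is precisely the same condition under which $s,s'$ give the same Cayley-graph edge out of $[g]$. I expect no real obstacle beyond this careful bookkeeping; the lemma is essentially a sanity check that the graph-theoretic operation of contracting the $h$-matching and identifying parallel edges implements the algebraic operation of quotienting by $H$.
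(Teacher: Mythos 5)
Your proof is correct and follows essentially the same route as the paper's: both verify that the edges of $G/h$ and of $\cay{\Gamma/H}{S/H}$ correspond via the coset projection, the paper phrasing this as an equivalence between the existence of one of the four edges joining the cosets $\{g,gh\}$ and $\{g',g'h\}$ in $G$ and an $S/H$-labelled edge in the quotient Cayley graph. Your version adds some explicit bookkeeping (that $S/H$ avoids $[\unit]$ and is inverse-closed, that no $s$-edge collapses to a loop) that the paper leaves implicit, but the underlying argument is the same.
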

\begin{proof}
     It remains to prove that there is a suitable bijection between the edges of $G/h$ and those of $\cay{\Gamma/H}{S/H}$.
     \begin{enumerate}
         \item[] There is an edge joining $[g]$ to $[g']$ in $G/H$
         \item[$\Leftrightarrow$] at least one of the edges $\{g,g'\}$, $\{g,g'h\}$, $\{gh,g'\}$ or $\{gh,g'h\}$ exists in~$G$
         \item[$\Leftrightarrow$] $[g]$ and $[g']$ are joined by an edge in $\cay{\Gamma/H}{S/H}$.\qedhere
     \end{enumerate}
\end{proof}

\begin{lem}\label{Loc2sepToLocCutvx}
    Let $G=\cay{\Gamma}{S}$ be a Cayley graph. Suppose that $h\in S$ is an involution and that $\langle h\rangle$ is normal in $\Gamma$. If $\X$ is an $(r+2)$-local $2$-separator of~$G$ for some~$r\ge 2$, then $\X$ is an $r$-local cutvertex of~$G/h$.
\end{lem}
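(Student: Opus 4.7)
The plan is to take the $(r+2)$-local $2$-separation $\{A,B\}$ of $G$ at $X=\X$ and project it through $\pi\colon G\to G/h$ into a $1$-separation of $B_r([\unit],G/h)$ at $[\unit]$.

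I would first observe that normality of $\langle h\rangle$ together with $h$ being the unique non-identity element of $\langle h\rangle$ forces $ghg^{-1}=h$ for every $g\in\Gamma$, so $h$ is central; hence $[c]=\{c,ch\}=\{c,hc\}$ for every $c\in\Gamma$, and left multiplication by $h$ is a graph automorphism of $G$ that swaps $\unit\leftrightarrow h$ and interchanges $B_\rho(\unit,G)\leftrightarrow B_\rho(h,G)$ for every $\rho$. A direct consequence is that for any $a\in N_G(X)$ the $h$-edge $\{a,ah\}$ sits on the length-$4$ closed walk $\unit\to a\to ah\to a\to\unit$; because $r+2\ge 4$ this places the $h$-edge inside $B_{r+2}(\unit,G)-\unit-h$, so $a$ and $ah$ are $C_{r+2}$-adjacent and lie on the same side of $\{A,B\}$: the sets $A\sm X$ and $B\sm X$ are closed under $a\mapsto ah$.

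Next I would establish a ball-comparison between $G/h$ and $G$. A closed walk in $G/h$ of length $\le r$ through $[\unit]$ lifts to a walk in $G$ of length $\le r$ with both endpoints in $\{\unit,h\}$; splitting this lift at its mid-point shows every vertex it visits lies inside $B_r(\unit,G)\cup B_r(h,G)\subseteq B_{r+2}(\unit,G)\cap B_{r+2}(h,G)$, and appending at most one $h$-edge shows every edge of $B_r([\unit],G/h)$ has a lift inside $B_{r+1}(\unit,G)\subseteq B_{r+2}(\unit,G)$. In particular, for every $[c]\in V(B_r([\unit],G/h))\setminus\{[\unit]\}$ some representative $c$ satisfies $d_G(\unit,c)\le r/2$, so the $h$-edge $\{c,ch\}$ fits on the closed walk $\unit\to c\to ch\to c\to\unit$ of length $\le r+2$: the $h$-invariance from the previous paragraph now extends to every vertex of $V(B_r([\unit],G/h))\setminus\{[\unit]\}$.

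I would then project the induced $2$-separation $\{A^\unit,B^\unit\}$ of $B_{r+2}(\unit,G)$ by setting $\hat A:=\pi(A^\unit)\setminus\{[\unit]\}$ and $\hat B:=\pi(B^\unit)\setminus\{[\unit]\}$. Both sides are nonempty (each contains the image of some neighbor of $\unit$), cover $V(B_r([\unit],G/h))\setminus\{[\unit]\}$ (because $B_r([\unit],G/h)\subseteq\pi(B_{r+2}(\unit,G))$ as subgraphs), and are disjoint (by the extended $h$-invariance from the previous paragraph). A crossing edge $\{[u],[v]\}$ of $B_r([\unit],G/h)-[\unit]$ with $[u]\in\hat A$ and $[v]\in\hat B$ would lift to an edge inside $B_{r+2}(\unit,G)$ whose endpoints lie on opposite sides of $\{A^\unit,B^\unit\}$ and are both distinct from $\unit$ and $h$, contradicting that $\{A^\unit,B^\unit\}$ is a $2$-separation of $B_{r+2}(\unit,G)$ with separator $X$. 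Hence $B_r([\unit],G/h)-[\unit]$ is disconnected and $[\unit]$ is an $r$-local cutvertex of $G/h$.

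The main obstacle is extending $h$-invariance from $N_G(X)$ to all representatives of vertices of $B_r([\unit],G/h)$ inside $B_{r+2}(\unit,G)$: this relies on the ball-size estimate $d_G(\unit,c)\le r/2$ together with the inequality $2d_G(\unit,c)+2\le r+2$, and both appearances of the ``$+2$'' slack are tight, explaining the gap between the hypothesis $r+2$ and the conclusion $r$.
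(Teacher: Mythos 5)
Your proof is correct in substance and takes a genuinely different route from the paper's. You construct a $1$-separation of $B_r([\unit],G/h)$ at $[\unit]$ directly, by projecting a $2$-separation of $B_{r+2}(\unit,G)$ at $X$ through $\pi$; the paper instead argues by contraposition, assuming $B_r([\unit],G/h)-[\unit]$ is connected, lifting a connecting walk via its walk-lifting sublemma, and deducing that $B_{r+2}(v,G)\setminus X$ would be connected for $v\in X$, contradicting the local $2$-separator hypothesis. Both arguments rest on the same two ingredients -- the $h$-matching lift of walks (using normality of $\langle h\rangle$) and the ball comparison with $+2$ slack -- and your observation that left- or right-multiplication by $h$ is a ball-preserving automorphism (via centrality of $h$) does the same work as the paper's \autoref{walkLiftBall}. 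The contrapositive route is cleaner because it never has to verify well-definedness of the projected sides; your route is more constructive but requires the extra $h$-invariance bookkeeping. Two small imprecisions, neither fatal: (a) if the lifted walk has \emph{both} ends equal to $h$ you need two $h$-edges, so the edge lift lands in $B_{r+2}(\unit,G)$ rather than $B_{r+1}(\unit,G)$ -- but since you only use $B_{r+2}$ downstream this is harmless; and (b) you invoke ``the induced $2$-separation'', which the paper defines only under the extra hypothesis that $X$ contains no $(r+2)$-local cutvertex of $G$ (an assumption absent from this lemma's statement), and your nonemptiness check ``each side contains the image of some neighbour of $\unit$'' relies on \autoref{LocalCompsAtXseeX} which also needs that hypothesis. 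This is fixable -- since $X$ is an $(r+2)$-local $2$-separator, $B_{r+2}(\unit,G)\setminus X$ is disconnected and every component of it meets $N(X)$, so one can take \emph{any} $2$-separation of $B_{r+2}(\unit,G)$ with separator $X$ and replace ``neighbour of $\unit$'' by ``vertex of $N(X)$'' (whose image is still adjacent to $[\unit]$ in $G/h$, hence in $B_2([\unit],G/h)\subseteq B_r([\unit],G/h)$ as $r\ge 2$) -- but as written your proof quietly imports an assumption the lemma does not make.
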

\begin{proof}
Let $\tilde G:=G/h$.
Let $q$ map every vertex $v\in V(G)$ and every edge $e\in E(G)$ to the vertex $\tilde v\in V(\tilde G)$ or edge $\tilde e\in E(\tilde G)$ of the contraction minor $\tilde G$ to which it corresponds.

\begin{sublem}\label{walkLifting}
    Let $\tilde W=u_0 f_0 u_1\ldots u_{k-1} f_{k-1} u_k$ be a walk in~$\tilde G$.
    Let $j$ be an index of this walk, and let $p$ be a vertex or edge of $G$ with $\tilde p=u_j$ or $\tilde p=f_j$.
    Then there is a walk $W=v_0 e_0 v_1\ldots v_{k-1} e_{k-1} v_k$ in~$G$ with $\tilde v_i=u_i$ and $\tilde e_i=f_i$ for all~$i$ and with $v_j=p$ or $e_j=p$.
\end{sublem}
\begin{cproof}
    The edges labelled by $h$ from a perfect matching in~$G$, and $u_i$ and $u_{i+1}$ viewed as 2-sets in $V(G)$ contain a matching between them in~$G$ (as $e_i$ exists and $\langle h\rangle$ is normal).
    Hence we can find $W$ by starting at $p$ and then greedily extending it in two directions.
\end{cproof}

\begin{sublem}\label{walkLiftBall}
    $q\i (B_r(\tilde v,\tilde G))\se B_{r+2}(v,G)$ for all $v,\tilde v$ with $v\in\tilde v\in V(\tilde G)$.
\end{sublem}
\begin{cproof}
    Suppose first that $u\in q\i (B_r(\tilde v,\tilde G))$ is a vertex.
    As $\tilde u\in B_r(\tilde v,\tilde G)$, there is a closed walk $\tilde W$ in $\tilde G$ of length~$\le r$ that uses both $\tilde u$ and $\tilde v$.
    By \autoref{walkLifting}, there is a walk $W$ in $G$ of length~$\le r$ that uses $u$ and has both ends contained in the branch set~$\tilde v\se V(G)$.
    The walk $W$ is closed and witnesses $u\in B_{r+2}(v,G)$ -- unless the ends of $W$ in $\tilde v$ are distinct or $W$ is closed but its ends are equal to the other vertex in the branch set~$\tilde v$.
    In either of the remaining two cases, we can amend $W$ by adding a subwalk of length $\le 2$ that only uses the $h$-labelled edge that spans the branch set~$\tilde v$.
\end{cproof}\medskip

Let $X:=\X$ and let us write $x$ instead of $X$ whenever we consider $X$ as a vertex of~$\tilde G$.
We assume for a contradiction that $B_r(x,\tilde G)-x$ is connected.
Let $v\in X$ be arbitrary.
To obtain a contradiction, it suffices to show that $B_{r+2}(v,G)\sm X$ is connected.
For this, let $a,b\in N(X)$; we have to show that there is an $a$--$b$ walk in $B_{r+2}(v,G)\sm X$.
By assumption, there is an $\tilde a$--$\tilde b$ walk $\tilde W$ in $B_r(x,\tilde G)-x$.
\autoref{walkLifting} obtains a walk $W$ with ends $a$ and $b'\in\tilde b$ from this, where $W$ avoids $X$.
Moreover, $W\subseteq B_{r+2}(v,G)$ by \autoref{walkLiftBall}.
If $b'=b$ we are done; otherwise we append the edge $b'b$ that is labelled by~$h$.
\end{proof}

\begin{figure}[ht]
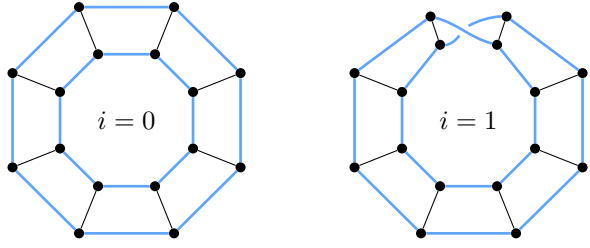

    \centering
\tikzfig{outcomes}
\caption{The two outcomes in the proof of \autoref{3-con} for $i=0,1$.}
\label{fig:main_proof_diag}
\end{figure}

In the proof of \autoref{3-con}, we will use $r$-local coverings.
However, we will use them only briefly, and \cite[Lemma~4.3]{GraphDec} will do most of the work for us.
Hence we do not introduce $r$-local coverings here, but instead refer to \cite[§4 up to Lemma~4.3]{GraphDec} for an introduction.

\begin{proof}[Proof of \autoref{3-con} assuming \autoref{lem:involution_free} and \autoref{lem:involution_case}]
We are given a finite group $\Gamma$ that is nilpotent of class~$\le n$, and we are given $r\ge\max\{4^{n+1},20\}$.

\indent \ref{StallingsCovering}$\to$\ref{StallingsLocalSeparators}.
By assumption, $\Gamma$ has a Cayley graph $G$ such that the $r$-local covering $G_r$ of $G$ has $\ge 2$ ends that are separated by a set $\hat X$ of $\le 2$ vertices.
Then there is a $(\le 2)$-separation of $G_r$ with separator~$\hat X$.
Restricting the sides of this separation to $\hat X$ and its neighbourhood in $G_r$ yields an $r$-local $(\le 2)$-separation $\{\hat A,\hat B\}$ of $G_r$ with separator~$\hat X$.
Then the $r$-local covering $p_r\colon G_r\to G$ projects $\{\hat A,\hat B\}$ to an $r$-local $(\le 2)$-separation $\{A,B\}$ of $G$ with separator $p_r(\hat X)$, by \cite[Lemma~4.3]{GraphDec}.
If $\Gamma$ has order~$\le r$, then every cycle in $G$ has length~$\le r$, so $G_r=G$ as cycles homotopically generate the fundamental group \cite[Lemma~4.1]{GraphDec}, contradicting that $G_r$ is infinite by assumption.

\ref{StallingsLocalSeparators}$\to$\ref{StallingsProduct}.
    By assumption, some Cayley graph $G=\cay{\Gamma}{S}$ has
    an $r$-local separator of size at most two.
    Let $\dot{r}:=\lfloor{\sqrt{r}\rfloor}$. 
    Our choice of $r$ ensures that $\dot{r}\ge2^{n+1}$. 
    If $G$ contains an $\dot{r}$-local cutvertex, then by \autoref{submainCutvertex} we conclude that $\Gamma$ is cyclic and of order $>r$, in which case we are done.
    So assume that $G$ contains no $\dot{r}$-local cutvertex.
    Let $r':=\max\left\{2\dot{r},10\right\}$. 
    Our choice of $r$ ensures that $r'\ge\max\{2^{n+2},10\}$.  
    
    We suppose that $S$ contains more than just one letter and its inverse, as otherwise $\Gamma$ would be cyclic and we would be done.
    We claim that $G$ contains a cycle of length at most~$r$. 
    Indeed, since the group $\Gamma$ is nilpotent of class~$\le n$ and~$S$ contains more than one letter and its inverse, the $n$th iterated commutator word in two inequivalent elements of~$S$ gives a closed walk of length at most $2^{n+1}\le r$ by \autoref{commutatorLength}. 
    By \autoref{lem:cycle_finder}, $G$ includes a cycle of length at most~$r$. 
    By \ref{StallingsLocalSeparators}, this cycle does not span~$G$.

    Now we invoke the \nameref{long_edge} (\ref{long_edge}) with $r^+=r$, $r^-=\dot{r}$, and $r_0=r'$ to obtain a new Cayley graph $G'=\cay{\Gamma}{S'}$ with $\X$ an $r'$-local $2$-separator and no $r'$-local cutvertices, where $S':=S\cup\{h^{\pm1}\}$.
    If $h$ is not an involution, then we apply \autoref{lem:involution_free} (using $r'\ge 2^{n+2}$) to deduce that $S'\subseteq\{h^{\pm1},h^{\pm2}\}$, so $\Gamma$ is cyclic and we are done.
    Hence we may assume now that $h$ is an involution.
    
    We apply \autoref{lem:involution_case}, using that $r'\ge\max\{2^{n+2},10\}$. We obtain that the subgroup $H:=\langle h\rangle$ generated by $h$ is normal in~$\Gamma$.
    Consider the graph $G'/h$ obtained by contracting the edges labelled by~$h$ and identifying edges that are in parallel after performing the contractions. 
    By \autoref{lem:quotient_is_contraction}, $G'/h=\cay{\Gamma/H}{S/H}$.
    By \autoref{Loc2sepToLocCutvx}, the $r'$-local $2$-separator $\X$ of $G'$ appears in $G'/h$ as an $(r'-2)$-local cutvertex. Since $r'\geq 2^{n+2}> 2^{n+1}+2$, we may apply \autoref{submainCutvertex} to obtain that $G'/h$ is a cycle of length~$\ge r'-1$ and $\Gamma/H$ is cyclic. 
    Hence there is some $g\in\Gamma$ such that $\Gamma/H=\langle [g]\rangle$. 
    We conclude that $g$ and $h$ together generate $\Gamma$. Since $H$ is normal in~$\Gamma$, the elements $g$ and $h$ commute.
    So, relators in $\Gamma$ aside from $h^2$ and the commutator can be simplified to a single relator:
    \[
        \Gamma \quad =\quad \langle\;\; g,h \;:\; h^2,\;ghg\i h,\;g^kh^i \;\;\rangle
    \]
    for some $k\in\N$, $i\in\{0,1\}$, see \autoref{fig:main_proof_diag}.
    We choose $k,i$ with $k$ as small as possible.
    If $i=1$, then $h=g^k$ and so $\Gamma\cong C_{2k}$, with $2k>r$ as $\Gamma$ has order~$>r$ by~\ref{StallingsLocalSeparators}.
    Otherwise $i=0$, and then $\Gamma\cong C_k\times C_2$. 
    Since the $C_2$-factor stems from~$H=\langle h\rangle$, we find that the Cayley graph $G$ looks like the one depicted in \autoref{fig:main_proof_diag} (on the left-hand side), where the edges labelled with~$h$ are black and the edges labelled with~$g$ are blue.
    So both blue cycles weakly traverse the $r$-local $2$-separator~$\X$ of~$G$, but only at one vertex.
    Hence the blue cycles have length~$>r$ by \autoref{CycleWeakTraversal}, and so $k>r$.

    \ref{StallingsProduct}$\to$\ref{StallingsCovering}.
    If $\Gamma$ is cyclic, then we consider a cycle as Cayley graph~$G$ of $\Gamma$.
    Since this cycle has length $>r$, its $r$-local covering is a two-way infinite path, which has two ends separated by a cutvertex.
    If $\Gamma$ is isomorphic to $C_i\times C_2$ for some $i>r$, then we let $G$ be the Cayley graph depicted in \autoref{fig:loc-sepr}.
    The $r$-local covering of $G$, also depicted in \autoref{fig:loc-sepr}, has two ends that are separated by two vertices.
\end{proof}

It remains to prove \autoref{lem:involution_free} and \autoref{lem:involution_case}, which we do in \autoref{sec:noninvo} and \autoref{sec:invo}, respectively.

\section{Non-involution case}\label{sec:noninvo}

In this section we prove \autoref{lem:involution_free} via the following two propositions.

\begin{prop}\label{lem:problem_morphemes}
    Assume \autoref{set} with $r\ge 2^{n+2}$, and suppose that $h$ is not an involution. 
    Then there is no $g\in S$ such that $g$ traverses~$\X$ at $\unit$, the word $gh^2$ traverses $\X$ strongly, and both $g\i h^2g\i h\i$ and $ghgh$ are morphemes.
\end{prop}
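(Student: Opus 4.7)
The plan is to suppose for contradiction that such a $g \in S$ exists, derive from the two morpheme conditions (combined with nilpotency) the relations $h^3 = \unit$ and $g^2 = h$, and then produce a short triangle through $\unit$ that places $g$ and $g\i$ in the same $r$-local component at $X = \X$, contradicting condition~(i).

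First I would cash out the morphemes as group relations. The morpheme $ghgh$ gives $(gh)^2 = \unit$, whence $ghg = h\i$; the morpheme $g\i h^2 g\i h\i$ rearranges to $ghg = h^2$. Comparing these two expressions for $ghg$ forces $h^3 = \unit$, and since $h \neq \unit$ is not an involution, $h$ has order exactly~$3$. Nilpotency then enters: in a finite nilpotent group, elements of coprime order commute, so the order-$3$ element $h$ and the order-$2$ element $gh$ commute. Cancelling $h$ on the right of $h(gh) = (gh)h$ yields $gh = hg$, and combining this with $(gh)^2 = \unit$ gives $g^2 h^2 = \unit$, so $g^2 = h^{-2} = h$ (using $h^3 = \unit$).

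For the contradiction, the relation $g \cdot h\i = g \cdot g^{-2} = g\i$ shows that $\{g, g\i\}$ is an edge of $G$, labelled by $h\i \in S$. Together with the $g$-edges at $\unit$, this edge forms a triangle $\unit \to g \to g\i \to \unit$ of length~$3 \le r$, so $\{g, g\i\}$ lies in $B_r(\unit)$. By the remark following the definition of an element traversing $X$ at a vertex, $g \nequiv h$, so neither endpoint of this edge lies in $X$; hence the edge persists in $B_r(\unit) - X$ and joins $g$ to $g\i$ in one component of the punctured ball. Thus $g$ and $g\i$ lie in the same $r$-local component at $X$, contradicting condition~(i). The main obstacle I anticipate is the algebraic passage from the two morphemes plus nilpotency to the single identity $g^2 = h$; once this is in hand, the triangle argument is almost immediate and uses only $r \ge 3$. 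The strong-traversal hypothesis on $gh^2$ does not seem to enter this particular contradiction, but presumably plays a role in the broader argument invoking \autoref{lem:problem_morphemes}.
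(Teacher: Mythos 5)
Your proof is correct, and it takes a genuinely different and more elementary route than the paper's.

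The paper also begins by combining the two morpheme relations to deduce $h^3 = \unit$, but then stays within the local-separator and iterated-commutator machinery: it observes that $z := gh$ is an involution, invokes \autoref{lem:add_z} (which needs the strong-traversal hypothesis on $gh^2$) to show that $\X$ remains an $\tfrac{r}{2}$-local $2$-separator after adding $z$ to the generating set, notes that $zhz$ then strongly traverses, and finally applies the \nameref{commutatorContainsMorpheme} to $[z,h]_n$ together with the \nameref{traverser} (\ref{traverser}) -- this is exactly where $r \ge 2^{n+2}$ is used. You instead exploit the algebra directly: once $h^3=\unit$ is known, $h$ has order $3$ and $z=gh$ has order $2$, and in a nilpotent group torsion elements of coprime order commute (the Sylow decomposition for finite nilpotent groups). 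This yields $gh=hg$, hence $g^2=h$, hence the edge $\{g,g\i\}$ labelled $h\i$, and the resulting triangle on $\unit,g,g\i$ immediately places $g$ and $g\i$ in the same $r$-local component, contradicting the traversal hypothesis. Your argument is not only shorter but needs only $r\ge 3$ and never uses the strong-traversal hypothesis on $gh^2$, so it in fact proves a stronger statement.

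Two small points worth addressing in a final write-up. First, \autoref{set} does not declare $\Gamma$ finite, and the end-of-paper remark indicates the authors care about the infinite case; to cover it, either cite the standard fact that the torsion subgroup of any nilpotent group is the restricted direct product of its $p$-torsion subgroups, or note that $\langle h, gh\rangle$ is a finitely generated nilpotent group generated by torsion elements, hence finite, and reduce to the finite case. Second, it is worth making explicit that $g$ is not an involution (which you need for $\unit, g, g\i$ to be three distinct vertices): this follows from the hypothesis that $g$ traverses $\X$ at $\unit$, since $g\i$ and $g$ must lie in distinct local components and in particular must be distinct, and also directly from $g^2 = h \neq \unit$.
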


\begin{prop}\label{non-involution}
Assume \autoref{set} with $r\ge 2^{n+2}$, and suppose that $h$ is not an involution.
Then $S\se\{h\pmo,h^{\pm 2}\}$, or else there is some $g\in S$ such that $g$ traverses~$\X$ at $\unit$, the word $gh^2$ strongly traverses~$\X$, and both $g^{-1}h^2g^{-1}h^{-1}$ and $ghgh$ are morphemes.
\end{prop}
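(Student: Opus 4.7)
Assume $S\not\subseteq\{h^{\pm1},h^{\pm2}\}$ and pick some $g\in S$ with $g\nequiv h$ and $g\nequiv h^2$; the sub-case where $g$ is itself an involution is easier (the morpheme $g^2$ is very short) and would be addressed separately. In the main case, $g$ is not an involution, and the strategy rests on two identity words supplied by nilpotency: $w_n:=[g,h^2]_n$ and $u_n:=[g,h]_n$, of lengths $3\cdot 2^n$ and $2^{n+1}$ respectively (\autoref{obs:commlen2} and \autoref{commutatorLength}), both at most $r$ since $r\ge 2^{n+2}$.

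To obtain (c), apply the Morpheme Finding Lemma~\ref{commutatorContainsMorpheme} to $w_n$, producing a morpheme $m$ of $\Gamma$ in $\{g^{\pm1},h^{\pm1}\}$ as a subword of $w_n$. Since neither $g$ nor $h$ is an involution, $m$ has length at least three, and the Ramsey Lemma~\ref{Ramsey} applied to $m$ as a cyclic subword of $w_n$ gives two alternatives: either $m$ contains one of $gh^2$ or $h^2 g$ as a cyclic subword, or $m=g^{-1}h^2g^{-1}h^{-1}$ up to cyclic permutation or inversion. The second alternative is exactly (c). To exclude the first, observe that $m$ corresponds via \autoref{cyclesGiveMorphemes} to a cycle $O$ of length at most $r$ in $G$; by vertex-transitivity, translate $O$ so the subword $gh^2$ labels the walk $g^{-1}\cdot\unit\cdot h\cdot h^2$. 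If $g^{-1}$ and $h^2$ lie in different local components at $X:=\{\unit,h\}$, this is a strong traversal of $X$, contradicting the Strong Traversal Lemma~\ref{traverser}. Thus the first Ramsey alternative is ruled out, and the same configuration simultaneously witnesses property~(b).

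For (d), I would run the analogous argument on $u_n$ using the Magic Corollary~\ref{magic2}: the magic morpheme produced contains, among cyclic subwords of itself and its inverse, at least three of $gh$, $hg$, $g^{-1}h$, $hg^{-1}$. Vertex-transitive translations again place these subwords as walks through $\unit$, and the Strong Traversal Lemma eliminates those configurations in which the resulting walks strongly traverse $X$. This narrows the magic morpheme to one expressing the relation $(gh)^2=\unit$ with no proper subword equal to the identity, i.e., $ghgh$ is a morpheme, which is~(d). Property~(a) then follows from the morpheme of~(c): a walk based at $g^{-1}$ tracing that morpheme exits $\unit$ along a $g$-edge and re-enters along a $g^{-1}$-edge, witnessing that $g$ and $g^{-1}$ sit in distinct local components at $X$.

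The main obstacle will be coordinating all four properties for a single $g$. Combined, (c) and (d) yield $h^2=ghg$ and $(gh)^2=\unit$, so $h^3=\unit$, a strong structural constraint on $\Gamma$. Extracting this from only the hypothesis $S\not\subseteq\{h^{\pm1},h^{\pm2}\}$ together with the local $2$-separator at $\X$ and the absence of local cutvertices will require carefully interleaving the Ramsey analysis of $w_n$, the Magic analysis of $u_n$, and the local-component bookkeeping at $X$, so that the Strong Traversal obstructions force exactly the two morphemes $g^{-1}h^2g^{-1}h^{-1}$ and $ghgh$ rather than any other shapes permitted by the Ramsey and Magic Lemmas.
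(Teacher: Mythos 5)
Your plan has several genuine gaps, and the proposed route diverges from what will actually succeed in at least three places.

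First, the choice of $g$ cannot be an arbitrary element of $S$ with $g\nequiv h,h^2$: for the Strong Traversal Lemma arguments to bite, you need $g$ to \emph{traverse} $\X$ at~$\unit$, and nothing forces the element you pick to do so (for instance, $g$ and $g^{-1}$ might land in the same local component). The paper invests a separate Lemma~\ref{trav_exists} (which itself rests on the Anchor Lemma~\ref{anchor}, \autoref{LocalCompsAtXseeX} and \autoref{finish-hcube}) to extract a traversing $g$; moreover the traversing $g$ it finds may satisfy $g\equiv h^k$ for $2\le k\le 4$, a case you omit entirely and which the paper dispatches with \autoref{power_to_traverse} and \autoref{finish-hcube}. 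Second, your exclusion of the first Ramsey alternative is circular: you say the translated walk $g^{-1}\cdot\unit\cdot h\cdot h^2$ is a strong traversal \emph{if} $g^{-1}$ and $h^2$ lie in different local components, and then announce that this ``simultaneously witnesses property~(b).'' But (b) is precisely that separation of $g^{-1}$ from~$h^2$ --- you must establish it independently before using it, which is exactly what \autoref{trav_exists} plus the interchange $g\leftrightarrow g^{-1}$ accomplishes. Also note that the Ramsey Lemma~\ref{Ramsey} requires $w$ to have at least four segments (the paper covers few-segment $w$ via \autoref{short}), and ruling out \emph{both} $gh^2$ and $h^2g$ as cyclic subwords needs the Seagull Lemma~\ref{lem:seagull}, not just the hypothesis on~$gh^2$.

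Third, and most substantially, your sketch for obtaining that $ghgh$ is a morpheme by ``running the analogous argument on $u_n$ using the Magic Corollary'' does not work. Magic gives you three of the subwords $gh,hg,g^{-1}h,hg^{-1}$ inside some cycle, and Strong Traversal arguments can forbid certain placements, but this does not pin down the shape of a morpheme to $ghgh$. The paper's actual route is different in kind: having established that $w=g^{-1}h^2g^{-1}h^{-1}$ is a morpheme labelling a $5$-cycle $O$, it invokes \autoref{lem:twin} (itself relying on the Crisscross Lemma~\ref{lem:crisscross}) to show that $O$ has an $h$-chord, and then \autoref{cor:twin} reads off $ghgh=\unit$ or $g\equiv h^2$ from the two sub-arcs of~$O$ cut by the chord. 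That chord-finding idea is the crux you are missing; a direct Magic-Lemma analysis of $u_n$ is not a substitute for it.
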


\begin{proof}[Proof of \autoref{lem:involution_free} assuming \autoref{lem:problem_morphemes} and \autoref{non-involution}:]
we combine them.
\end{proof}

We prove \autoref{lem:problem_morphemes} in \autoref{subsec:FirstProp}, and we prove \autoref{non-involution} in \autoref{subsec:SecondProp}.

\subsection{Proof of \autoref{lem:problem_morphemes}}\label{subsec:FirstProp}

\begin{lem}\label{lem:add_z}
    Assume \autoref{set} with~$r\ge4$.
    Let $\{A,B\}$ be an $r$-local $2$-separation of~$G$ with separator~$\X$.
    Suppose that $g\in S$ traverses~$\X$ at $\unit$ so that $g\i\in A$ and $g\in B$, and further suppose that $gh^2$ traverses~$\X$ strongly.
    Assume that $z:=gh$ is an involution.
    Then $\{A,B\cup\{z\}\}$ is an $\frac{r}{2}$-local $2$-separation of $G':=\cay{\Gamma}{S\cup\{z\}}$ with separator~$\X$.
\end{lem}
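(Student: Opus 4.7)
The plan is to verify directly that $\{A,B\cup\{z\}\}$ meets the definition of an $\frac{r}{2}$-local $2$-separation of $G'$ with separator $\X$: first check the set-theoretic structure, then carry the separation property from $G$ to $G'$ by a replacement argument that turns each new $z$-edge in a short $G'$-walk into a length-two $G$-walk, at the cost of halving the locality.

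For the set-theoretic part, the hypothesis that $z=gh$ is an involution gives $(gh)^2=\unit$, and hence $hz=g^{-1}$, $gh=h^{-1}g^{-1}$, and the closed $G$-walk $\unit,g,z,h^{-1},\unit$ of length four with label-word $ghgh$. This walk places $z$ and the $G$-edge $\{g,z\}$ inside $B_4(\unit,G)\setminus X$; since $g\in B\setminus A$, we conclude $z\notin A$, regardless of whether $z$ belongs to $N_G(X)$. Because the remaining new neighbour $hz=g^{-1}$ of $X$ in $G'$ is already in $N_G(X)$, we have $N_{G'}(X)=N_G(X)\cup\{z\}$, which yields $A\cap(B\cup\{z\})=X$ and $A\cup(B\cup\{z\})=X\cup N_{G'}(X)$, with both sides minus $X$ nonempty.

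For the separation property, assume towards a contradiction that some $a\in A\setminus X$ and $b\in(B\cup\{z\})\setminus X$ lie in a common component of $B_{r/2}(v,G')-X$ for some $v\in X$, witnessed by a path $P$. Translate $P$ into a $G$-walk $P^*$ by replacing each $z$-edge $\{u,uz\}$ of $P$ with whichever of the two length-two $G$-paths $u,ug,uz$ or $u,uh^{-1},uz$ has its intermediate vertex outside $X$. A short case analysis using $g\nequiv h$, $g\neq\unit$, and the identity $g^{-1}z=h$ shows such a choice always exists when $u,uz\notin X$: if both $ug$ and $uh^{-1}$ lay in $X$, one would derive $g\equiv h$, $g=\unit$, or $uz\in X$. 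For the length accounting, a closed $G'$-walk of length $\le r/2$ through $v$ witnessing $\{u,uz\}\in E(B_{r/2}(v,G'))$ becomes, after replacement of all its $z$-edges, a closed $G$-walk of length $\le r$ through $v$ containing the new intermediate vertex and the two new $G$-edges; so $P^*$ lies in $B_r(v,G)\setminus X$, placing $a$ and $b$ in one component there. If $b\in B\setminus X$, this directly contradicts $\{A,B\}$ being a $2$-separation of $G$. If $b=z$, we extend $P^*$ by the edge $\{g,z\}$, which lies in $B_r(v,G)\setminus X$ via the length-four walk above for $v=\unit$ and via the closed walk $h,\unit,g,z,g,\unit,h$ of length six for $v=h$ (the case $v=h$ with $r\in\{4,5\}$ being vacuous since then $B_{r/2}(h,G')-X$ has no edges, forcing $P$ to have length zero and $a=b=z\notin A$); this places $a$ and $g\in B\setminus X$ in one component of $B_r(v,G)\setminus X$, again contradicting $\{A,B\}$. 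Finally, $\X$ remains an $\frac{r}{2}$-local $2$-separator of $G'$ because $\unit$ and $h$ are still adjacent and the above makes $C_{r/2}(\unit,h,G')$ disconnected.

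The main obstacle is the replacement argument and its length accounting: checking that the two length-two $G$-replacements of any $z$-edge together exhaust the $X$-avoiding options (so at least one stays outside $X$ whenever the edge itself does), and verifying that doubling the length of short $G'$-walks is precisely the price one must pay to transfer separation properties from $r$-locality in $G$ down to $\frac{r}{2}$-locality in $G'$.
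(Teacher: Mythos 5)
Your proof is correct and takes a genuinely different route from the paper's. The paper also establishes $N_{G'}(\X)=N_G(\X)\cup\{z\}$, but then proves a sublemma that the \emph{fixed} label-word $gh$ never traverses $\{A,B\}$ (by analysing a cycle labelled $hghg$ through $h$ and showing it avoids $\unit$, which is precisely where the strong-traversal hypothesis on $gh^2$ enters), so that the set of $z$-labelled edges \emph{respects} $\{A,B\}$ with one uniform length-two replacement; it then invokes \autoref{CompatibleSeparationsAddingEdges} and \autoref{LocalSepnCompatibleII} and finally places $z$ on the $B$-side. You instead verify the definition of an $\tfrac{r}{2}$-local $2$-separation of $G'$ from scratch, and the decisive technical difference is per-edge flexibility in the replacement step: rather than a single label-word that works for every $z$-edge, you replace each $z$-edge $\{u,uz\}$ of the offending path by whichever of the two $G$-paths $u,ug,uz$ or $u,uh^{-1},uz$ has its middle vertex outside $X$, and your case analysis that one always does is correct (if $ug=\unit$ then $uz=g^{-1}gh=h\in X$, excluded; if $ug=h$ then $uh^{-1}=hg^{-1}h^{-1}\notin X$ since $g\neq\unit$ and $g\nequiv h$; symmetrically when $uh^{-1}\in X$). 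This makes your argument more elementary and self-contained, at the cost of inlining the content of the general lemmas that the paper reuses elsewhere (in \autoref{lem:add_z_invo} and in the \nameref{long_edge}~(\ref{long_edge})). One observation worth flagging: your argument never invokes the hypothesis that $gh^2$ traverses $\X$ strongly, whereas the paper's sublemma relies on it twice; if your argument is complete as written, this hypothesis could be dropped from the statement of \autoref{lem:add_z} -- harmless here since it is available in the only application (\autoref{lem:problem_morphemes}), but worth double-checking that you have not smuggled it in implicitly.
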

\begin{proof}
    If $z$ is in~$S$, there is nothing to show, so we may assume that this is not the case.
    We claim that 
    \begin{equation}\label{eq:NGdash}
        N_{G'}(\X)=N_G(\X)\cup\{z\}.
    \end{equation}
    Indeed, since $z$ is an involution, both vertices $\unit$ and $h$ get one (possibly new) neighbour in~$G'$, namely $z$ and~$hz$, respectively. 
    While $z\notin S$ gives $z\notin N_G(\X)$, using that $z$ is an involution we have
    \begin{equation}\label{eq:hz}
        hz=h(gh)=h(h\i g\i)=g\i\in S,
    \end{equation}
    so $hz$ is not a new neighbour; only~$z$ is.
    We will require the following sublemma:
    \begin{sublem}\label{NOghTraversal}
    The word $gh$ does not traverse~$\{A,B\}$.
    \end{sublem}
    \begin{cproof}
    Since $gh$ has length two, it suffices to show that $gh$ does not weakly traverse~$\{A,B\}$.
    Since the separator of $\{A,B\}$ contains the vertex~$h$, the word $gh$ does not weakly traverse $\{A,B\}$ at~$\unit$. 
    It remains to show that the word $gh$ does not weakly traverse~$\{A,B\}$ at~$h$. 
    For this, it suffices to show that both vertices $hg\i$ and $h^2$ lie in $B$.
    Since $g\i\in A$ and $gh^2$ strongly traverses~$\{A,B\}$, we have $h^2\in B$, so it remains to show that $hg\i$ is also in~$B$.
    For this, we consider the cycle $O$ in~$G$ that is labelled by the morpheme $hghg$ and that uses the vertices $h$, $h^2$, $h^2g$ and $h^2gh=hg\i$ in this order; see \autoref{fig:triangle}.
    Let us show that $O$ avoids the vertex~$\unit$, and suppose for a contradiction that it doesn't.
    Then one of the words $h$, $h^2$, $h^2g$, $hg\i$ evaluates to~$\unit$ in~$\Gamma$.
    It cannot be~$h$, and it cannot be $h^2$ since $gh^2$ strongly traverses~$\{A,B\}$.
    It cannot be $h^2 g$ since $h(hg)=h(g\i h\i)=hg\i h\i\neq\unit$.
    But it also cannot be $hg\i$, since $g\nequiv h$, a contradiction. So $\unit\notin O$.
    Hence $O$ cannot strongly traverse~$\{A,B\}$.
    If~$O$ would traverse $\{A,B\}$ weakly, then it would traverse $\{A,B\}$ weakly at both $\unit$ and~$h$ by \autoref{CycleWeakTraversal} (using $r\ge 4$), contradicting that $\unit$ is not on~$O$.
    So $O$ does not traverse $\{A,B\}$ at all.
    In combination with $\unit\notin O$, it follows that the two neighbours of~$h$ on~$O$, namely $hg\i$ and $h^2$, lie on the same side of~$\{A,B\}$.
    Since $h^2$ lies in~$B$ as shown earlier, this means that $hg\i$ lies in~$B$ as well. This completes our proof.
    \end{cproof}\medskip

Since $G$ has no $r$-local cutvertex by assumption, $\{A,B\}$ induces $2$-separations $\{A_x,B_x\}$ of $B_r(x,G)$ with separator~$X$ for both $x\in X$.
Let $A'_x,B'_x$ be obtained from $A_x,B_x$ by taking the intersection with the vertex set of $B_{r/2}(x,G')$.
By \autoref{NOghTraversal}, the ends of every $z$-labelled edge are linked by walk in $G$ of length two that does not traverse~$\{A,B\}$.
This allows us to apply \autoref{CompatibleSeparationsAddingEdges} with $r:=r$, $d:=2$, $F$~the set of $z$-labelled edges, and $r':=r/2$, which yields that the sets $\{A'_x,B'_x\}$ are compatible $2$-separations of $B_{r/2}(x,G')$.
Hence they make $X$ into the separator of an $\frac{r}{2}$-local $2$-separation $\{A',B'\}$ of~$G'$ with $A\sm B\se A'\sm B'$ and $B\sm A\se B'\sm A'$, by \autoref{LocalSepnCompatibleII}.
By (\ref{eq:NGdash}), it remains to show $z\in B'\sm A'$.

Since $g\in B\sm A$ by assumption, since $z=gh$, and since the vertex $z$ does not lie in~$X$ by (\ref{eq:NGdash}), we have $z\in B_{\unit}\sm A_{\unit}$ (using $r\ge 4$).
Hence $z\in B'_{\unit}\sm A'_{\unit}\se B'\sm A'$ as desired.
\end{proof}

\begin{figure}[ht]
    \centering
\tikzfig{big_triangle}
\caption{The situation in the proofs of \autoref{lem:add_z} and \autoref{lem:problem_morphemes}}
\label{fig:triangle}
\end{figure}

\begin{proof}[Proof of \autoref{lem:problem_morphemes}]
    Suppose for a contradiction that there exists some $g\in S$ such that $g$ traverses~$\X$ at $\unit$, the word $gh^2$ traverses~$\X$ strongly, and both $g\i h^2g\i h\i$ and $ghgh$ are morphemes.

    First, we combine $g\i h^2g\i h\i=\unit$ with $ghgh=\unit$ to obtain~$h^3=\unit$, as follows.
    From $g\i h^2g\i h\i=\unit$ we obtain $h^2 g\i h\i g\i=\unit$.
    From $ghgh=\unit$ we obtain $h\i g\i=gh$.
    By combining the two new terms, we get $\unit=h^2 g\i gh=h^3$.
    See \autoref{fig:triangle} for a depiction.
    
    Since $\X$ is an $r$-local $2$-separator of~$G$ and $g$ traverses~$\X$ at~$\unit$ by assumption, 
    we find an $r$-local $2$-separation~$\{A,B\}$ with separator~$\X$ such that $g\i\in A$ and~$g\in B$.
    Let $z:=gh$.
    Note that $z$ is an involution since $ghgh=\unit$,
    and let $G':=\cay{\Gamma}{S\cup\{z\}}$.
    Then $\{A,B\cup\{z\}\}$ is an $\frac{r}{2}$-local $2$-separation of~$G'$ with separator $\X$ by \autoref{lem:add_z}. 
    
    We claim that the word $zhz$ strongly traverses the $\frac{r}{2}$-local $2$-separation~$\{A,B\cup\{z\}\}$ in~$G'$.
    Indeed, on the one hand, the vertex $z\i=z$ is adjacent to~$\unit$ and lies in the side~$B\cup\{z\}$.
    And on the other hand, the vertex $hz=g\i$, see~(\ref{eq:hz}), lies in~$A$ by assumption.

\begin{sublem}\label{zh}
$z\nequiv h$.
\end{sublem}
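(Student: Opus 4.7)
The plan is to rule out both possibilities $z=h$ and $z=h\i$ by direct computation, exploiting the relation $h^3=\unit$ already derived earlier in the proof of \autoref{lem:problem_morphemes}, the hypothesis $g\nequiv h$ (which holds because $g$ traverses $\X$ at $\unit$, and any traversing element of $S$ is inequivalent to~$h$ by the remark following the definition of traversal), and the convention that $\unit\notin S$.

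First I would handle the case $z=h$. Since $z=gh$ by definition, the equation $gh=h$ forces $g=\unit$, contradicting $g\in S$.

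For the remaining case $z=h\i$, I would rearrange $gh=h\i$ to obtain $g=h^{-2}$. Combined with $h^3=\unit$, this gives $h^{-2}=h$, and hence $g=h$, contradicting $g\nequiv h$.

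There is no real obstacle here; the sublemma is a short bookkeeping step. Its purpose is to guarantee that $z$ genuinely behaves as a letter inequivalent to $h$ in the enlarged generating set $S\cup\{z\}$, so that the subsequent argument in the proof of \autoref{lem:problem_morphemes}, which invokes results such as the \nameref{magic_cor}~(\ref{magic_cor}) and the \nameref{traverser}~(\ref{traverser}) applied to short cycles in the letters $z$ and $h$, can be carried out without the letters collapsing into one another.
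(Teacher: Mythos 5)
Your proof is correct and follows essentially the same two-case computation as the paper, using $h^3=\unit$ and $g\nequiv h$ in exactly the same way.
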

\begin{cproof}
If $z=h$, then $gh=h$ yields $g=\unit$, which is not possible.
If $z=h\i$, then $gh=h\i$ in combination with $h^3=\unit$ yields $g=h^{-2}=h$, which is not possible as $g$ traverses $\X$ at~$\unit$.
\end{cproof}\medskip

Since $z\nequiv h$ by \autoref{zh}, we may use \autoref{commutatorContainsMorpheme} to find a morpheme $m$ in the iterated commutator word $[z,h]_n$ as a subword, and $m$ has length at least three since the iterated commutator word alternates in $z$ and~$h\pmo$.
Hence $m$ contains $zhz$ as a cylic subword.
By \autoref{commutatorLength}, $m$ has length at most~$\frac{r}{2}$.
So we can apply the \nameref{traverser} (\ref{traverser}) to obtain the desired contradiction. 
\end{proof}

\subsection{Proof of \autoref{non-involution}}\label{subsec:SecondProp}

    Let $h\nequiv g, s$ be letters and define $w_n:=[g,h^2]_n$ and $\vvv_n:=[s,h^3]_n$ (read $\vvv_n$ as \lq triple-u sub n\rq).

\begin{obs}\label{obs:commlen3}
    $|w_n|=3\cdot 2^n$ and $|\vvv_n|=2^{n+2}$.
\end{obs}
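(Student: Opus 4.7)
The plan is to prove both formulas by induction on $n$, reducing to \autoref{commutatorLength} via a simple substitution argument. Concretely, I would first establish the following substitution claim: for every integer $k\ge 1$, the reduced word $[g,h^k]_n$ is obtained from $u_n=[g,h]_n$ by replacing each occurrence of $h^{\pm1}$ with $h^{\pm k}$. Granting this claim together with the fact (to be verified below) that the resulting word is already reduced, one reads off
\[
    |[g,h^k]_n|\;=\;2^n+k\cdot 2^n\;=\;(k+1)\cdot 2^n,
\]
so that applying this with $k=2$ yields $|w_n|=3\cdot 2^n$ (recovering \autoref{obs:commlen2}) and with $k=3$ (and $s$ in place of $g$) yields $|\vvv_n|=4\cdot 2^n=2^{n+2}$.

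The substitution claim itself would be proved by induction on~$n$. The base case $n=1$ is immediate from the definition, since $[g,h^k]_1=gh^{-k}g^{-1}h^k$ is exactly $u_1=gh^{-1}g^{-1}h$ with each $h^{\pm1}$ replaced by $h^{\pm k}$. For the inductive step, $[g,h^k]_n$ is by definition the reduction of $u':=g\cdot[g,h^k]_{n-1}^{-1}\cdot g^{-1}\cdot[g,h^k]_{n-1}$; by the induction hypothesis, $[g,h^k]_{n-1}$ is obtained from $u_{n-1}$ by the substitution, so $u'$ is obtained from $gu_{n-1}^{-1}g^{-1}u_{n-1}$ by the very same substitution. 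The proof of \autoref{lem:what_words_look_like} identifies the only reduction that happens in $gu_{n-1}^{-1}g^{-1}u_{n-1}\to u_n$ as the cancellation of the explicitly inserted $g^{-1}$ with the leading letter $g$ of $u_{n-1}$. Since the substitution touches no $g^{\pm1}$-letter, exactly the same (and only this) cancellation occurs in $u'$, so $[g,h^k]_n$ equals the substitution applied to $u_n$.

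To see that the substituted word is already reduced, I would invoke the explicit form \eqref{eq:unRecursiveForm} together with the \nameref{lem:nosquares}~(\ref{lem:nosquares}), which together force $u_n$ to alternate strictly between letters from $\{g^{\pm1}\}$ and letters from $\{h^{\pm1}\}$; in particular, no two $h^{\pm1}$-letters of $u_n$ are adjacent, so replacing each $h^{\pm1}$ by $h^{\pm k}$ produces no new cancellation. Combined with $|u_n|=2^{n+1}$ from \autoref{commutatorLength}, strict alternation forces exactly $2^n$ letters of each type in $u_n$, which justifies the count $(k+1)\cdot 2^n$.

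There is no serious obstacle; the argument is pure combinatorial bookkeeping on top of \autoref{lem:what_words_look_like}. The only point requiring a moment's attention is checking that the single $g^{-1}g$ cancellation pattern in the recursive step of \autoref{lem:what_words_look_like} survives the substitution unchanged, and this is transparent because the substitution is supported entirely on the $h$-letters.
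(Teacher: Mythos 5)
Your proof is correct and fills in exactly the substitution-and-count argument that the paper compresses into the one-line ``apply \autoref{commutatorLength}''. One small overstatement: you do not need the \nameref{lem:nosquares}~(\ref{lem:nosquares}) to see that $u_n$ alternates between $g^{\pm1}$- and $h^{\pm1}$-letters --- that is immediate from the explicit form \eqref{eq:unRecursiveForm} (or from the recursion $u_n=g\,u_{n-1}^{-1}\,{}^{-}u_{n-1}$ and induction), and the No-Squares Lemma concerns forbidden length-four patterns, not alternation.
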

\begin{proof}[Proof:] apply \autoref{commutatorLength}.
\end{proof}

\begin{lem}\label{finish-hcube}
Assume \autoref{set} with $r\ge2^{n+2}$. If $h^3$ traverses $\X$ strongly, then $S\subseteq\{h\pmo, h^{\pm 2}\}$.
\end{lem}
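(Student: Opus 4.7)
The argument is by contradiction: assume some $g \in S$ satisfies $g \notin \{h^{\pm 1}, h^{\pm 2}\}$.

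First I rule out $g \equiv h^3$. Since $h^3$ strongly traverses $X$, in particular $h^3 \neq \unit$, so $h$ has order at least four and both $h^{-1}, h^2$ lie outside $X$. If $g = h^{\pm 3}$, then the $g$-edge at $h^{-1}$ (or at $h^2$) provides an edge $h^{-1}$--$h^2$ in $G$; the closed walk $\unit \to h^{-1} \to h^2 \to h \to \unit$ of length four witnesses that this edge lies in $B_r(\unit) \setminus X$, putting $h^{-1}$ and $h^2$ into the same $r$-local component at $X$ and contradicting the strong-traversal hypothesis. Hence $g \nequiv h^k$ for each $k \in \{1, 2, 3\}$.

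Since $g \nequiv h^2$, nilpotency of class at most $n$ yields $w_n := [g, h^2]_n = \unit$; by \autoref{obs:commlen2} it has length $3 \cdot 2^n \le \tfrac{3}{4} r < r$. The corresponding non-backtracking closed walk $W$ in $G$ contains a cycle $C$ of length at most $r$ by \autoref{lem:cycle_finder}, and the label of $C$ is a morpheme $m$ of $\Gamma$ in $\{g^{\pm 1}, h^{\pm 1}\}$ of length at most $r$. Applying the \nameref{Ramsey}~(\ref{Ramsey}) to $m$ as a cyclic subword of $w_n$, either $m$ contains $gh^2$ or $h^2 g$ as a cyclic subword, or $m = g^{-1}h^2g^{-1}h^{-1}$ up to cyclic permutation and inversion. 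In either case the cycle $C$ contains a length-3 subwalk whose two internal vertices form a translate $uX$ of $X$ (in the exceptional case we take $u = \unit$, exploiting that the derived identity $g^{-1}h^2g^{-1} = h$ places $X$ itself as an internal pair on the $5$-cycle $C$).

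By vertex-transitivity the strong traversal of $h^3$ transfers to $uX$, so the vertices $uh^{-1}$ and $uh^2$ lie in distinct $r$-local components at $uX$. The aim is now to show that the two endpoints of our length-3 subwalk lie in exactly these two components, so that the subwalk is a strong traversal of $uX$ of length at most $r$, contradicting the \nameref{traverser}~(\ref{traverser}).

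The main obstacle is this last step. I would treat the length-3 cyclic subwords of $w_n$ of the form $g^{\pm 1}h^{\pm 2}$ and $h^{\pm 2}g^{\pm 1}$ in parallel; each appears in $w_n$ by inspection of the recursive form in \autoref{lem:what_words_look_like}, and each gives a placement condition on one of $g, g^{-1}, hg, hg^{-1}$ in the local components $A \ni h^{-1}$ and $B \ni h^2$. If all four conditions failed simultaneously, a suitably chosen cycle extractable from $W$ would pass through $\unit$, avoid $h$, and have its two $\unit$-neighbours in distinct local components; this weak traversal of $X$ at $\unit$ would force, via \autoref{CycleWeakTraversal}, a weak traversal at $h$ too -- impossible because the cycle avoids $h$. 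So at least one of the conditions must hold, producing the desired strong traversal and hence the contradiction.
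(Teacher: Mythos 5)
Your approach diverges substantially from the paper's and, as written, has genuine gaps.

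The paper does not use $[g,h^2]_n$ and the \nameref{Ramsey}~(\ref{Ramsey}) at all for this lemma. Instead it introduces the iterated commutator $\vvv_n := [s,h^3]_n$ for an arbitrary $s \in S \setminus \{h^{\pm 1}, h^{\pm 2}\}$, whose whole point is to encode the hypothesis directly: since $h^3$ traverses $\X$ strongly, the \nameref{traverser}~(\ref{traverser}) immediately forbids any full $h$-segment $h^{\pm 3}$ from appearing in the morpheme $\vvv$ that the \nameref{commutatorContainsMorpheme}~(\ref{commutatorContainsMorpheme}) extracts from $\vvv_n$. That leaves at most one $s$-segment and $h$-segments of length at most two, so $\vvv$ reduces (after cyclic permutation) to $h^{\pm 1}s^{\pm 1}$ or $h^{\pm 2}s^{\pm 1}$, giving $s \equiv h$ or $s \equiv h^2$ --- a direct contradiction. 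No case split, no Ramsey, no careful bookkeeping of which neighbour of $\unit$ lies where. Your choice of commutator $[g,h^2]_n$ does not carry the $h^3$-traversal hypothesis inside it, so you are forced to recover the contradiction indirectly via component-placement arguments, which is exactly where your write-up breaks down.

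Concretely, there are two gaps. First, the \nameref{Ramsey}~(\ref{Ramsey}) only applies when the morpheme $m$ has at least four segments; a morpheme extracted from $w_n = [g,h^2]_n$ can perfectly well have three segments, e.g.\ $m = g h^{\pm 2} g$ giving $g^2 \equiv h^2$, which is not excluded by $g \notin \{h^{\pm 1}, h^{\pm 2}, h^{\pm 3}\}$ and which you never address. (The analogous fewer-segment analysis appears in the paper as \autoref{short}, but that lemma additionally assumes $g$ traverses $\X$ at $\unit$, a hypothesis you do not have here, so you cannot simply cite it.) Second, the closing paragraph --- arguing that one of the four ``placement conditions'' on $g, g^{-1}, hg, hg^{-1}$ must succeed, else a cycle through $\unit$ avoiding $h$ would weakly traverse $\X$ at $\unit$ --- is a sketch rather than a proof, and you say so yourself (``The main obstacle is this last step''). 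It is not clear what cycle you would extract from $W$, why it avoids $h$, or why its two neighbours of $\unit$ would straddle $\X$, and the exceptional Ramsey outcome $m = g^{-1}h^2g^{-1}h^{-1}$ is waved through rather than resolved. Switching to the paper's $[s,h^3]_n$ eliminates all of these difficulties at once.
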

\begin{proof}
    Let us assume for a contradiction that $h^3$ traverses $\X$ strongly, but $S\not\se\{h\pmo, h^{\pm 2}\}$.
    Pick $s\in S\sm\{h\pmo, h^{\pm 2}\}$ arbitrarily.
    Since $s\nequiv h$, by the \nameref{commutatorContainsMorpheme} (\ref{commutatorContainsMorpheme}) we find a morpheme $\vvv$ that is a linear subword of the iterated commutator $\vvv_n=[s,h^3]_n$. 
    By \autoref{obs:commlen3}, we have $|\vvv_n|\leq r$, so $|\vvv|\le r$. 
    Hence by the \nameref{traverser} (\ref{traverser}), $\vvv$ does not contain $h^3$ as a cyclic subword.
    Since $\vvv$ has no full $h$-segment, it has at most one $s$-segment. First note that  $\vvv$ cannot be equal to $s\pmo$ and also cannot consist of a single $h$-segment as $h^3$ traverses strongly. 
    So, possibly after cyclic permutation, $\vvv$ has only one $h$-segment $h^{\pm 1}$ or $h^{\pm 2}$ followed by the letter~$s^{\pm 1}$, contradicting $s\nequiv h,h^2$. 
\end{proof}

\begin{lem}\label{ghpseudomagic}
Let $g\nequiv h$ be elements of a group~$\Gamma$ and suppose that $h$ is not an involution.
Let $w$ be a subword of $w_n=[g,h^2]_n$ that is a morpheme.
Then $w$ contains $gh^2$ or $g\i h^2$ as a cyclic subword.
\end{lem}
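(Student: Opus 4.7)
The plan is a case analysis on the number $k$ of maximal monochromatic segments of $w$. I will use the structure of $w_n = [g,h^2]_n$: alternating segments $g^{\pm 1}$ (length $1$) and $h^{\pm 2}$ (length $2$), with adjacent $h$-segments of opposite sign (which follows from the recursive form of $w_n$ and the \nameref{lem:nosquares}~(\ref{lem:nosquares}) applied under the substitution $h^{\pm 1}\mapsto h^{\pm 2}$). First I will rule out morphemes of length $\le 2$: length-$1$ identity words do not exist, and by the \nameref{commutatorContainsMorpheme}~(\ref{commutatorContainsMorpheme}) any length-$2$ morpheme is of the form $g_i^{\pm 2}$, but $g^{\pm 2}$ does not appear in $w_n$ and $h^{\pm 2}\neq\unit$ since $h$ is not an involution. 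Hence $|w|\ge 3$.

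For $k \le 3$ I will enumerate the possible shapes of $w$ as a subword of $w_n$. The $2$-segment length-$3$ shapes are $g^{\alpha}h^{\beta 2}$ and $h^{\beta 2}g^{\alpha}$; a direct check over the four sign patterns $(\alpha,\beta)\in\{\pm 1\}^2$ confirms that each such word contains $gh^2$ or $g^{-1}h^2$ as a cyclic subword. The $3$-segment shapes split into type $ghg$, necessarily of the form $g^{\alpha}h^{\beta 2}g^{\gamma}$ (length $4$), and type $hgh$, of the form $h^{\alpha\ell_1}g^{\gamma}h^{-\alpha\ell_2}$ with $\ell_1,\ell_2\in\{1,2\}$ (opposite $h$-signs forced by alternation). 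In every $hgh$-type case, the identity relation reduces to $g\in\{\unit,h^{\pm 1}\}$, contradicting $g\nequiv h$ (and $g\ne\unit$). So only $g^{\alpha}h^{\beta 2}g^{\gamma}$ survives as a morpheme candidate, and direct inspection of its cyclic permutations confirms the conclusion.

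For $k \ge 4$ I will apply the \nameref{Ramsey}~(\ref{Ramsey}), whose three possible outcomes are: (I) $gh^2$ is a cyclic subword of $w$; (II) $h^2g$ is a cyclic subword of $w$; or (III) $w = g^{-1}h^2g^{-1}h^{-1}$ up to cyclic permutation or inversion. Outcomes (I) and (III) give the desired conclusion directly (in (III), $g^{-1}h^2$ appears as a linear subword). For outcome (II), the arc $hhg$ witnessing $h^2g$ sits in $w_n$ as a full $h^2$-segment immediately followed by the next $g$-segment. Since $w$ has $\ge 4$ segments, this three-letter arc does not exhaust $w$, so within $w_n$ the word $w$ extends either leftward of the $h^2$ (into the preceding $g$-segment, yielding $g^{\pm 1}h^2$ as a linear subword of $w$) or rightward of the $g$ into the next two segments of $w_n$ (the opposite-sign $h^{-2}$-segment and then the next $g^{\pm 1}$-segment, yielding $h^{-2}g^{\pm 1}$ as a linear subword of $w$). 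The linear subword $g^{\pm 1}h^2$ is itself $gh^2$ or $g^{-1}h^2$, while $h^{-2}g^{\pm 1}$ is the inverse of $g^{\mp 1}h^2$, so in both cases $gh^2$ or $g^{-1}h^2$ is a cyclic subword of $w$.

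The main obstacle will be the careful sign-tracking in the $k\le 3$ enumeration and, more subtly, converting the $h^2g$ output of the Ramsey Lemma into the desired $gh^2$ or $g^{-1}h^2$ conclusion by exploiting $w$'s ambient structure in $w_n$; the cyclic-subword definition is not symmetric in the two orderings $gh^2$ and $h^2g$, so this extension step is genuine. No machinery beyond the Ramsey and No-Squares Lemmas and the \nameref{commutatorContainsMorpheme}~(\ref{commutatorContainsMorpheme}) is needed.
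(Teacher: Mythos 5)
Your case split by the number $k$ of segments mirrors the paper's proof, and your $k\le 3$ analysis is essentially sound (modulo a minor point below). The issue is the $k\ge 4$ case, where you route through the \nameref{Ramsey}~(\ref{Ramsey}) and then try to upgrade its possible output ``$h^2g$ is a cyclic subword'' to the desired ``$gh^2$ or $g^{-1}h^2$ is a cyclic subword''. Your upgrade argument tacitly assumes the witness $hhg$ occurs \emph{linearly} in $w$ (and hence in $w_n$), but ``cyclic subword'' also allows the witness to be the inverse $g^{-1}h^{-2}$ and, more importantly, to be realised only after a non-trivial cyclic permutation of $w$ --- i.e.\ wrapping around the ends of $w$. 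In the wrap-around case there is no three-letter arc ``sitting in $w_n$'' to extend left or right, so the argument as written does not go through. This is a genuine gap, not merely a presentational one.

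The detour through the Ramsey Lemma is also unnecessary, and avoiding it closes the gap. Since $w$ is a linear subword of $w_n$ with at least four segments, at least one \emph{interior} segment of $w$ is an $h$-segment; being interior in $w_n$ it has length exactly two, and it is flanked on both sides (still inside $w$) by $g$-segments. So $w$ contains a linear subword $g^{\alpha}h^{\pm2}g^{\beta}$ with $\alpha,\beta\in\{\pm1\}$. If the middle block is $h^2$ then $g^{\alpha}h^2$ is a linear subword; if it is $h^{-2}$ then $h^{-2}g^{\beta}$ is a linear subword and its inverse is $g^{-\beta}h^2$. Either way $gh^2$ or $g^{-1}h^2$ is a cyclic subword of $w$, which is exactly the paper's argument for this case.

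Two smaller remarks. First, in the $k=3$ type-$hgh$ analysis you justify the ``opposite $h$-signs'' constraint by appeal to the No-Squares Lemma under the substitution $h\mapsto h^2$; but No-Squares does not by itself forbid a subword $h^{\alpha}g^{\epsilon}h^{\alpha}$ of $u_n$ (only the length-four extensions $g^{\epsilon}h^{\alpha}g^{\epsilon}h^{\alpha}$ etc.\ are squares), so this needs the explicit recursive form of $u_n$ from \autoref{lem:what_words_look_like}, or a separate induction. Second, this sign constraint is in fact dispensable: if the two outer $h$-segments of an $hgh$-shaped $w$ had the same sign, a cyclic permutation would merge them into a single $h$-segment of length $\ge2$ and reduce to your two-segment case, so either sign pattern leads to the conclusion (this is how the paper treats $k=3$, using only that the two outer exponents of a three-segment morpheme must have the same sign).
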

\begin{proof}
As $h$ is not an involution and $g \not\equiv h$, the morpheme $w$ is of length at least three, so it contains, perhaps after cyclic permutation, an $h$-segment of length two and a $g$-segment.

If $w$ has exactly two segments, then we cyclically permute $w$ so that $w=g^i h^j$ where $|i|\ge 1$ and $|j|\ge 2$.
If $j$ is positive, then $w$ contains $gh^2$ or $g\i h^2$ as a linear subword.
If $j$ is negative, then $gh^2$ or $g\i h^2$ is a linear subword of $g^{-i}h^{-j}$ by the previous argument.
Since $g^{-i}h^{-j}$ is obtained from $w$ by inversion and cyclic permutation, it follows that $gh^2$ and $g\i h$ are cyclic subwords of~$w$.

Suppose now that $w$ has exactly three segments. 
Then $w=g^i h^j g^k$ or $w=h^i g^j h^k$.
Since $w$ is a morpheme, the exponents $i$ and $k$ have the same sign, so we can cyclically permute $w$ so that it has exactly two segments -- a case which we solved above.

Finally, suppose that $w$ has at least four segments.
Here we cyclically permute $w$ so that it contains a word of the form $g^i h^j g^k$ as a linear subword, where $|i|,|k|\ge 1$ and $|j|\ge 2$.
If $j$ is positive, then we find $gh^2$ or $g\i h^2$ as a linear subword and are done.
If $j$ is negative, then we find $h^{-2}g\i$ or $h^{-2} g$ as a linear subword, and are done as these are the inverses of $gh^2$ and $g\i h^2$, respectively.
\end{proof}

\begin{lem}[Anchor Lemma]\label{anchor}
    Assume \autoref{set} with $r\ge3\cdot2^n$. Suppose that $h$ is not an involution. For every $g\in S\sm\{h\pmo\}$, at least one of the vertices $g$ or $g\i$ lies in the same local component at $\X$ as the vertex $h^2$.
\end{lem}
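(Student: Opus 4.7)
The plan is to extract, from the nilpotency relation $[g,h^2]_n=\unit$, a short cycle in~$G$ containing four consecutive vertices that (after a vertex-transitive translation) read $\epsilon,\unit,h,h^2$ with $\epsilon\in\{g,g\i\}$, and then to remove the separator $\X$ from this cycle to obtain the required local-component connection.

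First, since $\Gamma$ is nilpotent of class~$\le n$, the iterated commutator word $w_n=[g,h^2]_n$ evaluates to $\unit$ in~$\Gamma$; it is a nontrivial reduced word in $\{g\pmo,h\pmo\}$ of length $3\cdot 2^n\le r$ by \autoref{obs:commlen3}. The \nameref{commutatorContainsMorpheme}~(\ref{commutatorContainsMorpheme}) extracts a morpheme $w$ of~$\Gamma$ in $\{g\pmo,h\pmo\}$ as a subword of $w_n$, so $|w|\le r$. Since $h$ is not an involution, \autoref{ghpseudomagic} guarantees that $w$ contains $gh^2$ or $g\i h^2$ as a cyclic subword. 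The walk that $w$ traces starting at~$\unit$ is a cycle $O\subseteq G$ of length $|w|\le r$ passing through~$\unit$; hence $O\subseteq B_r(\unit,G)$.

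Second, using vertex-transitivity — and reversing the traversal direction of~$O$ if the cyclic subword appears only as an inverse linear subword in the chosen direction — I translate $O$ so that four consecutive vertices along it read $\epsilon,\unit,h,h^2$, where $\epsilon:=g\i$ in the $gh^2$-case and $\epsilon:=g$ in the $g\i h^2$-case. Among these four vertices the only possible coincidence, given $g\nequiv h$, $g\ne\unit$, and $h$ not an involution, is $\epsilon=h^2$; and $\epsilon=h^2$ forces $g\equiv h^{\pm 2}$, in which case the conclusion holds trivially since $g$ or $g\i$ then equals $h^2$. Otherwise the four vertices are distinct, so $O-\{\unit,h\}$ is a single path of length $|O|-3$ from $\epsilon$ to $h^2$, contained in $B_r(\unit,G)\sm\X$. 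Thus $\epsilon$ and $h^2$ lie in the same component of $B_r(\unit,G)\sm\X$, and hence in the same $r$-local component at~$\X$, as required.

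The main subtlety will be verifying that the four listed vertices are genuinely distinct after translation, so that $O-\{\unit,h\}$ really yields a single $\epsilon$–$h^2$ path. Since $w$ is a morpheme, its associated walk visits distinct vertices until it closes up, so the four consecutive positions give four distinct group elements as soon as $|O|\ge 4$. The only way for $|O|$ to drop to~$3$ is for $gh^2$ or $g\i h^2$ itself to be the whole morpheme~$w$, which happens precisely in the degenerate case $g\equiv h^{\pm 2}$ handled above; hence the path construction succeeds in all remaining cases.
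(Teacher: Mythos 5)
Your proof is correct and takes essentially the same approach as the paper: both extract a morpheme from $[g,h^2]_n$ via the \nameref{commutatorContainsMorpheme}~(\ref{commutatorContainsMorpheme}) and use \autoref{ghpseudomagic} to find $gh^2$ or $g^{-1}h^2$ as a cyclic subword of it. Where the paper then simply cites the \nameref{traverser}~(\ref{traverser}) to conclude, you unfold that lemma's proof inline, exhibiting directly the cycle through $\unit$ and the surviving $\epsilon$--$h^2$ path in $B_r(\unit)\sm\X$, with the degenerate case $g\equiv h^{\pm 2}$ correctly set aside.
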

\begin{proof}
    Using the \nameref{commutatorContainsMorpheme} (\ref{commutatorContainsMorpheme}), take a morpheme $w$ that is a subword of the iterated commutator word $w_n=[g,h^2]_n$. 
    Then $w$ must contain at least one of $gh^2$ or $g\i h^2$ as a cyclic subword by \autoref{ghpseudomagic}. By \autoref{obs:commlen3} and the \nameref{traverser} (\ref{traverser}) applied to $w_n$ and $w$, not both of $gh^2$ and $g\i h^2$ can traverse strongly. Hence at least one of the vertices $g$ or $g\i$ is in the same local component at $\X$ as the vertex~$h^2$.
\end{proof}

\begin{lem}\label{trav_exists}
    Assume \autoref{set} with $r\ge2^{n+2}$.
    Suppose that $h$ is not an involution.
    Then there is some $g\in S$ that traverses $\X$ at~$\unit$, or else $S\subseteq\{h\pmo, h^{\pm 2}\}$.
\end{lem}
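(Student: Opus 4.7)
My plan is to argue by contradiction: suppose no $g \in S$ traverses $\X$ at $\unit$, meaning that for every $g \in S \sm \{h\pmo\}$ the vertices $g$ and $g\i$ lie in the same $r$-local component at $\X$. The \nameref{anchor}~(\autoref{anchor}) applies because $r \ge 2^{n+2} \ge 3 \cdot 2^n$, so at least one of $g$, $g\i$ lies in the local component of $h^2$; call this component $C_1$. Hence both do, and $S \sm \{h\pmo\} \se C_1$. Combining \autoref{LocalCompsAtXseeX} with the Setting's no-cutvertex hypothesis, every local component at $\X$ contains a neighbour of $\unit$; since those neighbours are exactly $\{h\i\} \cup (S \sm \{h\pmo\})$ and all but $h\i$ lie in $C_1$, there is exactly one further local component $C_2$, and $h\i \in C_2$.

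The decisive step is to apply \autoref{empty_corner} to the two $r$-local $2$-separators $\X$ and $h\i\X = \{h\i,\unit\}$, the latter being a local $2$-separator by vertex-transitivity of the Cayley graph, with common vertex $v := \unit$. The required edges $\{v,h\}$ and $\{v,h\i\}$ are present because $h, h\i \in S$; we have $h \ne h\i$ because $h$ is not an involution; and $\unit$ is not an $r$-local cutvertex. Taking $\{A,B\}$ at $\X$ with $h\i \in A$ (so $B = \X \cup C_1$) and $\{A',B'\}$ at $h\i\X$ with $h \in A'$ (so $B' = h\i\X \cup h\i C_2$, by transporting the local components via left-multiplication by $h\i$, which is a graph automorphism), the lemma yields $B \cap B' = \{\unit\}$. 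Expanding this equality shows that $C_1 \cap h\i C_2 = \emptyset$, equivalently, $hc \notin C_2$ for every $c \in C_1$.

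To finish, I verify that $C_2$ contains no neighbour of $h$. Every neighbour of $h$ in $N(\X) \sm \X$ is of the form $hs$ for some $s \in S \sm \{h\i\}$: if $s = h$, then $hs = h^2 \in C_1$; otherwise $s \in S \sm \{h\pmo\} \se C_1$, so by the previous paragraph $hs \notin C_2$, and since $hs$ lies in $N(\X)$ and only $C_1, C_2$ are available, $hs \in C_1$. Hence $C_2$ avoids $N(h)$, so \autoref{LocalCompsAtXseeX} forces $\unit$ to be an $r$-local cutvertex of $G$, contradicting the Setting. Therefore some $g \in S$ must traverse $\X$ at $\unit$.

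The main obstacle, in my view, is the bookkeeping in the \autoref{empty_corner} application---identifying the correct sides of the two $2$-separations and pushing forward the local components under the $h\i$-translation---together with verifying that only two local components can arise under the no-traversal assumption. Once the consequence $hc \notin C_2$ for $c \in C_1$ is extracted, the rest of the argument is a short, direct combinatorial check, and the alternative conclusion $S \se \{h\pmo,h^{\pm 2}\}$ of the lemma is then an immediate weakening that needs no separate argument.
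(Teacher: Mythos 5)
Your proof is correct, and it takes a genuinely different route from the paper's. The paper's proof is three sentences long: by \autoref{LocalCompsAtXseeX} and the no-cutvertex hypothesis it produces a neighbour $g$ of $\unit$ lying in a different local component than $h^2$; if $g\ne h\i$ then the \nameref{anchor} immediately shows $g$ traverses, while if $g=h\i$ the word $h^3$ strongly traverses and \autoref{finish-hcube} delivers the second alternative. You instead assume no $g\in S$ traverses at $\unit$, use the \nameref{anchor} to pin $S\sm\{h\pmo\}$ into the local component $C_1$ of $h^2$, deduce (via \autoref{LocalCompsAtXseeX}) that $h\i$ lives in the unique remaining component $C_2$, and then apply \autoref{empty_corner} to $\X$ and $h\i\X$ to conclude $C_1\cap h\i C_2=\emptyset$, which forces $C_2$ to avoid $N(h)$ and hence contradicts the no-cutvertex hypothesis via \autoref{LocalCompsAtXseeX}. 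The bookkeeping in the \autoref{empty_corner} application -- which side contains which of $h^{\pm1}$, and how $h\i$-translation carries the local components -- is delicate but correct.

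What your approach buys is a strictly stronger conclusion: under the hypotheses of \autoref{set} with $r\ge 2^{n+2}$ and $h$ not an involution, some $g\in S$ \emph{always} traverses $\X$ at $\unit$, so the disjunct $S\se\{h\pmo,h^{\pm2}\}$ in the lemma's statement is never actually needed. You also avoid \autoref{finish-hcube} entirely, replacing it with \autoref{empty_corner}. The cost is a considerably longer argument that relies on the specific geometry of crossing local separations. The paper's proof is shorter, stays closer to the surrounding lemmas about iterated commutators, and is content with the weaker (disjunctive) statement, which is all that is used downstream in \autoref{non-involution}.

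One presentational quibble: you frame the argument as ``argue by contradiction'', but since you never branch into the $S\se\{h\pmo,h^{\pm2}\}$ case, what you actually prove is the unconditional statement that some $g$ traverses. Your closing remark that the second alternative is ``an immediate weakening that needs no separate argument'' is the right diagnosis -- it would read more cleanly to announce up front that you will prove this stronger fact.
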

\begin{proof}
    Since $\X$ is an $r$-local $2$-separator and neither $\unit$ nor $h$ are $r$-local cutvertices by an assumption in \autoref{set}, it follows by \autoref{LocalCompsAtXseeX} that there is a neighbour $g$ of $\unit$ such that $g$ and $h^2$ lie in distinct local components at $\X$. 
    If $g\neq h^{-1}$, then $g^{-1}$ lies in the same local component as $h^2$ by the \nameref{anchor}~(\ref{anchor}); thus $g$ traverses $\{\unit, h\}$ at~$\unit$.
    Otherwise $g=h\i$. Then $h^3$ strongly traverses $\{\unit, h\}$. Thus $S\subseteq\{h\pmo, h^{\pm 2}\}$ by \autoref{finish-hcube}.
\end{proof}

\begin{figure}[ht]
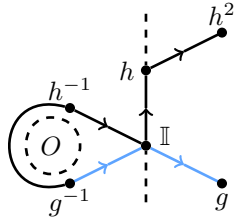

    \centering
\tikzfig{h3traverse}
\caption{The situation in the proof \autoref{power_to_traverse}}
\label{hcubepic}
\end{figure}

\begin{lem}\label{power_to_traverse}
Assume \autoref{set} with $r\ge 3\cdot 2^n$. Suppose that $h$ is not an involution. If $g\in S$ traverses $\X$ at $\unit$ and $g\equiv h^k$ for an integer~$k$ with $2\leq |k|\leq r-1$, then the word $h^3$ traverses $\X$ strongly.
\end{lem}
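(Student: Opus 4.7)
My plan is to assume for contradiction that $h^3$ does not strongly traverse~$\X$, and to derive that $g$ and~$g\i$ lie in the same local component at~$\X$, contradicting that $g$ traverses $\X$ at~$\unit$. First I would unpack the target: a walk labelled by~$h^3$ has length~3 and hence two interior vertices; requiring both interior vertices to lie in $\X=\{\unit,h\}$, together with the assumption that $h$ is not an involution (so $h^2\ne\unit$), forces the walk to be $h\i,\unit,h,h^2$. So the contradiction hypothesis means precisely that $h\i$ and~$h^2$ lie in the same local component at~$\X$.

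Next I would normalise~$k$. Since $g\equiv h^k$ is the same as $g\equiv h^{-k}$, after swapping $g$ with~$g\i$ if necessary (this preserves the hypotheses) we may assume $g=h^k$ with $k>0$; we then replace $k$ by the least positive integer with $g=h^k$, so $k<N$ where $N$ is the order of~$h$. The traversal hypothesis implies $g\notin\{\unit,h,h\i\}$ and $g^2\ne\unit$ (otherwise $g\i=g$, contradicting the traversal), so $k\in\{2,\ldots,N-2\}$ and $k\ne N/2$. This forces $N\ge 5$, the cases $N\le 4$ being vacuous. Finally, $k\le r-1$ follows from the hypothesis by minimality.

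The key step is to exhibit two $h$-labelled walks inside $B_r(\unit)\sm\X$: the walk $W_+:=h^2,h^3,\ldots,h^k\,(=g)$ of length $k-2$, and the walk $W_-:=h\i,h^{-2},\ldots,h^{-k}\,(=g\i)$ of length $k-1$. All their vertices have the form $h^j$ with $1\le|j|\le k\le N-2$, hence lie outside~$\{\unit,h\}$, so $W_\pm$ avoid~$\X$. Each walk lies in $B_r(\unit)$ because appending a single $g\i$-edge to $W_+$, respectively a single $g$-edge to $W_-$, closes it into a closed walk through~$\unit$ of length $k+1\le r$. Hence $h^2$ and~$g$ lie in the same local component at~$\X$, and so do $h\i$ and~$g\i$. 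Combining these with the contradiction hypothesis that $h\i$ and~$h^2$ lie in the same local component gives $g$ and~$g\i$ in the same local component at~$\X$, the desired contradiction.

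I expect the main obstacle to be the normalisation step, where all three exclusions ($g\ne\unit$, $g\not\equiv h$, $g$ non-involution) implied by the traversal hypothesis must be exploited to confine~$k$ to the safe range $\{2,\ldots,N-2\}$ in which the modular check $h^j\notin\X$ succeeds; the walk construction and the final chain are then routine.
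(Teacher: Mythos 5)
Your proof is correct and takes a genuinely different, more elementary route than the paper's. The paper invokes the \nameref{anchor} to orient $g$ so that $g$ (rather than $g\i$) shares a local component with $h^2$, then builds a cycle $O$ labelled by a morpheme $gh^{\pm k}$, uses the \nameref{traverser} to pin down its label as $gh^{-k}$, and finally applies \autoref{CycleWeakTraversal} to transfer a weak traversal from $\unit$ to $h$ and derive a degree contradiction. You bypass all of this machinery: after algebraically normalising $k$ to be the least positive exponent with $g=h^k$ (forcing $2\le k\le N-2$ where $N$ is the order of $h$), you exhibit two $h$-labelled walks $W_+$ and $W_-$ inside $B_r(\unit)\sm\X$ directly witnessing that $g$ lies in the same local component as $h^2$ and that $g\i$ lies in the same local component as $h\i$; the contradiction hypothesis that $h\i$ and $h^2$ share a component then chains through to $g\sim g\i$, contradicting the traversal assumption head-on. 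What your approach buys: it needs neither the morpheme-finding nor the nilpotency hypothesis (so it is strictly more general in this instance), and it makes the geometry of the contradiction transparent. One small imprecision: ``appending a single $g\i$-edge to $W_+$'' does not by itself produce a \emph{closed} walk through $\unit$; you should also prepend the two-edge path $\unit,h,h^2$, which is what brings the closed walk $\unit,h,h^2,\ldots,h^k,\unit$ to length $k+1\le r$ (and analogously $\unit,h\i,\ldots,h^{-k},\unit$ for $W_-$). Also note that the parenthetical exclusion $k\ne N/2$ (coming from $g$ not being an involution) is recorded but never actually used in the vertex-avoidance check; the range $2\le k\le N-2$ suffices.
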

\begin{proof}
    Since $h$ is not an involution, $h^2$ lies in a local component at~$\X$.
    By \autoref{anchor}, either $g$ or $g\i$ lies in the same local component at~$\X$ as~$h^2$. 
    Replacing $g$ by $g^{-1}$ if necessary, we may assume that $g$ lies in the same local component as~$h^2$.
    Let $k\in\N$ be minimal such that $g=h^{-k}$ or~$g=h^k$, and note that $k\ge 2$ since $g$ traverses~$\X$.
    Then $gh^k$ or $gh^{-k}$ is a morpheme; let $O$ denote the cycle in~$G$ labelled by it such that $O$ contains the edge joining $g\i$ to~$\unit$, see \autoref{hcubepic}.
    Note that $O$ has length at most~$r$.
    
    We claim that $O$ is labelled by~$gh^{-k}$.
    Indeed, otherwise $gh^k$ is the morpheme labelling~$O$.
    But then it traverses $\X$ strongly, contradicting the \nameref{traverser} (\ref{traverser}).
    So $O$ is labelled by~$gh^{-k}$.
    
    Let us assume for a contradiction that $h^3$ does not traverse~$\X$ strongly.
    Then the vertex $h\i$ lies in the same local component as~$h^2$, which does not contain~$g\i$.
    Hence $gh\i$, and in particular~$O$, weakly traverse~$\X$ at~$\unit$.
    By \autoref{CycleWeakTraversal}, $O$ must weakly traverse~$\X$ at~$h$ as well.
    But then the vertex~$h$ has $h$-degree greater than two (indeed, the edge joining~$\unit$ to~$h$ is not on $O$ and the vertex $h$ cannot be an endvertex of the edge of $O$ labelled by $g\i$, and so the vertex $h$ has $h$-degree two on $O$). This is a contradiction. 
\end{proof}

\begin{prop}\label{subsec}
    Assume \autoref{set} with $r\ge 3\cdot 2^n$. Suppose that there is some $g\in S$ such that $g$ traverses $\X$ at $\unit$ and the word $gh^2$ traverses $\X$ strongly.
    Then there is a natural number~$k$ with $2\leq k\leq 4$ such that 
    $g\equiv h^k$,  
    or else $g^{-1}h^2g^{-1}h^{-1}$ and $ghgh$ are morphemes in~$\Gamma$.
\end{prop}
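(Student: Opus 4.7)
The plan is to apply the \nameref{commutatorContainsMorpheme} to the iterated commutator $w_n=[g,h^2]_n$, which by \autoref{obs:commlen3} has length $3\cdot 2^n\le r$ and evaluates to~$\unit$ since $\Gamma$ is nilpotent of class~$\le n$. This yields a morpheme $m$ of $\Gamma$ in $\{g\pmo,h\pmo\}$ appearing as a linear subword of~$w_n$, with $|m|\le r$. By \autoref{ghpseudomagic}, $m$ contains either $gh^2$ or $g\i h^2$ as a cyclic subword. The hypothesis that $gh^2$ strongly traverses~$\X$, combined with the \nameref{traverser}, rules out the first possibility, so $m$ must contain $g\i h^2$ as a cyclic subword.

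The argument then splits on the number of segments of~$m$. In the case where $m$ has at most three segments, the shape of $m$ is limited: since the $g$-segments of $w_n$ have length one and the $h$-segments have length two, $m$ has one of the four forms $g^\alpha h^\beta$, $h^\beta g^\alpha$, $h^\alpha g^\beta h^\gamma$, or $g^\alpha h^\beta g^\gamma$ with $|\beta|\le 2$. The first three forms combined with $m=\unit$ and $g\not\equiv h$ directly yield $g\equiv h^k$ for some $k\in\{2,3,4\}$. The fourth form yields only $g^2\equiv h^k$ with $k\in\{1,2\}$; I would rule this out by observing that the corresponding short cycle in~$G$ passes through~$\unit$ but not through~$h\in\X$, so that since $g$ traverses $\X$ at~$\unit$ the cycle would weakly traverse $\X$ at~$\unit$ without also doing so at~$h$, contradicting \autoref{CycleWeakTraversal}.

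If $m$ has at least four segments, the \nameref{Ramsey} applied to $m$ (which is a cyclic subword of $w_n$) leaves three alternatives: $m$ contains $gh^2$ cyclically (excluded by strong traversal), $m$ contains $h^2g$ cyclically, or $m$ is a cyclic permutation or inverse of $g\i h^2g\i h\i$. The last, exceptional case immediately gives that $g\i h^2g\i h\i$ is a morpheme, encoding the relation $ghg=h^2$; to extract the second desired morpheme $ghgh$ I would derive $h^3=\unit$ by a further short-cycle argument leveraging the strong traversal of~$gh^2$, yielding $ghgh=h^3=\unit$ and morpheme minimality by a routine check. The $h^2g$ alternative I would handle by translating the cycle corresponding to~$m$ so that its $h^2g$ subwalk reads $h\i,\unit,h,hg$: this subwalk is a strong traversal of~$\X$ precisely when $h\i$ and $hg$ lie in different local components, and either outcome should reduce to the short-segment analysis via \autoref{CycleWeakTraversal} and the \nameref{anchor}.

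The main obstacle will be both the $h^2g$ alternative of the \nameref{Ramsey} and the lifting of $ghg=h^2$ to $h^3=\unit$ in the exceptional case; both require fine control over the local-component placement of vertices such as $hg$ and $h\i$, which should emerge from carefully combining vertex-transitivity of~$G$ with the weak-traversal compatibility ensured by \autoref{CycleWeakTraversal}.
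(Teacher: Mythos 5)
Your high-level plan matches the paper's structure: apply the Morpheme Finding Lemma to $w_n=[g,h^2]_n$, split on the number of segments, handle the short case by shape analysis (essentially \autoref{short} together with \autoref{lem:pent1}), and handle the long case via the Ramsey Lemma. You also correctly identify the two hard points: excluding the $h^2g$ alternative of the Ramsey Lemma, and extracting $ghgh$ as a morpheme from the exceptional case $g^{-1}h^2g^{-1}h^{-1}$. However, neither obstacle is actually resolved in your proposal, and the tools you name for resolving them are not sufficient.

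For excluding $h^2g$, the paper's route is the Seagull Lemma (\autoref{lem:seagull}): given that $g$ traverses $\X$ at $\unit$ and $gh^2$ strongly traverses $\X$, also $h^2g$ strongly traverses $\X$, hence is excluded by the Strong Traversal Lemma. The Seagull Lemma's proof depends essentially on the Crisscross Lemma (\autoref{lem:crisscross}) and on \autoref{empty_corner}. Your sketch -- translating so the $h^2g$-subwalk reads $h^{-1},\unit,h,hg$ and then appealing to \autoref{CycleWeakTraversal} and the Anchor Lemma -- does not give the needed control: \autoref{CycleWeakTraversal} only transfers a \emph{weak} traversal from one vertex of $\X$ to the other, whereas you need that the ends $h^{-1}$ and $hg$ of the length-three walk lie in distinct local components at $\X$, and neither tool you cite delivers that. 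For the exceptional case $m=g^{-1}h^2g^{-1}h^{-1}$, the paper proceeds via \autoref{lem:twin} (which produces an $h$-chord of the $5$-cycle labelled by $m$, again using the Crisscross Lemma) and \autoref{cor:twin} (which analyses how the chord splits the pentagon into a $2$-path and a $3$-path, obtaining either $h^3=\unit$ and $ghgh$ a morpheme, or $g\equiv h^2$, landing in the other branch of the dichotomy). Your proposed ``further short-cycle argument'' and ``routine check'' do not identify this chord mechanism, and without it $h^3=\unit$ does not follow from $ghg=h^2$ alone. Both gaps trace back to the same missing ingredient: the Crisscross Lemma machinery, which your toolkit of vertex-transitivity plus \autoref{CycleWeakTraversal} does not replace.
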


\begin{proof}[Proof of \autoref{non-involution} assuming \autoref{subsec}]
Assume \autoref{set} with $r\ge 2^{n+2}$, and suppose that $h$ is not an involution.
We are required to show that $S\subseteq\{h\pmo, h^{\pm 2}\}$, or else that there is some $g\in S$ such that $g$ traverses $\X$ at~$\unit$, the word $gh^2$ strongly traverses $\X$, and $g^{-1}h^2g^{-1}h^{-1}$ and $ghgh$ are morphemes in $G$.
Since we are done otherwise, assume that $s\nequiv h,h^2$ for some~$s\in S$.
Then we apply \autoref{trav_exists} to find an element $g\in S$ that traverses $\X$ at~$\unit$. 
By interchanging $g$ and $g\i$ if necessary, we may assume that the vertex $g\i$ lies in a different local component at $\X$ than~$h^2$. Thus $gh^2$ traverses $\X$ strongly.

It remains to show that $g^{-1}h^2g^{-1}h^{-1}$ and $ghgh$ are morphemes. 
To this end, we apply \autoref{subsec}, and assume for a contradiction that it returns a~$k$ with $2\leq k\le4$ such that $g\equiv h^k$ (as otherwise we are done). 
Since $h$ is not an involution, we can apply \autoref{power_to_traverse} to deduce that $h^3$ traverses strongly. Hence $S\subseteq \{h^{\pm 1}, h^{\pm 2}\}$ by \autoref{finish-hcube}, contradicting $s\nequiv h, h^2$.
\end{proof}

\subsection{Proof of \autoref{subsec}}

\begin{figure}[ht]
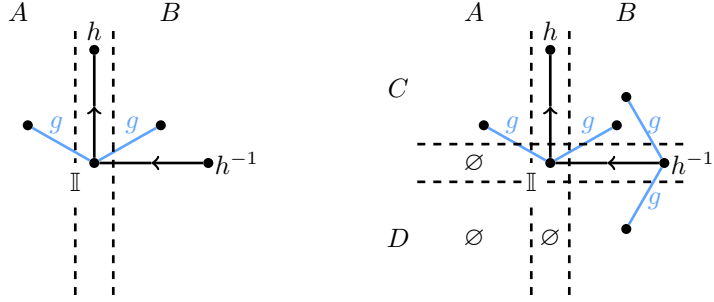

    \centering
\tikzfig{crisscross}
\caption{The diagram for the proof of \autoref{lem:crisscross}. To produce the sides $C$ and $D$, we left-multiply the arrangement by $h\i$
\label{fig:crisscross}}
\end{figure}

\begin{lem}[Crisscross Lemma]\label{lem:crisscross}
    Assume \autoref{set} with $r\ge 3\cdot2^n$. 
    Suppose that $h$ is not an involution and let $g\in S\sm\{h\pmo\}$. 
    Let $\{A,B\}$ be an $r$-local $2$-separation of~$G$ with separator $\X$ such that $g$ traverses $\{A,B\}$ at~$\unit$.
    Then $g$ traverses the $r$-local $2$-separation $\{h\i A,h\i B\}$ at~$\unit$.
\end{lem}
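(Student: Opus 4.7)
The plan is to reformulate the conclusion in terms of the original separation $\{A,B\}$. Since left-multiplication by $h$ is a graph automorphism, $g^{-1}$ (resp.\ $g$) lies in $h^{-1}A$ (resp.\ $h^{-1}B$) if and only if $hg^{-1}$ (resp.\ $hg$) lies in $A$ (resp.\ $B$); hence $g$ traverses $\{h^{-1}A,h^{-1}B\}$ at $\unit$ if and only if $hg^{-1}$ and $hg$ lie on opposite sides of $\{A,B\}$, i.e.\ if and only if $g$ traverses $\{A,B\}$ at $h$ as well as at $\unit$. Writing $g^{-1}\in A\setminus B$ and $g\in B\setminus A$, I then assume for a contradiction that $hg^{-1}$ and $hg$ lie on the same side of $\{A,B\}$.

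Next I invoke the Anchor Lemma (\autoref{anchor}), which places $h^2$ in the same $r$-local component at $\X$ as exactly one of $g, g^{-1}$; after possibly interchanging the roles of $g$ and $g^{-1}$ (and of $A$ and $B$), I may assume $h^2\in B$. The vertex $h^{-1}$, distinct from $h$ since $h$ is not an involution, lies either in $A$ or in $B$, and I split on this. In each sub-case I apply the Empty Corner Lemma (\autoref{empty_corner}) to the two $r$-local $2$-separations $\{A,B\}$ (with separator $\{\unit,h\}$) and $\{h^{-1}A,h^{-1}B\}$ (with separator $\{h^{-1},\unit\}$); they share the vertex $\unit$, which is not an $r$-local cutvertex by \autoref{set}, and their separators are spanned by the $h$-labelled edges from $\unit$ to $h$ and from $\unit$ to $h^{-1}$, respectively. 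Since $h^2\in B$, the vertex $h$ sits in $h^{-1}B$; combined with the sub-case on $h^{-1}$, \autoref{empty_corner} forces one of the four diagonal intersections of the two pairs of sides to equal $\{\unit\}$. As $g,g^{-1}\ne\unit$, this emptiness immediately pins down, in $\{A,B\}$, the location of one of $hg$ or $hg^{-1}$.

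The main obstacle is that each application of \autoref{empty_corner} constrains only one diagonal and hence locates only one of $hg$ and $hg^{-1}$, whereas I need to locate both in order to refute the same-side assumption. I plan to address this either by a second, complementary application of \autoref{empty_corner} (for instance to the pair $\{A,B\}$ and $\{hA,hB\}$, using the $h$-edge at $h$ rather than at $\unit$), or, failing that, by constructing a cycle of length at most~$r$ through both $\unit$ and $h$ from a magic morpheme sitting inside the iterated commutator $u_n=[g,h]_n$ (\autoref{magic_cor}, using $|u_n|=2^{n+1}\le r$). Such a cycle cannot strongly traverse $\{A,B\}$ by the Strong Traversal Lemma (\autoref{traverser}), so by \autoref{CycleWeakTraversal} any weak traversal at $\unit$ lifts to one at $h$, which I expect to rule out the assumed same-side configuration of $hg^{-1}$ and $hg$ and complete the contradiction.
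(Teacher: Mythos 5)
Your reformulation is right, and your Empty Corner observation parallels the paper's own sublemma, but you have correctly identified the gap yourself and neither of your two proposed fallbacks closes it.

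The missing idea is to run the iterated commutator with $g^2$ as a single letter: the paper considers $m_n := [g^2,h]_n$ (which evaluates to $\unit$ by nilpotency and requires $r\ge 3\cdot 2^n$ since a morpheme of $m_n$ unfolds in $G$ to a cycle of length at most $3\cdot 2^n$). Such a morpheme necessarily contains a letter $(g^2)^{\pm1}$, which unfolds to two \emph{consecutive} $g$-labelled edges. Placing the cycle $O$ so that the middle vertex of those two edges is $h^{-1}$ puts the vertices $h^{-1}g^{-1}$ and $h^{-1}g$ on $O$ adjacent to $h^{-1}$; since $g$ traverses $\{A,B\}$ at $\unit$, these two vertices lie on opposite sides of $\{h^{-1}A,h^{-1}B\}$, and neither is in its separator $\{\unit,h^{-1}\}$, so $O$ weakly traverses $\{C,D\}$ at $h^{-1}$. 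Now \autoref{CycleWeakTraversal} lifts this to a weak traversal of $\{C,D\}$ at $\unit$, so two of the vertices $g$, $g^{-1}$, $h$ (the only possible non-separator neighbours of $\unit$ on $O$) lie on opposite sides of $\{C,D\}$; combined with the Empty Corner observation (your step, the paper's sublemma) that $h$ shares a side with one of $g^{\pm1}$, it follows that $g$ and $g^{-1}$ are on opposite sides of $\{C,D\}$, which is the conclusion.

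Your second fallback, building a magic morpheme from $u_n=[g,h]_n$, cannot supply this structure: a morpheme of $u_n$ strictly alternates between $g^{\pm1}$ and $h^{\pm1}$, so at any vertex of the resulting cycle one incident edge is $h$-labelled. In particular at $\unit$ one neighbour on the cycle is $h$ or $h^{-1}$. If it is $h$ the purported traversal is not weak (its end lies in the separator $\X$); if it is $h^{-1}$ you still only learn the position of one of $hg$, $hg^{-1}$, since the magic property does not guarantee that both $hg$ and $hg^{-1}$ occur. This is precisely the obstacle you flagged, and $[g,h]_n$ cannot resolve it — only the doubled commutator does. The Anchor Lemma detour is also unnecessary; the paper does not invoke it here.
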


\begin{proof}
We abbreviate $C:=h\i A$ and $D:=h\i B$.
     
\begin{sublem}\label{xlem}
    The vertex $h$ lies on the same side of $\{C,D\}$ as one of the vertices~$g^{\pm 1}$.
\end{sublem}
\begin{cproof}
    We assume without loss of generality that $h\i\in B\sm A$ and $h\in C\sm D$ (all other cases follow by an analogue argument).
    Then $A\cap D=\{\unit\}$ by \autoref{empty_corner}, see \autoref{fig:crisscross}.
    By the choice of $\{A,B\}$, one of $g$ or $g\i$ lies in $A\sm B$.
    Since $A\cap D=\{\unit\}$, one of $g$ or $g\i$ lies in~$C\sm D$.
\end{cproof}\medskip

    Now we consider the iterated commutator $m_n:=[g^2,h]_n=\unit$. Note that since $g$ traverses, $g^2\nequiv\unit$ and $g^2\nequiv h$. 
    Using the \nameref{commutatorContainsMorpheme} (\ref{commutatorContainsMorpheme}), we find a morpheme~$m$ of length at least three that is a cyclic subword of~$m_n$. 
    So $m$ contains $g^2$ as a cyclic subword.
    Hence we find a cycle $O$ in~$G$, labelled by the morpheme~$m$, such that~$O$ includes the path on the vertices $h\i g\i$, $h\i$ and~$h\i g$ (in this order). The $h\i$-images of $g^{\pm 1}$ are $h\i g\i$ and $h\i g$, and as such lie on opposite sides of~$\{C,D\}$. So the cycle $O$ traverses $\{C,D\}$ at $h\i$. 
    As $O$ has length~$|m|\le r$ by \autoref{obs:commlen3}, it follows by \autoref{CycleWeakTraversal} that $O$ traverses $\{C,D\}$ at~$\unit$ as well. 
    Thus two of the vertices $g$, $g\i$ and $h$ are on opposite sides of $\{C,D\}$. 
    By \autoref{xlem}, $g$ traverses $\{C,D\}$ at~$\unit$.
\end{proof}

\begin{dfn}[$h$-chord]
    Given a group $\Gamma$, a Cayley graph $G$ of $\Gamma$, and an element $h\in \Gamma$, an $h$-\emph{chord} of a cycle $O$ of $G$ is a chord of $O$ labelled by~$h$. 
    An $h$-\emph{chord} of a morpheme $m$ is an $h$-chord of a cycle in~$G$ labelled by~$m$.
\end{dfn}

\begin{figure}[ht]
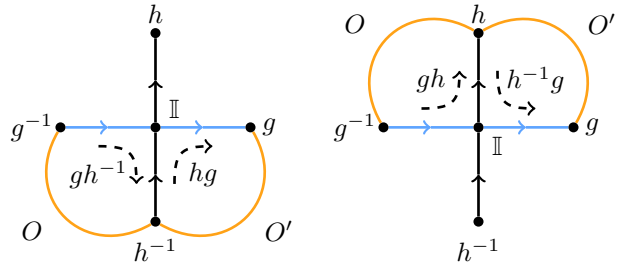

    \centering
\tikzfig{twin1}
\caption{Case~1 (left) and case~2 (right) in the proof of \autoref{lem:twin}}
\label{fig:twin1}
\end{figure}

\begin{lem}\label{lem:twin}
Assume \autoref{set} with $r\ge 3\cdot 2^n$.
Suppose that $h$ is not an involution. 
If~some $g\in S\sm\{h\pmo\}$ traverses $\X$ at $\unit$ and there is a magic morpheme $m$ of length at most $r$, then $m$ has an $h$-chord.
\end{lem}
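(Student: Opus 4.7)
I argue by contradiction: assume $m$ has no $h$-chord and fix a cycle $O$ in $G$ labelled by $m$ (a genuine simple cycle, as $m$ is a morpheme). The no-$h$-chord hypothesis is equivalent to the statement that for every pair of vertices $v,vh \in V(O)$, the edge $\{v,vh\}$ lies in $E(O)$.

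\textbf{Bend structure from magic.} The magic property of $m$ yields at least three cyclic positions of $m$ whose adjacent pair of letters belongs to $\{gh,hg,g^{-1}h,hg^{-1}\}$ (in $m$ or $m^{-1}$). Each such \emph{magic bend} corresponds to a vertex of $O$ at which the cycle makes a turn between a $g^{\pm1}$-edge and an $h^{\pm1}$-edge of $O$; in particular the $h^{\pm1}$-edge at each such bend is already an edge of $O$.

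\textbf{Placement and case split.} By vertex transitivity I translate $O$ so that $\unit$ coincides with a magic bend vertex. The $h^{\pm1}$-edge of $O$ incident to $\unit$ is then either $\{\unit,h\}$ (\emph{Case 1}) or $\{h^{-1},\unit\}$ (\emph{Case 2}); compare \autoref{fig:twin1}. In Case 2, I apply the \nameref{lem:crisscross}~(\ref{lem:crisscross}) to pass to the $r$-local $2$-separation $\{h^{-1}A,h^{-1}B\}$ with separator $\{h^{-1},\unit\}$ at which $g$ still traverses at $\unit$. This reduces Case 2 to a configuration entirely analogous to Case 1, with $h^{-1}$ playing the role previously played by $h$; note that an $h$-chord of $m$ is the same as an $h^{-1}$-chord since edges are undirected.

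\textbf{Deriving the chord in Case 1.} Fix an $r$-local $2$-separation $\{A,B\}$ with separator $\X$ and $g^{-1}\in A$, $g\in B$, which exists because $g$ traverses $\X$ at $\unit$. In Case 1, the bend at $\unit$ places a path $g^{-\epsilon},\unit,h$ of $O$ with $\epsilon\in\{\pm1\}$, so $h$ already lies in $V(O)\cap\X$ and $g^{-\epsilon}$ lies on a prescribed side of $\{A,B\}$. The remaining two magic bends of $O$ occur at distinct vertices $v_2,v_3\neq\unit$, each forcing an $h$-edge $\{v_i,v_ih^{\pm1}\}$ to lie on $O$. I analyse the two possible sub-configurations of these bends --- whether two of them share their $h$-edge (corresponding to a subword $g^{\pm1}hg^{\pm1}$ of $m$) or not --- and use the side-assignment imposed by $\{A,B\}$ together with the containment $O\subseteq B_r(x,G)$ for some $x\in\X$ (since $|m|\le r$) and the \nameref{traverser}~(\ref{traverser}) (which forbids strong traversals by cycles of length $\le r$) to show that at least one of these second-bend vertices produces a pair $\{w,wh\}\subseteq V(O)$ whose $h$-edge is not in $E(O)$. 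That edge is the desired $h$-chord, contradicting the standing assumption.

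\textbf{Main obstacle.} The delicate point is the bookkeeping in Case 1, namely verifying that in every sub-configuration of the three magic bends the combination of no-$h$-chord, the $g$-traversal of $\X$ at $\unit$, and the length bound $|m|\le r$ is simultaneously unsatisfiable. Particular care will be needed when two magic bends share an $h$-edge, so that the three bends account for only two incident $h$-edges on $O$: in that situation I expect to have to use the nilpotency-free \nameref{traverser}~(\ref{traverser}) together with the fact that $h$ is not an involution (so that $h$ and $h^{-1}$ are distinct $O$-vertices whenever both appear) to finish.
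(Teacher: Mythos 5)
Your proposal does not work as written, and the gap is structural rather than a matter of filling in details.

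First, the case split is oriented the wrong way around. You translate $O$ so that a single magic bend sits at~$\unit$ and then distinguish whether the $h$-edge of that bend is $\{\unit,h\}$ (your Case~1) or $\{h^{-1},\unit\}$ (your Case~2). Your Case~2 is already the \emph{good} configuration: there the $h$-edge at~$\unit$ goes to~$h^{-1}\notin\X$, so if $g^{\pm1}$ and $h^{-1}$ lie in distinct local components then $O$ weakly traverses $\X$ at $\unit$, hence (by \autoref{CycleWeakTraversal}) also at~$h$, which makes $\{\unit,h\}$ an $h$-chord at once. Instead you invoke the \nameref{lem:crisscross}~(\ref{lem:crisscross}) here and pass to the separator $\{\unit,h^{-1}\}$, after which the $h$-edge of your bend points \emph{into} the separator --- you have converted the easy case into the hard one. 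The \nameref{lem:crisscross} is needed exactly in your Case~1, where the $h$-edge of the bend is $\{\unit,h\}$: there the crisscross move replaces $\X$ by $\{\unit,h^{-1}\}$, placing~$h$ outside the new separator so the weak-traversal argument can run.

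Second, and more fundamentally, fixing a single bend does not give you enough. To conclude that $O$ weakly traverses at~$\unit$ you need the vertex $g^{-\epsilon}$ adjacent to $\unit$ on $O$ to lie in a different local component from the vertex the $h$-edge leads to. You do not control which of $g$ or $g^{-1}$ that is. The magic property gives three of the four words $gh,\,hg,\,g^{-1}h,\,hg^{-1}$, so by pigeonhole one of the two pairs $\{gh,\,g^{-1}h\}$ or $\{hg,\,hg^{-1}\}$ occurs in full; the paper uses the \emph{pair} to define two translated cycles $O,O'$ (one presenting $g^{-1}$ next to $\unit$, the other presenting~$g$), and since $g$ traverses~$\X$ at~$\unit$, at least one of these weakly traverses $\X$ at~$\unit$. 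Your sketch has no such choice, and the fall-back analysis of ``the remaining two magic bends at $v_2,v_3\neq\unit$'' rests on an unfounded premise: the magic property does not guarantee three bends at three distinct vertices of~$O$ (two of the four words can share a letter, e.g.\ $gh$ and $hg^{-1}$ both occur inside $ghg^{-1}$), and you have no control over where $v_2,v_3$ sit relative to $\{A,B\}$. You flag this bookkeeping as the ``main obstacle,'' and indeed it is: I do not see how to close it without the both-bends idea, at which point one is back to the paper's argument.

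Finally, the contradiction framing is unnecessary overhead; the argument is naturally constructive. The paper's proof is: split on which pair is present, in the $\{hg,hg^{-1}\}$ case use $O,O'$ and \autoref{CycleWeakTraversal} to exhibit $\{\unit,h\}$ as an $h$-chord directly, in the $\{gh,g^{-1}h\}$ case first pass to the separator $\{\unit,h^{-1}\}$ via the \nameref{lem:crisscross}~(\ref{lem:crisscross}) and repeat.
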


\begin{proof}
The magic property of $m$ implies that $m$ either contains both of $gh\i$ and $hg$ or both of $gh$ and $h\i g$ as cyclic subwords.
So it suffices to consider the following two cases.

\textbf{Case~1:} $gh\i$ and $hg$ are cyclic subwords of~$m$.
For a picture of the following, see \autoref{fig:twin1}.
The path on the vertices $g\i$, $\unit$ and~$h\i$ is labelled by~$gh\i$; let $O$ be a cycle in~$G$ that is labelled by~$m$ and contains this path.
Similarly, the path on vertices $h\i$, $\unit$ and $g$ is labelled by~$hg$; let $O'$ be a cycle in~$G$ that is labelled by~$m$ and contains this path.
Then $O$ and $O'$ share the edge that joins $h\i$ to~$\unit$.
Since $g$ traverses the $r$-local $2$-separator $\X$, one of the vertices $g$ and $g\i$ lies in a different local component at $\X$ than~$h\i$.
Hence at least one of the cycles $O$ or~$O'$ weakly traverses $\X$ at~$\unit$; let~$O''$ be such a cycle.
Since $O''$ has length at most~$r$, \autoref{CycleWeakTraversal} implies that $O''$ weakly traverses~$\X$ also at~$h$.
Therefore, the edge joining $\unit$ to~$h$ is an $h$-chord of~$O''$, and in particular of~$m$.

\textbf{Case~2:} $gh$ and $h\i g$ are cyclic subwords of~$m$.
Since $g$ traverses $\X$ at~$\unit$, \autoref{lem:crisscross} yields that $g$ traverses $\{\unit, h\i\}$ at $\unit$ as well. 
This allows us to argue similarly as in the first case, but with $\X$ replaced by~$\{\unit,h\i\}$, see the right-hand side of \autoref{fig:twin1}.
\end{proof}

\begin{figure}[ht]
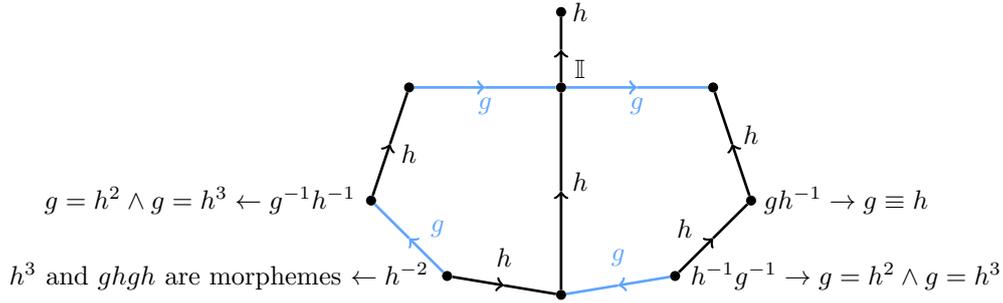

    \centering
\tikzfig{twin2}
\caption{A depiction for \autoref{cor:twin}. The implications show the result acquired when the vertex $h$ coincides with the labelled vertex}
\label{fig:twin}
\end{figure}

\begin{cor}\label{cor:twin}
    Assume \autoref{set} with~$r\ge 3\cdot 2^n$. Suppose that $h$ is not an involution, that some $g\in S\sm\{h\pmo\}$ traverses $\X$ at~$\unit$, and that $g\i h^2g\i h\i$ is a morpheme. 
    Then $ghgh$ is a morpheme or~$g\equiv h^2$.
\end{cor}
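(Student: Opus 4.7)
The plan is to apply \autoref{lem:twin} to the cycle $O$ in $G$ labelled by the given morpheme $m := g^{-1}h^2g^{-1}h^{-1}$, read starting at $\unit$. First I would check that $m$ is magic: read as the word $g^{-1},h,h,g^{-1},h^{-1}$, $m$ contains the linear subwords $g^{-1}h$ and $hg^{-1}$, and its inverse $h,g,h^{-1},h^{-1},g$ contains $hg$, so $m$ and $m^{-1}$ jointly contain three of the four words from $\{gh,hg,g^{-1}h,hg^{-1}\}$. Since $|m|=5 \le 3\cdot 2^n \le r$ (using $n \ge 1$), \autoref{lem:twin} produces an $h$-chord of $O$.

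Using the relation $h^2 = ghg$ that follows from $m=\unit$, I would then read off the vertices of $O$ in cyclic order as $v_0 = \unit$, $v_1 = g^{-1}$, $v_2 = g^{-1}h$, $v_3 = g^{-1}h^2 = hg$, $v_4 = h$. The $h$-chord joins two non-adjacent vertices, leaving five candidate pairs $\{v_i,v_j\}$, each realisable by either of the relations $v_i h = v_j$ or $v_j h = v_i$. The main step is the ensuing case analysis: the pair $\{v_2,v_4\}$ forces $g \equiv h$, contradicting $g \nequiv h$; the pairs $\{v_0,v_2\}$, $\{v_0,v_3\}$, $\{v_1,v_4\}$ each yield $g \equiv h^2$ in their only non-trivial orientation; and the pair $\{v_1,v_3\}$ has two orientations, one of which combined with $h^2 = ghg$ forces $h=\unit$, while the other gives $hgh = g^{-1}$, equivalently $ghgh = \unit$.

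To conclude, I would assume $g \not\equiv h^2$ and verify that the identity word $ghgh$ is a morpheme. Its length-one and length-two proper subwords cannot be identity words (since $g,h \ne \unit$ and $g \nequiv h$). The subword $hgh$ being identity would give $g \equiv h^2$, contradicting the standing assumption, while $ghg = \unit$ would force $h = g^{-2}$, whence $ghgh = h$, contradicting $ghgh = \unit$. I expect the main obstacle to be the case analysis itself, since each chord candidate requires combining its imposed relation $v_i h = v_j$ with the morpheme relation $h^2 = ghg$ and carefully using $g \ne \unit$, $h \ne \unit$, and $g \nequiv h$ to rule out degenerate solutions.
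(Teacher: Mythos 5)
Your proof is correct and takes essentially the same approach as the paper's: obtain an $h$-chord from \autoref{lem:twin}, then case-analyse the possible positions of the chord in the $5$-cycle labelled by $m$. Your enumeration over all five non-adjacent vertex pairs is in fact more careful than the paper's own text, which only discusses the short arc $P$ being labelled $h^2$, $gh$, or $hg$ and thus silently omits the labels $g^{-1}h$ and $hg^{-1}$ (your cases $\{v_0,v_2\}$ and $\{v_2,v_4\}$, which yield $g\equiv h^2$ and a contradiction, respectively); the paper also reads $ghgh$ directly off the $4$-cycle formed by $Q$ together with the chord, rather than performing your explicit proper-subword check.
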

\begin{proof}
    Since the morpheme $m:=g\i h^2g\i h\i$ is magic, it has an $h$-chord by \autoref{lem:twin}.
    Let $O$ be a cycle in~$G$ labelled by~$m$, let $e$ be a chord of $O$ labelled by~$h$, and let $P,Q$ be the two edge-disjoint paths in~$O$ linking the ends of~$e$.
    Note that $P$ and $Q$ have length at least two.
    Since $O$ has length five, this means that $P$ has length two and $Q$ has length three, say.
    If $P$ is labelled by~$h^2$ in some direction, then with the $h$-chord we infer that $h^3=\unit$, and from $Q$ plus the $h$-chord we infer that $ghgh$ is a morpheme.
    Otherwise $P$ is labelled $gh$ or $hg$ in some direction, and with the $h$-chord we infer $g\equiv h^2$. For a depiction, see~\autoref{fig:twin}.
\end{proof}

\begin{lem}\label{lem:pent1}
    Assume \autoref{set}. Let $g\in S\sm\{h\pmo\}$ and suppose that $g$ traverses $\X$ at~$\unit$. Then no morpheme in $g\pmo$ and $h\pmo$ is of the form $g^2h^k$  with $|k|\le r-2$.
\end{lem}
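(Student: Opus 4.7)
My plan is to argue by contradiction, assuming $w := g^2 h^k$ is a morpheme with $|k| \le r - 2$. The case $k = 0$ is easy: it would force $g^2 = \unit$, hence $g^{-1} = g$, preventing the walk $g^{-1}, \unit, g$ from having distinct endpoints and thus contradicting the hypothesis that $g$ traverses $\X$ at $\unit$. By reversing the word if needed, I may assume $k > 0$.

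Next I consider the cycle $\tilde O$ in $G$ labelled by $w$, started at $g^{-1}$ by vertex-transitivity; its vertex sequence is
\[
v_0 = g^{-1},\ v_1 = \unit,\ v_2 = g,\ v_3 = gh,\ \ldots,\ v_{k+1} = gh^{k-1},\ v_{k+2} = gh^k = g^{-1},
\]
(using $g^2 h^k = \unit$), and it has length $k+2 \le r$. The two $\tilde O$-neighbours of $v_1 = \unit$ are $g^{-1}$ and $g$, which by hypothesis lie in different $r$-local components at $\X$. I choose an $r$-local $2$-separation $\{A,B\}$ with separator $\X$ that places $g^{-1}$ and $g$ on opposite sides; then $\tilde O$ weakly traverses $\{A,B\}$ at $\unit$, and \autoref{CycleWeakTraversal} implies that $\tilde O$ also weakly traverses $\{A,B\}$ at $h$. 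In particular, $h \in V(\tilde O)$.

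Writing $h = v_i$, the cases $i \in \{0,1,2\}$ are ruled out instantly (each forces $h \equiv g$ or $h = \unit$), leaving $i \in \{3, \ldots, k+1\}$. From $v_i = gh^{i-2} = h$ we get $g = h^{3-i}$. For $i = 3$ this yields $g = \unit$, impossible; for $i = 4$ it yields $g = h^{-1}$, which contradicts $g \nequiv h$; and for $i \ge 5$ the proper subword $a_2 a_3 \cdots a_{i-1} = gh^{i-3}$ of $w$ evaluates to $\unit$, contradicting the morpheme property. The case $k < 0$ is entirely analogous, with $v_{i+1} = \unit$ playing the role of $v_{i-1}$ above and the corresponding proper subword being $a_2 \cdots a_{i+1}$. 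The main point that needs care is just verifying that the chosen $\{A,B\}$ exists and places $g^{-1}$ and $g$ on opposite sides, which is direct from the definition of an $r$-local $2$-separation together with the traversal hypothesis.
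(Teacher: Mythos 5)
Your proof is correct, and its first half mirrors the paper exactly: you build the cycle labelled by the hypothetical morpheme, observe it weakly traverses $\X$ at $\unit$, and invoke \autoref{CycleWeakTraversal} to deduce weak traversal at $h$. The two arguments part ways at the final step. The paper finishes with a degree count: since $h$ is not adjacent to $\unit$ on the cycle (that would force $h\equiv g$), both cycle-edges at $h$ are $h$-labelled, and together with the $h$-labelled edge $\unit h$ this gives three $h$-labelled edges at a single vertex — impossible in a Cayley graph. You instead only extract that $h=v_i$ for some index $i$, solve $gh^{i-2}=h$ for $g=h^{3-i}$, and then eliminate each $i$ by hand: $i\le 2$ or $i=k+2$ forces $h\equiv g$ or $h=\unit$, $i=3$ gives $g=\unit$, $i=4$ gives $g=h^{-1}$ contradicting $g\nequiv h$, and $i\ge 5$ exhibits $a_2\cdots a_{i-1}=gh^{i-3}$ as a nonempty proper identity subword, violating the morpheme property. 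Both finishes are sound; the paper's is shorter and leans on the regularity of Cayley graphs, while yours is more elementary and exercises the morpheme definition directly. Your remark about handling $k<0$ "by reversing" is stated a bit loosely — the clean reduction is to apply the $k>0$ argument to $g^{-1}$ (which also lies in $S\setminus\{h^{\pm1}\}$ and also traverses $\X$ at $\unit$) together with the morpheme $g^{-2}h^{-k}$, obtained from $g^2h^k$ by inversion and cyclic permutation — but your direct case analysis for $k<0$ with the subword $a_2\cdots a_{i+1}$ also goes through.
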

\begin{proof}
    Suppose for a contradiction that $m=g^2 h^k$ is a morpheme for some $k$ with $|k|\le r-2$.
    Since $g$ traverses, we have $g^2\neq\unit$, so~$k\neq 0$.
    Let $O$ be a cycle in $G$ that is labelled by~$m$, and that contains the vertex~$\unit$ so that $\unit$ is incident with the two edges of~$O$ that are labelled by~$g$. 
    Since $g$ traverses~$\X$ at~$\unit$, the cycle $O$ weakly traverses~$\X$ at~$\unit$. 
    Using that $O$ has length $|m|\le r$, \autoref{CycleWeakTraversal} implies that $O$ weakly traverses~$\X$ at~$h$ as well.
    But then the vertex~$h$ is incident to two $h$-labelled edges in~$O$, both of which run between the vertex~$h$ and distinct local components at~$X$, and the vertex~$h$ additionally is incident to the $h$-labelled edge that joins~$\unit$ to~$h$.
    So the vertex $h$ is incident with three $h$-labelled edges in~$G$, a contradiction.
\end{proof}

\begin{lem}\label{short}
    Assume \autoref{set} with~$r\ge3\cdot2^n$.
    Let $g\in S\sm\{h\pmo\}$ and suppose that $w$ is a morpheme that is a cyclic subword of $w_n=[g,h^2]_n$. 
    If $w$ has at most three segments, then $g\equiv h^k$ for some $k$ with $2\le k\le 4$ or $h$ is an involution.
\end{lem}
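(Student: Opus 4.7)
The plan is a case analysis on the number of segments of~$w$, using two structural facts: in $w_n=[g,h^2]_n$ every $g$-segment has length exactly~$1$ and every $h$-segment has length exactly~$2$; and all powers of~$h$ commute with one another. The conclusion will follow from the morpheme equation $w=\unit$ in each case.

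If $w$ has one segment, then since two $g$-letters of $w_n$ are never cyclically adjacent, the segment must be an $h$-segment; ruling out length one via the \nameref{commutatorContainsMorpheme}~(\ref{commutatorContainsMorpheme}) leaves $w=h^{\pm 2}$, and hence $h^2=\unit$. If $w$ has two segments, the segment-length bounds together with $|w|\ge 3$ force $w=g\pmo h^{\pm 2}$ up to cyclic permutation and inversion, and the morpheme equation yields $g\equiv h^2$.

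For three segments I would split into the patterns $hgh$ and $ghg$. In the pattern $w=h^{j_1}g^{\epsilon}h^{j_2}$ with $|j_1|,|j_2|\in\{1,2\}$, commutativity of powers of $h$ rewrites the morpheme equation as $g^{\epsilon}=h^{-(j_1+j_2)}$; the bound $|j_1+j_2|\le 4$ together with the exclusions $g\ne\unit$ and $g\not\equiv h$ forces $|j_1+j_2|\in\{2,3,4\}$, so $g\equiv h^k$ for some $k\in\{2,3,4\}$. In the pattern $w=g^{\epsilon_1}h^jg^{\epsilon_2}$ the interior $h$-segment is full, so $|j|=2$; if $\epsilon_1\ne\epsilon_2$ the morpheme equation collapses to $h^{\pm 2}=\unit$, and $h$ is an involution.

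The main obstacle is the remaining subcase $\epsilon_1=\epsilon_2$, where $w=\unit$ delivers only $g^2\equiv h^2$ --- a relation strictly weaker than $g\equiv h^k$ in a general group. To bridge this gap I would first dispose of the possibility $g\equiv h^2$ directly, and under the remaining hypotheses ($g\not\equiv h$, $g\not\equiv h^2$, $h$ not an involution) check that each proper subword of the two-segment word $g^2h^{\pm 2}$ is non-identity in $\Gamma$, so $g^2h^{\pm 2}$ is itself a morpheme of~$\Gamma$. Invoking \autoref{lem:pent1} --- which forbids morphemes of the form $g^2h^k$ with $|k|\le r-2$ whenever $g$ traverses $\X$ at $\unit$ --- then supplies the contradiction. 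It is precisely here that the $2$-separator geometry becomes essential: algebra alone yields only $g^2\equiv h^2$, and \autoref{lem:pent1} is what upgrades this to $g\equiv h^k$.
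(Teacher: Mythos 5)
Your case analysis by segment count, culminating in an appeal to \autoref{lem:pent1} for the pattern $g^{\epsilon}h^{j}g^{\epsilon}$, matches the paper's proof in structure and in its key lemma. Two remarks. In the one-segment case the appeal to the \nameref{commutatorContainsMorpheme} is out of place: here $w$ is an arbitrary morpheme, not one produced by that lemma, and the reason a single letter cannot be a morpheme is simply that $g,h\neq\unit$ in~$\Gamma$. In the three-segment subcase with $\epsilon_1=\epsilon_2$, your route is more laborious than it needs to be: you propose to first dispose of $g\equiv h^2$, restrict to $h$ not an involution, and then re-verify from first principles that $g^2h^{\pm 2}$ is a morpheme before citing \autoref{lem:pent1}. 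The paper instead cyclically permutes $w=g^{\epsilon}h^{j}g^{\epsilon}$ to $g^{2\epsilon}h^{j}$; since cyclic permutations and inverses of morphemes are again morphemes (this amounts to re-rooting or reversing the underlying closed walk), $g^{2}h^{\pm j}$ is already a morpheme of length at most $4\le r-2$, and \autoref{lem:pent1} yields an outright contradiction, so the whole subcase is vacuous and no preliminary split on $g\equiv h^2$ or subword-by-subword check is needed. You are right, however, to flag that \autoref{lem:pent1} requires $g$ to traverse $\X$ at~$\unit$: this hypothesis does not appear in the statement of \autoref{short} but is present in every context where the paper invokes it.
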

\begin{proof}
    \textbf{Case~1:} $w$ has exactly one segment.
    Since $g\neq\unit$, $w$ must be an $h$-segment. Since $h\neq\unit$, we get that $h$ must be an involution.
    
    \textbf{Case~2:} $w$ has exactly two segments.
    Then $g\equiv h^k$ for some~$k\in\{1,2\}$, and in fact $k=2$ since $g\nequiv h$.
    
    \textbf{Case~3:} $w$ has exactly three segments.
    If $w$ is of the form $g\pmo h^i g\pmo$, then we can cyclically permute it and obtain a contradiction to \autoref{lem:pent1}, so $w$ cannot have this form. 
    If $w$ is of the form $g\pmo h^i g^{\mp 1}$, then $h$ is an involution (because $h$-segments of~$w$ have length at most two, so~$1\le |i|\le 2$, and $h\neq\unit$ gives $|i|=2$). 
    Otherwise, $w=h^i g\pmo h^j$ for some $i,j$ with $|i|,|j|\in\{1,2\}$. 
    Then cyclically permuting yields $g=h^{i+j}$, and $k:=|i+j|$ satisfies $2\le k\le 4$ (since $k\in\{0,1\}$ would contradict $g\nequiv \unit,h$).
\end{proof}

\begin{figure}[ht]
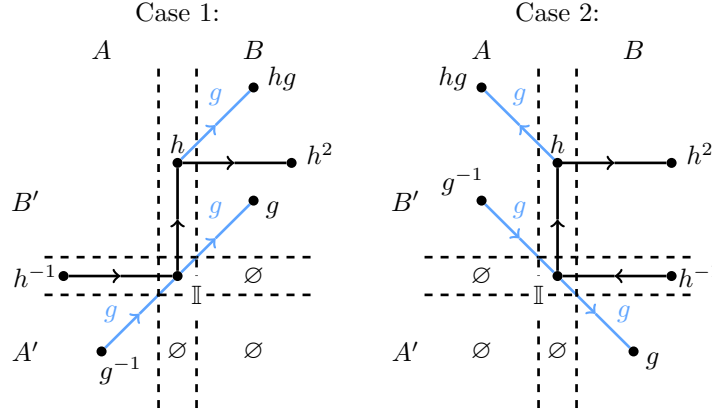

    \centering
\tikzfig{seagull}
\caption{The diagrams for the cases in \autoref{lem:seagull}}
\label{fig:seagull}
\end{figure}

\begin{lem}[Seagull Lemma]\label{lem:seagull}
    Assume \autoref{set} with $r\ge4$.
    Let $g\in S\sm\{h\pmo\}$ and suppose that $g$~traverses $\X$ at $\unit$ and that $gh^2$ traverses $\X$ strongly. Then $h^2g$ traverses $\X$ strongly.
\end{lem}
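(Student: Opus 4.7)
We argue by contradiction, assuming $h^2g$ does not strongly traverse $\X$. Fix an $r$-local $2$-separation $\{A,B\}$ of $G$ with separator $\X$ witnessing that $g$ traverses $\X$ at $\unit$, so that $g\in B\sm A$ and $g^{-1}\in A\sm B$. The strong traversal of $gh^2$, via the walk $g^{-1},\unit,h,h^2$, places $h^2\in B\sm A$. The failure of $h^2g$ to strongly traverse means that on the walk $h^{-1},\unit,h,hg$ the endpoints $h^{-1}$ and $hg$ lie on the same side of $\{A,B\}$, yielding two cases --- the two ``wings'' of the seagull.

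In \textbf{Case 1} ($h^{-1},hg\in A\sm B$), consider the length-two walk $g,\unit,h^{-1}$ labelled $g^{-1}h^{-1}$: since $g\in B\sm A$ and $h^{-1}\in A\sm B$, it weakly traverses $\{A,B\}$ at $\unit$. The plan is to embed this walk into a cycle of $G$ of length at most $r$ that avoids the vertex $h$, whence the \nameref{CycleWeakTraversal}~(\ref{CycleWeakTraversal}) corollary delivers a contradiction: the cycle would be forced to weakly traverse at $h$ as well, but $h$ is not on the cycle. Such a cycle is extracted from a short identity word of $\Gamma$ in $\{g\pmo,h\pmo\}$ --- for instance a cyclic translate of $[g,h^{-1}]_1=ghg^{-1}h^{-1}$ in the abelian case, or a morpheme subword of a suitable iterated commutator obtained via the \nameref{commutatorContainsMorpheme}~(\ref{commutatorContainsMorpheme}) in higher nilpotency classes --- positioned by vertex-transitivity so that the subwalk $g,\unit,h^{-1}$ sits inside it.

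In \textbf{Case 2} ($h^{-1},hg\in B\sm A$), the length-three walk $g^{-1},\unit,h,hg$ labelled $ghg$ has its endpoints $g^{-1}\in A\sm B$ and $hg\in B\sm A$ on opposite sides, so $ghg$ strongly traverses $\X$. The \nameref{traverser}~(\ref{traverser}) then forbids $ghg$ from appearing as a cyclic subword of any morpheme of $\Gamma$ in $S$ of length at most $r$. The contradiction arises by exhibiting such a morpheme: inspecting a morpheme subword of the iterated commutator $u_n=[g,h]_n$ (found via the Morpheme Finding Lemma~(\ref{commutatorContainsMorpheme})) one checks directly, using the explicit form from \autoref{lem:what_words_look_like}, that a cyclic permutation or inversion containing $ghg$ appears as a subword.

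The main obstacle in both cases is the management of length bounds and of certain degenerate sub-cases, in particular when $g$ coincides with a small power of $h$ (e.g.\ $g\equiv h^{\pm 2}$), which can cause the auxiliary cycle in Case~1 to meet $h$ or force the morpheme in Case~2 to coincide with a pure $h$-power. These sub-cases are handled by substituting the original cycles with alternative identity words drawn from the nilpotency assumption and from the vertex-transitive action of $\Gamma$, keeping all lengths within the bound $r\ge 4$ imposed by the hypothesis.
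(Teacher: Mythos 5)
Your overall strategy differs from the paper's: the paper argues \emph{directly} by applying the automorphism given by left-multiplication with $h^{-1}$, invoking the \nameref{lem:crisscross}~(\ref{lem:crisscross}) and \autoref{empty_corner} to locate $hg$ on the side opposite $h^{-1}$; you argue by contradiction with a case split and then try to produce offending short cycles. Unfortunately both of your cases have genuine gaps.

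Your Case~2 is the more serious problem and I do not see how to repair it by the proposed method. You correctly derive that $ghg$ would strongly traverse $\X$, and then invoke the \nameref{traverser}~(\ref{traverser}) to conclude that $ghg$ cannot be a cyclic subword of any morpheme of length $\le r$. To obtain a contradiction you claim that such a morpheme \emph{does} exist, by inspecting a morpheme subword of $u_n=[g,h]_n$. But this is false already in the base case: when $g$ and $h$ commute, $u_1=gh^{-1}g^{-1}h$ is itself a morpheme, and an inspection of its cyclic permutations and their inverses shows that $ghg$ (equivalently $g^{-1}h^{-1}g^{-1}$) never appears. The same happens for the magic morphemes $ghg^{-1}h$ and $gh^{-1}g^{-1}h$ more generally. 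The \nameref{traverser}~(\ref{traverser}) is a one-way implication (strong traversal $\Rightarrow$ not a cyclic subword of a short morpheme); its contrapositive does not let you infer a strong traversal, and the fact that $ghg$ is \emph{consistent} with not being a cyclic subword of $u_n$ means your Case~2 delivers no contradiction at all. This is precisely the obstacle the paper's \nameref{lem:crisscross}~(\ref{lem:crisscross}) is engineered to circumvent --- it works with $[g^2,h]_n$ and translates the separation by $h^{-1}$ rather than hunting for $ghg$ in $[g,h]_n$.

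Your Case~1 has a different, more localised gap that you partially acknowledge but do not resolve. You want a cycle of length $\le r$ through the path $g,\unit,h^{-1}$ that \emph{avoids} the vertex $h$, so that \autoref{CycleWeakTraversal} forces a contradiction. Even in the abelian case the natural candidate, the cycle labelled $g^{-1}h^{-1}gh$, visits $h^{-1}g$, which equals $h$ exactly when $g\equiv h^2$; and in higher nilpotency classes the cycle labelled by a morpheme subword of $u_n$ passes through many more group elements whose avoidance of $h$ you would need to control explicitly. You gesture at ``alternative identity words'' but give no construction, and it is not clear one exists within the given length bound for all the exceptional $g$. Since these degenerate $g$ (small $h$-powers) are exactly the ones that other parts of the paper handle with dedicated lemmas (e.g.\ \autoref{power_to_traverse}, \autoref{short}), this is not a routine loose end. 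I recommend replacing the contradiction argument altogether with the paper's direct approach via the translated separation $\{h^{-1}A,h^{-1}B\}$, the \nameref{lem:crisscross}~(\ref{lem:crisscross}), and \autoref{empty_corner}.
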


\begin{proof}
Let $\{A,B\}$ be the $r$-local $2$-separation whose separator is $\X$ such that $A\sm B$ is the local component containing $g\i$.  
Since $g$ traverses~$\X$, and since $gh^2$ traverses $\X$ strongly, we deduce that 
both vertices $g$ and $h^2$ are in different local components to $g^{-1}$ and thus lie in~$B\sm A$.
The automorphism of~$G$ defined by left-multiplication by~$h\i$ takes $\{A,B\}$ to an $r$-local $2$-separation $\{A',B'\}$ with separator~$\{\unit,h\i\}$, where $A':=h\i A$ and $B':=h\i B$.
Since $h^2\in B\sm A$, applying the automorphism defined by left-multiplication by $h\i$ yields~$h\in B'\sm A'$.
We consider two cases, see \autoref{fig:seagull}, which we finish in the same way.

\textbf{Case~1:} $h\i\in A\sm B$. We are to show that $hg\in B\sm A$. 
By \autoref{empty_corner}, we have that $B\cap A'=\{\unit\}$.
Since $g\i$ lies in $A\sm B$ and $B\cap A'=\{\unit\}$, the vertex $g\i$ must lie in~$A\cap A'$.
Since $g$ traverses $\{A',B'\}$ at~$\unit$ by the \nameref{lem:crisscross} (\ref{lem:crisscross}), we deduce that $g\in B'\sm A'$. Applying the automorphism $h$ to the $2$-separation $\{A',B'\}$, yields that $hg\in B\sm A$, completing this case. 

\textbf{Case~2:} not Case 1. As $h$ is in $A\cap B$ and $h$ is not an involution, we get $h\i\in B\sm A$. We are to show that $hg\in A\sm B$.
By \autoref{empty_corner}, we have that $A\cap A'=\{\unit\}$.
Since $g\i$ lies in $A$ and $B\cap A'=\{\unit\}$, the vertex $g\i$ must lie in~$A\cap B'$.
Since $g$ traverses $\{A',B'\}$ at~$\unit$ by the \nameref{lem:crisscross} (\ref{lem:crisscross}), we deduce that $g\in A'\sm B'$. 
 Applying the automorphism $h$ to the $2$-separation $\{A',B'\}$, yields that $hg\in A\sm B$, completing this case. 
\end{proof}

\begin{proof}[Proof of \autoref{subsec}]
    Assume \autoref{set} with $r\ge 3\cdot 2^n$.
    We have to show, for every $g\in S$ which traverses $\X$ at $\unit$ so that $gh^2$ traverses $\X$ strongly, that we either have $g\equiv h^k$ for some~$k$ with $2\le k\le 4$, or else that $g^{-1}h^2g^{-1}h^{-1}$ and $ghgh$ are morphemes.
    Since $g$ traverses~$\X$ at~$\unit$, we have~$g\nequiv h$.
    Using the \nameref{commutatorContainsMorpheme} (\ref{commutatorContainsMorpheme}), we find a morpheme~$w$ as a cyclic subword of~$w_n=[g,h^2]_n$.
    Note that $w$ has length at most~$r$ by \autoref{obs:commlen3}.
    
    \textbf{Case~1:} $w$ has at most three segments.
    Since $gh^2$ traverses strongly, $h$ is not an involution, so \autoref{short} gives that $g\equiv h^k$ for some~$k$ with~$2\le k\le 4$.
    
    \textbf{Case~2:} $w$ has at least four segments. The word $gh^2$ traverses $\X$ strongly by assumption, and $h^2g$ traverses $\X$ strongly by \autoref{lem:seagull}.
    Hence neither $gh^2$ nor $h^2 g$ occur as cyclic subwords in~$w$, by the \nameref{traverser} (\ref{traverser}).
    But then $w=g\i h^2g\i h\i$, say, by \autoref{Ramsey}.
    By \autoref{cor:twin}, either $ghgh$ is a morpheme as well, or $g\equiv h^k$ for~$k=2$, and we are done either way.
\end{proof}

\section{Involution case}
\label{sec:invo}

In this section we prove \autoref{lem:involution_case}, which states that assuming \autoref{set} with $r\ge \max\{2^{n+2},10\}$, if $h$ is an involution then the subgroup of~$\Gamma$ generated by~$h$ is normal.
When $h$ is an involution, we cannot use the previous technique of finding paths labelled by words like $gh^2$ in short cycles. We instead look for paths labelled by words like $ghk$ where the vertices $g\i$ and~$hk$ lie in distinct local components at the local $2$-separator~$\X$. This requires some preparation.

\begin{lem}
\label{lem:invo_implies_double_trav}
   Assume \autoref{set}. Suppose that $h$ is an involution.
    If some $g\in S$ traverses $\X$ at some vertex, then $g$ traverses $\X$ at both vertices $\unit$ and~$h$.
\end{lem}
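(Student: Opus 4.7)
The plan is to exploit that left-multiplication by~$h$ is a graph automorphism of~$G$ which, since $h^2=\unit$, is an involution that swaps the two vertices $\unit$ and~$h$ of the separator $X:=\X$ and therefore preserves $X$ setwise.

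First I would observe that this automorphism sends the neighbourhood $N(X)$ to itself and interchanges the balls $B_r(\unit,G)$ and $B_r(h,G)$, so it descends to an automorphism of the connectivity graph $C_r(\unit,h,G)$ whose restriction to $N(X)$ is given by $v\mapsto hv$. Consequently it permutes the local components at~$X$, and so for any pair $a,b\in N(X)$, the images $ha$ and~$hb$ lie in the same local component at~$X$ if and only if $a$ and~$b$ do.

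Next, by hypothesis $g$ traverses $X$ at some vertex $x\in X$, which by definition means that the walk with vertices $xg^{-1},x,xg$ labelled by~$g^2$ is a traversal; in particular $xg^{-1}$ and $xg$ lie in distinct local components at~$X$. Letting $x'$ denote the other vertex of~$X$, we have $x'=hx$, and applying the induced automorphism yields that $x'g^{-1}=h\cdot xg^{-1}$ and $x'g=h\cdot xg$ also lie in distinct local components at~$X$. Since the walk on vertices $x'g^{-1},x',x'g$ is labelled by~$g^2$ with its unique internal vertex $x'\in X$, it is a traversal witnessing that $g$ traverses $X$ at~$x'$ as well, as required.

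No substantive obstacle is anticipated: the whole argument rests on the involutive nature of~$h$, which promotes left-multiplication by~$h$ to an order-two automorphism swapping the two vertices of~$X$. In contrast to the non-involution case treated in \autoref{sec:noninvo}, no nilpotency, no iterated-commutator constructions and no specific bound on~$r$ beyond what \autoref{set} already provides are required.
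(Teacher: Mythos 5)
Your proposal is correct and takes essentially the same approach as the paper, which gives the one-line proof ``The automorphism of $G$ defined by left-multiplication with $h$ maps $\X$ to itself''; you have simply spelled out the details (that the automorphism swaps $\unit$ and $h$, descends to the connectivity graph, permutes local components, and carries a traversal at $x$ to one at $hx$) which the paper leaves implicit.
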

\begin{proof}
The automorphism of~$G$ defined by left-muliplication with~$h$ maps $\X$ to itself.
\end{proof}

\begin{figure}[ht]
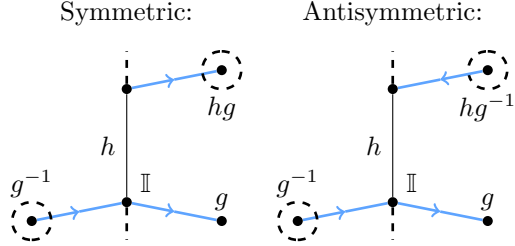

    \centering
\tikzfig{trav_types}
\caption{Symmetric and antisymmetric traversals}
\label{fig:trav_types}
\end{figure}

The following definition is supported by \autoref{fig:trav_types}.

\begin{dfn}[Symmetric and antisymmetric traversals]\label{symAndAntisym}
    Assume \autoref{set}. 
    Further suppose that some $g\in S$ traverses $\X$ and $h$ is an involution.
    Then $g$ traverses $\X$ at both $\unit$ and $h$ by \autoref{lem:invo_implies_double_trav}.
    We say that $g$ traverses $\X$ 
    \begin{enumerate}
        \item[\bf{--}]\emph{symmetrically} if the vertices $g\i$ and $hg$ lie in distinct local components at~$\X$, and
        \item[\bf{--}] \emph{antisymmetrically} if the vertices $g\i$ and $hg\i$ lie in distinct local components at~$\X$.
    \end{enumerate}
\end{dfn}

Note that it is possible for an element $g\in S$ to traverse $\X$ both symmetrically and antisymmetrically; for example, if all four vertices $g\i$, $g$, $hg\i$ and~$hg$ lie in distinct local components at~$\X$.
It is important to note that, in this context, if an element of~$S$ traverses~$\X$, then it must do so either symmetrically or antisymmetrically (possibly both).

\begin{figure}[ht]
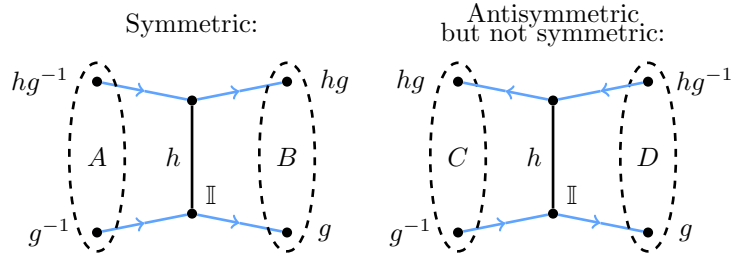

    \centering
\tikzfig{trav_obs}
\caption{The two outcomes (i) (left) and (ii) (right) of \autoref{obs:what_traverse}. The blue edges are labelled by~$g$.}
\label{fig:trav_obs}
\end{figure}

The following lemma is supported by \autoref{fig:trav_obs}.

\begin{lem}
\label{obs:what_traverse}
    Assume \autoref{set}. 
    Further suppose that some $g\in S$ traverses $\X$ and $h$ is an involution, so $g$ traverses $\X$ at both $\unit$ and $h$ by \autoref{lem:invo_implies_double_trav}.
    \begin{enumerate}[label=\textnormal{(\roman*)}]
        \item If $g$ traverses $\X$ symmetrically, then there is an $r$-local $2$-separation $\{A,B\}$ of~$G$ with separator $\X$ such that
        \[ g\i,hg\i\in A \quad\text{and}\quad g,hg\in B.\]
        \item If $g$ traverses $\X$ antisymmetrically but not symmetrically, then there are distinct $r$-local components $C$ and $D$ at $\X$ such that
        \[ g\i,hg\in C \quad\text{and}\quad g,hg\i\in D.\]
        In particular, both words $g\i hg$ and $ghg\i$ strongly traverse~$\X$.
    \end{enumerate}
\end{lem}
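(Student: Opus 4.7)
The plan is to exploit the automorphism $\sigma$ of $G$ given by left-multiplication by~$h$. Since $h$ is an involution, $\sigma$ swaps the two vertices of $X:=\X$ and so induces an involutive automorphism of the connectivity graph $C_r(\unit,h)$, permuting the $r$-local components at~$X$. Denote by $C_1,C_2,C_3,C_4$ the components containing $g\i,g,hg\i,hg$, respectively. The traversal hypothesis combined with \autoref{lem:invo_implies_double_trav} yields $C_1\ne C_2$ and $C_3\ne C_4$; and since $\sigma$ sends $g\i\mapsto hg\i$ and $g\mapsto hg$, we have $\sigma(C_1)=C_3$ and $\sigma(C_2)=C_4$.

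For (i), the symmetric hypothesis reads $C_1\ne C_4$. Applying $\sigma$ to this yields $C_3\ne C_2$. Together with $C_1\ne C_2$ and $C_3\ne C_4$, these four inequalities imply that the pairs $\{C_1,C_3\}$ and $\{C_2,C_4\}$ are disjoint as sets of components. I can then construct the required $r$-local $2$-separation $\{A,B\}$ by placing $C_1,C_3$ (and any remaining components) into $A\setminus X$ and $C_2,C_4$ into $B\setminus X$, both sides being nonempty.

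For (ii), the hypotheses translate to $C_1\ne C_3$ and $C_1=C_4$; applying $\sigma$ to the latter forces $C_3=C_2$. Setting $C:=C_1=C_4$ and $D:=C_2=C_3$ gives the two distinct components with the desired contents. For the \emph{in particular} clause, I would exhibit the length-three walk $g,\unit,h,hg$, which is labelled by~$g\i hg$: its internal vertices $\unit,h$ lie in~$X$, and its endpoints $g\in D$ and $hg\in C$ lie in distinct $r$-local components, making it a strong traversal. The walk $g\i,\unit,h,hg\i$ labelled by~$ghg\i$ works analogously.

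I expect no real obstacle here; the whole argument is a symmetry-bookkeeping exercise enabled by $h\in X$ being an involution. The only subtlety is that in case~(i) some of the $C_i$ may coincide (for instance, $C_1=C_3$ is not excluded), but the four inequalities above guarantee exactly that the two target pairs of components are disjoint, which is all that the $2$-separation construction requires.
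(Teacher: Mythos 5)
Your proof is correct and follows essentially the same approach as the paper's: both exploit the automorphism given by left-multiplication by $h$ to transfer information between the components at $\unit$ and those at $h$, then assemble the $2$-separation (in~(i)) or identify the coinciding components (in~(ii)). Your bookkeeping with $C_1,\dots,C_4$ and the observation $\sigma(C_1)=C_3$, $\sigma(C_2)=C_4$ is a slightly cleaner repackaging of the paper's reasoning about $C,D,hC,hD$, but the underlying argument is identical.
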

\begin{proof}
(i).
Let $C$ denote the local component at~$\X$ that contains the vertex~$g$, and let $D$ denote the local component at~$\X$ that contains the vertex~$hg$.
Since $g$ traverses~$\X$ at~$\unit$ and $g\in C$, the vertex $g\i$ is not in~$C$.
Since $g$ traverses $\X$ symmetrically and $hg\in D$, the vertex $g\i$ is not in~$D$.
In total, $g\i\notin C\cup D$.

Since left-multiplication by $h$ defines an automorphism of~$G$, and since this automorphism takes $\X$ to~$\X$, the two vertex sets $hC$ and $hD$ are $r$-local components at~$\X$.
From 
\begin{align*}
\begin{array}{lllll}
     & \phantom{h}g\in \phantom{h}C, & hg\in \phantom{h}D, & \text{and} & \phantom{h}g\i\notin \phantom{h}C\cup \phantom{h}D  \\
     \text{we obtain} & hg\in hC, & \phantom{h}g\in hD, & \text{and} & hg\i\notin hC\cup hD.
\end{array}
\end{align*}
Since $g$ traverses~$\X$ symmetrically and since $hg$ lies in~$hC$, we have that $g\i$ does not lie in~$hC$.
Since $g$ traverses $\X$ at~$\unit$ and since the vertex $g$ lies in~$hD$, we have that $g\i$ does not lie in~$hD$.
In total, $hg\i,g\i\notin hC\cup hD$ while $hg,g\in hC\cup hD$.
Hence taking $B:=\X\cup hC\cup hD$ and taking $A$ to be the union of~$\X$ with all $r$-local components at~$\X$ other than $hC$ and $hD$ yields an $r$-local $2$-separation $\{A,B\}$ of~$G$ with $g\i,hg\i\in A$ and $g,hg\in B$.

(ii).
Let $C$ denote the $r$-local component at~$\X$ that contains~$g\i$, and let $D$ denote the $r$-local component at~$\X$ that contains~$hg\i$.
Since $g$ traverses~$\X$ antisymmetrically, $C$ and $D$ are distinct.
Since $g$ does not traverse~$\X$ symmetrically, the vertex $hg$ lies in the same local component as~$g\i$, so in total $g\i,hg\in C$.
As in~(i), left-multiplication by~$h$ defines an automorphism of~$G$ that takes local components at~$\X$ to local components at~$\X$.
The automorphism takes the local component $C$ containing $g\i$ and $hg$ to the local component $hC$ containing $hg\i$ and~$g$.
Since $hg\i$ lies in both $hC$ and~$D$, it follows that $hC$ and $D$ coincide, so $g,hg\i\in hC=D$ which completes the proof.
\end{proof}

We now state three propositions without proof and show how we can derive \autoref{lem:involution_case} from them. 
We will prove the three propositions afterwards.

\begin{figure}[ht]
    \centering
\tikzfig{twosym}
\caption{The two outcome cases in \autoref{twosym}}
\label{fig:twosym}
\end{figure}

The following proposition is supported by \autoref{fig:twosym}.

\begin{prop}\label{twosym}
    Assume \autoref{set} with $r\ge\max\{2^{n+1},5\}$.
    Suppose that $h$ is an involution.
    \begin{enumerate}
        \item If $g\in S$ traverses $\X$ symmetrically, then $g$ and $h$ commute.
        \item If $g\in S$ traverses $\X$ antisymmetrically, but not symmetrically, then $gh$ is an involution.
    \end{enumerate}
\end{prop}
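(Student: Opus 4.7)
My plan in both parts is to enumerate which length-three walks through~$\X$ strongly traverse, invoke the \nameref{traverser}~(\ref{traverser}) to forbid these from appearing as cyclic subwords of any morpheme of length at most~$r$, and then feed the restriction into \autoref{magic2} to obtain a magic morpheme~$m$ of length at most $2^{n+1}\le r$ whose letter pattern is highly constrained. Throughout, I rely on the fact that the $h$-letters of~$u_n$ alternate in sign, which an easy induction extracts from the recursion $u_n=g\,u_{n-1}^{-1}\,({}^{-}u_{n-1})$ of \autoref{lem:what_words_look_like}.

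For part~(i), I fix an $r$-local $2$-separation $\{A,B\}$ with $g^{-1},hg^{-1}\in A$ and $g,hg\in B$ from \autoref{obs:what_traverse}(i). Inspecting walks of length three whose two interior vertices lie in~$\X$, the strongly traversing ones are precisely those labelled by $g^{\epsilon}h^{\delta}g^{\epsilon}$ with $\epsilon,\delta\in\{\pm1\}$ (two $g$-letters of the same sign straddling a single $h$-letter). Hence in the magic morpheme~$m$ consecutive $g$-letters have opposite signs, and combined with the alternation of $h$-signs in~$u_n$, every length-four cyclic subword of~$m$ evaluates (using $h^{-1}=h$) to the commutator~$[g,h]$. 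If $|m|=4$, then $m=[g,h]=\unit$ and part~(i) follows. Otherwise $|m|\ge 6$: by morpheme minimality the length-four proper subwords of~$m$ are not identity, and expanding the length-six case yields the braid-type relation $hgh=ghg$. Combined with $h^2=\unit$, this forces $g^{-1}hg=hgh$ to be an involution, whence $g^2=\unit$, and $\langle g,h\rangle$ becomes a quotient of $\langle g,h\mid g^2,h^2,hgh=ghg\rangle\cong S_3$; as $S_3$ is not nilpotent, this contradicts the nilpotency of~$\Gamma$.

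For part~(ii), I fix distinct $r$-local components $C,D$ with $g^{-1},hg\in C$ and $g,hg^{-1}\in D$ from \autoref{obs:what_traverse}(ii). The analogous enumeration now shows the strongly traversing length-three words are $g^{\epsilon}h^{\delta}g^{-\epsilon}$, so all $g$-letters of~$m$ share a common sign; combined with the alternating $h$-signs, the morpheme evaluates (via $h^{-1}=h$) to $(gh)^k$ with $|m|=2k$, and morpheme minimality identifies $k$ with the order of~$gh$. An induction on the recursion for $u_n$ caps the maximal same-sign $g$-run in~$u_n$ at~$3$, bounding $k\le 3$. The case $k=3$ is ruled out as follows: $h$ has order~$2$ and $gh$ would have order~$3$, and in a finite nilpotent group coprime-order elements commute; hence $h$ and $gh$ commute, which unravels to $gh=hg$. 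But then $gh=hg$ is adjacent to $g$ via an $h$-edge inside $B_r(\unit)\sm\X$, forcing $g$ and $hg$ into the same $r$-local component at~$\X$ -- contradicting that $g\in D$ and $hg\in C$ with $C\ne D$. Therefore $k=2$ and $gh$ is an involution.

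The main obstacle in both parts is ruling out the length-six sub-case. For part~(i) the braid-to-$S_3$ contradiction does the job, combining sign alternation, morpheme minimality, and nilpotency; for part~(ii) the coprime-orders argument together with the adjacency comparison between the antisymmetric components plays the analogous role. Verifying rigorously that the maximal same-sign and alternating $g$-runs in~$u_n$ are each bounded by~$3$ -- needed to cap~$|m|\le 6$ -- is the main piece of bookkeeping that a full write-up must supply, via induction on the recursion of \autoref{lem:what_words_look_like}.
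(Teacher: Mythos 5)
Your approach is a genuine alternative to the paper's (which routes the argument through the \nameref{lem:nosquares}-style \nameref{lem:no-rectangles}~(\ref{lem:no-rectangles}), the two Zigzag lemmas, and \autoref{lem:pentagon}), but as written it has a genuine gap and a couple of side confusions.

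The main gap is the odd-length morpheme case. Your constraints -- in part~(i) forbidding $g^{\epsilon}hg^{\epsilon}$ as a cyclic subword of $m$, in part~(ii) forbidding $g^{\epsilon}hg^{-\epsilon}$ -- only control length-three cyclic subwords of the form $g^{\pm}hg^{\pm}$. A morpheme labelling a cycle can have odd length, in which case two $g$-letters sit cyclically adjacent with no $h$ between them; the subword there is $g^2$, a weak (not strong) traversal, so the \nameref{traverser}~(\ref{traverser}) imposes nothing. Concretely, $m=ghg^{-1}hg$ (part~(i)) and $m=ghghg$ (part~(ii)) pass all your sign constraints, and your run-length bound only caps the $g$-count, which still permits these. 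Your claim ``$|m|=2k$'' therefore needs justification. The paper supplies it with \autoref{lem:pentagon}: a cycle labelled by an odd-length alternating morpheme would weakly traverse~$\X$ at~$\unit$, hence by \autoref{CycleWeakTraversal} also at~$h$, yet it does not contain the vertex~$h$ at all (one checks that none of its vertices can equal~$h$). Without this (or an equivalent argument), the reduction to $(gh)^k$ and $[g,h]$-shaped morphemes is unsupported.

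Two smaller points. First, the appeal to ``alternation of $h$-signs in $u_n$'' is not meaningful here: $h$ is an involution, so $h=h^{-1}$ and there are no $h$-signs in the involution form of $u_n$ (see \autoref{eg:invo_comm}); fortunately your later computations use only $h^{-1}=h$, so this red herring can simply be deleted. Second, your part~(i) length-six case is actually vacuous: a six-letter morpheme with three $g$-letters cannot have cyclically alternating $g$-signs (odd cycle), so it necessarily contains $g^{\epsilon}hg^{\epsilon}$ as a cyclic subword and is already excluded; this is exactly what the paper exploits inside \nameref{lem:ghg}~(\ref{lem:ghg}). Your braid/$S_3$ argument is a correct way to derive a contradiction there, but it is more work than needed, and as stated (``$\langle g,h\rangle$ is a quotient of $S_3$; as $S_3$ is not nilpotent, contradiction'') it is slightly too quick -- proper quotients of $S_3$ are nilpotent, so you would still have to rule out $\langle g,h\rangle\cong C_2$ or trivial using $g,h\neq\unit$ and $g\not\equiv h$. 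Finally, the morpheme you want is the one delivered by the \nameref{commutatorContainsMorpheme}~(\ref{commutatorContainsMorpheme}) as a cyclic subword of $u_n$; \autoref{magic2} is not quite the right citation, since its magic property plays no role in your argument and its proof has a branch in which the returned morpheme is the bare commutator rather than a subword of~$u_n$.
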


\begin{prop}
\label{lem:inv_all_traverses}
    Assume \autoref{set} with $r\ge2^{n+2}$.
    Suppose that $h$ is an involution. 
    Then every element of $S\setminus\{h\}$ traverses $\X$.
\end{prop}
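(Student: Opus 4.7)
The plan is to argue by contradiction. Suppose some $g \in S \setminus \{h\}$ does not traverse $\X$. Left-multiplication by the involution $h$ is an automorphism of $G$ preserving $\X$, so it permutes the $r$-local components at $\X$; by the non-traversal assumption, $g$ and $g^{-1}$ lie in a common local component $C$, and consequently $hg, hg^{-1} \in hC$.

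The first step is to show $C = hC$. By \autoref{magic2} there is a magic morpheme $m$ of $\Gamma$ in $\{g^{\pm 1}, h\}$ of length at most $2^{n+1} \le r$. Because $h$ is an involution, the four two-letter combinations in the definition of magic pair up under inversion (since $(gh)^{-1} = hg^{-1}$ and $(hg)^{-1} = g^{-1}h$), so the requirement that at least three of them occur in $m \cup m^{-1}$ in fact forces all four; in particular $gh$ occurs as a linear subword of $m$ or of $m^{-1}$. Translating the labelled cycle, I obtain a cycle $O$ of length $\le r$ containing four consecutive vertices $g^{-1}, \unit, h, hx$, where $x \in \{g, g^{-1}\}$ is the letter of $m$ immediately following $h$ in the chosen $gh$-subword ($x \neq h$ because $m$ is a morpheme and $h^2 = \unit$). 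The \nameref{traverser}~(\ref{traverser}) prohibits $O$ from strongly traversing $\X$, so the endpoints $g^{-1} \in C$ and $hx \in hC$ of the length-$3$ subwalk whose two internal vertices exhaust $\X$ must lie in the same local component; hence $C = hC$.

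For the second step, as $\X$ is an $r$-local $2$-separator there is a second local component $D \neq C$, and by \autoref{LocalCompsAtXseeX} this $D$ contains some $s \in S \setminus \{h\}$. By disjointness of local components $s \nequiv g$, so applying \autoref{magic2} to the inequivalent pair $g, s$ yields a magic morpheme $m'$ in $\{g^{\pm 1}, s^{\pm 1}\}$ of length at most $2^{n+1} \le r/2$. A suitably translated cycle $O'$ labelled by $m'$ contains three consecutive vertices $g^{-1}, \unit, s$ arising from the $gs$-subword of $m'$. Crucially, $O'$ uses no $h$-edge (as $m'$ has no $h$-letters), so $\unit$ and $h$ cannot be consecutive on $O'$ and hence $O'$ cannot strongly traverse $\X$. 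If $h \notin V(O')$, then $O'$ cannot weakly traverse $\X$ at $h$, so \autoref{CycleWeakTraversal} rules out a weak traversal at $\unit$ as well; this forces the two neighbours of $\unit$ on $O'$, namely $g^{-1} \in C$ and $s \in D$, into the same local component, contradicting $C \cap D = \emptyset$.

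It remains to dispose of the case $h \in V(O')$, which is equivalent to $h \in \langle g, s \rangle$. Here the shorter of the two $\unit$-to-$h$ paths along $O'$ has length $\le |O'|/2 \le r/4$, and splicing it with the edge $\unit$--$h$ yields an auxiliary cycle $O^*$ of length $\le r/4 + 1 \le r$ that does use the edge $\unit$--$h$. The \nameref{traverser} applied to $O^*$ forces the non-$\X$ neighbours of $\unit$ and $h$ on $O^*$ into the same local component; running this argument for both $\unit$-to-$h$ paths along $O'$ (and, if needed, for the other magic subwords $sg, g^{-1}s, sg^{-1}$ of $m'$, each of which places a different pair of neighbours at $\unit$), and combining with the $h$-invariance of $C$ together with the disjointness of $C$ and $D$, should pin $s$ to be a vertex of the form $hg^{\pm 1}$ up to equivalence, thereby placing $s$ simultaneously in $hC = C$ and in $D$, a contradiction. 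I expect this last case to be the main obstacle, as the combinatorics of how the $h$-vertex sits on $O'$ relative to the various magic subwords at $\unit$ needs to be bookkept carefully.
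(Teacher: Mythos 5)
Your first step, establishing that the local component $C$ containing $g^{\pm1}$ satisfies $C=hC$, is correct; it is a clean re-derivation of the paper's \autoref{lem:c_shape}, replacing the paper's explicit morpheme analysis with the observation that for an involution $h$ the four two-letter magic targets pair up under inversion, so a magic morpheme in $g,h$ must realise all four, in particular $gh$, and the Strong Traversal Lemma then forces $g^{-1}$ and $hx$ into the same component. Likewise the sub-case $h\notin V(O')$ of your second step is correct: with no $h$-edge on $O'$ there is no strong traversal, and $h\notin V(O')$ rules out a weak traversal at $h$, so \autoref{CycleWeakTraversal} forbids the weak traversal at $\unit$ that the subpath $g^{-1}\,\unit\,s$ would otherwise produce. (One small repair: the magic property only guarantees three of $gs,sg,g^{-1}s,sg^{-1}$, so you cannot insist on the specific subword $gs$; but at least one of $gs,\,g^{-1}s$ must occur, and either gives a path $g^{\pm1}\,\unit\,s$ through~$\unit$ with one end in $C$ and the other in $D$, which is all you use.)

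The genuine gap is the sub-case $h\in V(O')$, which you yourself flag as unresolved. Your splicing argument with $O^*=P\cup\{\unit h\}$ and $O^*_Q=Q\cup\{\unit h\}$ does show that the two $O'$-neighbours $p,q$ of $h$ satisfy $p\in C$ and $q\in D$. But this produces two weak traversals of the $2$-separation $\{X\cup C,\;X\cup(\text{rest})\}$ on $O'$ — one at $\unit$ and one at $h$ — and an even number of traversals on a closed walk is perfectly consistent with \autoref{traversal} and \autoref{CycleWeakTraversal}, so no contradiction follows. The further claim that juggling the other magic subwords ``should pin $s$ to be of the form $hg^{\pm1}$'' is not substantiated, and I do not see how it would be; nothing in the structure of the magic morpheme $m'$ (which lives in the sub-Cayley graph on $\{g^{\pm1},s^{\pm1}\}$) controls where $h$ sits on $O'$ relative to the magic subwords.

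This is exactly the difficulty the paper's proof is engineered to avoid. Instead of working with $[g,s]_n$ and hoping $h$ stays off the cycle, the paper builds the conjugate $hkh$ into the commutator, considers $\ell_n=[g,hkh]_n$, extracts a morpheme $\ell$ from the expanded word via the Morpheme Finding Lemma, and then applies the Strong Traversal Lemma directly to rule out $ghk$ and $g^{-1}hk$ as cyclic subwords of $\ell$ (both of which would be strong traversals, given $g^{\pm1}\in C$ and $hk\notin C$). The structural \nameref{lem:ghkh}~(\ref{lem:ghkh}) then pins $\ell$ to a short explicit list, each entry of which is killed by \autoref{locSepSeps}, the Strong Traversal Lemma, \autoref{obs:what_traverse}, or \autoref{twosym}. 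The crucial feature is that the $h$-labelled edges in the cycle labelled by $\ell$ always sit between an $X$-vertex and a neighbour, so every potential strong traversal is visible to the Strong Traversal Lemma — the very information that your $O'$ (built entirely from $g$- and $s$-edges) cannot provide when $h\in V(O')$.
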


\begin{prop}
\label{lem:no_antisymmetry_for_anyone}
    Assume \autoref{set} with $r\ge\max\{2^{n+2},10\}$.
    Suppose that $h$ is an involution. 
    Every element of~$S$ that traverses $\X$ antisymmetrically also traverse $\X$ symmetrically.
\end{prop}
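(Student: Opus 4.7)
Suppose for contradiction that some $g\in S$ traverses $\X$ antisymmetrically but not symmetrically. Since antisymmetric and symmetric traversal coincide whenever $g=g^{-1}$, the element $g$ cannot be an involution. By \autoref{twosym}(2), $gh$ is an involution, so $(gh)^2=\unit$; together with $h^2=\unit$ this yields $hgh=g^{-1}$, so $h$ inverts $g$ by conjugation and $\langle g,h\rangle$ is a quotient of the infinite dihedral group $D_\infty=\langle a,b:b^2,(ab)^2\rangle$. Because $\langle g,h\rangle$ is a finite subgroup of the nilpotent group $\Gamma$, it must be isomorphic to $D_{2^m}$ for some $2\le m\le n$: any odd-order part of $\langle g\rangle$ would, together with~$h$, generate a nilpotent finite dihedral group of odd rotational order, but the only such group is trivial; and $D_{2^m}$ has nilpotency class exactly~$m$, so $m\le n$. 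In particular $g$ has order $2^m$, and $2^m+2\le 2^n+2\le r$ since $r\ge 2^{n+2}$.

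By \autoref{obs:what_traverse}(ii), the length-$3$ walk $g^{-1},\unit,h,hg^{-1}$ in $G$ with edge-labels $g,h,g^{-1}$ is a strong traversal of $\X$: its two internal vertices lie in $\X$, and its endpoints $g^{-1}\in C$ and $hg^{-1}\in D$ lie in distinct local components. My plan is to close this walk into a short cycle by sweeping back along the coset $\langle g\rangle h$. Concretely, I would exhibit the cycle
\[
  g^{-1},\ \unit,\ h,\ gh,\ g^2h,\ \ldots,\ g^{2^m-1}h,\ g^{-1}
\]
in $G$: the first three edges reproduce the strong traversal (using $hg^{-1}=gh$), the next $2^m-1$ edges are all labelled $g^{-1}$ and step through the coset $\langle g\rangle h$ (using $g^ih\cdot g^{-1}=g^{i+1}h$), and the final edge is labelled $h$ (using $g^{2^m-1}h\cdot h=g^{-1}$). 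All $2^m+2$ listed vertices are distinct, since $\{h,gh,\ldots,g^{2^m-1}h\}$ is the coset $\langle g\rangle h$ of size $2^m$, disjoint from $\{g^{-1},\unit\}\subseteq\langle g\rangle$. So this is an honest cycle in $G$ of length $2^m+2\le r$ whose first three edges constitute a strong traversal of $\X$, contradicting the \nameref{traverser}~(\ref{traverser}).

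The main obstacle I anticipate is the structural step identifying $\langle g,h\rangle\cong D_{2^m}$ with $m\le n$: justifying that nilpotency of the ambient group forces the rotational order to be a power of~$2$, and that this power is bounded by the ambient class. Once this identification is in place, the cycle above is explicit and the remaining checks (edge-labels, vertex distinctness, length bound) are routine calculations inside $D_{2^m}$; in particular no use of morphemes or of the \nameref{commutatorContainsMorpheme}~(\ref{commutatorContainsMorpheme}) is required, because the cycle is produced by hand rather than extracted from an identity word.
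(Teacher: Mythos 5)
Your proof is correct, and it takes a genuinely different route from the paper's. The paper also begins by invoking Proposition~\ref{twosym}(ii) to get that $z:=gh$ is an involution, but then inserts $z$ into the generating set via \autoref{lem:add_z_invo} (at the cost of halving the locality to~$r/2$), observes that $zhz$ strongly traverses~$\X$ in $G'=\cay{\Gamma}{S\cup\{z\}}$, extracts a morpheme from $[z,h]_n$ via the \nameref{commutatorContainsMorpheme}, and forces it to equal $hzh$, giving $g=h$. You instead push the relation $hgh=g^{-1}$ all the way: you identify $\langle g,h\rangle$ as a finite nilpotent dihedral group, so $\cong D_{2^m}$ (rotation of order $2^m$) with $2\le m\le n$, and then close the length-$3$ strong traversal $g^{-1},\unit,h,hg^{-1}$ into an explicit cycle of length $2^m+2\le 2^n+2\le r$ by sweeping through the coset $\langle g\rangle h$, contradicting the \nameref{traverser}~(\ref{traverser}) directly. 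This avoids both the edge-insertion lemma and the morpheme machinery; the trade-off is that it leans on the structure theory of finite nilpotent groups (Sylow decomposition, $D_k$ nilpotent iff $k$ a power of $2$) where the paper's argument is purely combinatorial in the word. For the record, the bound $m\le n$ is also directly verifiable in the paper's iterated-commutator sense: in $D_{2^m}$ one computes $[h,g]_n=g^{2^n}$, so $[h,g]_n=\unit$ forces $2^m\mid 2^n$.

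Two small bookkeeping points, neither affecting correctness. First, your count of the $g^{-1}$-labelled edges as ``the next $2^m-1$'' double-counts the edge $h\to gh$ already included in ``the first three edges''; the segment from $gh$ to $g^{2^m-1}h$ has $2^m-2$ edges, and the total cycle length is $2^m+2$ as you ultimately state. Second, the case distinction $2\le m\le n$ silently handles $n=1$: there $g$ would have to be an involution, which you excluded, so the proposition holds vacuously; it would be worth saying this explicitly rather than leaving the reader to infer that the empty range $2\le m\le 1$ already produces the contradiction.
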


\begin{proof}[Proof of \autoref{lem:involution_case} assuming \autoref{twosym}, \autoref{lem:inv_all_traverses} and \autoref{lem:no_antisymmetry_for_anyone}]
    Assume \autoref{set} with $r\ge\max\{2^{n+2},10\}$ and suppose that $h$ is an involution. 
    We have to show that the subgroup of~$\Gamma$ generated by~$h$ is normal. 
    For this, it suffices to show that every element of the generating set $S$ commutes with~$h$.
    So let $g\in S$.
    If $g=h$, this is trivial, so let us assume that $g\neq h$.
    By \autoref{lem:inv_all_traverses}, $g$ traverses~$\X$.
    Then $g$ traverses $\X$ at both $\unit$ and $h$ by \autoref{lem:invo_implies_double_trav}.
    Hence $g$ traverses~$\X$ symmetrically or antisymmetrically (as we have already noted below \autoref{symAndAntisym}).
    By \autoref{lem:no_antisymmetry_for_anyone}, $g$~traverses~$\X$ symmetrically.
    Therefore, $g$ commutes with~$h$ by \autoref{twosym}.
\end{proof}

\subsection{Proof of \autoref{twosym}}

\begin{rem}
    When a letter $h$ represents a group element that is an involution, we do not differentiate between the letters $h$ and~$h\i$. This means that, when reducing words, we remove subwords like $hh$ as well as $hh\i$. In the remainder of the paper, we will only use~$h$.
\end{rem}

\begin{eg}\label{eg:invo_comm}
When $h$ is an involution, the first three iterated commutator words in $g$ and $h$ are:
\begin{align*}
    u_1&=ghg\i h,\\
    u_2&=ghghg\i hg\i h\text{, and}\\
    u_3&=ghghghg\i hg\i hg hg\i hg\i h.
\end{align*}
\end{eg}

\begin{lem}\label{lem:pentagon}
    Assume \autoref{set}. Suppose that $g\in S$ traverses $\X$ at $\unit$ and that $h$ is an involution. 
    Then there is no morpheme of~$\Gamma$ alternating between $g\pmo$ and $h$ whose length is odd and at most~$r$.
\end{lem}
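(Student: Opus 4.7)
The plan is a proof by contradiction: suppose $m$ is such a morpheme, of odd length $n\le r$. Since $m$ is an identity word distinct from $\unit$ we have $n\ge 2$, and hence $n\ge 3$ by parity.

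First I would rule out the case where $m$ starts and ends with $h$. Writing $m=hm'h$, the middle subword $m'$ is nonempty (since $n\ge 3$), and the identities $m=\unit$ and $h^2=\unit$ in $\Gamma$ force $m'=\unit$ in $\Gamma$; so $m'$ is a nonempty proper subword of $m$ that is itself an identity word, contradicting that $m$ is a morpheme. Hence $m$ starts and ends with $g\pmo$, and so has the form
\[
m \;=\; g^{\epsilon_1}\, h\, g^{\epsilon_2}\, h \cdots h\, g^{\epsilon_k}
\]
with $k=(n+1)/2\ge 2$ and $\epsilon_i\in\{\pm1\}$.

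Since $m\ne h^2$, by \autoref{cyclesGiveMorphemes} some cycle $O$ in $G$ is labelled by $m$. Using vertex-transitivity I would translate $O$ so that the vertex of $O$ between the first and last letter of $m$ is $\unit$, and I would label the vertices of $O$ in cyclic order $v_0=\unit,v_1,\ldots,v_{n-1}$, so that the edge from $v_i$ to $v_{i+1\bmod n}$ is labelled by the $(i+1)$-st letter of $m$. Both edges of $O$ at $v_0$ are then labelled $g\pmo$, with neighbours $v_1=g^{\epsilon_1}$ and $v_{n-1}=g^{-\epsilon_k}$. Since $O$ is simple with $n\ge 3$, the vertices $v_1$ and $v_{n-1}$ are distinct, and since $g$ traverses $\X$ (and hence is not an involution), this forces $\{v_1,v_{n-1}\}=\{g,g\i\}$. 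Because $g$ traverses $\X$ at $\unit$, the vertices $g$ and $g\i$ lie in distinct $r$-local components at $\X$, so I can pick an $r$-local $2$-separation $\{A,B\}$ of $G$ with separator $\X$ splitting $\{g,g\i\}$; then the subwalk $v_{n-1}\,\unit\,v_1$ of $O$ witnesses that $O$ weakly traverses $\{A,B\}$ at $\unit$.

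Since $|O|=n\le r$, \autoref{CycleWeakTraversal} will then force $O$ to weakly traverse $\{A,B\}$ at $h$ as well; in particular $h\in V(O)$. The main step is to contradict this. Since $v_0=\unit\ne h$, suppose $v_j=h$ for some $j\in\{1,\ldots,n-1\}$. By the alternation of $m$, exactly one of the two edges of $O$ incident with $v_j$ is labelled $h$, and as $h$ is an involution this $h$-edge joins $v_j=h$ to $v_jh=\unit=v_0$. So $v_0$ is a neighbour of $v_j$ on $O$, forcing $j\in\{1,n-1\}$; but both edges of $O$ at $v_0$ are labelled $g\pmo$, not $h$, the desired contradiction.
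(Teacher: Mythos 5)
Your proof is correct and follows essentially the same route as the paper: both arguments deduce that the morpheme begins and ends with $g$ or with $g^{-1}$ (the paper phrases this as "the first and last letters cannot be inverses" and then cyclically permutes; you phrase it via the distinctness of the two $O$-neighbours of $\unit$), find a cycle labelled by the morpheme in which $\unit$ is incident with two $g\pmo$-edges, invoke \autoref{CycleWeakTraversal} to force a weak traversal at $h$, and derive a contradiction from the unique $h$-labelled edge at $h$ leading back into the separator. The differences are purely presentational.
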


\begin{proof}
Suppose for a contradiction that there is a morpheme $m$ of $\Gamma$ alternating between $g\pmo$ and $h$ whose length is odd and at most~$r$.
Then the first and last letter must be of the same type: either both are equal to $h$ or both are elements of~$\{g\pmo\}$.
Since $m$ is a morpheme, the first and last letter cannot be inverses of each other. 
Since $h$ is an involution, it follows that either both of them are equal to $g$ or both of them are equal to~$g\i$. 
So by applying a cyclic permutation, we obtain from $m$ a morpheme $m'$ that alternates between $h$-segments and $g\pmo$-segments, and all segments have length one except for one that is equal to~$g^2$. 

    Since $g$ traverses~$\X$ at~$\unit$, we find a cycle $O$ in $G$ labelled by $m'$ containing the vertex $\unit$ such that $\unit$ is incident with two edges labelled $g$ in~$O$.
    Hence $O$ weakly traverses~$\X$ at~$\unit$.
    By \autoref{CycleWeakTraversal}, $O$ weakly traverses~$\X$ at~$h$ as well.
    But since $g^2$ occurs only once on~$O$, at least one of the edges on the weak traversal in~$O$ through the vertex~$h$ must be labelled by~$h$, contradicting that $h$ is an involution joining the vertices $\unit$ and $h$ in the separator~$\X$.
\end{proof}

\begin{lem}[No-Rectangles Lemma]\label{lem:no-rectangles}
    Let $g\nequiv h$ be letters and $h$ an involution. Then the iterated commutator $[g,h]_n$ contains none of the following words as a cyclic subword:
    \[
        ghghghg, \quad ghg\i hghg\i, \quad g\i hghg\i hg
    \]
\end{lem}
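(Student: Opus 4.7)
The plan is to proceed by induction on $n$, paralleling the proof of \autoref{lem:nosquares}. Before starting the induction, I observe that the first forbidden word $W_1 := ghghghg$ contains the square word $ghgh$ as a linear subword, while its inverse $g^{-1}hg^{-1}hg^{-1}hg^{-1}$ contains the square word $g^{-1}hg^{-1}h$; hence $W_1$ cannot be a cyclic subword of $u_n$ by \autoref{lem:nosquares}. The real content of the lemma is therefore the exclusion of $W_2 := ghg^{-1}hghg^{-1}$ and $W_3 := g^{-1}hghg^{-1}hg$. Under the involution hypothesis on $h$ both words are self-inverse, and neither contains a square word as a linear subword, so \autoref{lem:nosquares} does not apply directly.

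The base cases $n = 1, 2, 3$ can be handled by direct inspection: $u_1$ has length~$4$ and admits no length-$7$ subword at all; for $n = 2, 3$ the length-$7$ cyclic windows of $u_n$ (of lengths $8$ and $16$) form a short list and can be compared directly against $W_2$ and $W_3$.

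For the inductive step with $n \ge 4$, I would use the decomposition $u_n = g \cdot u_{n-1}^{-1} \cdot {}^{-}u_{n-1}$ from \autoref{lem:what_words_look_like}. A length-$7$ cyclic subword of $u_n$ that lies entirely within $u_{n-1}^{-1}$ or within ${}^{-}u_{n-1}$ is, up to inversion, a cyclic subword of $u_{n-1}$, so the inductive hypothesis applies. The remaining windows cross one of the three seams of the decomposition: between the lone $g$ and $u_{n-1}^{-1}$, between $u_{n-1}^{-1}$ and ${}^{-}u_{n-1}$, or the wrap-around between ${}^{-}u_{n-1}$ and the lone $g$. To make these windows explicit, one first establishes (by a short auxiliary induction with $n = 3$ as base case) that for all $n \ge 3$ the word $u_n$ begins with the fixed prefix $ghghghg^{-1}h$ and ends with the fixed suffix $g^{-1}hghg^{-1}hg^{-1}h$. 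With the boundary letters of $u_{n-1}^{-1}$ and ${}^{-}u_{n-1}$ now explicitly determined, each of the at most eighteen seam-crossing length-$7$ windows can be written out and compared by hand to $W_2$ and $W_3$.

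The main obstacle is bookkeeping rather than any conceptual difficulty: one must carefully identify the seam-crossing windows (noting that an arc may simultaneously cross the first and wrap-around seams through the lone $g$) and spell each out using the stabilised boundary patterns. The check is substantially streamlined by the observation that any window beginning with $h$ automatically differs from both $W_2$ and $W_3$, which each begin with $g^{\pm 1}$; this eliminates roughly half of the cases outright.
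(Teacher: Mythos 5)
Your overall approach (induction on $n$ via the decomposition $u_n = g\cdot u_{n-1}^{-1}\cdot {}^{-}u_{n-1}$ of \autoref{lem:what_words_look_like}, with the base cases $n\le 3$ by inspection and the inductive step reduced to a finite seam-check) is exactly the paper's approach, and the observation that windows starting with $h$ are irrelevant, as well as the stabilised prefix/suffix claim, are both correct and would indeed streamline the bookkeeping.

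However, there is a genuine gap in your opening paragraph, where you dismiss $W_1 = ghghghg$ by appealing to the \nameref{lem:nosquares}~(\ref{lem:nosquares}). That lemma is stated and proved for the \emph{general} iterated commutator word $[g,h]_n$, in which $h$ and $h^{-1}$ are distinct letters. In the present lemma $h$ is an involution, so by the Remark at the start of the involution section the word $u_n = [g,h]_n$ is the one obtained by replacing every $h^{-1}$ with $h$, and the No-Squares Lemma simply does not extend to this word. Concretely, the involution word $u_3 = ghghghg^{-1}hg^{-1}hghg^{-1}hg^{-1}h$ (from \autoref{eg:invo_comm}) begins with $ghgh$, which is a square word, so the involution $u_n$ \emph{does} contain square words as cyclic subwords; the reason the non-involution No-Squares Lemma doesn't contradict this is that the corresponding subword of the non-involution $u_3$ is $gh^{-1}gh$, which is not a square word. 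Consequently your argument that "$W_1$ contains $ghgh$, hence $W_1$ cannot be a cyclic subword of $u_n$" has a false premise. Since you explicitly set $W_1$ aside and check only $W_2$ and $W_3$ in the inductive step, the proof as written is incomplete; $W_1$ must be carried through the same seam-checking induction as the other two words, as the paper does.
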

\begin{proof}
    We proceed by induction. For $n\in\{1,2,3\}$ we verify this by hand using \autoref{eg1}. So suppose that $n\ge 4$.
    Recall that \autoref{lem:what_words_look_like} tells us that
    \begin{equation*}
        g(u_{n-1}\i)( {}^-u_{n-1}) \quad =\quad g\cdot h ghgh g\i\ldots hg\i h g\i h g\i \cdot h g h g h g\i\ldots gh g\i h g\i h.
    \end{equation*}
    By induction, $ghghghg$ or its inverse cannot be contained linearly in either $u_{n-1}\i$ or ${}^-u_{n-1}$. 
    Similarly, $ghg\i hghg\i$, $g\i hg hg\i hg$ and their inverses are not contained linearly in either $u_{n-1}\i$ or ${}^-u_{n-1}$. 
    So the result follows from inspecting up to six letters either side of the places where we concatenate $g$, $u_{n-1}\i$ and ${}^-u_{n-1}$ to form $u_n$, in the above equation.
\end{proof}

\begin{lem}[Zigzag~I]
\label{lem:ghg}
    Let $\Gamma$ be a group that is nilpotent of class~$\le n$ with generating set~$S$. Let $g\nequiv h$ be elements of $S$ and suppose that $h$ is an involution.
    Then, for every cyclic subword $u$ of $u_n=[g,h]_n$ that is a morpheme, at least one of the following is true:
    \begin{enumerate}
        \item $u$~contains $ghg$ as a cyclic subword;
        \item $u$ has the form $ghg\i h$ up to cyclic permutation and inversion, so $g$ and $h$ commute;
        \item $u$ has odd length.
    \end{enumerate}
\end{lem}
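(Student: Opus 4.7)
The plan is to assume that $u$ has even length and that $ghg$ is not a cyclic subword of $u$ (so cases (iii) and (i) both fail) and then to deduce case (ii). Since $h$ is an involution, the iterated commutator $u_n$ strictly alternates between $h$ and $g\pmo$, and so does every cyclic subword of it. After a cyclic rotation I can therefore write
\[
u \;=\; g^{a_1}h\,g^{a_2}h\,\cdots\,g^{a_k}h
\]
with each $a_i\in\{+1,-1\}$, giving $|u|=2k$.

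Next I observe that the assumption that $ghg$ is not a cyclic subword of $u$ forces strict cyclic alternation of the exponents $a_i$: if $a_i=a_{i+1}$ (indices mod $k$), then the subword $g^{a_i}hg^{a_{i+1}}$ is either $ghg$ or $g\i hg\i=(ghg)\i$, and in either case $ghg$ would be a cyclic subword of $u$, contradicting the assumption. Cyclic strict alternation of two values forces $k$ to be even.

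It then remains to rule out $k\ge 4$. If $k\ge 4$, then reading off the first seven letters of $u$ gives, according to the sign of $a_1$, either $ghg\i hghg\i$ (when $a_1=+1$) or $g\i hghg\i hg$ (when $a_1=-1$). In both cases this is a linear subword of $u$ and hence a cyclic subword of $u_n$, contradicting the \nameref{lem:no-rectangles}~(\ref{lem:no-rectangles}). Therefore $k=2$, and up to cyclic permutation and inversion $u=ghg\i h$. Since $u$ is a morpheme, $u=\unit$ in $\Gamma$, which combined with $h^2=\unit$ yields $gh=hg$, giving case (ii).

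The only real step is identifying the \nameref{lem:no-rectangles}~(\ref{lem:no-rectangles}) as the decisive input; once strict alternation is established, that lemma cleanly caps $k$ at $2$ and the commuting conclusion follows immediately from the morpheme property together with $h^2=\unit$.
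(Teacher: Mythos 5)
Your proof is correct and follows essentially the same route as the paper: you use the alternating structure of $u_n$ together with the failure of (i) to force strict sign-alternation of the $g^{\pm1}$-exponents, and then the No-Rectangles Lemma to cap the length, leaving only $u=ghg^{-1}h$ up to cyclic permutation and inversion. The one cosmetic difference is that you invoke the parity of a properly $2$-coloured cycle to rule out $k=3$ (i.e.\ $|u|=6$), whereas the paper excludes $|u|=6$ by direct inspection; both arrive at $|u|=4$ the same way.
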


\begin{proof}
Let $u$ be a cyclic subword of~$u_n$ that is a morpheme, and suppose that (i) and (iii) fail; that is, $u$ does not contain $ghg$ as a cyclic subword and has even length.We have to derive~(ii). Note that $|u|\geq 3$. 
We also have $|u|\le 6$, since otherwise we find either $g h g\i h g h g\i$ or $g\i h g h g\i h g$ as a cyclic subword of~$u$ which contradicts the \nameref{lem:no-rectangles}~(\ref{lem:no-rectangles}).

Since $u$ does not contain $ghg$ as a cyclic subword, the word obtained from $u$ by deleting all occurrences of the letter~$h$ alternates between $g$ and~$g\i$. If  $|u|=6$, then by possibly applying a cyclic permutation or inversion to~$u$, we obtain one of the words $ghg\i hgh$ or $g\i hghg\i h$, which both contain $ghg$ as a cyclic subword. Since this is not possible, $|u|=4$ is the only possibility, and here we get (ii).
\end{proof}

\begin{lem}
\label{sublem:gh_cubic}
    Let $\Gamma$ be a group that is nilpotent of class~$\le n$ with generating set~$S$, and let $r\ge 2^{n+1}$.
    Let $g\nequiv h$ be elements of~$S$ and suppose that $h$ is an involution.
    Suppose that $(gh)^3$ is a morpheme of $\Gamma$ in~$S$.
    Then there is a morpheme $m$ of~$\Gamma$ in~$S$ of length at most $r$ such that $m$ either contains at most one of $ghg\i$ and $g\i hg$ as a subword, or $m=ghgh$, or $m$ has odd length and alternates between $h$ and $g\pmo$.
\end{lem}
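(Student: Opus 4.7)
The plan is to extract $m$ from the iterated commutator~$u_n := [g,h]_n$. Since $\Gamma$ is nilpotent of class~$\le n$, we have $u_n = \unit$, so the \nameref{commutatorContainsMorpheme}~(\ref{commutatorContainsMorpheme}) yields a morpheme $u$ of~$\Gamma$ in~$\{g\pmo, h\}$ that is a linear subword of~$u_n$. By \autoref{commutatorLength} we have $|u|\le 2^{n+1}\le r$, and since $h$ is an involution the word $u_n$ alternates between $g\pmo$ and $h$ (compare \autoref{eg:invo_comm}), so $u$ does as well.

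Next I would apply the \nameref{lem:ghg}~(\ref{lem:ghg}) to trichotomise~$u$. If $u$ has odd length, then $m:=u$ immediately meets the third alternative of the conclusion. If $u$ has the form $ghg\i h$ up to cyclic permutation and inversion, then $u$ is one of the four words $ghg\i h$, $hg\i hg$, $g\i hgh$, $hghg\i$; inspection shows that each of them contains at most one of $ghg\i$ and $g\i hg$ as a linear subword, so setting $m:=u$ meets the first alternative.

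The main obstacle is the remaining case, where $u$ contains $ghg$ as a cyclic subword. Here my plan is to discard $u$ and instead set $m:=(gh)^3$: this is a morpheme by hypothesis, uses only the letters $g$ and $h$ (no $g\i$), and hence contains neither $ghg\i$ nor $g\i hg$ at all, so it meets the first alternative as soon as $|m|=6\le r$. To justify $r\ge 6$, I would argue that this case forces $n\ge 2$: if $n=1$ then $\Gamma$ is abelian, and direct inspection of $u_1=ghg\i h$ shows that its only morpheme subword in~$\{g\pmo,h\}$ is $u_1$ itself (the length-$2$ candidates $gh$, $hg\i$, $g\i h$ would each force $g\equiv h$, while $ghg\i=h$ and $hg\i h=g\i$ are nontrivial letters of~$S$), and $u_1$ has no cyclic permutation or inversion containing $ghg$. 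Consequently the present case requires $n\ge 2$, which gives $r\ge 2^{n+1}\ge 8\ge 6$, closing the argument.
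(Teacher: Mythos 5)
The statement as printed says $m$ ``contains at most one of $ghg\i$ and $g\i hg$ as a subword'', but this is a typo for ``at least one'': the lemma is invoked in the proof of \autoref{lem:ghg_in} to conclude that lemma's alternative~(i), which reads ``contains $ghg\i$ or $g\i hg$ as a subword'', and the paper's own proof of \autoref{sublem:gh_cubic} derives exactly that. Your argument establishes the literal (typo'd) statement but not the intended one. The first two branches of your trichotomy via \autoref{lem:ghg} are sound under either reading: an odd-length alternating morpheme meets the third alternative, and each of the four words of the form $ghg\i h$ up to cyclic permutation and inversion contains exactly one of $ghg\i$, $g\i hg$. But your final branch has a genuine gap under ``at least one'': you set $m:=(gh)^3=ghghgh$, and this word contains no $g\i$ whatsoever, so it contains neither $ghg\i$ nor $g\i hg$.

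The paper takes a different route: it uses the hypothesis $(gh)^3=\unit$ to sanitize the \emph{input}, not to supply the \emph{output}. Concretely, it repeatedly deletes cyclic occurrences of $ghghgh$ (and its inverse $hg\i hg\i hg\i$) from $u_n$, obtaining a word $u_n'$. Since $|u_n|=2^{n+1}$ is not divisible by~$6$, the word $u_n'$ is nonempty; since only morphemes were removed, $u_n'$ still evaluates to $\unit$; and it remains reduced and alternating in $g\pmo$, $h$. Applying the \nameref{commutatorContainsMorpheme}~(\ref{commutatorContainsMorpheme}) to $u_n'$ then yields a morpheme $m$ with $3\le|m|\le r$. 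If $|m|$ is odd you are in the third alternative; if $|m|=4$ then $m$ is $ghgh$, $hghg$, or contains $ghg\i$ or $g\i hg$, giving one of the first two alternatives; and if $|m|\ge 6$ then, because $u_n'$ has no cyclic occurrence of $ghghgh$, any length-six alternating cyclic subword of $m$ must contain $ghg\i$ or $g\i hg$, hence so does~$m$. That pre-deletion step is the idea your proposal is missing: it guarantees that the extracted morpheme cannot simply cycle through~$gh$.
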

\begin{proof}
    Let $u_n'$ be the word obtained from $u_n$ by repeatedly deleting sets of letters representing occurrences of $ghghgh$ as cyclic subwords, until none are left. That is, we remove strings of the form $ghghgh$ and $hg\i hg\i hg\i$ as well as pairs of strings at the start and end of $u_n$ that correspond to instances of $ghghgh$ in some cyclic permutation of $u_n$ or its inverse.
    Observe that we remove letters in sextuples and, since $|u_n|=2^{n+1}$ is not a multiple of six, $u_n'$ is not the empty word.
    Furthermore, since we have removed only morphemes, we get that $u_n'$ evaluates to~$\unit$ in~$\Gamma$. Note that $u_n'$ is reduced since it still alternates between $g\pmo$ and~$h$.
    Using the \nameref{commutatorContainsMorpheme}~(\ref{commutatorContainsMorpheme}), we find a subword~$m$ of~$u_n'$ that is a morpheme.
    Then $m$ alternates between $g\pmo$ and $h$ as well.
    In particular, the \nameref{commutatorContainsMorpheme} ensures that $m$ has length at least three.

    Since $m$ is a morpheme and $h$ is an involution, $m$ cannot start and end with~$h$.
    So $m$ starts with $g\pmo$ and ends with~$h$, or vice versa.
    If $|m|\ge 6$, then $m$ contains a cyclic subword $m'$ of length six starting with~$g$ such that $m'$ alternates between $g\pmo$ and~$h$.
    Since $u_n'$ does not contain $ghghgh$ as a cyclic subword, and $m$ is a subword of~$u_n'$, it follows that the cyclic subword $m'$ of~$m$ of length six contains either $ghg\i$ or $g\i hg$ as a subword and, by extension, so does~$m$.
    Since we are done otherwise, assume that $m$ has even length. So $m$ has length exactly four. 
    If $m$ contains $ghg\i$ or $g\i hg$ as a subword, then we are done.
    Otherwise $m$ is one of the two words $ghgh$ or $hghg$.
    In either case, $ghgh$ is a morpheme as desired.
\end{proof}

\begin{lem}[Zigzag II]
\label{lem:ghg_in}
    Let $\Gamma$ be a group that is nilpotent of class~$\le n$ and $S\se\Gamma$ a generating set.
    Let $g\nequiv h$ be elements of $S$ and suppose that $h$ is an involution. Then at least one of the following holds:
    \begin{enumerate}
        \item there is a morpheme $u$ of $\Gamma$ in $S$ of length at most $2^{n+1}$  that contains $ghg\i$ or $g\i hg$ as a subword;
        \item the word $ghgh$ is a morpheme;
        \item there is a morpheme of $\Gamma$ alternating between $h$ and $g\pmo$ whose length is odd and at most~$2^{n+1}$.
    \end{enumerate}
\end{lem}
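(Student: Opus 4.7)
The plan is to extract a morpheme $u$ as a linear subword of the iterated commutator $u_n=[g,h]_n$ via the \nameref{commutatorContainsMorpheme}~(\ref{commutatorContainsMorpheme}), and then apply the \nameref{lem:ghg}~(\ref{lem:ghg}) to $u$ to dispatch each of its three outcomes into one of~(i), (ii), (iii). Since $u_n$ alternates between $g^{\pm1}$ and $h$, so does the subword~$u$, which rules out the exceptional forms $g^{\pm2}$ and $h^{\pm2}$ in the conclusion of the \nameref{commutatorContainsMorpheme}; hence $|u|\ge 3$, and $|u|\le 2^{n+1}$ by \autoref{commutatorLength}.

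Two of the three cases of \autoref{lem:ghg} resolve immediately. If $u$ has odd length, then $u$ itself witnesses~(iii). If $u$ equals $ghg\i h$ up to cyclic permutation and inversion, then a suitable cyclic permutation of $u$ is a morpheme containing $ghg\i$ as a linear subword, witnessing~(i).

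The remaining case is when $u$ contains $ghg$ as a cyclic subword and has even length. If additionally $u$ contains $ghg\i$ or $g\i hg$ as a cyclic subword, cyclic permutation again yields~(i). Otherwise, every pair of cyclically consecutive $g^{\pm1}$-letters of $u$ has the same sign, so all $g^{\pm1}$-letters of $u$ share a single sign. Up to cyclic permutation and possibly inversion, this forces $u=(gh)^k$ for some integer $k\ge 2$.

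The main obstacle is bounding~$k$. The key is the \nameref{lem:no-rectangles}~(\ref{lem:no-rectangles}) applied to $u_n$: it forbids $ghghghg$ as a cyclic subword of $u_n$, and hence also of the subword~$u$, which forces $k\le 3$. For $k=2$, the word $u=ghgh$ is itself a morpheme, witnessing~(ii). For $k=3$, the word $(gh)^3$ is a morpheme, so we may invoke \autoref{sublem:gh_cubic} with $r:=2^{n+1}$; this produces a morpheme of length $\le 2^{n+1}$ satisfying exactly one of the three properties matching~(i), (ii), and~(iii).
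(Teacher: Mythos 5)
Your proof is correct and follows essentially the same line as the paper's: extract a morpheme from $u_n$ via the \nameref{commutatorContainsMorpheme}~(\ref{commutatorContainsMorpheme}), settle the odd-length and $ghg\i$\nobreakdash-containing cases, reduce the remainder to $(gh)^k$ up to cyclic permutation and inversion, bound $k\le 3$ by the \nameref{lem:no-rectangles}~(\ref{lem:no-rectangles}), and dispatch $k=3$ to \autoref{sublem:gh_cubic}. Routing the case split through \nameref{lem:ghg}~(\ref{lem:ghg}) is a harmless reorganization of what the paper does directly.
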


\begin{proof}
    Using the \nameref{commutatorContainsMorpheme}~(\ref{commutatorContainsMorpheme}), we find a subword~$u$ of~$u_n$ that is a morpheme.
    Since $u_n$ alternates between $g\pmo$ and~$h$, so does~$u$.
    In particular, the \nameref{commutatorContainsMorpheme} ensures that $u$ has length at least three.
    Moreover, $u$ has length at most~$2^{n+1}$ by \autoref{commutatorLength}.  
    Then, by replacing $u$ by a cyclic permutation if necessary, we assume that $u$ starts with~$g$.
    Since we are done otherwise by (iii), we assume that $u$ has even length.
    So either $u$ has the form $(gh)^k$ for some $k\geq 2$ or contains $ghg\i$ as a subword.
In the second case we obtain (i).
In the first case we obtain (ii) for $k=2$ or we obtain~$k\geq 3$. If $k\geq 3$, then by the 
\nameref{lem:no-rectangles}~(\ref{lem:no-rectangles}) we deduce that $k=3$. 
Hence \autoref{sublem:gh_cubic} completes the proof. 
\end{proof}

\begin{proof}[Proof of \autoref{twosym}]
We assume \autoref{set} with $r\ge\max\{2^{n+1},5\}$ and suppose that $h$ is an involution. The following cases are illustrated in \autoref{fig:twosym}.

(i).
Suppose that $g\in S$ traverses~$\X$ symmetrically. We have to show that $g$ and~$h$ commute.
    Since~$g$ traverses~$\X$ symmetrically, the word $ghg$ traverses~$\X$ strongly, and $g\neq h$.
    By the \nameref{commutatorContainsMorpheme} (\ref{commutatorContainsMorpheme}), we find a morpheme $u$ of $\Gamma$ in~$S$ as a cyclic subword of~$u_n=[g,h]_n$.
    We recall that~$u_n$, and in particular~$u$, have length at most~$r$ by \autoref{commutatorLength}.
    By the \nameref{traverser} (\ref{traverser}), the morpheme $u$ cannot contain $ghg$ as a cyclic subword.
    By \autoref{lem:pentagon}, $u$ cannot have odd length.
    Hence $g$ and $h$ commute by \nameref{lem:ghg} (\ref{lem:ghg}).

(ii).
Suppose that $g\in S$ traverses~$\X$ antisymmetrically, but not symmetrically.
We have to show that $gh$ is an involution.
    Since $g$ traverses~$\X$ antisymmetrically, but not symmetrically, we may apply \autoref{obs:what_traverse}~(ii) to get that the words $ghg\i$ and $g\i hg$ strongly traverse~$\X$ (also see \autoref{fig:trav_obs}). 
    By the \nameref{commutatorContainsMorpheme} (\ref{commutatorContainsMorpheme}), we find a morpheme $u$ of $\Gamma$ in~$S$ as a subword of~$u_n=[g,h]_n$.
    As usual, $u$ has length~$\le r$ by \autoref{commutatorLength}.
    By the \nameref{traverser} (\ref{traverser}), $u$~contains neither $ghg\i$ nor $g\i hg$ as a cyclic subword. 
    Hence \nameref{lem:ghg_in} (\ref{lem:ghg_in}) with \autoref{lem:pentagon} yields that $gh$ is an involution.
\end{proof}

\subsection{Proof of \autoref{lem:inv_all_traverses}}

\begin{lem}
\label{lem:c_shape}
    Assume \autoref{set} with $r\ge2^{n+1}$. 
    Suppose that $h$ is an involution. 
    If $g\in S\sm\{h\}$ does not traverse $\X$, then all four vertices $g$, $g\i$, $hg$ and $hg\i$ lie in the same $r$-local component at $\X$.
\end{lem}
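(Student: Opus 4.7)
The plan is to reduce the claim to showing $g$ and $hg$ lie in the same $r$-local component at $\X$. Since $g$ does not traverse $\X$, neither the length-$2$ walk $g\i\,\unit\,g$ at $\unit$ nor $hg\i\,h\,hg$ at $h$ weakly traverses $\X$, immediately placing $g,g\i$ in a common $r$-local component and likewise $hg,hg\i$; it then suffices to link one of $\{g,g\i\}$ with one of $\{hg,hg\i\}$.

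For this I will extract from the iterated commutator $u_n=[g,h]_n=\unit$ a morpheme $u$ in $g\pmo,h$ of length at most $2^{n+1}\le r$ via the \nameref{commutatorContainsMorpheme}~(\ref{commutatorContainsMorpheme}), and apply \nameref{lem:ghg}~(\ref{lem:ghg}). In case~(i), $u$ contains $ghg$ as a cyclic subword: vertex-transitivity lets me place the simple cycle $O$ labelled by $u$ so that the $ghg$-subword runs along the $4$-vertex path $g\i,\unit,h,hg$ with its central $h$-edge coinciding with the edge of $\X$; since $O$ has length at most $r$ and meets $\X$ only at $\unit$ and $h$, deleting these two vertices leaves a path from $hg$ to $g\i$ of length at most $r-3$ entirely inside $B_r(\unit)\sm\X$, placing $g\i$ and $hg$ in a common $r$-local component. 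In case~(ii), $u$ equals $ghg\i h$ up to cyclic permutation/inversion, so $g$ and $h$ commute; then the direct $h$-edge between $g$ and $gh=hg$ lies in $B_r(\unit)\sm\X$ and links them.

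The remaining possibility is that only case~(iii) holds. Following the proof of \nameref{lem:ghg}, the \nameref{lem:no-rectangles}~(\ref{lem:no-rectangles}) forces $|u|\le 6$; length $3$ is ruled out by hand, and length $5$ combined with the alternation of $u$ and the absence of $ghg$ as a cyclic subword forces $u=g^{-a}hg^ahg^{-a}$ for some $a\in\{\pm 1\}$, yielding the group relation $hgh=g^2$ and hence $hg=g^2h$. Here $g$ cannot be an involution (else $u=ghghg$ would contain $ghg$ linearly), so $g^2\notin\{\unit,h\}$, and I obtain the length-$2$ path $g\to g^2\to hg$ in $G$ contained in $B_r(\unit)\sm\X$ via the closed walk $\unit,g,g^2,hg,h,\unit$ of length $5$. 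The bound $5\le r$ holds because this subcase forces $n\ge 2$ (as $n=1$ yields abelianness and hence case~(ii)), so $r\ge 2^{n+1}\ge 8$.

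The hard part will be this last subcase, as the morpheme itself does not cooperate with $\X$ directly and one must exploit the derived relation $hgh=g^2$ by hand to bypass $\X$. As a side remark, the same relation together with $h^2=\unit$ forces $g^3=\unit$ and $\langle g,h\rangle\cong S_3$, contradicting nilpotency of $\Gamma$, so this subcase in fact does not arise.
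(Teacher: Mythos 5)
Your proof is correct, but it takes a genuinely different route from the paper's. The paper argues by contradiction with a single application of the Strong Traversal Lemma~(\ref{traverser}): if the component containing $g,g\i$ differs from the one containing $hg,hg\i$, then all four words $g^{\pm 1}hg^{\pm 1}$ strongly traverse $\X$ and so cannot be cyclic subwords of the morpheme $u$ found inside $u_n$; but since $u$ alternates between $g\pmo$ and $h$ and has length $\ge 3$, it must contain one, contradiction. You instead argue directly, invoking \nameref{lem:ghg}~(\ref{lem:ghg}) to classify $u$ and in each case exhibiting an explicit $\X$-avoiding path in $B_r(\unit)$ joining $\{g,g\i\}$ to $\{hg,hg\i\}$. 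Your case~(i) recovers the missing link from the cyclic $ghg$-subword placed across the $h$-edge of $\X$; case~(ii) uses commutativity and the $h$-edge $\{g,hg\}$; and case~(iii) derives $hgh=g^2$ and builds a $5$-cycle through $g^2$, with the required $r\ge 5$ secured by the observation that $n=1$ forces even $|u|$. The paper's argument is shorter and avoids your case~(iii) entirely; your approach is more constructive and self-correcting, since -- as you rightly note in the side remark -- the relations $h^2=\unit$ and $hgh=g^2$ would force $\langle g,h\rangle\cong S_3$, which is not nilpotent, so case~(iii) is in fact vacuous and the explicit path there is redundant. One small point worth flagging: in case~(iii) the bound $|u|\le 6$ quoted from the Zigzag~I proof was established there under the assumption that $u$ has \emph{even} length, whereas you have odd length; the argument nonetheless goes through because, once $ghg$ is excluded as a cyclic subword, any length-$7$ morpheme alternating in $g\pmo$ and $h$ with $g$-letters of alternating sign yields $ghg\i hghg\i$ or $g\i hghg\i hg$ as a cyclic subword, both forbidden by the \nameref{lem:no-rectangles}~(\ref{lem:no-rectangles}) -- so in fact $|u|\le 5$ and your conclusion $|u|\in\{3,5\}$ stands.
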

\begin{proof}
Since $g$ does not traverse~$\X$ at~$\unit$, there is a local component $C$ at~$\X$ that contains both $g\i$ and~$g$.
Similarly, since $g$ does not traverse~$\X$ at~$h$, there is a local component $D$ at~$\X$ that contains both $hg\i$ and~$hg$.
If $C=D$ we are done, so suppose for a contradiction that $C\neq D$.
Then the words $ghg$, $ghg\i$ and $g\i hg$ strongly traverse~$\X$ (the traversals witnessing this start at the vertices $g\i$, $g\i$ and $g$, respectively). 
Using the \nameref{commutatorContainsMorpheme}~(\ref{commutatorContainsMorpheme}), we find a subword $u$ of $u_n=[g,h]_n$ that is a morpheme, and it is of length at most~$r$ by \autoref{commutatorLength}. 
By the \nameref{traverser}~(\ref{traverser}), none of $ghg$, $ghg\i$ or $g\i hg$ is a cyclic subword of~$u$. 
As a subword of~$u_n$, the morpheme $u$ alternates between $g\pmo$ and $h$, and so the three excluded cyclic subwords force $u$ to contain no more than one instance of~$g\pmo$.
Since $h$ is an involution and $u$ is a morpheme, $u$ does not start and end with~$h$. 
Thus $u$ is of the form $gh$ up to cyclic permutation or inversion, contradicting that $u$ -- as a morpheme of length two provided by the the \nameref{commutatorContainsMorpheme}~(\ref{commutatorContainsMorpheme}) -- should have the form $g^{\pm 2}$ or~$h^2$.
\end{proof}

\begin{lem}[Zigzag III]
\label{lem:ghkh}
    Let $\Gamma$ be a group that is nilpotent of class~$\le n$ and $S\se\Gamma$ a generating set.
    Let $g,h,k\neq\unit$ be pairwise inequivalent elements of~$S$. 
    Suppose that $h$ is an involution. 
    Let $\ell$ be a morpheme of~$\Gamma$ in $S$ with $|\ell|\ge 3$ that is a linear subword of the iterated commutator word $\ell_n=[g,hkh]_n$. 
    Then either $\ell$ contains one of $ghk$ or $g\i hk$ as a cyclic subword, or $\ell\in\{khghk\i h,\; gkh,\; g\i kh\}$ up to cyclic permutation and inversion.
\end{lem}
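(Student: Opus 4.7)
The plan is to analyze the block structure of $\ell_n := [g,hkh]_n$ and then enumerate the morphemes appearing as linear subwords, following the style of the earlier Zigzag lemmas.

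First, by induction on $n$ analogous to \autoref{lem:what_words_look_like}, the word $\ell_n$ satisfies the recursion $\ell_n = g\cdot\ell_{n-1}\i\cdot{}^{-}\ell_{n-1}$ after a single reduction $g\i g$ at the join, yielding the block description
\[
\ell_n \;=\; g^{\epsilon_1}\,B_1\,g^{\epsilon_2}\,B_2\cdots g^{\epsilon_{2^n}}\,B_{2^n},
\]
where $B_i := hk^{(-1)^{i+1}}h$; consecutive blocks thus strictly alternate between $hk\i h$ and $hkh$. The alternation is preserved through the recursion because $h$ is an involution, so $(hk^\epsilon h)\i = hk^{-\epsilon}h$, which flips the $k$-sign of a block. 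The forbidden set $\{ghk,\,g\i hk,\,k\i hg,\,k\i hg\i\}$ (closed under inversion) now translates into block-structural conditions: a forbidden pattern appears cyclically in $\ell$ exactly when some $g\pmo$-letter of $\ell$ is followed in $\ell$ by at least the $hk$ of an adjacent $hkh$-block, or preceded by at least the $k\i h$ of an adjacent $hk\i h$-block. By the alternation, whenever a $g$-letter in $\ell$ has both adjacent blocks at least partly visible, at least one forbidden pattern must appear in $\ell$ or its inverse.

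Now suppose $\ell$ is a morpheme and a linear subword of $\ell_n$ with $|\ell|\ge 3$ that avoids the forbidden patterns cyclically. If $\ell$ contains no $g\pmo$-letter, then $\ell$ lies inside a single block and must equal $hk\pmo h$, which is not a morpheme (it evaluates to $k\pmo\ne\unit$). If $\ell$ contains two or more $g$-letters, the block between two consecutive such letters is fully inside $\ell$, triggering a forbidden pattern by the alternation. Hence $\ell$ contains exactly one $g\pmo$-letter, and it sits inside a length-$7$ window of the form $hkh\,g\pmo\,hk\i h$ around that letter, for otherwise a partly visible adjacent block of the ``wrong'' type would trigger a forbidden pattern.

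Finally I would enumerate the subwords of this length-$7$ window that contain the central $g\pmo$, for each length from $3$ to $7$. The length-$7$ window itself is not an identity, since it would force $g\pmo = \unit$. All four length-$4$ subwords cyclically contain a forbidden pattern. Each length-$5$ subword either cyclically contains a forbidden pattern or, being an identity, forces a relation of the form $g = kh$ or $g = hk\i$; under such a relation one of $khg\pmo$ or $g\pmo hk\i$ is a length-$3$ morpheme that is a proper subword of $\ell$, contradicting $\ell$ being a morpheme. The length-$3$ safe candidates $g\pmo hk\i$ and $khg\pmo$ are equivalent to $gkh$ or $g\i kh$ up to cyclic permutation and inversion. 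The two length-$6$ subwords $hkhg\pmo hk\i$ and $khg\pmo hk\i h$ are both equivalent to $khghk\i h$ up to cyclic permutation and inversion, and a direct check shows they are morphemes under the forced relation $g = hk\i hkh$ (which also makes $g$ an involution). The main obstacle is the length-$5$ elimination, where the appropriate length-$3$ morphemic subword must be explicitly identified in each case.
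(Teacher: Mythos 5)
Your proposal follows essentially the same path as the paper's proof: establish the alternating block structure of $\ell_n$, reduce to exactly one $g^{\pm1}$-letter, localise $\ell$ in the window $hkhg^{\pm1}hk\i h$, and enumerate subwords. The one soft spot is exactly where you flag it. In fact, none of the three length-5 subwords contains $ghk$ or $g\i hk$ cyclically, so the first branch of your ``either/or'' is vacuous. Moreover, the middle one, $khg^{\pm1}hk\i$, if an identity, forces $g^{\pm1}=\unit$ rather than $g=kh$ or $g=hk\i$, and under that ``relation'' neither $khg^{\pm1}$ nor $g^{\pm1}hk\i$ becomes a morphemic proper subword — your proposed mechanism does not apply there, and the case is instead killed outright because $g\neq\unit$. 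The paper's enumeration uses a cleaner filter that would simplify your argument: a word that begins with a letter and ends with that letter's inverse cannot be a morpheme, because deleting both end letters (equivalently, conjugating) leaves a shorter nonempty identity subword. This single observation disposes uniformly of all three length-5 subwords, of the full length-7 window, and of the length-3 subword $hg^{\pm1}h$, which is absent from your length-3 list.
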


\begin{proof}
    \textbf{Case~1:} $\ell$ contains a cyclic subword of the form $g^{\alpha}hk^{\beta}hg^{\gamma}$ for $\alpha,\beta,\gamma\in\{\pm1\}$.
    By applying a cyclic permutation if necessary, assume that $\beta=1$. 
    So $\ell$ contains $ghk$ or $g\i hk$ as a cyclic subword.
    
    \textbf{Case~2:} not Case~1. Then $\ell$ contains at most one instance of~$g\pmo$. 
    If $\ell$ does not contain~$g\pmo$, then by $g,h,k\neq\unit$ the word $\ell$ is one of $hkh$ or $hk\i h$, which is a contradiction since a morpheme cannot start and end with an involution.
    Hence $\ell$ contains exactly one instance of $g\pmo$.

\begin{sublem}\label{boring-calc}
$\ell$ or $\ell\i$ is a linear subword of the word $hkhghk\i h$.
\end{sublem}

    \begin{cproof}
    By symmetry, we assume that $\ell$ contains an instance of~$g$.
    Then letters adjacent to $g$ in $\ell$ can only be the involution $h$. Adjacent to them we can only have $k$ or $k^{-1}$.
    Since $ghk$ is excluded, the later one can only be a $k^{-1}$. Since  $g\i hk$ is excluded as a cyclic subword, its inverse $k\i hg$ is excluded, so the first one can only be~$k$. 
    The next two letters can only be the involution $h$, and after that a~$g\pmo$, which is impossible as $\ell$ has only one occurrence of $g\pmo$. Thus $\ell$ is a linear subword of $hkhghk\i h$.
    \end{cproof}\medskip

    By inverting $\ell$ if necessary, assume via \autoref{boring-calc} that $\ell$ is a linear subword of the word $hkhghk\i h$.
    An exhaustive list of the possibilities for $\ell$ is provided below. Instances of $ghk$ or $g\i hk$ as cyclic subwords have been coloured in blue. Possibilities that cannot be morphemes, because they end with the inverse of the letter they start with, have also been highlighted, where the start and end are coloured in red.
    Possibilities that are too short are coloured grey.
    Cyclic permutations of $khghk\i h$ are coloured teal.
    \[
    \begin{array}{rrrr}
        \r{h}khghk\i \r{h} & \g{hkhghk\i} & \r{h}khg\r{h} & \b{hk}h\b{g} \\
        \g{khghk\i h} & \r{k}hgh\r{k\i} & \b{k}h\b{gh} & khg \\
        \r{h}ghk\i \r{h} & \b{hg}h\b{k\i} & \r{h}g\r{h} & \grey{hg} \\
        \b{g}h\b{k\i h} & ghk\i & \grey{gh} & \grey{g}
    \end{array}
    \]
    By inspection, we obtain our required result.
\end{proof}

\begin{figure}[ht]
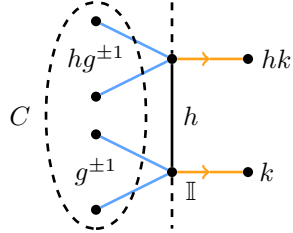

    \centering
\tikzfig{ghk}
\caption{The situation in the proof of \autoref{lem:inv_all_traverses}. Blue edges are labelled by~$g$, orange edges by~$k$}
\label{fig:ghk}
\end{figure}
\begin{proof}[Proof of \autoref{lem:inv_all_traverses}]
    Assume \autoref{set} with $r\ge \max\{2^{n+2},10\}$.
    Suppose that $h$ is an involution.
    We have to show that every element of~$S\sm\{h\}$ traverses~$\X$, and assume for a contradiction that some $g\in S\sm\{h\}$ does not traverse~$\X$.
    
    By \autoref{lem:c_shape}, all four vertices $g$, $g\i$, $hg$ and $hg\i$ lie in the same $r$-local component at~$\X$. 
    Call this local component~$C$.
    For a depiction of the situation, see \autoref{fig:ghk}.
    Since $\X$ is an $r$-local $2$-separator, there is another local component at~$\X$ besides~$C$, so there
    exists $k\in S\sm\{h,g\pmo\}$ such that either the vertex $k$ or $hk$ does not lie in~$C$. 
    Since the automorphism of~$G$ given by left-multiplication with $h$ preserves~$C$ but swaps the vertices $k$ and~$hk$, both vertices $k$ and $hk$ lie outside of~$C$.
    We conclude that both words $ghk$ and $g\i hk$ strongly traverse~$\X$.
    Using the \nameref{commutatorContainsMorpheme}~(\ref{commutatorContainsMorpheme}), we find a linear subword $\ell$ of the iterated commutator word $\ell_n=[g,hkh]_n$ such that~$\ell$ is a morpheme, and this lemma further ensures~$|\ell|\ge 3$. 
    By the \nameref{traverser}~(\ref{traverser}) and since $|\ell|\le|\ell_n|=2^{n+2}\le r$, the morpheme $\ell$ cannot contain either of $ghk$ or $g\i hk$ as a cylic subword. 
    Thus, by \nameref{lem:ghkh} (\ref{lem:ghkh}), without loss of generality we get $\ell\in\{khghk\i h,\;gkh,\; g\i kh\}$. 
    Suppose first that $\ell\in\{gkh,\; g\i kh\}$. Then, since $\ell$ labels a triangle, either the vertices $g\i$ and $k$ are adjacent or the vertices $g$ and $k$ are adjacent, contradicting \autoref{locSepSeps} as they lie in distinct $r$-local components at~$\X$. 
    So we have $\ell=khghk\i h$.
    
    If $k$ does not traverse $\X$, then we additionally get that the word $ghk\i$ traverses strongly. Since $ghk\i$ is a subword of~$\ell$, this contradicts the \nameref{traverser} (\ref{traverser}).
    
    Similarly, if $k$ traverses $\X$ antisymmetrically, but not symmetrically, we apply \autoref{obs:what_traverse} to get that $k\i hk$ strongly traverses, again a contradiction.
    So we may assume that $k$ traverses $\X$ symmetrically and thus, by \autoref{twosym}, $k$ and $h$ commute. Applying this fact to the morpheme $\ell=khghk\i h$ yields that $g=h$, a contradiction.
\end{proof}

\subsection{Proof of \autoref{lem:no_antisymmetry_for_anyone}}

\begin{lem}\label{lem:add_z_invo}
    Assume \autoref{set} with~$r\ge 4$.
    Suppose that $h$ is an involution. Let $\{A,B\}$ be an $r$-local $2$-separation of~$G$ with separator~$\X$.
    Let $g\in S\sm\{h\}$ such that $z:=gh$ is an involution.
    Then $\{A,B\}$ is an $\frac{r}{2}$-local $2$-separation of $G':=\cay{\Gamma}{S\cup\{z\}}$ with separator~$\X$.
\end{lem}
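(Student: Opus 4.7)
The plan parallels that of \autoref{lem:add_z} but is considerably simpler because the key no-traversal sublemma is automatic under our hypotheses. If $z\in S$, then $G'=G$ and there is nothing to prove, so assume $z\notin S$. Since $z^2=\unit$, we have $z = z\i = h\i g\i$, hence $hz = h\cdot h\i g\i = g\i\in S$; so the only new neighbour of $\X$ in $G'$ compared to $G$ is the vertex $z$ (adjacent to $\unit$), and $N_{G'}(\X) = N_G(\X)\cup\{z\}$.

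The crucial step will be to show that the word $gh$ does not traverse $\{A,B\}$, which guarantees that the set $F$ of $z$-labelled edges of $G'$ respects $\{A,B\}$ with witness walks of length two. Since $|gh|=2$, any such traversal would have to be weak, requiring its middle vertex to lie in $\X$. Writing the walk starting at $x$ as $x\to xg\to xz$: if $xg=\unit$ then $x=g\i$ and the endpoint $xz = g\i gh = h$ lies in $\X$; if $xg=h$ then $x=hg\i$ and $xz = hg\i gh = \unit$ lies in $\X$. In either case an endpoint lies in the separator, contradicting the definition of a traversal. The algebraic identity $z\i = h\i g\i$ (equivalently $ghgh=\unit$) is what makes this step automatic.

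It then remains to invoke \autoref{CompatibleSeparationsAddingEdges} and \autoref{LocalSepnCompatibleII} exactly as in the proof of \autoref{lem:add_z}. We apply \autoref{CompatibleSeparationsAddingEdges} with $d:=2$, $F$ the set of $z$-labelled edges, and $r':=r/2$: the product condition $r'\cdot d\le r$ holds with equality; $F$ respects $\{A,B\}$ by the previous paragraph; and the distance between $\unit$ and $h$ in $G'$ equals $1\le r'/2$ because $h\in S$ and $r\ge 4$. Since $\X$ contains no $r$-local cutvertex by \autoref{set}, the induced $2$-separations $\{A_x,B_x\}$ of $B_r(x,G)$ exist, and their restrictions to $B_{r/2}(x,G')$ are compatible $2$-separations of these smaller balls. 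Finally \autoref{LocalSepnCompatibleII} glues these into the desired $\frac{r}{2}$-local $2$-separation of $G'$ with separator $\X$, where the new vertex $z$ is placed on the side of $\{A,B\}$ containing $g\i$. The only mild obstacle is the routine verification of the hypotheses of \autoref{CompatibleSeparationsAddingEdges}.
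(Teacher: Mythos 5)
Your overall structure matches the paper's: reduce to \autoref{CompatibleSeparationsAddingEdges} and \autoref{LocalSepnCompatibleII}, using the length-two walk labelled $gh$ to witness that the set of $z$-labelled edges respects $\{A,B\}$. Your explicit check that no walk labelled $gh$ is a traversal of $\{A,B\}$ (because whenever the middle vertex lands in $\X$, an endpoint does too) is correct, and it usefully fills in the ``respects'' hypothesis of \autoref{CompatibleSeparationsAddingEdges} which the paper leaves tacit for this lemma.

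The gap is in the neighbourhood computation. You write $z = z^{-1} = h^{-1}g^{-1}$ and correctly deduce $hz = g^{-1}\in S$, so $hz$ was already a neighbour of $\unit$ in $G$. But since $h^{-1}=h$, the same identity reads $z = hg^{-1}$, which exhibits the vertex $z$ as a $g^{-1}$-neighbour of $h$ in $G$. So $z\in N_G(\X)$ as well, and in fact $N_{G'}(\X) = N_G(\X)$; your claim that $N_{G'}(\X) = N_G(\X)\cup\{z\}$ is false. This is not merely cosmetic. Your closing sentence places ``the new vertex $z$'' on the side of $\{A,B\}$ containing $g^{-1}$, but the vertex $z = hg^{-1}$ already lies on a definite side of the given separation $\{A,B\}$, and if $g$ traverses $\X$ antisymmetrically that side is the one \emph{not} containing $g^{-1}$; so your placement would contradict what \autoref{LocalSepnCompatibleII} actually produces. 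More fundamentally, the lemma asserts that $\{A,B\}$ \emph{itself} is the desired $\frac{r}{2}$-local $2$-separation of $G'$, and the missing observation $N_{G'}(\X) = N_G(\X)$ is precisely what lets one conclude that the separation output by \autoref{LocalSepnCompatibleII} coincides with $\{A,B\}$. Without it, you have produced \emph{some} $\frac{r}{2}$-local $2$-separation of $G'$ with separator $\X$, but not shown that it equals $\{A,B\}$.
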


\begin{proof}
    If $z$ is in~$S$, there is nothing to show, so we may assume that this is not the case.
    We claim that 
    \begin{equation}\label{eq:zNbhd}
        N_{G'}(\X)=N_G(\X).
    \end{equation}
    Indeed, since $z$ is an involution, both vertices $\unit$ and $h$ get at most one possibly new neighbour in~$G'$, namely $z$ and~$hz$, respectively. 
    We have
    \[
        z=gh=hg\i
        \quad\text{and}\quad
        hz=hgh=hhg\i=g\i
    \]
    so neither $z$ nor $hz$ is a new neighbour.
    
    Let us show now that $\{A,B\}$ is an $\frac{r}{2}$-local $2$-separation of~$G'$ with separator~$\X$.
    Since $G$ has no $r$-local $2$-separator by assumption, for each $x\in X$ we may let $\{A_x,B_x\}$ denote the $2$-separation of $B_r(x,G)$ induced by~$\{A,B\}$.
    Let $A'_x,B'_x$ be obtained from $A_x,B_x$ by taking the intersection with the vertex set of $B_{r/2}(x,G')$.
    Then the sets $\{A'_x,B'_x\}$ are $2$-separations of $B_{r/2}(x,G')$, and these $2$-separations are compatible, by \autoref{CompatibleSeparationsAddingEdges} applied with $r:=r$, $d:=2$, $F$ the set of edges in~$G'$ labelled by~$gh$, and $r':=r/2$, using $r'\cdot d=r$ and that the vertices in $\X$ have distance $1\le r'/2=r/4$ in~$G'$.
    Hence the $2$-separations $\{A'_x,B'_x\}$ make $\X$ into the separator of an $\frac{r}{2}$-local $2$-separation $\{A,B\}$ of~$G'$ by \autoref{LocalSepnCompatibleII}.
    It follows from (\ref{eq:zNbhd}) that $A'=A$ and $B'=B$.
\end{proof}

\begin{proof}[Proof of \autoref{lem:no_antisymmetry_for_anyone}]
Assume \autoref{set} with $r\ge\max\{2^{n+2},10\}$.
Suppose that $h$ is an involution.
We have to show that every element of~$S$ that traverses~$\X$ antisymmetrically also traverses $\X$ symmetrically.
Suppose for a contradiction that there is some $g\in S$ which traverses $\X$ antisymmetrically but not symmetrically. 
Let $\{A,B\}$ be an $r$-local $2$-separation with separator $\X$ such that $g\i$ and $hg\i$ lie on opposite sides of~$\{A,B\}$. 
Then $z:=gh$ is an involution by \autoref{twosym}. 
Using \autoref{lem:add_z_invo}, we obtain a graph $G':=\cay{\Gamma}{S\cup\{z\}}$ where $\{A,B\}$ is an $\frac{r}{2}$-local $2$-separation with separator~$\X$. 
As $hz=g\i$ and $z=hg\i$ (see (\ref{eq:zNbhd})) lie on opposite sides of~$\{A,B\}$, the word $zhz$ strongly traverses~$\X$.
So by the \nameref{traverser} (\ref{traverser}), no morpheme of~$\Gamma$ in~$S$ contains $zhz$ as a cyclic subword. 
Using the \nameref{commutatorContainsMorpheme} (\ref{commutatorContainsMorpheme}), we find a subword $m$ of $[z,h]_n$ that is a morpheme of length at least three.
Since $m$ does not contain $zhz$ as a cyclic subword and $|m|\ge 3$, the only possibility for~$m$ is $m=hzh$.
Hence $h=g$, a contradiction.
\end{proof}

This complete the proof of \autoref{3-con}.

\begin{rem}
    In the proof of \autoref{3-con}, we only use that the group is finite when we have already found a specific generating set and wish to infer statements on the global structure of the group from this generating set.
    In fact, the proof would still work if we allow the group to be infinite or $r=\infty$, provided that we allow the cyclic groups in the statement of \autoref{3-con} to be infinite as well.
    We state \autoref{3-con} for finite groups and $r<\infty$ to emphasise what is new.
\end{rem}

\bibliographystyle{amsplain}
\bibliography{literatur}
\end{document}